\newcommand{\bbfont}{\mathbbm}
\DeclareMathAlphabet{\mathcalalt}{OMS}{cmsy}{m}{n}
\newcommand{\poalgfont}{\mathcalalt}
\newcommand{\borelfont}{\mathscr}
\newcommand{\lebfont}{\mathcal}
\newcommand{\opstrufont}{\mathcalalt}
\theoremstyle{plain}
\newtheorem{theorem0}{Theorem}[section]
\newtheorem{theorem}[theorem0]{Theorem}
\newtheorem{proposition}[theorem0]{Proposition}
\newtheorem{lemma}[theorem0]{Lemma}
\newtheorem{corollary}[theorem0]{Corollary}
\newtheorem*{theorem*}{Theorem}
\newtheorem*{proposition*}{Proposition}
\newtheorem*{lemma*}{Lemma}
\newtheorem*{corollary*}{Corollary}
\theoremstyle{definition}
\newtheorem{definition}[theorem0]{Definition}
\newtheorem{example}[theorem0]{Example}
\newtheorem{remark}[theorem0]{Remark}
\newtheorem*{definition*}{Definition}
\newtheorem*{example*}{Example}
\newtheorem*{remark*}{Remark}
\crefname{theorem}{Theorem}{Theorems}
\crefname{proposition}{Proposition}{Propositions}
\crefname{lemma}{Lemma}{Lemmas}
\crefname{corollary}{Corollary}{Corollaries}
\crefname{definition}{Definition}{Definitions}
\crefname{example}{Example}{Examples}
\crefname{remark}{Remark}{Remarks}
\crefname{section}{Section}{Sections}
\crefname{subsection}{Section}{Sections}
\crefname{subsubsection}{Section}{Sections}
\crefname{equation}{equation}{equations}
\crefname{enumi}{part}{parts}
\crefname{enumii}{part}{parts}
\crefname{enumiii}{part}{parts}
\crefname{enumiv}{part}{parts}
\setlist[enumerate,1]{label=\textup{(\arabic*)},ref=\textup{(\arabic*)}}
\setlist[enumerate,2]{label=\textup{(\alph*)},ref=\textup{(\alph*)}}
\setlist[enumerate,3]{label=\textup{(\roman*)},ref=\textup{(\roman*)}}
\setlist[enumerate,4]{label=\textup{(\Alph*)},ref=\textup{(\Alph*)}}
\newlist{enumerate_arabic}{enumerate}{1}
\setlist[enumerate_arabic,1]{label=\textup{(\arabic*)},ref=\textup{(\arabic*)}}
\newlist{enumerate_alpha}{enumerate}{1}
\setlist[enumerate_alpha,1]{label=\textup{(\alph*)},ref=\textup{(\alph*)}}
\newlist{enumerate_roman}{enumerate}{1}
\setlist[enumerate_roman,1]{label=\textup{(\roman*)},ref=\textup{(\roman*)}}
\newlist{enumerate_Alpha}{enumerate}{1}
\setlist[enumerate_Alpha,1]{label=\textup{(\Alph*)},ref=\textup{(\Alph*)}}
\numberwithin{equation}{section}
\newcommand{\tposskip}{\hskip   0.05555555555em}
\newcommand{\tnegskip}{\hskip -0.05555555555em}
\newcommand{\CC}{{\bbfont C}}
\newcommand{\RR}{{\bbfont R}}
\newcommand{\ZZ}{{\bbfont Z}}
\newcommand{\upc}{{\mathrm{c}}}
\newcommand{\upd}{{\mathrm{d}}}
\newcommand{\upe}{{\mathrm{e}}}
\newcommand{\upi}{{\mathrm{i}}}
\newcommand{\upo}{{\mathrm{o}}}
\newcommand{\upr}{{\mathrm{r}}}
\newcommand{\upB}{{\mathrm{B}}}
\newcommand{\upC}{{\mathrm{C}}}
\newcommand{\upL}{{\mathrm{L}}}
\newcommand{\ulp}{{\textup{(}}}
\newcommand{\urp}{{\textup{)}}}
\newcommand{\uppars}[1]{\ulp #1\urp}
\newcommand{\abs}[1]{{\lvert #1 \rvert}}
\newcommand{\norm}[1]{{\lVert #1 \rVert}}
\newcommand{\lrnorm}[1]{{\left\lVert #1 \right\rVert}}
\newcommand{\di}[1]{\,\upd #1}
\newcommand{\linear}{{\opstrufont L}}
\newcommand{\regular}{\linear_{\mathrm r}}
\newcommand{\Calgebra}{\ensuremath{{\upC}^\ast}\!-algebra}
\newcommand{\Csubalgebra}{\ensuremath{{\upC}^\ast}\!-subalgebra}
\newcommand{\Calgebras}{\Calgebra s}
\newcommand{\Csubalgebras}{\Csubalgebra s}
\newcommand{\idmap}{{\mathrm{id}}}
\newcommand{\idop}{I}
\newcommand{\Ker}{\operatorname{Ker}}
\newcommand{\Wed}[1]{\operatorname{Wed}\left[#1\right]}
\newcommand{\up}{{\Uparrow}}
\newcommand{\down}{{\Downarrow}}
\newcommand{\ups}{{\uparrow}}
\newcommand{\downs}{{\downarrow}}
\newcommand{\mc}{mo\-no\-tone com\-plete}
\newcommand{\smc}{$\sigma$-mono\-tone com\-plete}
\newcommand{\Dc}{De\-de\-kind com\-plete}
\newcommand{\sDc}{$\sigma$-De\-de\-kind com\-plete}
\newcommand{\smcont}{$\sigma$-mono\-tone con\-tin\-u\-ous}
\newcommand{\mcont}{mono\-tone con\-tin\-u\-ous}
\newcommand{\csp}{countable sup property}
\newcommand{\rsup}{\sup}
\newcommand{\psup}{\bigvee}
\newcommand{\pinf}{\bigwedge}
\newcommand{\supp}{\mathrm{supp}\,}
\newcommand{\SOT}{{\ensuremath{\mathrm{SOT}}}}
\newcommand{\SOTlim}{\SOT\text{--}\!\lim}
\newcommand{\zerofunction}{\textbf{0} }
\newcommand{\onefunction}{\textbf{1}}
\newcommand{\indicator}[1]{\chi_{#1}}
\newcommand{\pos}[1]{{#1^+}}
\newcommand{\negt}[1]{{#1^-}}
\newcommand{\largest}{\infty}
\newcommand{\f}[1]{(#1)}
\newcommand{\lrinp}[1]{\left\langle #1\right\rangle}
\newcommand{\seq}[1]{\{{#1}_n\}_{n=1}^{\infty}}
\newcommand{\net}[1]{\{{#1}_\lambda\}_{\lambda\in \Lambda}}
\newcommand{\Ell}{\upL}
\newcommand{\sa}{\mathrm{sa}}
\newcommand{\oa}{\poalgfont{A}}
\newcommand{\oatwo}{\poalgfont{B}}
\newcommand{\os}{E}
\newcommand{\ostwo}{F}
\newcommand{\hilbert}{H}
\newcommand{\pset}{X}
\newcommand{\ts}{X}
\newcommand{\posos}{\pos{\os}}
\newcommand{\osext}{\overline{\os}}
\newcommand{\pososext}{\overline{\posos}}
\newcommand{\posoa}{\pos{\oa}}
\newcommand{\posoaext}{\overline{\posoa}}
\newcommand{\posR}{\pos{\RR}}
\newcommand{\posRext}{\overline{\pos{\RR}}}
\newcommand{\posmap}{\pi}
\newcommand{\opint}[1]{I_{#1}}
\newcommand{\opintm}{\opint{\npm}}
\newcommand{\opintP}{\opint{\psm}}
\newcommand{\cont}[1]{{\upC}(#1)}
\newcommand{\conto}[1]{\upC_0(#1)}
\newcommand{\contc}[1]{\upC_{\upc}(#1)}
\newcommand{\contts}{\cont{\ts}}
\newcommand{\contots}{\conto{\ts}}
\newcommand{\contcts}{\contc{\ts}}
\newcommand{\contCts}{\cont{\ts;\CC}}
\newcommand{\contoCts}{\conto{\ts;\CC}}
\newcommand{\contcCts}{\contc{\ts;\CC}}
\newcommand{\odual}[1]{{#1^{\thicksim}}}
\newcommand{\ndual}[1]{{#1^{\ast}}}
\newcommand{\ocdual}[1]{{#1_{\mathrm {oc}}^{\thicksim}}}
\newcommand{\socdual}[1]{{#1_{\sigma\mathrm {oc}}^{\thicksim}}}
\newcommand{\odualos}{\odual{\os}}
\newcommand{\ndualos}{\ndual{\os}}
\newcommand{\ocdualos}{\ocdual{\os}}
\newcommand{\socdualos}{\socdual{\os}}
\newcommand{\ndualoa}{\ndual{\oa}}
\newcommand{\ocdualoa}{\ocdual{\oa}}
\newcommand{\boundedh}{\opstrufont{B}(\hilbert)}
\newcommand{\ocontinuous}{\linear_{\mathrm{oc}}}
\newcommand{\socontinuous}{\linear_{\sigma\mathrm{oc}}}
\newcommand{\linearop}[1]{\linear(#1)}
\newcommand{\regularop}[1]{\regular(#1)}
\newcommand{\ocontop}[1]{\ocontinuous(#1)}
\newcommand{\socontop}[1]{\socontinuous(#1)}
\newcommand{\alg}{\Omega}
\newcommand{\borel}{\borelfont B}
\newcommand{\mss}{\Delta}
\newcommand{\ms}{(\pset,\alg)}
\newcommand{\msm}{(\pset,\alg,\npm,\os)}
\newcommand{\npm}{\mu}
\newcommand{\psm}{\npm} 
\newcommand{\elemfun}{{{\lebfont E}(\pset,\alg;\posR)}}
\newcommand{\integrableelemfun}{{{\lebfont E}^1(\pset,\alg;\posR)}}
\newcommand{\integrablefun}{{{\lebfont L}^1(\pset,\alg,\npm;\RR)}}
\newcommand{\integrablefunprime}{{{\lebfont L}^1(\pset,\alg,\npm\sp\prime;\RR)}}
\newcommand{\posintegrablefun}{{{\lebfont L}^1(\pset,\alg,\npm;\posR)}}
\newcommand{\aezerofun}{{{\lebfont N}(\pset,\alg,\npm;\RR)}}
\newcommand{\boundedmeasfun}{{{\lebfont{B}(\pset,\alg;\RR)}}}
\newcommand{\posboundedmeasfun}{{{\lebfont B}(\pset,\alg;\posR)}}
\newcommand{\ellone}{{{\mathrm L}^1(\pset,\alg,\npm;\RR)}}
\newcommand{\boundedmeasfunae}{{{\upB(\pset,\alg,\npm;\RR)}}}
\newcommand{\posboundedmeasfunae}{{{\upB(\pset,\alg,\npm;\pos{\RR})}}}
\newcommand{\posellone}{{{\mathrm L}^1(\pset,\alg,\npm;\posR)}}
\newcommand{\integrablefunspectral}{{{\lebfont L}^1(\pset,\alg,\psm;\RR)}}
\newcommand{\aezerofunspectral}{{{\lebfont N}(\pset,\alg,\psm;\RR)}}
\newcommand{\ellonespectral}{{{\mathrm L}^1(\pset,\alg,\psm;\RR)}}
\newcommand{\integrableelemfunts}{{{\lebfont E}^1(\ts,\borel;\posR)}}
\newcommand{\integrablefunts}{{{\lebfont L}^1(\ts,\borel,\npm;\RR)}}
\newcommand{\posintegrablefunts}{{{\lebfont L}^1(\ts,\borel,\npm;\posR)}}
\newcommand{\boundedmeasfunts}{{{\lebfont{B}(\ts,\borel;\RR)}}}
\newcommand{\boundedmeasfunCts}{{{\lebfont{B}(\ts,\borel;\CC)}}}
\newcommand{\boundedmeasfunCaets}{{{\upB(\ts,\borel,\npm;\CC)}}}
\newcommand{\posboundedmeasfunts}{{{\lebfont B}(\ts,\borel;\posR)}}
\newcommand{\ellonets}{{{\mathrm L}^1(\ts,\borel,\npm;\RR)}}
\newcommand{\posellonets}{{{\mathrm L}^1(\ts,\borel,\npm;\posR)}}
\newcommand{\boundedmeasfunaets}{{{\upB(\ts,\borel,\npm;\RR)}}}
\newcommand{\posboundedmeasfunaets}{{\upB(\ts,\borel,\npm;\posR)}}
\newcommand{\rs}{\os}
\newcommand{\orderintegral}[3]{{\int_{#1}^{\mathrm{o}}\! {#2}\di {#3}}}
\newcommand{\ointm}[1]{\orderintegral{\pset}{#1}{\npm}}
\newcommand{\ointmprime}[1]{\orderintegral{\pset}{#1}{\npm\sp\prime}}
\newcommand{\leftidalgnonassociative}[2]{_{#1}{#2}}
\newcommand{\rightidalgnonassociative}[2]{{#2}_{#1}}
\newcommand{\leftidalg}[2]{#1#2}
\newcommand{\rightidalg}[2]{#2#1}
\newcommand{\leftrightidalg}[2]{#1#2#1}
\newcommand{\eclass}[1]{\llbracket #1\rrbracket}
\begin{document}

\title[Spectral theorems]{Spectral theorems for positive algebra homomorphisms}

\author{Marcel de Jeu}
\address[Marcel de Jeu]{Mathematical Institute, Leiden University, P.O.\ Box 9512, 2300 RA Leiden, The Netherlands\\
	and\\
	Department of Mathematics and Applied Mathematics, University of Pretoria, Corner of Lynnwood Road and Roper Street, Hatfield 0083, Pretoria,
	South Africa}
\email[Marcel de Jeu]{mdejeu@math.leidenuniv.nl}

\author{Xingni Jiang}
\address[Xingni Jiang]{College of Mathematics, Sichuan University, No.\ 24, South Section, First Ring Road, Chengdu, P.R.\ China}
\email[Xingni Jiang]{x.jiang@scu.edu.cn}


\subjclass[2010]{Primary 47B99; Secondary 06F25, 28B15}
\keywords{Riesz representation theorem, positive algebra homomorphism, par\-tial\-ly or\-dered algebra, spectral measure, ups and downs, Banach lattice, Hilbert space, JBW-algebra}

\begin{abstract}
Let $X$ be a locally compact Hausdorff space, let $\mathcalalt A$ be a partially ordered algebra, and let $\pi\colon \mathrm{C}_{\mathrm c}(X)\to\mathcalalt A$ be a positive algebra homomorphism. Under  conditions on $\mathcalalt A$ that are satisfied in a good number of cases of practical interest, it is shown that $\pi$ is represented by a unique regular spectral measure $\mu$ on the Borel $\sigma$-algebra of $X$, taking its values in the positive idempotents in $\mathcalalt A$. The measure $\mu$, which is $\sigma$-additive in an ordered sense, represents $\pi$ via the order integral (a generalisation of the Lebesgue integral) that goes back to J.D.M.\ Wright and which was investigated earlier by the authors. \\
The positive algebra homomorphism $\pi$ can be extended from $\mathrm{C}_{\mathrm c}(X)$ to a positive linear map from the accompanying $\mathcal L\sp 1$-space of $\mu$ into $\mathcalalt A$. It is shown that, quite often, this
$\mathcal L\sp 1$-space is closed under multiplication, so that it is a vector lattice algebra, and that the extended map from $\mathcal L\sp 1$ into $\mathcalalt A$ is not only an algebra homomorphism but, even when $\mathcalalt A$ is not a vector lattice, also a vector lattice homomorphism in a sense that is explained in the paper. When $\mathcalalt A$ has the countable sup property, the image of $\mathcal L^1$ (or of its positive cone) is described in terms of consecutive ups and downs of the image of ${\mathrm C}_{\mathrm c}(X)$ (or of its positive cone).\\
The general results are applied in three different contexts, showing how various spectral theorems have a common order-theoretical root. \\
For positive algebra homomorphisms from ${\mathrm C}_0(X)$ into the order continuous operators on a Banach lattice, this leads to an improvement of earlier work by the first author and Ruoff on positive representations on KB-spaces. For representations of ${\mathrm C}_0(X,\mathbb C)$ on Hilbert spaces, a rather precise spectral theorem for such (possibly degenerate) representations results, including explicit formulas for the spectral measure. Under a condition that is satisfied when the Hilbert space is separable, the image of the Borel functional calculus is shown to be the strongly closed subalgebra that is generated by the image of ${\mathrm C}_0(X,\mathbb C)$. \\
The algebra $\mathcalalt A$ need not be an algebra of operators. This allows a final application to JBW-algebras, where the existence of the Borel functional calculus and of the spectral resolution for an element now become consequences of the existence of a spectral measure.
\end{abstract}

\maketitle


\section{Introduction and overview}\label{3_sec:introduction_and_overview}


\noindent In this paper, we take a direct, order-theoretical, approach to aspects of spectral theory. For comparison, we start by briefly reviewing some known facts.

 Let $\ts$ be a compact Hausdorff space, and suppose that  $\posmap\colon \contCts\to\boundedh$ is a representation of its continuous complex-valued functions on a complex Hilbert space $\hilbert$. For $x,x^\prime\in\hilbert$, the Riesz representation theorem furnishes a regular complex Borel measure $\npm_{x,x^\prime}$ on the Borel $\sigma$-algebra $\borel$ of $\ts$ such that
\begin{equation}\label{3_eq:intro_measure_1}
\lrinp{\posmap(f)x,x^\prime}=\int_\ts\! f\di{\npm_{x,x^\prime}}
\end{equation}
for $f\in\contCts$. Turning the tables, \cref{3_eq:intro_measure_1} can be used as a definition to extend $\posmap$ to a map $\posmap\colon \boundedmeasfunCts\to\boundedh$ from the bounded Borel measurable functions on $\ts$ into $\boundedh$. This extended map can then be shown to be a representation again, and a spectral measure $\npm$ on $\borel$ is obtained by setting $\npm(\mss)\coloneqq \posmap(\indicator{\mss})$ for $\mss\in\borel$. It is such that
\begin{equation}\label{3_eq:intro_measure_2}
	\npm_{x,x^\prime}(\mss)=\lrinp{\npm(\mss)x,x^\prime}
\end{equation}
for $x,x^\prime\in\hilbert$. In this fashion, the existence of a (unique regular) spectral measure is establishes that generates the representation $\posmap$. It is uniquely determined by \cref{3_eq:intro_measure_1,3_eq:intro_measure_2}, and the validity of these two equations is what is meant by writing
\begin{equation}\label{3_eq:intro_measure_3}
\posmap(f)=\int_\ts\! f\di{\npm}
\end{equation}
for $f\in\contCts$.  We refer to, for example, \cite[Section~IX.1]{conway_A_COURSE_IN_FUNCTIONAL_ANALYSIS_SECOND_EDITION:1990} or \cite[Section~1.4]{folland_A_COURSE_IN_ABSTRACT_HARMONIC_ANALYSIS_SECOND_EDITION:2016} for this well-known material. A similar approach is used in \cite{de_jeu_ruoff:2016} to find a spectral measure for a positive representation $\posmap\colon \contots\to\regularop{\os}$ on a KB-space $\os$ of the real-valued functions on a locally compact Hausdorff space $\ts$ vanishing at infinity.

In the present paper, we take a different, direct, approach. Suppose that $\posmap\colon \contcts\to\oa$ is a positive algebra homomorphism from the real-valued compactly supported continuous functions on a locally compact Hausdorff space into a partially ordered algebra $\oa$. When $\oa$ satisfies reasonably mild conditions, which are satisfied in a number of cases of practical interest, we show that there exists a unique regular spectral measure $\npm$ on the Borel $\sigma$-algebra of $\ts$, taking its values in the positive idempotents in $\oa$ and $\sigma$-additive in an ordered sense, such that
\begin{equation}\label{3_eq:intro_measure_4}
	\posmap(f)=\ointm{f}
\end{equation}
for $f\in\contcts$. The integral in \cref{3_eq:intro_measure_4} is now not a symbolic notation as in \cref{3_eq:intro_measure_3}, but it is an actual integral that can be defined for measures taking values in (suitable) partially ordered vector spaces. This order integral, which is a generalisation of the Lebesgue integral, is studied in detail in \cite{de_jeu_jiang:2022a}. It goes back to J.D.M.\ Wright.

The algebra $\oa$ need not be an algebra of operators but, if it is, then, \emph{starting} from the measure $\npm$, one can define measures $\npm_{x,x^\prime}$ as in \cref{3_eq:intro_measure_2}. The validity of \cref{3_eq:intro_measure_1} then follows easily from the properties of the order integral. The algebra of self-adjoint operators in a commutative strongly closed \Csubalgebra\ of $\boundedh$ for a complex Hilbert space $\hilbert$ satisfies the appropriate conditions, and so does the algebra of regular operators on a KB-space.\footnote{As will become apparent in \cref{3_subsec:Banach_lattices}, one can actually do better than that.} Consequently, in these cases, our spectral measures for these algebras coincide with the ones that are found via the classical `weak' method summarised above. Our purely order-theoretical method via general partially ordered algebras is, however, essentially different in nature as the spectral measure comes \emph{before} the Borel functional calculus and the scalar valued measures. The difference (and advantage) becomes especially clear in the context of JBW-algebras. As these need not be algebras of operators, measures $\npm_{x,x^\prime}$ as above make no sense then. Yet our results are applicable, and they have the existence of a Borel functional calculus and the spectral resolution for an element of a JBW-algebra as an easy consequence.

The order-theoretical spectral theorems in the current paper are consequences of the  Riesz representation theorems in \cite{de_jeu_jiang:2022a} for positive linear maps $\posmap$ from $\contcts$ or $\contots$ into (suitable) partially ordered vector spaces. These apply, in particular, to the positive algebra homomorphisms from these spaces into $\oa$. The fact that the representing $\oa$-valued measure is then actually a spectral measure is a consequence of the multiplicativity of $\posmap$ and the explicit formulas from \cite{de_jeu_jiang:2022a} for the representing measure for the positive linear map $\posmap$. As for the special case of the classical Riesz representation theorem, where the partially ordered vector space consists of the real numbers, we have
\begin{align}
	\label{3_eq:intro_measure_5}
	\psm(V)&=\psup\{\posmap(f):f\in\contcts,\,\zerofunction\leq f\leq\onefunction,\ \supp{f}\subseteq V\}\\
\intertext{for an open subset $V$ of $\ts$, and}
\label{3_eq:intro_measure_6}
	\psm(K)&=\pinf\{\posmap(f):f\in\contcts,\,\zerofunction\leq f\leq\onefunction,\ f(x)=1\text{ for }x\in K\}
\end{align}
for a compact subset $K$ of $\ts$. In the classical real case, these formulas tend to fade into the background once they have served their purpose during the proof of the Riesz representation theorem.  In the present paper, however, they are very much in the foreground as they underlie the spectral property of $\npm$ when $\posmap$ is multiplicative. They are also instrumental to the up-down theorems that we shall establish. Even in the well-studied case of representations on complex Hilbert spaces these formulas appear to yield something new. For this, we recall that the existence of the extremum of a monotone net of self-adjoint operators in $\boundedh$ and the existence of its strong operator limit are equivalent and that, when they exists, they are equal. In this case, therefore, the supremum (resp.\ infimum) in \cref{3_eq:intro_measure_5} (resp.\ \cref{3_eq:intro_measure_6}) gives the spectral measures of non-empty open subsets and of compact subsets as explicit strong operator limits.\footnote{See \cref{3_rem:principled_order_theoretical_approach_2} for further comments.} Similar remarks apply to the regular operators on a Banach lattice with an order continuous norm.

\medskip

\noindent
This paper is organised as follows.

\cref{3_sec:preliminaries} contains the basic notion, definitions, and conventions, as well as  a few preparatory results. Of particular relevance are the partially ordered algebras in \cref{3_res:order_continuous_operators_are_suitable_algebra}, \cref{3_res:order_continuous_operators_on_Banach_lattices_are_suitable_algebra},  \cref{3_res:riesz_algebra_for_hilbert_spaces}, and \cref{res:JBW_algebra_monotone_continous_multiplication}. These are commonly occurring algebras to which the basic spectral theorem, \cref{3_res:positive_homomorphisms_into_partially_ordered_algebras}, applies. In \cref{3_subsec:moduli_preserving_operators}, we introduce a terminology to express that a linear map between two partially ordered vector spaces behaves to some extent  as a vector lattice homomorphism. We say that such maps preserve moduli. The necessary material from \cite{de_jeu_jiang:2022a} concerning the order integral is summarised in \cref{3_subsec:measures_and_integrals}, and \cref{3_subsec:spectral_measures} contains the definition of spectral measures in the general context. We also include an embedding result for $\contots$ for which we are not aware of a reference; see \cref{3_res:topological_embedding_with_closed_image}.

\cref{3_sec:commuting_idempotents_and_spectral_measures} contains the proof that the representing measure for a positive algebra homomorphism is spectral; see \cref{3_res:representing_measure_is_spectral}. A finitely additive measure on an algebra of sets, with values in an algebra without additive 2-torsion, and with the property that its image consists of commuting idempotents, is automatically a spectral measure; see \cref{3_res:idempotents_form_boolean_algebra}. Based on this, the spectrality of the representing measure is a surprisingly easy of \cref{3_eq:intro_measure_5}.

\cref{3_sec:relations_between_measures_algebra_homomorphisms_and_vector_lattice_homomorphisms} is mostly intended as a preparation for \cref{3_sec:ups_and_downs}, but it also has a value of its own. Its starting point is a measure $\mu$ with values in a vector lattice or in a partially ordered algebra. There is an associated map $\opintm$\textemdash defined by the order integral\textemdash from the corresponding $\lebfont L^1$-space into the vector lattice or partially ordered algebra. When is $\opintm$ a vector lattice homomorphism, or an algebra homomorphism? After answering these questions, we proceed to show that, when $\opintm$ is an algebra homomorphism (which is the case if and only if $\npm$ is spectral), it is, properly interpreted if necessary, also a vector lattice homomorphism; see \cref{3_res:integral_preserves_moduli,3_res:integral_into_riesz_algebra_is_riesz_algebra_homomorphism,3_res:eight_properties}.

\cref{3_sec:ups_and_downs} is not concerned with a positive algebra homomorphism, but, more generally, with a positive linear map $\posmap$ into a partially ordered vector space. Using \cref{3_eq:intro_measure_5,3_eq:intro_measure_6} and the regularity of the representing measure, it is not too difficult to see that various images of the associated operator $\opintm$ are contained in consecutive ups and downs of the image of (the positive cone of) $\contcts$ under $\posmap$. When the codomain has the countable sup property\footnote{As \cref{3_subsec:the_countable_sup_property} shows, this is, in practice, rather often the case.} and $\opintm$ preserves moduli, inclusions can be improved to equalities and net ups and downs can be replaced by their sequential counterparts. The monotone convergence theorem for the order integral (see \cite[Theorem~6.9]{de_jeu_jiang:2022a}) is the key to this.

In \cref{3_sec:riesz_representation_theorems_for_positive_algebra_homomorphisms}, we put the pieces together. It contains two spectral theorems for positive algebra homomorphisms. In the first one, the codomain is a Banach lattice algebra with an order continuous norm. Algebras of operators will only rarely fall into this category, but they \emph{do} tend to be in the range of the second theorem, where the codomain is a suitable partially ordered algebra. We have made some effort to collect all relevant results from \cite{de_jeu_jiang:2022a,de_jeu_jiang:2022b} and the present paper in these two theorems, which are the focal points of this paper.

In \cref{3_sec:special_positive_representations}, we apply the general theory to positive representations of $\contots$ on Banach lattices and, via its restriction to $\contots$, to representations of the complex algebra $\contoCts$ on Hilbert spaces. In the first case, a significant extension of the results in \cite{de_jeu_ruoff:2016} is obtained. In the second case, we obtain a spectral theorem for (possibly) degenerate representations of $\contoCts$ that appears to be more complete than is to be found in the literature.  When combined with \cite[Theorem~2.4.4]{pedersen_C-STAR-ALGEBRAS_AND_THEIR_AUTOMORPHISM_GROUPS:1979}, our results on ups and downs from \cref{3_sec:ups_and_downs} imply that, under a condition that is satisfied for separable Hilbert spaces, the image of the Borel functional calculus for a (possibly degenerate) representation of $\contoCts$ equals the strongly closed subalgebra that is generated by the image of $\contoCts$. Although material in this direction exists, we are not aware of a reference where this result in our generality is actually proved.\footnote{See \cref{3_rem:up-down} for further comments.}

\cref{3_sec:JBW-algebras} is concerned with JBW-algebras. As mentioned above, the classical `weak' approach to spectral theorems is not applicable here. Still, our methods apply. The introduction of a spectral measure in this context, which appears to be new, simplifies the picture and, as we believe, gives a better understanding. In the first part of the $20^{\mathrm{th}}$ century, spectral theorems for hermitian and normal operators on a Hilbert space were developed using spectral resolutions. Later, these were seen to be consequences of more general results on representations of unital commutative \Calgebras. Our approach in \cref{3_sec:JBW-algebras} is the analogue of this for JBW-algebras.


\section{Preliminaries}\label{3_sec:preliminaries}

\noindent In this section, we collect the necessary notation, definitions, conventions, and preliminary results.

All vector spaces are over the real numbers, unless otherwise indicated.  Operators between two vector spaces are always supposed to be linear. An algebra homomorphism between two unital associative algebras need not be unital.

When $\os$ is a partially ordered set, we shall employ the usual notation in which $a_\lambda\uparrow$ means that $\net{a}$ is an increasing net in $\os$, and in which $a_\lambda\uparrow x$ means that $\net{a}$ is an increasing net in $\os$  with supremum $x$ in $\os$. The notations $a_\lambda\downarrow$ and $a_\lambda\downarrow x$ are similarly defined.

Suppose that $S$ is a non-empty subset of a partially ordered set $\os$. Then we shall say that \emph{$S^\vee$ exists in $\os$} when the supremum of finitely many arbitrary elements of $S$ exists in $\os$. In that case, we let $S^\vee$ denote the set consisting of all suprema of finitely many arbitrary elements of $S$. There are a similar definition and notation $S^\wedge$ for infima.

When $S$ is a subset of a set $\pset$, $\indicator{S}$ denotes its indicator function; we write $\zerofunction$ for $\indicator{\emptyset}$ and $\onefunction$ for $\indicator{\pset}$.

When $\ts$ is a topological space, we write $\contts$, $\contots$, and $\contcts$ for its (real-valued) continuous functions, its continuous functions that vanish at infinity, and its compactly supported continuous functions, respectively. Their respective complex-valued counterparts are denoted by $\contCts$, $\contoCts$, and $\contcCts$. When $S$ is a subset of $\ts$, we shall write $f\prec S$ to mean that $f\in\contcts$, that $\zerofunction\leq f\leq\onefunction$, and that $\supp f\subseteq S$; we shall write $S\prec f$ to mean that $f\in\contcts$, that $\zerofunction\leq f\leq\onefunction$, and that $f(x)=1$ for $x\in S$. The Borel $\sigma$-algebra of $\ts$ is the $\sigma$-algebra generated by the open subsets of $\ts$ and is denoted by $\borel$.

When $\os$ is a normed space, its norm dual will be denoted by $\ndualos$.

When $\hilbert$ is a Hilbert space, its inner product is denoted by $\lrinp{\,\cdot\,,\,\cdot\,}$. In the complex case, it is linear in the first variable. Its bounded operators are denoted by $\boundedh$.

When $\os$ is a non-empty set supplied with an equivalence relation, the set of equivalence classes corresponding to a subset $S$ of $E$ will be denoted by $\eclass{S}$ rather than the more customary $[S]$; this makes the formulas in \cref{3_sec:ups_and_downs} easier to read. When $x\in\os$, we write  $\eclass{x}$ for $\eclass{\{x\}}$.



\subsection{Partially ordered vector spaces}\label{3_subsec:partially_ordered_vector_spaces}


\noindent When $\os$ is a partially ordered vector space, we let $\pos{\os}$ denote its positive cone. We do not require that $\posos$ be generating, i.e., we do not require that $\os$ be directed, but we do require that $\posos$ be proper. It is always supposed that $\os$ is Archimedean: for all $x\in\posos$, $r_n x\downarrow 0$ whenever $\seq{r}$ is a sequence in $\posR$ such that $r_n\downarrow 0$.

When $\os$ and $F$ are vector spaces, $\linearop{\os,F}$ denotes the operators from $\os$ into $F$. An operator $T\colon \os\to\ostwo$ between two partially ordered vector spaces is \emph{positive} when $T(\pos{\os})\subseteq \pos{F}$, and \emph{regular} when it is the difference of two positive operators. We let $\regularop{\os,F}$ denote the vector space of regular operators from $\os$ into $F$. When $\pos{\os}$ is generating in $\os$, $\linearop{\os,F}$ and $\regularop{\os,F}$ are partially ordered vector spaces via their common positive cones $\pos{\regularop{\os,F}}$.

An operator $T\in\pos{\linearop{\os,F}}$ between two partially ordered vector spaces $\os$ and $\ostwo$ is called \emph{order continuous} (resp.\ \emph{$\sigma$-order continuous}) if $Tx_\lambda\downarrow 0$ in $F$ whenever $\net{x}$ is a net in $\os$ such that $x_\lambda\downarrow 0$ in $\os$ (resp.\ if $Tx_n\downarrow 0$ in $F$  whenever $\seq{x}$ is a sequence in $\os$ such that $x_n\downarrow 0$ in $\os$).\footnote{One can argue that it is better to speak of \emph{monotone} ($\sigma$-)order continuous operators, but we do not want to burden the terminology further than is necessary for our purposes.}  An operator in $\regularop{\os,F}$ is said to be order continuous (resp.\ $\sigma$-order continuous) if it is the difference of two positive order continuous (resp.\ $\sigma$-order continuous) operators.\footnote{When $\os$ and $\ostwo$ are both vector lattices, the present definitions specialise to those  in the literature for vector lattices; see \cite[Remark~3.5]{de_jeu_jiang:2022a}}. We let $\ocontop{\os,F}$ (resp.\ $\socontop{\os,F}$) denote the order continuous (resp.\ $\sigma$-order continuous) operators from $\os$ into $F$; they are linear subspaces of $\regularop{\os,F}$. When $\os$ is directed, they are partially ordered vector spaces with the positive order continuous (resp.\ $\sigma$-order continuous) operators as positive cones, which are generating by definition. We write $\linearop{E}$ for $\linearop{E,E}$, etc.; $\odualos$ for $\regularop{E,\RR}$; $\ocdualos$ for $\ocontop{\os,\RR}$; and $\socdualos$ for  $\socontop{\os,\RR}$.  When  $\os$ is  a Banach lattice, $\odualos$ coincides with the norm dual $\ndualos$ of $\os$.

\begin{definition}\label{3_def:normal_space}
	A partially ordered vector space $\os$ is called \emph{normal} when, for $x\in \os$, $\f{x,x^\prime}\geq 0$ for all $x^\prime\in\pos{(\ocdualos)}$ if and only if $x\in \pos{\os}$.\footnote{For a vector lattice $\os$, this is equivalent to the usual requirement that $\ocdualos$ separate the points of $\os$; see \cite[Lemma~3.7]{de_jeu_jiang:2022a}.}
\end{definition}

Clearly, when $\os$ is normal, $\pos{(\ocdualos)}$ separates the points of $\os$.

\medskip

A partially ordered vector space $\os$ is \emph{\mc} (resp.\ \emph{\smc}) when every increasing net (resp.\ sequence) in $\os$ that is bounded from above has a supremum; \emph{\Dc} when every non-empty subset of $\os$ that is bounded from above has a supremum; and \emph{\sDc} when every non-empty countable infinite subset of $\os$ that is bounded from above has a supremum.
As was observed in \cite[Lemma~1.1]{wright:1972}, every \smc\  partially vector space is automatically Archimedean.

For vector lattices, \Dc ness (resp.\ \sDc ness) and \mc ness (resp.\ \smc ness) are equivalent. If $\os$ has a generating positive cone and if $\os$ is \sDc, then, for $x_1,x_2\in\os$, the subset $\{x_1,x_2\}$ is bounded from above, so that it has a supremum. Hence $\os$ is then a vector lattice.

When $\os$ and $\ostwo$ are vector lattices, and $\ostwo$ is Dedekind complete, $\regularop{\os,\ostwo}$ is a Dedekind complete vector lattice. When $\os$ and $\ostwo$ are Banach lattices,  every regular operator from $\os$ into $\ostwo$ is continuous. When $\os$ and $\ostwo$ are Banach lattices where $\ostwo$ is Dedekind complete,  $\regularop{\os,\ostwo}$ is a Dedekind complete Banach lattice when supplied with the \emph{regular norm} $\norm{\,\cdot\,}_{\upr}$, defined by setting  $\norm{T}_{\upr}\coloneqq\norm{\abs{T}}$ for $T\in\regularop{\os,\ostwo}$; see \cite[Theorem~4.74]{aliprantis_burkinshaw_POSITIVE_OPERATORS_SPRINGER_REPRINT:2006}, for example.

\medskip

The condition that $\os$ be \mc\ as well as normal is an important one in this paper. The class of such spaces is, for practical purposes, rather large, and contains many spaces that are not vector lattices. It includes, e.g., the Banach lattices with order continuous norms; the regular operators on such Banach lattices; more generally: every subspace of $\linearop{\os,\ostwo}$ that contains $\regularop{\os,\ostwo}$, where $\os$ and $\ostwo$ are partially ordered vector spaces such that $\os$ is directed and $\ostwo$ is monotone complete and normal; the vector space consisting of all self-adjoint elements of a strongly closed complex linear subspace of $\boundedh$ for a complex Hilbert space $\hilbert$; JBW-algebras; and the regular operators on JBW-algebras. For this, and for more examples, we refer to  \cite[Section~3]{de_jeu_jiang:2022a}.

\medskip

In \cref{3_sec:riesz_representation_theorems_for_positive_algebra_homomorphisms}, the finiteness of a representing spectral measure can often be concluded when the codomain is a quasi-perfect partially ordered vector space.
This notion was introduced in \cite{de_jeu_jiang:2022b}; we recall its definition.

\begin{definition}\label{3_def:quasi_perfect_spaces}
	A partially ordered vector space $\os$ is \emph{quasi-perfect} when the two following conditions are both satisfied:
	\begin{enumerate}
		\item\label{3_part:quasi_perfect_spaces_1}
		$\os$ is normal;
		\item\label{3_part:quasi_perfect_spaces_2}
		if an increasing net $\net{x}$ in $\posos$ is such that $\rsup\,\f{x_\lambda,x^\prime}<\infty$ for each $x^\prime\in\pos{(\odualos)}$, then this net has a supremum in $\os$.
	\end{enumerate}
\end{definition}

Clearly,  a quasi-perfect partially ordered vector space  is monotone complete.
The terminology is motivated by an existing characterisation of perfect vector lattices. We recall that a vector lattice is called \emph{perfect} when the natural vector lattice homomorphism from $\os$ into $\ocdual{(\ocdualos)}$ is a surjective isomorphism. The following alternate characterisation, which we include for comparison, is due to Nakano; see \cite[Theorem~1.71]{aliprantis_burkinshaw_POSITIVE_OPERATORS_SPRINGER_REPRINT:2006}. It shows that a perfect vector lattice is a quasi-perfect partially ordered vector space.

\begin{theorem}\label{3_res:nakano}
	A vector lattice $\os$ is a perfect vector lattice if and only if	the following two conditions hold:
	\begin{enumerate}
		\item\label{3_part:nakano_1}
		$\os$ is normal;
		\item\label{3_part:nakano_2}
		if an increasing net $\net{x}$ in $\posos$ is such that $\rsup\,\f{x_\lambda,x^\prime}<\infty$ for each $x^\prime\in\pos{(\ocdualos)}$, then this net has a supremum in $\os$.
	\end{enumerate}
\end{theorem}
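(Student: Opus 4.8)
The plan is to analyse the canonical evaluation map $J\colon\os\to\ocdual{(\ocdualos)}$, given by $J(x)(x')=\f{x,x'}$ for $x\in\os$ and $x'\in\ocdualos$, and to show that perfectness (surjectivity and bijectivity of $J$ as a Riesz homomorphism) is equivalent to the two listed conditions. First I would record the structural facts on which everything rests. The order dual of any Riesz space is Dedekind complete, its order continuous part $\ocdual{(\ocdualos)}$ is a band therein and hence is itself Dedekind complete, and for an increasing order-bounded net in such a dual the supremum is computed pointwise on the positive cone. Using the corresponding pointwise formula for downward nets, a net $f_\alpha\downarrow 0$ in $\ocdualos$ satisfies $f_\alpha(x)\downarrow 0$ for every $x\in\posos$, so each evaluation $J(x)$ is order continuous on $\ocdualos$; thus $J$ genuinely maps into $\ocdual{(\ocdualos)}$, and it is a Riesz homomorphism by the standard argument for the canonical embedding into the order bidual.

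For the implication that perfectness yields \cref{3_part:nakano_1} and \cref{3_part:nakano_2}, note that a surjective isomorphism $J$ is in particular bipositive, so $x\in\posos$ if and only if $J(x)\geq 0$, which means $\f{x,x'}=J(x)(x')\geq 0$ for all $x'\in\pos{(\ocdualos)}$; this is exactly normality. For \cref{3_part:nakano_2}, given an increasing net $\net{x}$ in $\posos$ with $\rsup_\lambda\f{x_\lambda,x'}<\infty$ for each $x'\in\pos{(\ocdualos)}$, the functional $x'\mapsto\rsup_\lambda\f{x_\lambda,x'}$, extended off the cone by Kantorovich's theorem, is a positive upper bound for the increasing family $\{J(x_\lambda)\}$ in the order dual of $\ocdualos$. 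Its supremum $\Phi$ there is computed pointwise on the cone, and since $\ocdual{(\ocdualos)}$ is a band containing each $J(x_\lambda)$, it contains $\Phi$. Surjectivity gives $\Phi=J(x)$ for some $x\in\os$, and because $J$ is a Riesz isomorphism it reflects this supremum to $x_\lambda\uparrow x$ in $\os$.

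For the converse, normality first yields that $\ocdualos$ separates the points of $\os$ (if $\f{x,x'}=0$ for all $x'\in\pos{(\ocdualos)}$ then $x$ and $-x$ are both positive, so $x=0$), whence $J$ is an injective Riesz homomorphism which, again by normality, is bipositive; thus $J$ is a Riesz isomorphism of $\os$ onto $J(\os)$. Everything reduces to surjectivity, and by passing to differences it suffices to realise each $\Phi\in\pos{(\ocdual{(\ocdualos)})}$ as some $J(x)$. I would form the set $D\coloneqq\{x\in\posos : J(x)\leq\Phi\}$, which is upward directed because $J(x_1\vee x_2)=J(x_1)\vee J(x_2)\leq\Phi$ whenever $x_1,x_2\in D$. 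Since $\rsup_{x\in D}\f{x,x'}\leq\Phi(x')<\infty$ for each $x'\in\pos{(\ocdualos)}$, condition \cref{3_part:nakano_2} supplies $x_0\coloneqq\rsup D\in\posos$; and because every $x'\in\ocdualos$ is order continuous, $J$ carries this supremum to $J(x_0)=\rsup_{x\in D}J(x)$ in $\ocdual{(\ocdualos)}$, so that $J(x_0)\leq\Phi$.

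It remains to exclude $J(x_0)<\Phi$, and this is where the one genuinely non-formal ingredient enters, which I expect to be the main obstacle: the fact that $J(\os)$ is \emph{order dense} in $\ocdual{(\ocdualos)}$, i.e.\ that every nonzero positive element of the bidual dominates a nonzero element of $J(\posos)$. This order density is a consequence of $\ocdualos$ separating the points of $\os$; I would either reproduce the classical argument, producing for a positive $\Psi\neq 0$ a nonzero $y\in\posos$ with $0<J(y)\leq\Psi$ through the band structure of the order dual, or cite it from the literature on perfect Riesz spaces. Granting order density, if $\Phi-J(x_0)>0$ there is $0<y\in\posos$ with $J(y)\leq\Phi-J(x_0)$; then $J(x_0+y)\leq\Phi$ places $x_0+y$ in $D$ while $x_0+y>x_0=\rsup D$, a contradiction. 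Hence $J(x_0)=\Phi$, which establishes surjectivity and completes the identification of $\os$ with its order continuous bidual.
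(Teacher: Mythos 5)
Your proposal is correct, but there is nothing in the paper to compare it against: the paper states \cref{3_res:nakano} without proof, attributing it to Nakano and citing \cite[Theorem~1.71]{aliprantis_burkinshaw_POSITIVE_OPERATORS_SPRINGER_REPRINT:2006}. What you have written is essentially the standard textbook proof of that theorem, organised around the evaluation map $J:\os\to\ocdual{(\ocdualos)}$. All the steps you carry out in detail check out: the pointwise formulas for suprema and infima of monotone nets of functionals give both that each $J(x)$ is order continuous and that $J$ reflects the relevant suprema; normality is literally bipositivity of $J$, and together with injectivity (which you correctly derive from normality via separation) it makes $J$ an order isomorphism onto its image, reducing everything to surjectivity; the Kantorovich extension of $x^\prime\mapsto\rsup_\lambda\f{x_\lambda,x^\prime}$ is legitimate because additivity on the cone follows from the net being increasing; and the exhaustion argument with $D=\{x\in\posos:J(x)\leq\Phi\}$, whose directedness uses that $J$ is a lattice homomorphism, is sound. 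Two points you gloss over deserve a remark. First, that $J$ is a lattice homomorphism is not purely "the standard bidual argument": the modulus formula $\abs{J(x)}(f)=f(\abs{x})$ requires the supremum defining $\abs{J(x)}$ to be taken over $g\in\ocdualos$ with $\abs{g}\leq f$ rather than over all of $\odualos$, and it is the fact that $\ocdualos$ is an ideal (indeed a band) of $\odualos$ that makes the two suprema coincide; this is worth spelling out. Second, the order-density step that you explicitly flag as the non-formal crux is exactly \cite[Theorem~1.70]{aliprantis_burkinshaw_POSITIVE_OPERATORS_SPRINGER_REPRINT:2006}: if an ideal of $\odualos$ separates the points of $\os$, then the canonical image of $\os$ is order dense in the order continuous dual of that ideal. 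Citing it is entirely legitimate\textemdash the paper outsources the whole theorem to the same source\textemdash but be aware that this lemma carries most of the real work of the converse direction, so your proof is a correct reduction of Nakano's theorem to it rather than a self-contained argument. With that citation in place, the proof is complete.
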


Quite a few spaces of practical interest are quasi-perfect. We give a number of examples in \cref{3_res:examples_of_quasi_perfect_spaces} and \cref{3_res:order_continuous_operators_are_quasi-perfect}; see also \cref{3_res:order_continuous_operators_are_suitable_algebra} and \cref{3_res:order_continuous_operators_on_Banach_lattices_are_suitable_algebra}.
As a preparation for some of our examples,  we recall that the norm on a Banach lattice is said to be a \emph{Levi norm} when every increasing norm bounded net in the positive cone has a supremum. It follows from the uniform boundedness principle that the norm on a Banach lattice $\os$ is a Levi norm precisely when $\os$ has the property in part~\ref{3_part:quasi_perfect_spaces_2} of \cref{3_def:quasi_perfect_spaces}.

A Banach lattice is a \emph{KB-space} when every increasing norm bounded net in the positive cone is norm convergent. KB-spaces have order continuous norms, and a reflexive Banach lattice is a KB-space; see \cite[p.~232]{aliprantis_burkinshaw_POSITIVE_OPERATORS_SPRINGER_REPRINT:2006}.
 It is not difficult to see that the KB-spaces are precisely the Banach lattices with Levi norms that are order continuous.

The following examples of quasi-perfect vector lattices are taken from \cite[Proposition~6.7]{de_jeu_jiang:2022b}.

\begin{proposition}\label{3_res:examples_of_quasi_perfect_spaces}
	The following spaces are quasi-perfect partially ordered vector spaces:
	\begin{enumerate}
		\item\label{3_part:examples_of_quasi_perfect_spaces_1}
		perfect vector lattices;
		\item\label{3_part:examples_of_quasi_perfect_spaces_2}
		normal Banach lattices with a Levi norm, such as KB-spaces and, still more in particular, reflexive Banach lattices;
		\item\label{3_part:examples_of_quasi_perfect_spaces_3}
		for \SOT-closed complex linear subspaces $L$ of $\boundedh$, where $\hilbert$ is a complex Hilbert space: the real vector spaces $L_\sa$ consisting of all self-adjoint elements of $L$;
		\item\label{3_part:examples_of_quasi_perfect_spaces_4} JBW-algebras.
	\end{enumerate}
\end{proposition}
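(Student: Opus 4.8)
The plan is to verify the two conditions of \cref{3_def:quasi_perfect_spaces} for each of the four classes. For part~\ref{3_part:examples_of_quasi_perfect_spaces_1} this is immediate from Nakano's \cref{3_res:nakano}: the normality conditions \ref{3_part:quasi_perfect_spaces_1} and \ref{3_part:nakano_1} coincide, and the only difference between \ref{3_part:quasi_perfect_spaces_2} and \ref{3_part:nakano_2} is that the former tests boundedness of $\rsup_\lambda\,\f{x_\lambda,x^\prime}$ against every $x^\prime\in\pos{(\odualos)}$, while the latter tests it only against the smaller set $\pos{(\ocdualos)}\subseteq\pos{(\odualos)}$ of order continuous functionals. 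Hence the hypothesis of \ref{3_part:quasi_perfect_spaces_2} is the stronger one and implies that of \ref{3_part:nakano_2}, which for a perfect Riesz space already yields the supremum; so \cref{3_def:quasi_perfect_spaces}\ref{3_part:quasi_perfect_spaces_2} holds a fortiori.

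For part~\ref{3_part:examples_of_quasi_perfect_spaces_2}, normality is assumed, giving condition~\ref{3_part:quasi_perfect_spaces_1}, while condition~\ref{3_part:quasi_perfect_spaces_2} follows from the equivalence recorded above between the Levi norm property and property~\ref{3_part:quasi_perfect_spaces_2} of \cref{3_def:quasi_perfect_spaces} for Banach lattices (itself a consequence of the uniform boundedness principle and the identification $\odualos=\ndualos$). For the named cases it then suffices to observe that a KB-space is a normal Banach lattice with a Levi norm — the Levi property being part of its characterisation recalled above, and normality holding because an order continuous norm makes $\ocdualos=\ndualos$ separate points — and that reflexive Banach lattices are KB-spaces.

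Part~\ref{3_part:examples_of_quasi_perfect_spaces_3} carries the real content. Here $\os=L_\sa$ inherits its order from $\boundedh_\sa$, so $A\in\pos{(L_\sa)}$ exactly when $\inp{A\xi,\xi}\geq 0$ for all $\xi\in\hilbert$, and the vector functionals $\om_\xi\colon A\mapsto\inp{A\xi,\xi}$ are positive on $L_\sa$ and jointly detect positivity. I would use the standard fact that an increasing net of self-adjoint operators bounded above in $\boundedh_\sa$ converges in the strong operator topology to its supremum (and dually for decreasing nets bounded below); since $L$ is $\SOT$-closed the limit lies in $L_\sa$, and being the supremum in $\boundedh_\sa$ it is automatically the supremum in $L_\sa$. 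Normality then follows once each $\om_\xi$ is order continuous: if $A_\lambda\downarrow 0$ in $L_\sa$, the net decreases strongly to an infimum $A_\infty\geq 0$ in $\boundedh_\sa$, which lies in $L_\sa$ and hence equals the infimum $0$ taken in $L_\sa$, so $\inp{A_\lambda\xi,\xi}\downarrow 0$. For condition~\ref{3_part:quasi_perfect_spaces_2}, starting from an increasing net $\net{A}$ in $\pos{(L_\sa)}$ with $\rsup_\lambda\,\f{A_\lambda,x^\prime}<\infty$ for all $x^\prime\in\pos{(\odualos)}$, applying this to the $\om_\xi$ gives $\rsup_\lambda\inp{A_\lambda\xi,\xi}<\infty$ for each $\xi$; the Cauchy--Schwarz inequality for the positive forms $\inp{A_\lambda\,\cdot\,,\,\cdot\,}$ together with two applications of the uniform boundedness principle upgrade this to $\rsup_\lambda\norm{A_\lambda}<\infty$, whence $A_\lambda\leq M\idh$ for some $M$ and the net, being bounded above in $\boundedh_\sa$, has a supremum in $L_\sa$.

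Part~\ref{3_part:examples_of_quasi_perfect_spaces_4} runs along the same lines, with the operator monotone convergence theorem replaced by its JBW-algebra analogue: a JBW-algebra is monotone complete and its normal states separate points, detect positivity, and are order continuous, which yields normality, while condition~\ref{3_part:quasi_perfect_spaces_2} follows by passing from boundedness over $\pos{(\odualos)}$, in particular over the normal states, to norm boundedness, hence to an order bound above supplied by monotone completeness. The step I expect to be the main obstacle is precisely this mediation, in parts~\ref{3_part:examples_of_quasi_perfect_spaces_3} and~\ref{3_part:examples_of_quasi_perfect_spaces_4}, between suprema computed in the ambient space and those computed in the subspace: it is exactly here that the $\SOT$-closedness of $L$ (respectively the monotone completeness of the JBW-algebra) is indispensable, both to return the limit to the subspace and to identify it as the order-theoretic supremum there.
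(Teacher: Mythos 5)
Your proposal is correct, and on the only part the paper actually proves in detail\textemdash part~\ref{3_part:examples_of_quasi_perfect_spaces_3}\textemdash it is essentially the paper's own argument: the paper likewise feeds the vector states into the boundedness hypothesis, upgrades $\sup_\lambda\lrinp{A_\lambda x,x}<\infty$ to $\sup_\lambda\lrinp{A_\lambda x,y}<\infty$ (via polarisation where you use Cauchy--Schwarz for the positive forms; both work, yours marginally more directly), invokes uniform boundedness to get $A_\lambda\leq M\idh$, cites Vigier's theorem (in the guise of \cite[Lemma~I.6.4]{davidson_C-STAR-ALGEBRAS_BY_EXAMPLE:1996}) for the \SOT-limit being the supremum in $\boundedh_\sa$, and then uses \SOT-closedness to place the supremum in $L_\sa$\textemdash exactly the subspace-versus-ambient-space mediation you flag as the crux. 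The genuine difference is organisational: for parts~\ref{3_part:examples_of_quasi_perfect_spaces_1}, \ref{3_part:examples_of_quasi_perfect_spaces_2}, and~\ref{3_part:examples_of_quasi_perfect_spaces_4} the paper simply cites \cite{de_jeu_jiang:2021b}, whereas you derive them in place; your derivations are sound and in fact assemble facts the paper records immediately before the proposition (Nakano's \cref{3_res:nakano}, with the correct observation that boundedness against all of $\pos{(\odualos)}$ is a stronger hypothesis than against $\pos{(\ocdualos)}$; the equivalence of the Levi property with condition~\ref{3_part:quasi_perfect_spaces_2} of \cref{3_def:quasi_perfect_spaces}; normality as point-separation by $\ocdualos$). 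One small remark in your favour: the paper's proof of part~\ref{3_part:examples_of_quasi_perfect_spaces_3} verifies only condition~\ref{3_part:quasi_perfect_spaces_2}, leaving the normality of $L_\sa$ to a reference to \cite{de_jeu_jiang:2021a}, while you prove it explicitly by showing the vector states are order continuous on $L_\sa$ (again via the \SOT-closedness argument); this makes your write-up more self-contained at the cost of some length, and your part~\ref{3_part:examples_of_quasi_perfect_spaces_4} sketch, though looser (the passage from boundedness against normal states to norm boundedness deserves the explicit remark that the normal states span the predual, so that the uniform boundedness principle applies, and that the unit of a JBW-algebra is an order unit), follows the same correct template.
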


Two other classes of quasi-perfect partially ordered vector spaces are in \cref{3_res:order_continuous_operators_are_quasi-perfect}. We start with by collecting a few properties of order continuous operators in the following result.

\begin{proposition}\label{3_res:order_continuous_operators_are_monotone_complete}
	Let $\os$ be a directed partially ordered vector space, and let $\ostwo$ be a \mc\ partially ordered vector space.
	\begin{enumerate}
		\item\label{3_part:order_continuous_operators_are_monotone_complete_1}
		If $\net{T}$ is a net in $\ocontop{\os,\ostwo}$ and $T_\lambda\uparrow T$ in $\regularop{\os,\ostwo}$ for some $T\in\regularop{\os,\ostwo}$,  then $T\in\ocontop{\os,\ostwo}$;
		\item\label{3_part:order_continuous_operators_are_monotone_complete_2}
		Let $\net{T}$ be a net in $\ocontop{\os,\ostwo}$, and let $T\in\ocontop{\os,\ostwo}$. Then $T_\lambda\uparrow T$ in $\ocontop{\os,\ostwo}$ if and only if $T_\lambda\uparrow T$ in $\regularop{\os,\ostwo}$;
		\item\label{3_part:order_continuous_operators_are_monotone_complete_3}
		$\ocontop{\os,\ostwo}$ is \mc;
		\item\label{3_part:order_continuous_operators_are_monotone_complete_4}
		Suppose, in addition, that $\ostwo$ is normal. Then $\ocontop{\os,\ostwo}$ is normal.
	\end{enumerate}
\end{proposition}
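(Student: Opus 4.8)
The plan is to reduce everything to a single pointwise description of order limits in $\regularop{\os,\ostwo}$. First I would establish the following auxiliary fact, which uses only that $\os$ is directed and $\ostwo$ is \mc: if $\net{T}$ is an increasing net in $\regularop{\os,\ostwo}$ that is bounded above, then $\rsup_\lambda T_\lambda$ exists in $\regularop{\os,\ostwo}$ and is computed pointwise on the cone, i.e.\ $(\rsup_\lambda T_\lambda)x=\rsup_\lambda T_\lambda x$ for $x\in\posos$. Indeed, for $x\in\posos$ the net $\set{T_\lambda x}$ is increasing and bounded above, so $Sx\coloneqq\rsup_\lambda T_\lambda x$ exists by monotone completeness; a short cofinality argument shows that $S$ is additive and positively homogeneous on $\posos$, whence it extends (using that $\os$ is directed) to a positive operator $S\in\regularop{\os,\ostwo}$ that one checks to be the least upper bound of $\net{T}$. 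Passing to negatives yields the dual statement for decreasing nets bounded below. This pointwise formula is the engine for all four parts.

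For part~(1), I would first reduce to a positive net: fixing an index $\lambda_0$, the operators $T_\lambda-T_{\lambda_0}$ (for $\lambda\geq\lambda_0$) are positive, order continuous, and increase to $T-T_{\lambda_0}$, so it suffices to show that a positive supremum of positive order continuous operators is order continuous; then $T=T_{\lambda_0}+(T-T_{\lambda_0})$ is order continuous. So assume each $T_\lambda\geq0$ and $T_\lambda\uparrow T$ with $T\geq0$. Given $x_\mu\downarrow0$ in $\os$, the net $\set{Tx_\mu}$ is decreasing and bounded below by $0$, hence has an infimum $z\geq0$, and I must show $z=0$. The crux is an interchange of infimum and supremum: for $\mu\geq\mu_0$ one has $z\leq Tx_\mu=T_\lambda x_\mu+(T-T_\lambda)x_\mu\leq T_\lambda x_\mu+(T-T_\lambda)x_{\mu_0}$; taking the infimum over $\mu\geq\mu_0$ and using the order continuity of $T_\lambda$ (so that $T_\lambda x_\mu\downarrow0$) gives $z\leq(T-T_\lambda)x_{\mu_0}$ for every $\lambda$ and every $\mu_0$. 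Since $T_\lambda x_{\mu_0}\uparrow Tx_{\mu_0}$ by the pointwise formula, $(T-T_\lambda)x_{\mu_0}\downarrow0$ over $\lambda$, and hence $z\leq0$. This interchange step is the main obstacle of the whole proposition.

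For part~(2), the direction assuming $T_\lambda\uparrow T$ in $\regularop{\os,\ostwo}$ is immediate, since any upper bound of $\net{T}$ inside $\ocontop{\os,\ostwo}$ is also an upper bound in $\regularop{\os,\ostwo}$ and hence dominates $T$. For the converse, suppose $T_\lambda\uparrow T$ in $\ocontop{\os,\ostwo}$ and let $U\in\regularop{\os,\ostwo}$ be an arbitrary upper bound of $\net{T}$; the auxiliary fact produces $S\coloneqq\rsup_\lambda T_\lambda\in\regularop{\os,\ostwo}$ with $S\leq U$, and part~(1) gives $S\in\ocontop{\os,\ostwo}$. Then $S$ is an upper bound of $\net{T}$ in $\ocontop{\os,\ostwo}$, so $T\leq S$, while $T$ is an upper bound in $\regularop{\os,\ostwo}$, so $S\leq T$; hence $S=T\leq U$, which is exactly $T_\lambda\uparrow T$ in $\regularop{\os,\ostwo}$. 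Part~(3) then follows at once: an increasing net in $\ocontop{\os,\ostwo}$ that is bounded above there is bounded above in $\regularop{\os,\ostwo}$, so by the auxiliary fact its supremum $S$ exists, lies in $\ocontop{\os,\ostwo}$ by part~(1), and is its supremum in $\ocontop{\os,\ostwo}$ by the easy direction of part~(2).

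Finally, for part~(4) I would exhibit enough positive order continuous functionals on $\ocontop{\os,\ostwo}$ to detect positivity, using the normality of $\ostwo$. For $x\in\posos$ and $\psi\in\pos{(\ocdual{\ostwo})}$, define $\Phi_{x,\psi}\colon\ocontop{\os,\ostwo}\to\RR$ by $\Phi_{x,\psi}(T)\coloneqq\psi(Tx)$. This functional is positive and linear, and it is order continuous: if $T_\lambda\downarrow0$ in $\ocontop{\os,\ostwo}$, then by part~(2) and the pointwise formula $T_\lambda x\downarrow0$ in $\ostwo$, so $\psi(T_\lambda x)\downarrow0$ since $\psi$ is order continuous. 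Now if $T\in\ocontop{\os,\ostwo}$ satisfies $\Phi(T)\geq0$ for every positive order continuous functional $\Phi$, then in particular $\psi(Tx)\geq0$ for all $x\in\posos$ and all $\psi\in\pos{(\ocdual{\ostwo})}$; the normality of $\ostwo$ then yields $Tx\geq0$ for each $x\in\posos$, that is, $T\geq0$. As the reverse implication is trivial, $\ocontop{\os,\ostwo}$ is normal.
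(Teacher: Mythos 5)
Your proposal is correct and essentially retraces the paper's proof: your auxiliary pointwise-supremum fact is precisely the monotone completeness of $\regularop{\os,\ostwo}$ that the paper cites from its companion paper (Proposition~3.1 there), your part~(1) is the Aliprantis--Burkinshaw interchange argument that the paper invokes by reference, parts~(2) and~(3) are deduced from part~(1) plus that auxiliary fact exactly as in the paper, and your functionals $T\mapsto\psi(Tx)$ in part~(4) simply inline the construction behind the cited normality of $\regularop{\os,\ostwo}$, transferred to $\ocontop{\os,\ostwo}$ via part~(2) just as the paper does. The only blemish is cosmetic: in your auxiliary fact the pointwise supremum $S$ is regular (it dominates any fixed $T_{\lambda_0}$, so $S=(S-T_{\lambda_0})+T_{\lambda_0}$) but need not be \emph{positive}; either say ``regular'' there or first pass to the net $T_\lambda-T_{\lambda_0}$, as you in fact do in part~(1).
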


\begin{proof}
	Part~\ref{3_part:order_continuous_operators_are_monotone_complete_1} is well known when $\os$ and $\ostwo$ are vector lattices. The argument in that case (see \cite[Proof of Theorem~1.57]{aliprantis_burkinshaw_POSITIVE_OPERATORS_SPRINGER_REPRINT:2006}, for example) works equally well in the general case.
	
	The parts~\ref{3_part:order_continuous_operators_are_monotone_complete_2} and~\ref{3_part:order_continuous_operators_are_monotone_complete_3} follow from the combination of part~\ref{3_part:order_continuous_operators_are_monotone_complete_1} and the \mc ness of $\regularop{\os,\ostwo}$ (see \cite[Proposition~3.1]{de_jeu_jiang:2022a}).
	
	For part\ref{3_part:order_continuous_operators_are_monotone_complete_4}, we observe that the normality of $\ostwo$ implies that of $\regularop{\os,\ostwo}$; see \cite[Proposition~3.11]{de_jeu_jiang:2022a}. The normality of $\ocontop{\os,\ostwo}$ then follows from part~\ref{3_part:order_continuous_operators_are_monotone_complete_2}.
\end{proof}

Combining \cite[Proposition~3.11]{de_jeu_jiang:2022a} and \cref{3_res:order_continuous_operators_are_monotone_complete}, we have the following.

\begin{proposition}\label{3_res:regular_and_order_continuous_operators_are_monotone_complete}
	Let $\os$ be a directed partially ordered vector space, and let $\ostwo$ be a \mc\ and normal partially ordered vector space. Then $\regularop{\os,\ostwo}$ and $\ocontop{\os,\ostwo}$ are directed, \mc, and normal partially order vector spaces.
\end{proposition}

When~$F$ is quasi-perfect, one can do better.

\begin{proposition}\label{3_res:order_continuous_operators_are_quasi-perfect}
	Let $\os$ be a directed partially ordered vector space, and let $\ostwo$ be a quasi-perfect partially ordered vector space. Then $\regularop{\os,\ostwo}$ and $\ocontop{\os,\ostwo}$ are directed quasi-perfect partially ordered vector spaces.
\end{proposition}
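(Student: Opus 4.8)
The plan is to verify the two defining properties of quasi-perfectness for $\ocontop{\os,\ostwo}$ from \cref{3_def:quasi_perfect_spaces}, directedness being immediate: since $\os$ is directed, $\ocontop{\os,\ostwo}$ carries the positive order continuous operators as a generating cone, as recorded in \cref{3_subsec:partially_ordered_vector_spaces}. For normality (part~\ref{3_part:quasi_perfect_spaces_1}), I would note that a quasi-perfect space is in particular monotone complete and normal, so $\ostwo$ meets both hypotheses of part~\ref{3_part:order_continuous_operators_are_monotone_complete_4} of \cref{3_res:order_continuous_operators_are_monotone_complete}, which then gives that $\ocontop{\os,\ostwo}$ is normal.

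The substance lies in the completeness condition of part~\ref{3_part:quasi_perfect_spaces_2}. So take an increasing net $\net{T}$ in $\pos{(\ocontop{\os,\ostwo})}$ with $\rsup_\lambda\f{T_\lambda,\Phi}<\infty$ for every $\Phi\in\pos{(\odual{(\ocontop{\os,\ostwo})})}$. The crucial device is to pass from operators to their values: for fixed $x\in\posos$ and $y^\prime\in\pos{(\odual{\ostwo})}$, the functional $\Phi_{x,y^\prime}\colon T\mapsto\f{Tx,y^\prime}$ is a positive functional on $\ocontop{\os,\ostwo}$, hence an element of $\pos{(\odual{(\ocontop{\os,\ostwo})})}$. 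Applying the hypothesis to $\Phi_{x,y^\prime}$ yields $\rsup_\lambda\f{T_\lambda x,y^\prime}<\infty$ for every such $y^\prime$. As $\{T_\lambda x\}$ is an increasing net in $\pos{\ostwo}$ and $\ostwo$ is quasi-perfect, part~\ref{3_part:quasi_perfect_spaces_2} of \cref{3_def:quasi_perfect_spaces}, now used inside $\ostwo$, produces a supremum $Tx\coloneqq\rsup_\lambda T_\lambda x$ in $\ostwo$.

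It remains to assemble these fibrewise suprema into an order continuous operator. Using the elementary fact that $a_\lambda\uparrow a$ and $b_\lambda\uparrow b$ over a common directed index set force $a_\lambda+b_\lambda\uparrow a+b$, the assignment $x\mapsto Tx$ is additive and positively homogeneous on $\posos$, and since $\os$ is directed it extends in the standard (Kantorovich) manner to a positive operator $T\in\linearop{\os,\ostwo}$. A short check on the cone shows that $T$ is the least upper bound of $\net{T}$, so $T_\lambda\uparrow T$ in $\regularop{\os,\ostwo}$. Part~\ref{3_part:order_continuous_operators_are_monotone_complete_1} of \cref{3_res:order_continuous_operators_are_monotone_complete} then places $T$ in $\ocontop{\os,\ostwo}$, and part~\ref{3_part:order_continuous_operators_are_monotone_complete_2} upgrades this to $T_\lambda\uparrow T$ in $\ocontop{\os,\ostwo}$. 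This furnishes the required supremum and completes the verification of part~\ref{3_part:quasi_perfect_spaces_2}.

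I expect the main obstacle to be bookkeeping rather than conceptual: checking that the fibrewise suprema patch into a genuinely well-defined linear operator (the additivity-of-suprema lemma for nets together with the Kantorovich extension) and that the resulting $T$ is the \emph{supremum}, not merely an upper bound, in $\regularop{\os,\ostwo}$. The conceptual heart of the argument is the observation that the dual-boundedness hypothesis on the net of operators transfers, through the positive functionals $\Phi_{x,y^\prime}$, to dual-boundedness of each fibre $\{T_\lambda x\}$ in $\ostwo$; this is exactly what allows the quasi-perfectness of $\ostwo$ to do the essential work.
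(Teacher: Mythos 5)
Your proposal is correct and follows essentially the same route as the paper's proof: normality of $\ocontop{\os,\ostwo}$ via part~\ref{3_part:order_continuous_operators_are_monotone_complete_4} of \cref{3_res:order_continuous_operators_are_monotone_complete}, transfer of the dual-boundedness hypothesis to each fibre $\{T_\lambda x\}$ through the positive (hence regular) functionals $T\mapsto (Tx,y^\prime)$, quasi-perfectness of $\ostwo$ to obtain the pointwise suprema, and the parts~\ref{3_part:order_continuous_operators_are_monotone_complete_1} and~\ref{3_part:order_continuous_operators_are_monotone_complete_2} of \cref{3_res:order_continuous_operators_are_monotone_complete} to conclude that the supremum lies in, and is attained in, $\ocontop{\os,\ostwo}$. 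The only divergence is at the assembly step: where you build the supremum in $\regularop{\os,\ostwo}$ by hand (additivity of suprema over a common directed index set plus the Kantorovich extension from $\posos$, which also correctly handles the fact that $\{T_\lambda x\}$ is monotone only for $x\in\posos$), the paper simply cites the companion result that this operator space is monotone complete with suprema computed pointwise on the positive cone\textemdash so your inline argument amounts to a proof of that cited lemma rather than a genuinely different approach.
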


\begin{proof}
We give the proof for $\ocontop{\os,\ostwo}$; the easier argument for $\regularop{\os,\ostwo}$ is similar.
Part~\ref{3_part:order_continuous_operators_are_monotone_complete_4} of \cref{3_res:order_continuous_operators_are_monotone_complete} shows that $\ocontop{\os,\ostwo}$ is normal. Suppose that $\net{T}$ is an increasing net in $\pos{\ocontop{\os,\ostwo}}$, and that $\sup_\lambda\f{T_\lambda,\varphi}<\infty$ for each $\varphi\in\pos{(\odual{\ocontop{\os,\ostwo}})}$. In particular, we then have that 	$\sup_{\lambda} \f{T_{\lambda}x,x'}<\infty$ for all  $x\in\pos{\os}$ and $x'\in\pos{(\odual{\ostwo})}$. Since $\ostwo$ is quasi-perfect, this implies that $\sup_\lambda T_\lambda x$ exists in $\ostwo$ for each $x\in\os$. By \cite[Proposition~3.1]{de_jeu_jiang:2022a}, the net $\net{T}$ has a supremum $T$ in $\regularop{\os,\ostwo}$. According to the  parts~\ref{3_part:order_continuous_operators_are_monotone_complete_1} and~\ref{3_part:order_continuous_operators_are_monotone_complete_2} of \cref{3_res:order_continuous_operators_are_monotone_complete}, $T$ is also the supremum of $\net{T}$ in $\ocontop{\os,\ostwo}$.
\end{proof}


\subsection{The \csp}\label{3_subsec:the_countable_sup_property}


Let $\os$ be a partially ordered vector space. Then $\os$ is said to have the \emph{\csp} when, for every net $\net{x}\subseteq\posos$ and $x\in\pos{\os}$ such that $x_\lambda\uparrow x$, there exists a countable set of indices $\{\lambda_n: n\geq 1 \}$ such that $x=\sup_{n\geq 1} x_{\lambda_n}$.\footnote{As in our definition of order continuous operators, we refrain from calling this the \emph{monotone} \csp.} In this case, there also always exist $\lambda_1\leq\lambda_2\leq\dotsb$ such that $x_{\lambda_n}\uparrow x$. For vector lattices, our \csp\ is equivalent to what is usually called the \csp\ in that context; namely, that every subset that has a supremum contains a countable subset with the same supremum.

The \csp\ is not only relevant to the properties of the $\Ell^1$-spaces that we shall introduce in \cref{3_subsec:measures_and_integrals} but, when combined with the monotone convergence theorem, it is also an essential ingredient to the proof of our main results on ups and downs, \cref{3_res:ups_and_downs,3_res:full_ups_and_downs}. It is for this reason that we mention a few facts here to show that this property is not at all uncommon, as it can often be obtained by pulling it back from a codomain via a strictly positive operator.

If $E$ is a \sDc\ vector lattice, $F$ is a \mc\ partially ordered vector space with the countable sup property, and $T\colon E\to F$ is a strictly positive $\sigma$-order continuous operator, then $E$ has the countable sup property, $E$ is Dedekind complete, and $T$ is order continuous; see \cite[Proposition~6.16]{de_jeu_jiang:2022a}.

For vector lattices, the situation is simpler. Suppose that there exists a strictly positive operator $\posmap\colon \os\to\ostwo$ between two vector lattices $\os$ and $\ostwo$. If $\ostwo$ has the \csp, then so does $\os$; see \cite[Theorem~1.45]{aliprantis_burkinshaw_LOCALLY_SOLID_RIESZ_SPACES_WITH_APPLICATIONS_TO_ECONOMICS_SECOND_EDITION:2003}. Consequently, a vector sublattice of a vector lattice with the \csp\ has the \csp; this also follows from \cite[Theorem~23.5]{luxemburg_zaanen_RIESZ_SPACES_VOLUME_I:1971}. As another consequence, every vector lattice that admits a strictly positive functional has the \csp. Consequently, every  separable Banach lattice has the \csp\ (see \cite[Exercise~4.1.4]{aliprantis_burkinshaw_POSITIVE_OPERATORS_SPRINGER_REPRINT:2006}).

Suppose that $\os$ is a separable Banach lattice, and that $F$ is a Dedekind complete normed vector lattice that admits a strictly positive continuous functional $\varphi$. Choose a sequence $\{e_n\}_{n=1}^\infty$ in $\os$ that is dense in the positive part of the unit ball of $\os$, and define the functional $T\mapsto  \sum_{n=1}^\infty 2^{-n}(Te_n,\varphi)$ on $\regularop{E,F}$. Then $\varphi$ is strictly positive, so that $\regularop{E,F}$ has the \csp. In particular, this is true when $E$ and $F$ are separable Banach lattices and $F$ is Dedekind complete.

If $\hilbert$ is a separable complex Hilbert space with orthonormal basis $\seq{e}$, then $T\mapsto \sum_{n=1}^\infty 2^{-n}\lrinp{ Te_n,e_n}$ is a strictly positive functional on $\boundedh_\sa$. Hence every linear subspace of $\boundedh_\sa$ that is a lattice in the restricted partial ordering has the \csp.

Suppose that $E$ and $F$ are vector lattices, that $F$ is Dedekind complete, that $E$ has a weak order unit $e$, and that $F$ admits a strictly positive order continuous functional. Then $T\mapsto (Te,\varphi)$ is a strictly positive functional on $\ocontop{E,F}$, so that $\ocontop{E,F}$ has the \csp.

Finally, a Banach lattice with an order continuous norm has the \csp; see \cite[Theorem~17.8]{zaanen_INTRODUCTION_TO_OPERATOR_THEORY_IN_RIESZ_SPACES:1997}, for example.


\subsection{Partially ordered algebras}\label{3_subsec:partially_ordered_algebras}


A \emph{partially ordered algebra} $\oa$ is an associative algebra that is also a partially ordered vector space such that $ab\in\posoa$ for all $a,b\in\posoa$. We do not suppose that $\oa$ has an identity element or, if so, that the identity element is positive. A partially ordered algebra that is also a vector lattice is a \emph{vector lattice algebra}. A \emph{normed vector lattice algebra} is a vector lattice algebra that is supplied with a norm satisfying $\norm{x}\leq\norm{y}$ whenever $x,y\in\oa$ are such that $\abs{x}\leq\abs{y}$. A possible identity element need not have norm 1. A \emph{Banach lattice algebra} is a normed vector lattice algebra with a complete norm.

If $\oa$ is a partially ordered algebra, then we shall say that the multiplication in $\oa$ is \emph{\mcont} if,  whenever $\net{a}\subseteq\posoa$ and $a\in\posoa$ are such that $a_\lambda\uparrow a$,  then $ba_\lambda\uparrow ba$ and $a_\lambda b\uparrow ab$ for all $b\in\posoa$; it is \emph{\smcont} if, whenever $\seq{a}\subseteq\posoa$ and $a\in\posoa$ are such that $a_n\uparrow a$, then $ba_n\uparrow ba$ and $a_nb\uparrow ab$ for all $b\in\posoa$. It is routine to verify that the analogous definitions using decreasing nets and sequences in $\posoa$ are equivalent to the above ones. 

\begin{remark}
	Suppose that $\oa$ is a vector lattice algebra. Then the multiplication in $\oa$ is \mcont\ (resp.\ \smcont) in our sense if and only if, for all $b\in\oa$, the multiplication operators $a\mapsto ba$ and $a\mapsto ba$ are order continuous (resp.\ $\sigma$-order continuous) operators on $\oa$ in the sense of \cite[p.~123]{zaanen_RIESZ_SPACES_VOLUME_II:1983}. This is an easy consequence of the fact that $\posoa$ generates $\oa$ and the observation that linear combinations of positive operators that are order continuous in the sense of \cite[p.~123]{zaanen_RIESZ_SPACES_VOLUME_II:1983} are again order continuous in the same sense.
\end{remark}

The terminology `monotone continuous multiplication', rather than `monotone left and right multiplications', is justified by the following result, which will be used in the proof of \cref{3_res:representing_measure_is_spectral} on spectral measures. Its proof is routine.

\begin{lemma}\label{3_res:multiplication_in_two_variables_is_monotone_order_continuous}
	Let $\oa$ be a partially ordered algebra with a \mcont\ multiplication. 	
	\begin{enumerate}
	\item\label{3_part:multiplication_in_two_variables_is_monotone_order_continuous_1}
If $\{a_{\lambda_1}\}_{\lambda_1\in\Lambda_1},\,\{b_{\lambda_2}\}_{\lambda_2\in\Lambda_2}\subseteq\posoa$, and $a,b\in\posoa$ are such that $a_{\lambda_1}\uparrow a$ and $b_{\lambda_2}\uparrow b$, then $a_{\lambda_1}b_{\lambda_2}\uparrow ab$;
	\item\label{3_part:multiplication_in_two_variables_is_monotone_order_continuous_2}
	If $\{a_{\lambda}\}_{\lambda\in\Lambda},\,\{b_{\lambda}\}_{\lambda\in\Lambda}\subseteq\posoa$, and $a,b\in\posoa$ are such that $a_{\lambda}\uparrow a$ and $b_{\lambda}\uparrow b$, then $a_{\lambda}b_{\lambda}\uparrow ab$.
	\end{enumerate}
	When the multiplication is \smcont, the analogous statements for the term-wise products of two sequences hold.
	
	The three analogous statements for decreasing nets and sequences in $\posoa$ are also true.		
\end{lemma}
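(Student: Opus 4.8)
The plan is to reduce both statements to two successive applications of the hypothesis, carried out one variable at a time. The single order-algebraic fact I would use throughout is that, for $a,a',b\in\posoa$ with $a\leq a'$, one has $ab\leq a'b$ and $ba\leq ba'$; indeed $(a'-a)b$ and $b(a'-a)$ lie in $\posoa$ because $\posoa$ is closed under multiplication. This at once makes the product nets below increasing and gives $ab$ as an upper bound.

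For part~\ref{3_part:multiplication_in_two_variables_is_monotone_order_continuous_1}, I would first note that $\{a_{\lambda_1}b_{\lambda_2}\}_{(\lambda_1,\lambda_2)\in\Lambda_1\times\Lambda_2}$, ordered by the product order, is increasing (apply the fact above in each variable) and that $a_{\lambda_1}b_{\lambda_2}\leq ab_{\lambda_2}\leq ab$, so $ab$ is an upper bound. The crux is that $ab$ is the \emph{least} upper bound. Let $c$ be any upper bound. Keeping $\lambda_2$ fixed and letting $\lambda_1$ vary, the monotone continuity of the multiplication (right multiplication by $b_{\lambda_2}$) yields $a_{\lambda_1}b_{\lambda_2}\uparrow ab_{\lambda_2}$, whence $ab_{\lambda_2}\leq c$ for every $\lambda_2$; then, letting $\lambda_2$ vary, monotone continuity (left multiplication by $a$) yields $ab_{\lambda_2}\uparrow ab$, whence $ab\leq c$. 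Thus $a_{\lambda_1}b_{\lambda_2}\uparrow ab$.

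For part~\ref{3_part:multiplication_in_two_variables_is_monotone_order_continuous_2}, I would deduce the diagonal statement from part~\ref{3_part:multiplication_in_two_variables_is_monotone_order_continuous_1}. Again $\{a_\lambda b_\lambda\}_{\lambda\in\Lambda}$ is increasing with upper bound $ab$. If $c$ is an upper bound of this diagonal net, then for any $(\lambda_1,\lambda_2)$ the directedness of $\Lambda$ furnishes $\nu\geq\lambda_1,\lambda_2$, and monotonicity gives $a_{\lambda_1}b_{\lambda_2}\leq a_\nu b_\nu\leq c$; hence $c$ also bounds the product net of part~\ref{3_part:multiplication_in_two_variables_is_monotone_order_continuous_1}, so $ab\leq c$ and therefore $a_\lambda b_\lambda\uparrow ab$.

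For the \smcont\ case the same scheme applies, but with every limit taken along an honest sequence so that only the sequential hypothesis is invoked: with $a_m\uparrow a$ and $b_m\uparrow b$ and $c$ an upper bound of the diagonal, for fixed $m$ and arbitrary $n$ one has $a_m b_n\leq a_{\max(m,n)}b_{\max(m,n)}\leq c$; letting $n\to\infty$ gives $a_m b\leq c$, and letting $m\to\infty$ gives $ab\leq c$. The decreasing statements follow identically once one invokes the remark that monotone continuity may be rephrased with decreasing nets (resp.\ sequences): the relevant products are then decreasing with $ab$ as a lower bound, and the two iterated limits exhibit $ab$ as the greatest lower bound. The only delicate point\textemdash and, as the assertion that the proof is routine already suggests, a mild one\textemdash is the bookkeeping of the two limits: at each stage one must apply order continuity along a genuine monotone net (or sequence) in $\posoa$, rather than over the whole product index set at once.
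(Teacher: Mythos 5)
Your proof is correct and complete: the iterated one-variable application of monotone continuity (first in $\lambda_1$ with $b_{\lambda_2}$ fixed, to get $ab_{\lambda_2}\leq c$ for any upper bound $c$, then in $\lambda_2$), the reduction of the diagonal statement in part (2) to the doubly indexed net of part (1) via directedness of $\Lambda$, and the care to invoke only genuinely sequential limits in the $\sigma$-monotone continuous case are exactly what is needed. The paper itself gives no proof\textemdash the lemma is declared routine\textemdash and your argument is the standard one the authors evidently intend, including the appeal to the stated equivalence of the increasing and decreasing formulations of monotone continuity to dispatch the three decreasing variants.
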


The following result will be convenient later on.

\begin{lemma}\label{3_res:properties_inherited_by_algebras_associated_to_idempotent}
	Let $\oa$ be a partially ordered algebra with monotone continuous multiplication. Suppose that $p\in\posoa$ and that $p^2=p$.
	\begin{enumerate}
	\item\label{3_part:properties_inherited_by_algebras_associated_to_idempotent_1}
	If $\net{a}$ is a net in $\pos{(p\oa)}$ and $a_\lambda\uparrow a$  in $\oa$ for some $a\in\oa$, then $a\in \pos{(p\oa)}$.
	\end{enumerate}

	If, in addition, $\oa$ is \mc, then the following hold.
	\begin{enumerate}[resume]
	\item\label{3_part:properties_inherited_by_algebras_associated_to_idempotent_2}
	Let $\net{a}$ be a net in $\pos{(p\oa)}$ and let $a\in \pos{(p\oa)}$. Then $a_\lambda\uparrow a$ in $p\oa$ if and only if $a_\lambda\uparrow a$ in $\oa$;
	\item\label{3_part:properties_inherited_by_algebras_associated_to_idempotent_3}
	The subalgebra $p\oa$ of $\oa$ is a monotone complete partially ordered algebra with monotone continuous  multiplication.;
	
	\item\label{3_part:properties_inherited_by_algebras_associated_to_idempotent_3_extra} If $\oa$ is a normal partially ordered vector space, then so is $p\oa$;
	\item\label{3_part:properties_inherited_by_algebras_associated_to_idempotent_4}  Suppose, in addition, that $\oa$ has the \csp. Then so does $p\oa$.
	\end{enumerate}

Similar statements hold for $\oa p$ and $p\oa p$.
\end{lemma}

\begin{proof}
	We consider only the case $p\oa$; the other two are treated similarly.
	
	We prove part~\ref{3_part:properties_inherited_by_algebras_associated_to_idempotent_1}. Under the pertinent premises, it follows that $a_\lambda=pa_\lambda\uparrow pa$, so that $a=pa\in p\oa$.
	
	We prove part~\ref{3_part:properties_inherited_by_algebras_associated_to_idempotent_2}.
	Let the net $\net{a}$ in $\pos{(p\oa)}$ and $a\in\pos{(p\oa)}$ be such that $a_\lambda\uparrow a$ in $p\oa$. Since $\oa$ is \mc, there exists an $a^\prime\in\oa$ such that $a_\lambda\uparrow a^\prime$ in $\oa$. By  part~\ref{3_part:properties_inherited_by_algebras_associated_to_idempotent_1}, we have $a^\prime\in p\oa$. Hence $a=a^\prime$, so that $a_\lambda\uparrow a=a^\prime$ in $\oa$. The converse statement is trivial.
	
	The parts~\ref{3_part:properties_inherited_by_algebras_associated_to_idempotent_3},~\ref{3_part:properties_inherited_by_algebras_associated_to_idempotent_3_extra}, and~\ref{3_part:properties_inherited_by_algebras_associated_to_idempotent_3_extra} are now easy consequences of the parts part~\ref{3_part:properties_inherited_by_algebras_associated_to_idempotent_1} and ~\ref{3_part:properties_inherited_by_algebras_associated_to_idempotent_2}.
	
\end{proof}

The second main spectral theorem in \cref{3_sec:riesz_representation_theorems_for_positive_algebra_homomorphisms}, \cref{3_res:positive_homomorphisms_into_partially_ordered_algebras}, is in the context of \mc\ and normal partially ordered algebras with a monotone continuous multiplication. The monotone continuity of the multiplication may fail for algebras of general regular operators on partially ordered vector spaces, but for algebras of order continuous operators we have the following consequence of \cite[Proposition~3.1]{de_jeu_jiang:2022a} and  \cref{3_res:order_continuous_operators_are_monotone_complete,3_res:order_continuous_operators_are_quasi-perfect,3_res:regular_and_order_continuous_operators_are_monotone_complete}, exhibiting two major classes of algebras to which \cref{3_res:positive_homomorphisms_into_partially_ordered_algebras} applies.
	
\begin{proposition}\label{3_res:order_continuous_operators_are_suitable_algebra}
	Let $\os$ be a directed\ partially ordered vector space.
	\begin{enumerate}
		\item If $E$ is monotone complete and normal, then $\ocontop{E}$ is a directed, monotone complete, and normal partially ordered algebra with a monotone continuous multiplication.
		\item If $E$ is quasi-perfect, then $\ocontop{E}$ is a directed quasi-perfect partially ordered algebra with a monotone continuous multiplication.
	\end{enumerate}
\end{proposition}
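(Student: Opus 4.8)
The plan is to deduce both assertions by specialising the two-space results \cref{3_res:order_continuous_operators_are_monotone_complete} and \cref{3_res:order_continuous_operators_are_quasi-perfect} to $\ostwo=\os$, and then to supply the three purely algebraic facts that those propositions do not address: that $\ocontop{\os}$ is closed under composition, that its positive cone is closed under multiplication, and that its multiplication is monotone continuous.

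First I would record the algebra structure. If $S,T\in\pos{\ocontop{\os}}$ and $x_\lambda\downarrow 0$ in $\os$, then $Sx_\lambda\downarrow 0$ and hence $T(Sx_\lambda)\downarrow 0$, so $TS$ is again positive and order continuous; writing an arbitrary order continuous operator as a difference of two positive ones shows that $\ocontop{\os}$ is a subalgebra of $\linearop{\os}$ and that the product of two positive elements is positive. Thus $\ocontop{\os}$ is a partially ordered algebra, and it is directed because, $\os$ being directed, its positive cone is generating, as noted in \cref{3_subsec:partially_ordered_vector_spaces}. For the case where $\os$ is normal, the monotone completeness and normality of $\ocontop{\os}$ are then exactly the parts~\ref{3_part:order_continuous_operators_are_monotone_complete_3} and~\ref{3_part:order_continuous_operators_are_monotone_complete_4} of \cref{3_res:order_continuous_operators_are_monotone_complete} applied with $\ostwo=\os$.

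The one genuinely new point is the monotone continuity of the multiplication. Here the key input is that, since $\os$ is monotone complete, suprema of increasing nets in $\regularop{\os}$ are computed pointwise (\EXTREF{\cref{1_res:V_is_monotone_complete}}{Proposition~3.1}{de_jeu_jiang:2021a}), together with part~\ref{3_part:order_continuous_operators_are_monotone_complete_2} of \cref{3_res:order_continuous_operators_are_monotone_complete}, which identifies suprema in $\ocontop{\os}$ with those in $\regularop{\os}$. Concretely, if $T_\lambda\uparrow T$ in $\pos{\ocontop{\os}}$ then $T_\lambda x\uparrow Tx$ for every $x\in\posos$; given $S\in\pos{\ocontop{\os}}$, the order continuity of $S$ yields $ST_\lambda x\uparrow STx$, while the positivity of $S$ (so that $Sx\in\posos$) yields $T_\lambda Sx\uparrow TSx$. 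Reading these pointwise relations back in $\ocontop{\os}$ gives $ST_\lambda\uparrow ST$ and $T_\lambda S\uparrow TS$, which is precisely what monotone continuity of the multiplication requires. For the quasi-perfect case, \cref{3_res:order_continuous_operators_are_quasi-perfect} with $\ostwo=\os$ already gives that $\ocontop{\os}$ is directed and quasi-perfect; since a quasi-perfect space is in particular monotone complete and normal, the algebra structure and the monotone continuity of the multiplication follow verbatim from the normal case. I do not expect a serious obstacle: the substantive order-theoretic content is carried by the two cited propositions, and the only thing left to check carefully is the monotone continuity of the multiplication, where the essential ingredient is the pointwise computation of suprema of order continuous operators.
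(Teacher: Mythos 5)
Your proposal is correct and coincides with the paper's own (unwritten) argument: the paper gives no displayed proof of this proposition but states just before it that the monotone continuity of the multiplication is an immediate consequence of the pointwise computation of suprema in $\regularop{\os}$ (\EXTREF{\cref{1_res:V_is_monotone_complete}}{Proposition~3.1}{de_jeu_jiang:2021a}), which, combined with \cref{3_res:order_continuous_operators_are_monotone_complete} and \cref{3_res:order_continuous_operators_are_quasi-perfect} specialised to $\ostwo=\os$, yields the result. Your filling-in of the routine algebraic verifications (closure of $\ocontop{\os}$ under composition, positivity of products, directedness) and of the passage between suprema in $\ocontop{\os}$ and $\regularop{\os}$ via part~\ref{3_part:order_continuous_operators_are_monotone_complete_2} of \cref{3_res:order_continuous_operators_are_monotone_complete} is exactly what the paper intends.
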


\cref{3_res:order_continuous_operators_are_suitable_algebra} and \cref{3_res:examples_of_quasi_perfect_spaces} have the following consequence for Banach lattices.

\begin{corollary}\label{3_res:order_continuous_operators_on_Banach_lattices_are_suitable_algebra}
	Let $\os$ be a \Dc\ Banach lattice, and supply $\ocontop{E}$ with the regular norm.  If $\os$ is normal, i.e., is such that $\ocdualos$ separates the points of $\os$, then $\ocontop{\os}$ is a \Dc\ and normal Banach lattice algebra with a \mcont\ multiplication. If $\os$ is normal and the norm on $\os$ is a Levi norm, then $\ocontop{\os}$ is a \Dc\ quasi-perfect Banach lattice algebra with a \mcont\ multiplication.
\end{corollary}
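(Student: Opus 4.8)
The plan is to obtain the corollary by combining \cref{3_res:order_continuous_operators_are_suitable_algebra}, which supplies the order-theoretic and algebraic content, with classical operator theory, which supplies the Banach-lattice-norm structure. A \Dc\ Banach lattice $\os$ is, in particular, a Riesz space (hence directed) and a \mc\ partially ordered vector space, so \cref{3_res:order_continuous_operators_are_suitable_algebra} applies throughout. First I would invoke its first assertion: when $\os$ is normal, $\ocontop{\os}$ is a directed, \mc, and normal partially ordered algebra with a \mcont\ multiplication. This already delivers, for both statements of the corollary, the normality and the monotone continuity of the multiplication, and it is the order-theoretic backbone on which the rest is built.

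Next I would install the Banach lattice algebra structure. Since the codomain $\os$ is a \Dc\ Banach lattice, classical theory (see \cite{aliprantis_burkinshaw_POSITIVE_OPERATORS_SPRINGER_REPRINT:2006}) shows that $\regularop{\os}$, equipped with the regular norm, is a \Dc\ Banach lattice; moreover, because $\os$ is \Dc, the order bounded and the regular operators coincide, and the order continuous operators $\ocontop{\os}$ form a band in $\regularop{\os}$. A band is a norm-closed ideal, so $\ocontop{\os}$ is itself a \Dc\ Banach lattice under the regular norm. In particular it is \Dc, which upgrades the mere monotone completeness coming from \cref{3_res:order_continuous_operators_are_suitable_algebra}. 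Finally, that it is a Banach lattice \emph{algebra} follows from the submultiplicativity of the regular norm: using $\abs{ST}\leq\abs{S}\,\abs{T}$ together with the monotonicity and submultiplicativity of the operator norm on the moduli, one obtains $\norm{ST}_{\mathrm r}\leq\norm{S}_{\mathrm r}\norm{T}_{\mathrm r}$, while the positive cone $\pos{\ocontop{\os}}$ is closed under multiplication since $\ocontop{\os}$ is already a partially ordered algebra by the previous step.

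For the second statement, I would observe that a normal Banach lattice with a Levi norm is quasi-perfect; this is recorded in \cref{3_res:examples_of_quasi_perfect_spaces}, part~\ref{3_part:examples_of_quasi_perfect_spaces_2}. The second assertion of \cref{3_res:order_continuous_operators_are_suitable_algebra} then gives that $\ocontop{\os}$ is quasi-perfect, and combining this with the \Dc\ Banach lattice algebra structure established above yields that $\ocontop{\os}$ is a \Dc\ quasi-perfect Banach lattice algebra with a \mcont\ multiplication, as required.

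The main obstacle is not conceptual but one of bookkeeping and consistency: assembling the two classical inputs\textemdash that $\regularop{\os}$ is a Banach lattice under the regular norm (which genuinely uses that $\os$ is \Dc) and that $\ocontop{\os}$ is a band in it\textemdash and then confirming that the Riesz-space order and the suprema computed in this band coincide with the partially ordered vector space structure and suprema on $\ocontop{\os}$ used in \cref{3_res:order_continuous_operators_are_suitable_algebra}. This identification is automatic because an ideal inherits suprema from the ambient space, and once it is in place the corollary is simply the conjunction of the order-theoretic information from the proposition and the norm-theoretic information from the band.
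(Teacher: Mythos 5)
Your proposal is correct and takes essentially the same route as the paper, which offers no written proof but presents the corollary as a direct consequence of \cref{3_res:order_continuous_operators_are_suitable_algebra} combined with precisely the classical facts you assemble: that $\ocontop{\os}$ is a band in the \Dc\ Banach lattice $\regularop{\os}$ under the submultiplicative regular norm (hence itself a \Dc\ Banach lattice algebra), and that a normal Banach lattice with a Levi norm is quasi-perfect by part~\ref{3_part:examples_of_quasi_perfect_spaces_2} of \cref{3_res:examples_of_quasi_perfect_spaces}, so that the second assertion of the proposition applies. Your closing compatibility check is the right point to flag, and it is indeed automatic, since the positive cone of $\ocontop{\os}$ as defined in the paper coincides with the cone it inherits as a band of $\regularop{\os}$.
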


Another important case where \cref{3_res:positive_homomorphisms_into_partially_ordered_algebras} applies is in the context of Hilbert spaces.

\begin{proposition}\label{3_res:riesz_algebra_for_hilbert_spaces} Let  $\hilbert$ be a complex Hilbert space, and let $\oa\subseteq\boundedh$ be a commutative \SOT-closed \Csubalgebra. Let $\oa_\sa$ be the real vector space that consists of the self-adjoint elements of $\oa$, supplied with the partial ordering that is inherited from the usual partial ordering on $\boundedh_\sa$. Then $\oa_\sa$ is a quasi-perfect Banach lattice algebra with a \mcont\ multiplication.
\end{proposition}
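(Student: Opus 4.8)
The plan is to establish the three asserted properties almost independently: Gelfand theory will supply the algebraic and lattice structure, \cref{3_res:examples_of_quasi_perfect_spaces} will supply quasi-perfectness, and the coincidence of order suprema with strong operator limits will supply the monotone continuity of the multiplication.

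First I would record that $\oa_\sa$ is a Riesz algebra. Since $\oa$ is commutative, $(ab)^\ast=b^\ast a^\ast=ba=ab$ for $a,b\in\oa_\sa$, so $\oa_\sa$ is closed under multiplication and is a commutative, associative real algebra. By the commutative Gelfand--Naimark theorem, $\oa$ is $\ast$-isomorphic to $\conto{\Omega}$ for its Gelfand spectrum $\Omega$, a locally compact Hausdorff space, and this isomorphism carries $\oa_\sa$ onto the real-valued functions in $\conto{\Omega}$. It remains to check that it is an \emph{order} isomorphism for the ordering inherited from $\boundedh_\sa$: for $a\in\oa_\sa$ one has $a\geq 0$ as an operator exactly when $\sigma(a)\subseteq[0,\infty)$, and since positivity is an intrinsic notion in a \Calgebra, this is equivalent to $a$ being positive in $\oa$, i.e.\ to its Gelfand transform being a non-negative function. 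Hence $\oa_\sa$ is order isomorphic to $\conto{\Omega}_{\RR}$ as an ordered algebra. In particular $\oa_\sa$ is a Riesz space and the product of two elements of $\pos{(\oa_\sa)}$ again lies in $\pos{(\oa_\sa)}$, so $\oa_\sa$ is a Riesz algebra.

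Quasi-perfectness is then immediate: a commutative \SOT-closed \Csubalgebra\ is, in particular, an \SOT-closed complex linear subspace of $\boundedh$, so $\oa_\sa$ is quasi-perfect by part~\ref{3_part:examples_of_quasi_perfect_spaces_3} of \cref{3_res:examples_of_quasi_perfect_spaces}. In particular $\oa_\sa$ is monotone complete.

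It remains to verify that the multiplication is \mcont, and this is where I expect the only real work. The key input, already contained in the proof of part~\ref{3_part:examples_of_quasi_perfect_spaces_3} of \cref{3_res:examples_of_quasi_perfect_spaces}, is that for an increasing net in $\pos{(\oa_\sa)}$ that is bounded above, the supremum taken in $\boundedh_\sa$ is the \SOT-limit of the net and, since $\oa$ is \SOT-closed, it lies in $\oa_\sa$ and is the supremum there. Thus $a_\lambda\uparrow a$ in $\oa_\sa$ forces $a_\lambda\to a$ in the strong operator topology. Now fix $b\in\pos{(\oa_\sa)}$. For $\lambda\leq\mu$ the element $a_\mu-a_\lambda$ is positive and commutes with $b$, so $b(a_\mu-a_\lambda)\geq 0$ by the Riesz-algebra property, whence $\{ba_\lambda\}$ is increasing; likewise $b(a-a_\lambda)\geq 0$ shows $ba_\lambda\leq ba$. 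Hence $\{ba_\lambda\}$ is an increasing net in $\pos{(\oa_\sa)}$ that is bounded above by $ba$, and so has a supremum $c$ in $\oa_\sa$, again equal to its \SOT-limit. But $b$ is a bounded operator, so $ba_\lambda x\to bax$ for every $x\in\hilbert$, i.e.\ $ba_\lambda\to ba$ strongly; therefore $c=ba$ and $ba_\lambda\uparrow ba$. By commutativity $a_\lambda b=ba_\lambda\uparrow ba=ab$ as well, so the multiplication is \mcont. The main obstacle is precisely this identification of the order supremum in $\oa_\sa$ with the strong operator limit, which is what converts the continuity of the bounded operator $b$ into order continuity of multiplication by $b$.
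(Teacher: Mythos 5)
Your proposal is correct and takes essentially the same route as the paper's proof: the Gelfand model $\conto{\Omega;\CC}$ yields the Riesz algebra structure, quasi-perfectness is quoted from part~\ref{3_part:examples_of_quasi_perfect_spaces_3} of \cref{3_res:examples_of_quasi_perfect_spaces}, and monotone continuity of the multiplication is obtained by identifying order suprema of bounded increasing nets in $\oa_\sa$ with their \SOT-limits and using that multiplication by a fixed bounded operator is \SOT-continuous. The only cosmetic difference is that you derive the monotonicity and boundedness of $\{ba_\lambda\}$ from the positivity of products of commuting positive elements, where the paper simply observes that $0\leq TS_\lambda\uparrow\,\leq TS$ is clear in the function model.
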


\begin{proof}
	There exists a locally compact Hausdorff space $\ts$ such that $\oa$ is isomorphic to $\contoCts$ as a \Calgebra. It is then clear that $\oa_\sa$ is a Banach lattice algebra. We know from part~\ref{3_part:examples_of_quasi_perfect_spaces_3} of \cref{3_res:examples_of_quasi_perfect_spaces} that $\oa_\sa$ is quasi-perfect.
	
	We turn to the multiplication. Suppose that $T\in\pos{\oa_\sa} $ and that $0\leq S_\lambda\uparrow S$ in $\oa_\sa$. Then $S=\SOTlim_{\lambda}S_\lambda$ by \cite[Proposition~3.2]{de_jeu_jiang:2022a}, and this implies that $TS=\SOTlim_{\lambda}TS_\lambda$. Since $0\leq TS_\lambda\uparrow\leq TS$ (this is clear in the $\contoCts$-model), \cite[Proposition~3.2]{de_jeu_jiang:2022a} implies that $TS_\lambda\uparrow TS$ in $\oa_\sa$. A similar argument shows that $S_\lambda T\uparrow ST$ in $\oa_\sa$. Hence the multiplication in $\oa_\sa$ is monotone order continuous.
\end{proof}

For an application of \cref{3_res:positive_homomorphisms_into_partially_ordered_algebras} to JBW-algebras, we record the following.

\begin{proposition}\label{res:JBW_algebra_monotone_continous_multiplication}
	Let~${\poalgfont M}$ be an associative JBW-algebra. Then ${\poalgfont M}$ is a directed quasi-perfect partially ordered algebra with a monotone continuous multiplication.
\end{proposition}

\begin{proof} In view of \cref{3_res:examples_of_quasi_perfect_spaces}, only the monotone continuity of the (commutative) multiplication needs proof.
	Suppose that $\net{a}\subseteq \pos{M}$ is a net and that $a_\lambda\uparrow a$ for some $a\in\pos{M}$. Take $b\in\pos{M}$. According to \cite[Proposition~2.5(ii)]{alfsen_shultz_GEOMETRY_OF_STATE_SPACES_OF_OPERATOR_ALGEBRAS:2003}, $a_\lambda\to a$ $\sigma$-strongly. By \cite[Proposition~2.4]{alfsen_shultz_GEOMETRY_OF_STATE_SPACES_OF_OPERATOR_ALGEBRAS:2003}, we also have $a_\lambda b\to ab$ $\sigma$-strongly. As $b\in\pos{M}$, the net $\{a_\lambda b\}$ is increasing and bounded from above by $ab$. Let $s$ be its supremum. Then $a_\lambda b\uparrow s$ implies that $a_\lambda b\to s$ $\sigma$-strongly by \cite[Proposition~2.5(ii)]{alfsen_shultz_GEOMETRY_OF_STATE_SPACES_OF_OPERATOR_ALGEBRAS:2003} again. Hence $ab=s$ and $a_\lambda b\uparrow ab$. The proof that $ba_\lambda\uparrow ba$ is similar.
\end{proof}


\subsection{Moduli preserving operators}\label{3_subsec:moduli_preserving_operators}


As we shall see in \cref{3_sec:relations_between_measures_algebra_homomorphisms_and_vector_lattice_homomorphisms}, a positive algebra homomorphism often gives rise to a vector lattice homomorphism. The present section provides the  necessary terminology and elementary preparatory results for this.

\begin{definition}\label{3_def:preserving_moduli}
	Let $\os$ and $\ostwo$ be partially ordered vector spaces, and let $T \colon \os\to\ostwo$ be an operator. Then $T $ \emph{preserves moduli} when, for all $x\in\os$ such that $\abs{x}$ exists in $\os$, $\abs{T x}$ exists in $\ostwo$ and $T \abs{x}=\abs{T x}$.
\end{definition}

A moduli preserving operator is positive. The moduli preserving operators between two vector lattices are precisely the vector lattice homomorphisms.

It is well known (and easy to see) that, for elements $x,y$ of a partially ordered vector space $\os$, the existence in $\os$ of $x\vee y$, $x\wedge y$, and $\abs{x-y}$ are all equivalent. In this case, we have $x+y=x\vee y + x\wedge y$, $x\vee y=\frac{1}{2}(x+y + \abs{x-y})$, and $x\wedge y=\frac{1}{2}(x+y-\abs{x-y})$. Using this, the following result is easily established, giving two equivalent definitions of moduli preserving operators.

\begin{lemma}\label{3_res:equivalent_definitions_of_lattice_preserving_maps}
	Let $\os$ and $\ostwo$ be partially ordered vector spaces, and let $T \colon \os\to \ostwo$ be an operator. The following are equivalent:
	\begin{enumerate}
		\item\label{3_part:equivalent_definitions_of_lattice_preservering_maps_3}
		$T$ preserves moduli;
		\item\label{3_part:equivalent_definitions_of_lattice_preserving_maps_1}
		whenever $x,y\in \os$ are such that $x\vee y$ exists in $\os$, $T x\vee T y$ exists in $\ostwo$, and $T (x\vee y)=T x\vee T y$;
		\item\label{3_part:equivalent_definitions_of_lattice_preserving_maps_2}
		whenever $x,y\in \os$ are such that $x\wedge y$ exists in $\os$, $T x\wedge T y$ exists in $\ostwo$, and $T (x\wedge y)=T x\wedge T y$.		
	\end{enumerate}
\end{lemma}

\begin{remark}\label{3_rem:binary_lattice_preserving_operators} Suppose that $T:\os\to\ostwo$ is a moduli preserving operator between two partially ordered vector spaces. Let $n\geq 3$,  and suppose that $x_1,\dotsc, x_n\in\os$ are such that $\sup\{x_1,\dotsc,x_n\}$ exists in $\os$. In this case, there is no guarantee that $\sup\{T x_1,\dotsc,T x_n\}$ exists in $\ostwo$. In the cases where we shall encounter moduli preserving operators, however, $\os$ is a vector lattice;  then an  inductive argument shows that $\sup\{T x_1,\dotsc,T x_n\}$ exists in $\ostwo$ and that also $T(\sup\{x_1,\dotsc,x_n\})=\sup\{T x_1,\dotsc,T x_n\}$ when  $n\geq 3$.
\end{remark}

The next result follows easily from the fact that the existence of suprema and infima and their values in a partially ordered vector space are compatible with translations.

\begin{lemma}\label{3_res:lattice_preserving_on_positive_cone_is_sufficient}
	Let $\os$ and $\ostwo$ be partially ordered vector spaces, where $\os$ is directed, and let $T \colon \os\to \ostwo$ be an operator. The following are equivalent:
	\begin{enumerate}
		\item\label{3_part:lattice_preserving_on_positive_cone_is_sufficient_1}
		$T$ preserves moduli;
		\item\label{3_part:lattice_preserving_on_positive_cone_is_sufficient_2} whenever $x,y\in \pos{\os}$ are such that $x\vee y$ exists in $\os$, $T x\vee T y$ exists in $\ostwo$, and $T (x\vee y)=T x\vee T y$;
		\item\label{3_part:lattice_preserving_on_positive_cone_is_sufficient_3} whenever $x,y\in \pos{\os}$ are such that $x\wedge y$ exists in $\os$, $T x\wedge T y$ exists in $\ostwo$, and $T (x\wedge y)=T x\wedge T y$.
	\end{enumerate}
\end{lemma}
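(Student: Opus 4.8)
The plan is to reduce everything to the positive cone by translation. The key structural fact, already signposted just before the statement, is that in any partially ordered vector space suprema and infima are translation-invariant: for $x,y,z$, the supremum $x\vee y$ exists in the space if and only if $(x+z)\vee(y+z)$ exists, and in that case $(x+z)\vee(y+z)=(x\vee y)+z$, with the analogous assertion for infima. I will use this translation invariance in both $\os$ and $\ostwo$, and it is this transfer of \emph{existence} (not merely of the value) of the supremum that does the real work.

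The implication \ref{3_part:lattice_preserving_on_positive_cone_is_sufficient_1}$\Rightarrow$\ref{3_part:lattice_preserving_on_positive_cone_is_sufficient_2} is immediate, since \ref{3_part:lattice_preserving_on_positive_cone_is_sufficient_2} is simply the restriction of the defining property of a lattice structure preserving operator to positive elements. Likewise, invoking the infimum-formulation of lattice-structure preservation supplied by \cref{3_res:equivalent_definitions_of_lattice_preserving_maps}, the implication \ref{3_part:lattice_preserving_on_positive_cone_is_sufficient_1}$\Rightarrow$\ref{3_part:lattice_preserving_on_positive_cone_is_sufficient_3} is immediate as well.

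For \ref{3_part:lattice_preserving_on_positive_cone_is_sufficient_2}$\Rightarrow$\ref{3_part:lattice_preserving_on_positive_cone_is_sufficient_1}, which is the main step, I would take arbitrary $x,y\in\os$ such that $x\vee y$ exists in $\os$. Since $\os$ is directed, the pair $\{x,y\}$ admits a common lower bound $z\in\os$, so that $x-z,y-z\in\pos{\os}$. Translation invariance in $\os$ yields that $(x-z)\vee(y-z)$ exists and equals $(x\vee y)-z$. Applying the hypothesis \ref{3_part:lattice_preserving_on_positive_cone_is_sufficient_2} to the positive elements $x-z$ and $y-z$, the supremum $\posmap(x-z)\vee\posmap(y-z)$ exists in $\ostwo$ and equals $\posmap\bigl((x-z)\vee(y-z)\bigr)$, which by linearity of $\posmap$ is $\posmap(x\vee y)-\posmap z$. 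Since $\posmap(x-z)=\posmap x-\posmap z$ and $\posmap(y-z)=\posmap y-\posmap z$, translation invariance in $\ostwo$ now shows that $\posmap x\vee\posmap y$ exists and equals $\bigl(\posmap(x-z)\vee\posmap(y-z)\bigr)+\posmap z=\posmap(x\vee y)$, which is \ref{3_part:lattice_preserving_on_positive_cone_is_sufficient_1}.

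Finally, \ref{3_part:lattice_preserving_on_positive_cone_is_sufficient_3}$\Rightarrow$\ref{3_part:lattice_preserving_on_positive_cone_is_sufficient_1} is obtained by the identical translation argument with infima in place of suprema: it produces the infimum-formulation of lattice-structure preservation, which is equivalent to \ref{3_part:lattice_preserving_on_positive_cone_is_sufficient_1} by \cref{3_res:equivalent_definitions_of_lattice_preserving_maps}. I do not anticipate any genuine obstacle; the only two points requiring care are the appeal to directedness to manufacture the common lower bound $z$ (which is precisely what places $x-z$ and $y-z$ in $\pos{\os}$) and the bookkeeping ensuring that translation invariance is used to transfer the \emph{existence} of the relevant supremum or infimum between $\os$ and $\ostwo$, not only its value.
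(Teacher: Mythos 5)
Your proof is correct and follows exactly the route the paper intends: the paper gives no written-out proof beyond the remark that the lemma ``follows easily from the fact that the existence of suprema and infima and their values in a partially ordered vector space are compatible with translations'', and your argument is precisely that translation argument, using directedness of $\os$ to produce a common lower bound (which exists since a directed partially ordered vector space is also downward directed) and \cref{3_res:equivalent_definitions_of_lattice_preserving_maps} for the infimum case. No gaps; your explicit care about transferring the \emph{existence} of the supremum, not just its value, is exactly the point the paper's one-line justification glosses over.
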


The following result is a direct consequence of the definitions.

\begin{lemma}\label{3_res:lattice_preserving_map_give_vector_lattice_homomorphism}
	Let $\os$ be a vector lattice, let $\ostwo$ be a partially ordered vector space, and let $T \colon \os\to\ostwo$ be a moduli preserving operator. Supplied with the partial ordering inherited from $\ostwo$, the space $T(\os)$ is a vector lattice, and the map $T\colon \os\to T(E)$ is then a vector lattice homomorphism.
\end{lemma}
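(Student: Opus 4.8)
The plan rests on the observation that, since $\os$ is a Riesz space, $x\vee y$ exists in $\os$ for all $x,y\in\os$; the hypothesis that $T$ preserves the lattice structure therefore gives, for \emph{all} $x,y\in\os$, that $Tx\vee Ty$ exists in $\ostwo$ and that $T(x\vee y)=Tx\vee Ty$, the supremum being taken in $\ostwo$. First I would record that $T(\os)$ is a linear subspace of $\ostwo$, so that, equipped with the inherited ordering and the positive cone $T(\os)\cap\pos{\ostwo}$, it is again a partially ordered vector space: the cone stays proper as a subset of the proper cone $\pos{\ostwo}$, and the Archimedean property descends to the subspace because any lower bound in $T(\os)$ of a decreasing family is in particular a lower bound in the Archimedean space $\ostwo$.

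The key point is that a supremum computed in $\ostwo$ which happens to lie in $T(\os)$ is automatically the supremum within the subspace $T(\os)$: if $c=\sup_{\ostwo}\{u,v\}$ and $c\in T(\os)$, then $c$ is an upper bound of $\{u,v\}$ in $T(\os)$, and every competing upper bound in $T(\os)$ is an upper bound in $\ostwo$ and hence dominates $c$. Applying this with $u=Tx$, $v=Ty$ and $c=T(x\vee y)$ shows that $Tx\vee Ty$ exists in $T(\os)$ and equals $T(x\vee y)$; the corresponding statement for infima follows in the same way, or directly from \cref{3_res:equivalent_definitions_of_lattice_preserving_maps}. Thus every pair of elements of $T(\os)$ has a supremum and an infimum in $T(\os)$, so the partially ordered vector space $T(\os)$ is a lattice, i.e.\ a Riesz space.

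The identity $T(x\vee y)=Tx\vee Ty$, now read inside $T(\os)$, says exactly that the linear map $T\colon\os\to T(\os)$ preserves finite suprema; as it is by construction surjective onto $T(\os)$, it is a surjective vector lattice homomorphism. For the final assertion I would argue that $\ker T$, which is a linear subspace, is solid: if $x\in\ker T$ and $y\in\os$ satisfies $\abs{y}\le\abs{x}$, then positivity of $T$ gives $0\le T\abs{y}\le T\abs{x}$, while the homomorphism property yields $T\abs{y}=\abs{Ty}$ and $T\abs{x}=\abs{Tx}=0$; hence $\abs{Ty}=0$ in the Riesz space $T(\os)$, forcing $Ty=0$ and so $y\in\ker T$.

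I expect no serious obstacle: the whole argument is elementary once the subspace-supremum observation of the second paragraph is in place, and that observation is precisely the step where one must be careful not to conflate suprema taken in $\ostwo$ with suprema taken in $T(\os)$. The only mild subtlety is confirming that $T(\os)$ genuinely qualifies as a Riesz space in the sense used here—a partially ordered (hence Archimedean) vector space that is a lattice—which is handled by the opening remarks.
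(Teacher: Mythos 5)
Your proof is correct, and it fills in exactly the routine verification the paper alludes to: the paper gives no proof at all, stating only that the lemma ``is a direct consequence of the definitions.'' Your key observation---that a supremum computed in $\ostwo$ which lies in $T(\os)$ is automatically the supremum within $T(\os)$---is precisely the point one must make explicit, and the rest (positivity of $T$, $T\abs{x}=\abs{Tx}$ for the solidity of $\ker T$) follows as you say.
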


We shall apply \cref{3_res:lattice_preserving_map_give_vector_lattice_homomorphism} quite a few times in the sequel. Note, however, that some information regarding the ordering is lost when passing from its premises to its conclusion. Indeed, for $x\in\os$, the supremum of the set $\{T x,-T x\}$ even exists in $\ostwo$; it so happens that this supremum in the full space is already in $T(\os)$.  Similar remarks apply to $T x\vee T y$ and $T x\wedge T y$ for $x,y\in\os$. In the terminology of \cite[Definition~5.58]{abramovich_aliprantis_INVITATION_TO_OPERATOR_THEORY:2002}, $T(\os)$ is a lattice-subspace of $\ostwo$, but it is more than that.

\begin{remark}\label{3_rem:kalauch_stennder_van_gaans}
	A detailed investigation of the possible generalisations of the notion of a vector lattice homomorphism to the context of partially ordered vector spaces, and to that of pre-Riesz spaces in particular, is undertaken in \cite{kalauch_stennder_van_gaans:2021}. It is shown in \cite[Proposition~39]{kalauch_stennder_van_gaans:2021} that a positive operator between two pre-Riesz spaces $\os$ and $\ostwo$ preserves moduli if and only if it preserves disjointness on $\pos{\os}$. In spite of its naturality, the notion of a moduli preserving operator between general partially ordered vector spaces as in \cref{3_def:preserving_moduli} and the equivalences in \cref{3_res:equivalent_definitions_of_lattice_preserving_maps} appear to be new.
\end{remark}


\subsection{Measures and integrals}\label{3_subsec:measures_and_integrals}


In this section, we shall briefly outline part of the material in \cite{de_jeu_jiang:2022a} on measures with values in the extended positive cones of \smc\  partially ordered vector spaces, and on the associated (order) integrals. This extension of earlier work of Wright generalises the theory of the Lebesgue integral. We refer to  \cite[Section~7]{de_jeu_jiang:2022a} for a comparison with vector measures where it is argued, that, in the case of a \smc\ partially ordered Banach space, the measures and the (order) integrals as in the current section are a more convenient tool to work with than positive vector measures and their integrals.

Let $\os$ be a \smc\ partially ordered vector space. We adjoin a new element $\largest$ to $\os$, and extend the partial ordering from $\os$ to  $\osext\coloneqq\os\cup\{\infty\}$ by declaring that $x\leq \largest$ for all $x\in\osext$.
The addition on the extended positive cone $\pososext\coloneqq \posos\cup\{\infty\}$ is canonically defined, as well as the action of $\posR$ on $\pososext$. The elements of $\osext$ that are in $\os$ are called \emph{finite}.

The following definition is due to Wright; see \cite[p.~111]{wright:1969}. It generalises the notion of a measure with values in the extended positive real numbers.

\begin{definition}\label{3_def:positive_pososext_valued_measure}
	Let $\ms$ be a measurable space\footnote{In the earlier parts of \cite{de_jeu_jiang:2022a}, it was sufficient that $\alg$ be an algebra of subsets of $\pset$, but here we require it to be a $\sigma$-algebra from the outset.}, and let $\os$ be a \smc\  partially ordered vector space. An \emph{$\pososext$-valued measure} is a map $\npm\colon \alg\rightarrow \pososext$ such that:
	\begin{enumerate}
		\item $\npm(\emptyset)=0$;\label{3_part:pososext_valued_measure_1}
		\item If $\seq{\mss}$ is a pairwise disjoint sequence in $\alg$, then
		\begin{equation}\label{3_eq:sigma_additivity}
			\npm\left(\bigcup_{n=1}^\infty\mss_n\right)=\psup_{N=1}^\infty\sum_{n=1}^N\npm(\mss_n)
		\end{equation}
		in $\osext$.\label{3_part:pososext_valued_measure_2}
	\end{enumerate}
The quadruple $\msm$ is then a \emph{measure space}. If $\npm(\Omega)\subseteq \posos$ (equivalently: if $\npm(\pset)\in\posos$), then $\npm$ is called \emph{finite}, in which case we shall speak of an $\os$-valued measure.
\end{definition}

As a prelude to \cref{3_sec:special_positive_representations}, we collect a few results from \cite{de_jeu_jiang:2022a}. They show how the concept of a measure in an ordered context unites those of $\sigma$-additive measures in the strong operator topology for rather diverse spaces. Naturally, the same is true for spectral measures.

\begin{proposition}[{see \cite[Proposition~3.2 and Lemma~4.2]{de_jeu_jiang:2022a}}]\label{3_res:measures_with_values_in_L_sa}
	Let  $\hilbert$ be a complex Hilbert space, and let $L$ be a strongly closed complex linear subspace of $\boundedh$.  Let $L_\sa$ be the real vector space that consists of the self-adjoint elements of $L$, supplied with the partial ordering that is inherited from the usual partial ordering on $\boundedh_\sa$. Then $L_\sa$ is a monotone complete partially ordered vector space. Let $\ms$ be a measurable space, and let $\npm\colon \alg\to\pos{L_\sa}$ be a map such that $\npm(\emptyset)=0$.
	
	Then the following are equivalent:
	
	\begin{enumerate}
		\item\label{3_part:measures_with_values_in_L_sa_1}
		$\npm$ is a finite $\pos{L_\sa}$-valued measure in the sense of \cref{3_def:positive_pososext_valued_measure};
		
		\item\label{3_part:measures_with_values_in_L_sa_2}
		If $\seq{\mss}$ is a pairwise disjoint sequence in $\alg$, then $\npm\left(\bigcup_{n=1}^\infty\mss_n\right)x=\sum_{n=1}^\infty \npm(\mss_n) x$ in the norm topology of $\hilbert$ for all $x\in \os$.
	\end{enumerate}
	
\end{proposition}

\begin{proposition}[{see \cite[Lemma~4.3]{de_jeu_jiang:2022a}}]\label{3_res:measures_with_values_in_the_regular_operators}
	Let $\os$ be a Banach lattice with an order continuous norm. Then  $\regularop{\os}$ is a \Dc\ vector lattice. Let $\ms$ be a measurable space,  and let $\npm\colon \alg\to\pos{\regularop{\os}}$ be a map such that $\npm(\emptyset)=0$. The following are equivalent:
	
	\begin{enumerate}
		\item\label{3_part:measures_with_values_in_the_regular_operators_1}
		$\npm$ is a finite $\pos{\regularop{\os}}$-valued measure in the sense of \cref{3_def:positive_pososext_valued_measure};
		
		\item\label{3_part:measures_with_values_in_the_regular_operators_2}
		If $\seq{\mss}$ is a pairwise disjoint sequence in $\alg$, then $\npm\left(\bigcup_{n=1}^\infty\mss_n\right)x=\sum_{n=1}^\infty \npm(\mss_n) x$ in the norm topology of $\os$ for all $x\in \os$.
	\end{enumerate}
\end{proposition}

In a topological context, we distinguish various regularity properties of measures. Since the terminology in the literature is not entirely uniform, we mention them explicitly.

\begin{definition}\label{3_def: regularity of measures}
	When $\ts$ is a locally compact Hausdorff space, we let $\borel$ denote its Borel $\sigma$-algebra. Let $\os$ be a \mc\ partially ordered vector space, and let $\npm:\borel\to\pososext$ be a measure. Then $\npm$ is called:
	\begin{enumerate}
		\item a \emph{Borel measure \uppars{on $\ts$}} if $\npm(K)\in \os$ for all compact subset $K$;
		\item \emph{inner regular at $\mss\in\borel$} if $\npm(\mss)=\psup\{\npm(K): K\ \textup{is compact and}\ K\subseteq \mss\}$ in $\osext$;	
		\item \emph{outer regular at $\mss\in\borel$} if $\npm(\mss)=\pinf\{\npm(V): V\ \textup{is open and}\ \mss\subseteq V\}$ in $\osext$;
		\item a \emph{regular Borel measure \uppars{on $\ts$}} if $\npm$ is a Borel measure on $\ts$ that is inner regular at all open subsets of $\ts$ and outer regular at all Borel sets.
	\end{enumerate}
\end{definition}

Let $\msm$ be a measure space. A  (finite-valued) measurable function $\varphi\colon \pset\to\posR$ is an \emph{elementary function} when it  takes only finitely many values. It can be written (not generally uniquely) as a finite sum $\varphi=\sum_{i=1}^n r_i\indicator{\mss_i}$ for some $r_1,\dotsc,r_n\in\posR$ and $\mss_1,\dots,\mss_n\in\alg$. Here the $r_i$ are all finite, but it is allowed that $\npm(\mss_i)=\infty$ for some of the $\mss_i$. We let $\elemfun$ denote the set of elementary functions.

If $\varphi=\sum_{i=1}^n r_i\indicator{\mss_i}$ is an elementary function, where the $\mss_i$ have been chosen pairwise disjoint, then we define its (order) integral, which is an element of $\pososext$, by setting
\[
\ointm{\varphi}\coloneqq\sum_{i=1}^n r_i\npm(\mss_i).
\]
This is well defined.  For a measurable function $f\colon \pset\to\posRext$, we choose a sequence $\seq{\varphi}\subseteq\elemfun$ such that, for all $x\in\pset$,  $\varphi_n(x)\uparrow f(x)$ in $\posRext$.  We define the (order) integral of $f$, which is an element of $\pososext$, by setting
\[
\ointm{f}\coloneqq \psup_{n=1}^\infty \ointm{\varphi_n}.
\]
The value of the integral does not depend on the choice of the sequence $\seq{\varphi}$.

We let $\integrablefun$ denote the set of all (finite-valued) measurable functions $f\colon \pset\to\RR$ such that $\ointm{\abs{f}}$ is finite; its positive cone is denoted by $\posintegrablefun$. We write $\integrableelemfun$ for the elementary functions with finite integral. For $f\in\integrablefun$, we define $\ointm{f}$ by splitting $f$ into its positive and negative parts. $\integrablefun$ is a \sDc\ vector lattice, and the positive operator $\opintm\colon \integrablefun\to\os$, defined by setting $\opintm{f}\coloneqq\ointm{f}$ for $f\in\integrablefun$, is  a $\sigma$-order continuous operator from $\integrablefun$ into $\os$;  see  \cite[Proposition~6.14]{de_jeu_jiang:2022a}.  We shall use the same notation $\opintm$ to denote the restrictions of the original map to subspaces of $\integrablefun$, and also to denote the induced maps on quotients of such subspaces.

We let $\aezerofun$ denote the order ideal of $\integrablefun$ that consists of the measurable functions that vanish $\npm$-almost everywhere, and we let $\ellone$ denote the resulting quotient vector lattice. According to \cite[Theorem~6.17]{de_jeu_jiang:2022a}, $\ellone$ is a $\sigma$-Dedekind complete vector lattice, and the map $\opintm\colon \ellone\to\os$ is strictly positive and $\sigma$-order continuous. When $\os$ is \mc\ and has the \csp, $\ellone$ is a \Dc\  vector lattice with the \csp. Moreover, $\opintm$ is then order continuous.

We shall write $\boundedmeasfun$ for the bounded measurable functions on $\pset$; $\posboundedmeasfun$ for its positive cone; $\boundedmeasfunae$ for its quotient with respect to $\aezerofun\cap\boundedmeasfun$; and $\posboundedmeasfunae$ for the positive cone of $\boundedmeasfunae$. The essential supremum norm can be defined on $\boundedmeasfunae$ as in the case where $\os=\RR$, and $\boundedmeasfunae$ is then a Banach lattice algebra that is isometrically isomorphic to $\cont{\ts}$ for a compact Hausdorff space $\ts$, unique up to homeomorphism.

In \cite[Section~6.2]{de_jeu_jiang:2022b}, the monotone convergence theorem,  Fatou's lemma, and (when $\os$ is \sDc) the dominated convergence theorem for the order integral are established. Although the space $\integrablefun$ consists of finite-valued functions, the monotone convergence theorem is valid for functions with values in the extended positive real numbers. We shall benefit from this in the proof of  \cref{3_res:pulling_back_ups_and_downs} on ups and downs.

Finally, for later use in the context of JBW-algebras, we record the following on image measures. The proofs are analogous to those in \cite[\textsection 19]{bauer_MEASURE_AND_INTEGRATION_THEORY:2001}.

\begin{proposition}\label{3_res:image_measure}
	Let $(\pset,\alg)$ and $(\pset\sp\prime,\alg\sp\prime)$ be measurable spaces, and let $\Psi\colon\pset\to\pset\sp\prime$ be $\alg\text{-}\alg\sp\prime$-measurable. Let $\os$ be a \smc\ partially ordered vector space, and let $\npm\colon \alg\to\pososext$ be a measure. Define $\npm\circ\Psi\sp{-1}\colon \alg\sp\prime\to\pososext$ by setting $\npm\circ\Psi\sp{-1}(\mss\sp\prime)\coloneqq\npm(\Psi\sp{-1}(\mss\sp\prime))$ for $\mss\sp\prime\in\alg\sp\prime$. Then $\npm\circ\Psi\sp{-1}$ is a measure on $\alg\sp\prime$. If $f\sp\prime\colon\pset\sp\prime\to\posRext$ is $\alg\sp\prime$-measurable, then
	\begin{equation}\label{3_eq:image_measure}
	\int_{\pset\sp\prime}\sp{\upo}\! f\sp\prime\di(\npm\circ\Psi\sp{-1})=\ointm{f\sp\prime\circ\Psi}
	\end{equation}
	in $\pososext$. If $f\sp\prime\colon\pset\sp\prime\to\RR$ is $\alg\sp\prime$-measurable, then $f\sp\prime\in{\lebfont L}\sp 1(\pset\sp\prime,\alg\sp\prime,\npm\circ\Psi^{-1};\RR)$ if and only if
	$f\circ\Psi\in{\lebfont L}\sp 1(\pset,\alg,\npm;\RR)$, in which case \cref{3_eq:image_measure} holds in $\os$.
\end{proposition}


\subsection{Spectral measures}\label{3_subsec:spectral_measures}


Of particular interest in this paper are measures that take their values in partially ordered algebras. Motivated by the terminology in the literature for measures of various sorts that take their values in algebras of operators, we introduce the following terminology for measures that take their values in (partially ordered) algebras. 

\begin{definition}\label{3_def:spectral_measures}
Let $\ms$ be a measurable space, and let $\oa$ be a \smc\ partially ordered algebra. A measure $\psm\colon \alg\to\posoaext$ is a \emph{spectral measure} when $\psm(\mss_1\cap\mss_2)=\psm(\mss_1)\psm(\mss_2)$ for all $\mss_1,\mss_2\in\alg$ with finite measure.
\end{definition}

\begin{remark}\label{3_rem:spectral_measure_with_infinite_values}\quad
	\begin{enumerate}
		\item\label{3_part:spectral_measure_with_infinite_values_1}
		For a spectral measure, the finite elements among the $\psm(\mss)$ for $\mss\in\alg$ form a family of commuting idempotents.  Somewhat surprisingly, when an $\posoaext$-valued measure is actually finite, this property already implies that the measure is a spectral measure; see \cref{3_res:idempotents_form_boolean_algebra} for an even larger context in which this equivalence holds. This property may be easier to verify. In fact, we shall do just that  in the proof of the key result \cref{3_res:representing_measure_is_spectral}.
		\item\label{3_part:spectral_measure_with_infinite_values_2}
		Infinite spectral measures exist.  As an example, one can take a two-point space $\pset=\{x,y\}$, take $\alg=2^\pset$, take $\oa=\RR$, and let $\psm$ be the measure on $\alg$ such that $\psm(\{x_1\})=1$ and $\psm(\{x_2\})=\infty$.
		\item\label{3_part:spectral_measure_with_infinite_values_3}
		As a thought experiment, one can supply $\posoaext$ with a multiplication in the canonical fashion, and then require that $\psm(\mss_1\cap\mss_2)=\psm(\mss_1)\psm(\mss_2)$ for \emph{all} $\mss_1,\mss_2\in\alg$.  When $\psm(\pset)=\infty$, this implies that $\psm(\mss)=\psm(\mss\cap\pset)=\psm(\mss)\cdot\infty$ for $\mss\in\alg$. It follows from this that $\psm(\mss)\in\{0,\infty\}$ for all $\mss\in\alg$. As a consequence, all integrable functions have zero order integral. This means that the only such spectral measures that can lead to anything interesting at all, are actually the finite ones, in which case they are then spectral measures as in \cref{3_def:spectral_measures}. Hence such a more stringent multiplicativity condition is not imposed.
		\item\label{3_part:spectral_measure_with_infinite_values_4}
		Suppose that, in the context and notation of \cref{3_res:image_measure}, $\os$ is a \smc\ partially ordered algebra. If $\npm$ is a spectral measure, then so is the image measure $\npm\circ\Psi\sp{-1}$.
	\end{enumerate}
\end{remark}

For general measures, finite or infinite, an integrable function need not be essentially bounded. Also for spectral measures, finite or infinite, this still need not be the case, as is shown by the following example.

\begin{example}\label{3_ex:integrable_function_for_spectral_measure_need_not_be_bounded}
	Let $\pset$ be an infinite set, and let $\mathrm{Fun}(X;\RR)$ denote the \Dc\ vector lattice of all real-valued functions on $\pset$. Set $\oa\coloneqq\regularop{\mathrm{Fun}(X;\RR)}$. Then $\oa$ is a \Dc\ vector lattice algebra. Take $\alg\coloneqq 2\sp X$.  For $\mss\in\alg$, let $\npm(\mss)\in\oa $ be the pointwise multiplication by the characteristic function of $\mss$. Then $\npm\colon \alg\to\posoa$ is a finite spectral measure and $\integrablefun$ consists of \emph{all} functions on $\pset$: for $f\in\integrablefun$, $\ointm{f}\in\oa$ is the pointwise multiplication by $f$. Since the empty set is the only subset of $\pset$ of measure zero and $\pset$ is infinite, there are elements of $\integrablefun$ that are not essentially bounded.
	
	To get an example where the spectral measure is infinite, we fix a point $x_0$ in $\pset$, and define $\npm^\prime\colon \alg\to\posoaext$ by letting $\npm^\prime$ be the multiplication by the characteristic function of $\mss$ when $x_0\notin\mss$, and setting $\npm^\prime(\mss)\coloneqq \infty$ when $x_0\in\mss$. Then $\npm^\prime$ is an infinite spectral measure, and $\integrablefunprime$ consists of all functions on $\pset$ that vanish at $x_0$: for $f\in\integrablefunprime$, $\ointmprime{f}\in\oa$ is, again, the pointwise multiplication by $f$. As above, there are elements of $\integrablefunprime$ that are not essentially bounded.
\end{example}

In view of \cref{3_ex:integrable_function_for_spectral_measure_need_not_be_bounded}, the following result for \emph{normed} algebras is, perhaps, a pleasant surprise. It is the counterpart of \cite[Proposition~V.4]{ricker_OPERATOR_ALGEBRAS_GENERATE_BY_COMMUTING_PROJECTIONS:1999}, which states that an integrable function with respect to a spectral measure with values in the bounded operators on a Banach space (see  \cite[Definition~III.2 and Definition~III.5]{ricker_OPERATOR_ALGEBRAS_GENERATE_BY_COMMUTING_PROJECTIONS:1999} for definitions) is automatically essentially bounded. The proof in our (rather different) context is considerably simpler.

\begin{lemma}\label{3_res:integrable_function_for_spectral_measure_is_bounded}
	Let $\ms$ be a measurable space, and let $\oa$ be a \smc\ partially ordered normed algebra such that $\norm{x}\le\norm{y}$ for all $x,y\in\oa$ with $0\leq x\leq y$. Let $\npm\colon \alg\to\posoaext$ be a measure such that $\npm(\mss)^2=\npm(\mss)$ for all $\mss\in\alg$ with finite measure. Then every element of $\integrablefun$ is essentially bounded.
\end{lemma}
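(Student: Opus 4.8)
The plan is to prove directly that every $f\in\integrablefun$ is essentially bounded, by inspecting the super-level sets of $\abs{f}$ and exploiting that their measures are positive idempotents, which must have norm at least $1$ once they are nonzero.

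First I would put $g\coloneqq\abs{f}$, a nonnegative integrable function with $\ointm{g}=\ointm{\abs{f}}$ finite, and introduce the decreasing sequence of measurable sets $\mss_n\coloneqq\{x\in\pset:g(x)>n\}$ for $n\geq 1$. Since $n\indicator{\mss_n}\leq g$ pointwise, the monotonicity of the order integral together with the value $\ointm{n\indicator{\mss_n}}=n\npm(\mss_n)$ on elementary functions gives
\[
n\,\npm(\mss_n)=\ointm{n\indicator{\mss_n}}\leq\ointm{g}
\]
in $\osext$. In particular $\npm(\mss_n)\leq\frac{1}{n}\ointm{g}$ is finite, so each $\mss_n$ has finite measure and, by hypothesis, $\npm(\mss_n)$ is a positive idempotent of $\oa$.

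Next I would invoke the submultiplicativity of the norm on the normed algebra $\oa$: for a nonzero idempotent $p=p^2$ one has $\norm{p}=\norm{p^2}\leq\norm{p}^2$, whence $\norm{p}\geq 1$. Combining this with the monotonicity of the norm applied to $0\leq n\npm(\mss_n)\leq\ointm{g}$ yields, whenever $\npm(\mss_n)\neq 0$,
\[
n\leq n\,\norm{\npm(\mss_n)}=\norm{n\,\npm(\mss_n)}\leq\norm{\ointm{g}}.
\]
As $\norm{\ointm{g}}$ is a fixed finite number, this forces $\npm(\mss_n)=0$ for every integer $n>\norm{\ointm{g}}$. Fixing such an $n_0$, the set $\{g>n_0\}$ is $\npm$-null, so $g\leq n_0$ $\npm$-almost everywhere; hence $f$ agrees $\npm$-almost everywhere with the bounded measurable function $f\cdot\indicator{\{g\leq n_0\}}$, which is precisely the inclusion $\ellone\subseteq\boundedmeasfunae$.

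I expect the only step needing care to be the passage from the idempotent relation $\npm(\mss_n)^2=\npm(\mss_n)$ to the norm lower bound $\norm{\npm(\mss_n)}\geq 1$, which rests on the submultiplicativity built into the notion of a normed algebra (and which is exactly the abstract counterpart of the fact that nonzero projections have operator norm at least $1$). Everything else is a routine combination of the elementary properties of the order integral recalled in \cref{3_subsec:measures_and_integrals}\textemdash monotonicity and the value on elementary functions\textemdash with the monotonicity of the norm assumed in the statement.
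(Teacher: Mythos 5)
Your proof is correct and takes essentially the same route as the paper's: both bound the measure of the super-level sets of $\abs{f}$ by $\tfrac{1}{n}\lrnorm{\,\ointm{\abs{f}}\,}$ via monotonicity of the norm, and both use that a nonzero idempotent in a normed algebra has norm at least one to force $\npm(\mss_n)=0$ for all sufficiently large $n$. You merely spell out two details the paper leaves implicit, namely that $n\npm(\mss_n)\leq\ointm{\abs{f}}$ shows each $\mss_n$ has finite measure (so the idempotence hypothesis applies) and that the lower bound $\norm{p}\geq 1$ rests on submultiplicativity.
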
	

\begin{proof}
	Take an $f\in\integrablefun$. For $n\geq 1$, set $\mss_n\coloneqq\{x\in\pset: \abs{f(x)}\geq n\}$. Then the monotonicity of the norm on $\posoa$ implies that
	\[
	\norm{\npm(\mss_n)}\leq \frac{1}{n}\lrnorm{\,\ointm{\abs{f}}\,}
	\]
	for $n\geq 1$. As each $\npm(\mss_n)$ is an idempotent, its norm is either zero or at least one. Hence $\npm(\mss_n)=0$ for all sufficiently large $n$.
\end{proof}

\begin{remark}\label{3_rem:embedding_of_ellone}
When every  element of $\integrablefun$ is essentially bounded, there is a canonically defined embedding of  $\ellone$ as a vector sublattice of $\boundedmeasfunae$. We shall allow ourselves to write $\ellone\subseteq\boundedmeasfunae$ in this case.
\end{remark}

\subsection{An embedding as a closed subalgebra}\label{3_subsec:a_closed_range_theorem}
It is known that, for a locally compact Hausdorff space $\ts$, the supremum norm is the minimal algebra norm on $\contots$ and $\contoCts$. We refer to \cite[Theorem~6.2]{kaplansky:1949} for this fact; the special case of $\contCts$ for compact $\ts$ is also covered by \cite[Theorem~1.2.4]{sakai_C-STAR-ALGEBRAS_AND_W-STAR-ALGEBRAS:1971}.   The alternate norm need not be complete, nor need it be unital when $\ts$ is compact. It is only supposed that $\norm{fg}\leq \norm{f}\norm{g}$ for $f,g\in\contots$ or $\contoCts$. This has the following consequence.

\begin{proposition}\label{3_res:topological_embedding_with_closed_image}
	Let $\ts$ be a locally compact Hausdorff space, let $\oa$ be a real normed algebra, and let $\posmap\colon  \contots\to\oa$ be a  continuous algebra homomorphism. Let $q\colon \contots\to\contots/\Ker \posmap$ be the quotient map, and let $\overline{\posmap}\colon  \contots/\Ker\posmap\to\oa$ be the algebra homomorphism such that $\posmap=\overline{\posmap}\circ q$. Then we have  $\norm{q(f)}\leq\norm{\overline{\posmap}(q(f))}\leq\norm{\posmap}\,\norm{q(f)}$ for $f\in \contots$. Consequently, $\overline{\posmap}$ is a topological embedding of $\contots/\Ker \posmap$  as the subalgebra $\posmap(\contots)$ of $\oa$, where $\posmap(\contots)$ is closed in $\oa$.
\end{proposition}

We mention explicitly that $\oa$ is not required to be complete or unital; when it is unital, it is not required that its identity element have norm 1; when $\oa$ is unital and $\ts$ is compact, it is not required that $\posmap$ be unital.

\begin{proof}
	All will be clear once we know that $\norm{q(f)}\leq\norm{\overline{\posmap}(q(f))}$ for $f\in\contots$. For this, we note that $\Ker\posmap\oplus{\upi}\Ker\posmap$ is a closed ideal of the complex \Calgebra\ $\contoCts$. A moment's thought shows that the canonical map from $\contots/\Ker\posmap$ into $\contoCts/(\Ker\posmap\oplus{\upi}\Ker\posmap)$ is an isometric embedding of $\contots/\Ker\posmap$ as the self-adjoint part of the commutative complex \Calgebra\ $\contoCts/(\Ker\posmap\oplus{\upi}\Ker\posmap)$. Hence $\contots/\Ker\posmap$ in its quotient norm is isometrically isomorphic to $\conto{\ts\sp\prime}$ for some locally compact Hausdorff space $\ts\sp\prime$. Since $q(f)\mapsto\norm{\overline\posmap (q(f))}$ provides an algebra norm on $\contots/\Ker\posmap$, the minimality of the supremum norm on $\conto{\ts\sp\prime}$ mentioned above implies the desired inequality.	
\end{proof}

The analogous statement for $\contoCts$ is obviously also valid, with a yet easier proof.

 \section{Commuting idempotents and spectral measures}\label{3_sec:commuting_idempotents_and_spectral_measures}

\noindent As indicated in the introduction, our approach to obtain a representing spectral measure for a positive algebra homomorphism $\posmap$ from $\contcts$ or $\contots$ into a partially ordered algebra consists of two steps. The first step is to invoke a Riesz representation theorem for positive operators from \cite{de_jeu_jiang:2022b}, showing that there is a representing measure for $\posmap$. The second step is to use the multiplicativity of $\posmap$ to show that this representing measure is, in fact, a spectral measure. This second step follows from the key result \cref{3_res:representing_measure_is_spectral} in this section. Its proof is easier than one might perhaps expect.

We need the following preparatory result for the proof of \cref{3_res:representing_measure_is_spectral}. The, perhaps, somewhat surprising equivalence of its parts~\ref{3_part:idempotents_form_boolean_algebra_1} and~\ref{3_part:idempotents_form_boolean_algebra_2} and the conditional validity of part~\ref{3_part:idempotents_form_boolean_algebra_a} also hold in the absence of a partial ordering. Although we shall apply it in the context of associative algebras, neither the associativity of the multiplication nor the vector space structure is needed for its proof. In view of its rather basic nature, we have formulated it under minimal hypotheses. It applies to what could be called  (not necessarily associative) partially ordered $\ZZ$-algebras that have no additive 2-torsion. Its parts~\ref{3_part:idempotents_form_boolean_algebra_b} and~\ref{3_part:idempotents_form_boolean_algebra_c} already hint at the moduli preserving properties of algebra homomorphisms in \cref{3_sec:relations_between_measures_algebra_homomorphisms_and_vector_lattice_homomorphisms}.

\begin{proposition}\label{3_res:idempotents_form_boolean_algebra}
	Let $\pset$ be a non-empty set, and let $\alg$ be an algebra of subsets of $\pset$. Let $\oa$ be an abelian group that has no elements of order 2. Suppose that $\oa$ is supplied with a translation invariant partial ordering such that $\posoa+\posoa\subseteq\posoa$, and with a bi-additive map $(a_1,a_2)\mapsto a_1a_2$ from $\oa\times\oa$ into $\oa$ such that $\posoa\posoa\subseteq\posoa$. Let $\psm\colon \alg\to\posoa$ be a map such that
	\begin{itemize}
		\item $\psm(\emptyset)=0$;
		\item $\psm\left(\bigcup_{i=1}^n\mss_i\right)=\sum_{i=1}^n \psm(\mss_i)$ whenever $\mss_1,\dotsc,\mss_n\in\alg$ are pairwise disjoint.
	\end{itemize}
	Then the following are equivalent:
	\begin{enumerate_arabic}
		\item\label{3_part:idempotents_form_boolean_algebra_1}   $\psm(\mss)^2=\psm(\mss)$ for $\mss\in\alg$ and $\psm(\mss_1)\psm(\mss_2)=\psm(\mss_2)\psm(\mss_1)$ for $\mss_1,\mss_2\in\alg$;
		\item\label{3_part:idempotents_form_boolean_algebra_2}
		$\psm(\mss_1\cap\mss_2)=\psm(\mss_1)\psm(\mss_2)$ for $\mss_1,\mss_2\in\alg$.
	\end{enumerate_arabic}

	Suppose that this is the case. Set $\leftidalgnonassociative{\psm(\pset)}{\oa}\coloneqq\{a\in\oa: \psm(X) a=a\}$ and $\rightidalgnonassociative{\psm(\pset)}{\oa}\coloneqq\{a\in\oa:  a\psm(X)=a\}$. Then:
	\begin{enumerate_alpha}
		\item\label{3_part:idempotents_form_boolean_algebra_a}
		$\psm(\alg)\subseteq\tposskip\leftidalgnonassociative{\psm(\pset)}{\oa}\cap\rightidalgnonassociative{\psm(\pset)}{\oa}$;
		\item\label{3_part:idempotents_form_boolean_algebra_b}
		for $\mss_1,\mss_2\in\alg$, $\psm(\mss_1)\vee\psm(\mss_2)$ exists in 	
		$\leftidalgnonassociative{\psm(\pset)}{\oa}$,  $\rightidalgnonassociative{\psm(\pset)}{\oa}$, $\leftidalgnonassociative{\psm(\pset)}{\oa}\cap\rightidalgnonassociative{\psm(\pset)}{\oa}$; it equals $\psm(\mss_1\cup\mss_2)$ in all cases;
		\item\label{3_part:idempotents_form_boolean_algebra_c}
		for $\mss_1,\mss_2\in\alg$, $\psm(\mss_1)\wedge\psm(\mss_2)$ exists in $\leftidalgnonassociative{\psm(\pset)}{\oa}$,  $\rightidalgnonassociative{\psm(\pset)}{\oa}$, $\leftidalgnonassociative{\psm(\pset)}{\oa}\cap\rightidalgnonassociative{\psm(\pset)}{\oa}$, and $\npm(\alg)$; it equals $\psm(\mss_1\cap\mss_1)$ in all cases;
		\item\label{3_part:idempotents_form_boolean_algebra_d}  		
		when supplied with the partial ordering inherited from $\oa$, the set $\psm(\alg)$ is a Boolean algebra with largest element $\psm(X)$ and smallest element 0; the complement of $a\in\psm(\alg)$ is $\psm(\pset)-a$. The map $\psm\colon \alg\to\psm(\alg)$ is a homomorphism of Boolean algebras.
	\end{enumerate_alpha}
\end{proposition}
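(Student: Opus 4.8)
The plan is to establish the equivalence first and then read off the consequences \ref{3_part:idempotents_form_boolean_algebra_a}--\ref{3_part:idempotents_form_boolean_algebra_d}. For the easy implication \ref{3_part:idempotents_form_boolean_algebra_2}$\Rightarrow$\ref{3_part:idempotents_form_boolean_algebra_1} I would simply specialise: taking $\mss_1=\mss_2=\mss$ gives $\psm(\mss)^2=\psm(\mss\cap\mss)=\psm(\mss)$, and since $\mss_1\cap\mss_2=\mss_2\cap\mss_1$ both $\psm(\mss_1)\psm(\mss_2)$ and $\psm(\mss_2)\psm(\mss_1)$ equal $\psm(\mss_1\cap\mss_2)$, hence commute.

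The substance is \ref{3_part:idempotents_form_boolean_algebra_1}$\Rightarrow$\ref{3_part:idempotents_form_boolean_algebra_2}, and the key lemma I would isolate is that \emph{disjoint sets have orthogonal images}: if $\mss\cap\mss'=\emptyset$, then $\psm(\mss)\psm(\mss')=0$. Indeed $\psm(\mss)+\psm(\mss')=\psm(\mss\cup\mss')$ is again idempotent; expanding $(\psm(\mss)+\psm(\mss'))^2$ by biadditivity and cancelling the two squares leaves $\psm(\mss)\psm(\mss')+\psm(\mss')\psm(\mss)=0$, which commutativity turns into $2\,\psm(\mss)\psm(\mss')=0$, whence $\psm(\mss)\psm(\mss')=0$ since $\oa$ has no element of order $2$. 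This is the one place where the no-$2$-torsion hypothesis is essential, and I expect it to be the crux of the whole argument. With the lemma available, I would decompose $\mss_1=(\mss_1\cap\mss_2)\sqcup(\mss_1\setminus\mss_2)$ and $\mss_2=(\mss_1\cap\mss_2)\sqcup(\mss_2\setminus\mss_1)$, use finite additivity to write each $\psm(\mss_i)$ as a sum of two pieces, and expand $\psm(\mss_1)\psm(\mss_2)$ into four products. Three of them pair disjoint sets and vanish by the lemma, while the survivor is $\psm(\mss_1\cap\mss_2)^2=\psm(\mss_1\cap\mss_2)$, giving \ref{3_part:idempotents_form_boolean_algebra_2}.

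Assuming the equivalent conditions, part~\ref{3_part:idempotents_form_boolean_algebra_a} is immediate from \ref{3_part:idempotents_form_boolean_algebra_2}: since $\pset\in\alg$, one has $\psm(\pset)\psm(\mss)=\psm(\pset\cap\mss)=\psm(\mss)$ and symmetrically on the right, so every $\psm(\mss)$ lies in $\leftidalgnonassociative{\psm(\pset)}{\oa}\cap\rightidalgnonassociative{\psm(\pset)}{\oa}$. For \ref{3_part:idempotents_form_boolean_algebra_b} and \ref{3_part:idempotents_form_boolean_algebra_c} I would argue in $\leftidalgnonassociative{\psm(\pset)}{\oa}$, the right-hand case being symmetric. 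Write $e=\psm(\pset)$, $p_i=\psm(\mss_i)$, and let $q=\psm(\mss_1\cup\mss_2)$, $m=\psm(\mss_1\cap\mss_2)=p_1p_2$. These are bounds because $q-p_1=\psm(\mss_2\setminus\mss_1)\in\posoa$, $p_1-m=\psm(\mss_1\setminus\mss_2)\in\posoa$, etc., and they lie in $\leftidalgnonassociative{\psm(\pset)}{\oa}$ by \ref{3_part:idempotents_form_boolean_algebra_2}. The point is minimality/maximality, and here the trick is to multiply inequalities by positive complementary idempotents. Given $b$ with $eb=b$ and $b\geq p_1,p_2$, I would use $e-p_1=\psm(\pset\setminus\mss_1)\geq 0$ to get $(e-p_1)(b-p_2)\geq 0$, which (via $eb=b$, $ep_2=p_2$) reads $b-p_1b\geq p_2-p_1p_2=q-p_1$; adding $p_1(b-p_1)\geq 0$, i.e.\ $p_1b\geq p_1$, yields $b\geq q$. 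Dually, for $b\leq p_1,p_2$ the relation $(e-p_1)(p_1-b)\geq 0$ collapses (as $(e-p_1)p_1=0$) to $b\leq p_1b$, and $p_1(p_2-b)\geq 0$ gives $p_1b\leq p_1p_2=m$, so $b\leq m$.

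Finally, for part~\ref{3_part:idempotents_form_boolean_algebra_d} the computations above show that, in the inherited order, $\psm(\alg)$ is a bounded lattice with bottom $0=\psm(\emptyset)$ and top $e=\psm(\pset)$ (since $0\leq\psm(\mss)$ and $e-\psm(\mss)=\psm(\pset\setminus\mss)\geq 0$), and that $\psm(\mss_1)\vee\psm(\mss_2)=\psm(\mss_1\cup\mss_2)$, $\psm(\mss_1)\wedge\psm(\mss_2)=\psm(\mss_1\cap\mss_2)$; because these ambient suprema and infima already lie in $\psm(\alg)$, they are the suprema and infima within $\psm(\alg)$ as well. Thus $\psm\colon\alg\to\psm(\alg)$ sends $\cup,\cap$ to $\vee,\wedge$ and, by additivity, sends $\pset\setminus\mss$ to $e-\psm(\mss)$, with $\psm(\mss)\vee(e-\psm(\mss))=\psm(\pset)=e$ and $\psm(\mss)\wedge(e-\psm(\mss))=\psm(\emptyset)=0$, so $e-\psm(\mss)$ is a complement. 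Distributivity of $\vee,\wedge$ on $\psm(\alg)$ then transfers verbatim from the distributive laws for $\cup,\cap$ through this surjection. Hence $\psm(\alg)$ is a complemented distributive bounded lattice, i.e.\ a Boolean algebra, and $\psm$ is a surjective homomorphism of Boolean algebras.
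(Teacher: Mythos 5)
Your proof is correct and follows essentially the same route as the paper's: the identical key lemma that disjoint sets have orthogonal images (squaring $\psm(\mss\cup\mss')$, cancelling, and using the absence of $2$-torsion), followed by the identical four-term decomposition of $\psm(\mss_1)\psm(\mss_2)$ for the implication from the first condition to the second. The only cosmetic deviation is in parts (b) and (c): the paper bounds an upper bound $a$ from below by multiplying it against the four-piece partition of $\psm(\pset)$ and then deduces (c) from (b) by complementation with $\psm(\pset)-a$, whereas you expand products of differences such as $(e-p_1)(b-p_2)\geq 0$ and handle (c) directly\textemdash both are equivalent instances of multiplying order inequalities by positive idempotents.
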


\begin{proof} We prove that part~\ref{3_part:idempotents_form_boolean_algebra_1} implies part~\ref{3_part:idempotents_form_boolean_algebra_2}.
	First, take $\mss_1,\mss_2\in\alg$ such that $\mss_1\cap\mss_2=\emptyset$. Then
	\begin{align*}
		\psm(\mss_1)+\psm(\mss_2)&=\psm(\mss_1\cup\mss_2)\\
		&=\big(\psm(\mss_1\cup\mss_2)\big)^2\\
		&=\big(\psm(\mss_1)+\psm(\mss_2)\big)^2\\
		&=\psm(\mss_1)+2\psm(\mss_1)\psm(\mss_2)+\psm(\mss_2).
	\end{align*}
	Hence $\psm(\mss_1)\psm(\mss_2)=0$ whenever $\mss_1,\mss_2\in\alg$ are disjoint. Next, take $\mss_1,\mss_2\in\alg$ arbitrary. Using what we have just established, we see that
	\begin{align*}
		\psm(\mss_1)\psm(\mss_2)&=\big(\psm(\mss_1\setminus(\mss_1\cap\mss_2))+\psm(\mss_1\cap\mss_2)\big)\, \big(\psm(\mss_2\setminus(\mss_1\cap\mss_2))\\
		&\phantom{=}\quad+\psm(\mss_1\cap\mss_2)\big)\\
		&=\psm(\mss_1\setminus(\mss_1\cap\mss_2)) \psm(\mss_2\setminus(\mss_1\cap\mss_2))\\
		&\phantom{=}\quad+\psm(\mss_1\setminus(\mss_1\cap\mss_2)) \psm(\mss_1\cap\mss_2)\\
		&\phantom{=}\quad + \psm(\mss_1\cap\mss_2) \psm(\mss_2\setminus(\mss_1\cap\mss_2))+\psm(\mss_1\cap\mss_2)\,\psm(\mss_1\cap\mss_2)\\
		&=0+0+0+\psm(\mss_1\cap\mss_2)^2\\
		&	=\psm(\mss_1\cap\mss_2),
	\end{align*}
	as required. It is clear that part~\ref{3_part:idempotents_form_boolean_algebra_2} implies part~\ref{3_part:idempotents_form_boolean_algebra_1}.
	
	Suppose that the properties in the parts~\ref{3_part:idempotents_form_boolean_algebra_1} and~\ref{3_part:idempotents_form_boolean_algebra_2} are valid.
	
	Then part~\ref
	{3_part:idempotents_form_boolean_algebra_a} follows from part~\ref{3_part:idempotents_form_boolean_algebra_2}
	
	We turn to part~\ref{3_part:idempotents_form_boolean_algebra_b}. We consider the supremum in $\{a\in\oa: a\psm(X)=a\}$; the other case is handled similarly, and each of these implies the statements in the remaining two cases. Take $\mss_1,\mss_2\in\alg$.  Then $\psm(\mss_1\cup\mss_2)\geq\psm(\mss_1)$ and $\psm(\mss_1\cup\mss_2)\geq\psm(\mss_2)$. Suppose that $a\in\oa$ is such that $a\geq\psm(\mss_1)$, $a\geq\psm(\mss_2)$, and $a\psm(\pset)=a$. For any $\mss_3\in\alg$ such that $\mss_3\subseteq\mss_1$, we then have $a\geq\psm(\mss_3)$, so that $a\psm(\mss_3)\geq \psm(\mss_3)^2=\psm(\mss_3)$; and similarly for any $\mss_3\subseteq\mss_2$. Using this, we see that
	\begin{align*}
		a&=a\psm(\pset)\\
		&=a\big(\psm(\mss_1\setminus(\mss_1\cap\mss_2))+\psm(\mss_1\cap\mss_2)+\psm(\mss_2\setminus(\mss_1\cap\mss_2))\\
		&\phantom{=}\quad +\psm(\pset\setminus(\mss_1\cup\mss_2))\big)\\
		&\geq \psm(\mss_1\setminus(\mss_1\cap\mss_2))+\psm(\mss_1\cap\mss_2)+\psm(\mss_2\setminus(\mss_1\cap\mss_2))\\
		&=\psm(\mss_1\cup\mss_2).
	\end{align*}
	This concludes the proof of part~\ref{3_part:idempotents_form_boolean_algebra_b}.
	
	For part~\ref{3_part:idempotents_form_boolean_algebra_c}, we consider only the infimum in $\{a\in\oa: a\psm(X)=a\}$. The other case is handled similarly, and each of these implies the statements in the remaining two cases. Take $\mss_1,\mss_2\in\alg$.  It is clear that $\psm(\mss_1\cap\mss_2)\leq\psm(\mss_1)$ and $\psm(\mss_1\cap\mss_2)\leq\psm(\mss_2)$. Suppose that $a\in\oa$ is such that $a\leq\psm(\mss_1)$, $a\leq\psm(\mss_2)$, and $a\psm(\pset)=a$. Then $\psm(\pset)-a\geq\psm(\pset)-\psm(\mss_1)=\psm(\pset\setminus\mss_1)$; likewise, $\psm(\pset)-a\geq\psm(\pset\setminus\mss_2)$.  Part~\ref{3_part:idempotents_form_boolean_algebra_b} then implies that $\psm(\pset)-a\geq\psm(\pset\setminus(\mss_1\cap\mss_2))$, showing that $a\leq\psm(\mss_1\cap\mss_2)$. This concludes the proof of part~\ref{3_part:idempotents_form_boolean_algebra_c}.
	
	Now that the parts~\ref{3_part:idempotents_form_boolean_algebra_b} and~\ref{3_part:idempotents_form_boolean_algebra_c} have been established, it is clear that $\psm(\alg)$ is a lattice when supplied with the partial ordering inherited from $\oa$. It is routine to verify the  statements in part~\ref{3_part:idempotents_form_boolean_algebra_d}.
\end{proof}

\begin{remark}\label{3_rem:idempotents_form_boolean_algebra_associative_case}
If the multiplication in $\oa$ in \cref{3_res:idempotents_form_boolean_algebra} is associative, then clearly	
	$\leftidalgnonassociative{\psm(\pset)}{\oa}=\leftidalg{\psm(\pset)}{\oa}$,
	$\rightidalgnonassociative{\psm(\pset)}{\oa}=\rightidalg {\psm(\pset)}{\oa}$, and $\leftidalgnonassociative{\psm(\pset)}{\oa}\cap \rightidalgnonassociative{\psm(\pset)}{\oa}=\leftrightidalg{\psm(\pset)}{\oa}$.
\end{remark}

Although we shall not use it in our proofs, before we proceed, we still want to mention the following strengthening of what we believe to be a noteworthy result of Alekhno's in \cite{alekhno:2012}. Compared to \cref{3_res:idempotents_form_boolean_algebra}, its premises are of a different nature, but its conclusions are in the same vein. In \cref{3_res:idempotents_form_boolean_algebra}, the commutativity of the multiplication on the idempotents is supposed in  part~\ref{3_part:idempotents_form_boolean_algebra_1}, and its associativity follows from part~\ref{3_part:idempotents_form_boolean_algebra_2}. In \cref{3_res:alekhno_improved}, it is just the other way around. As in \cite{alekhno:2012}, the ordering is already necessary to establish the purely algebraic statement in part~\ref{3_part:alekhno_improved_1} of \cref{3_res:alekhno_improved} which, just as  \cref{3_res:idempotents_form_boolean_algebra}, applies to (not necessarily associative) partially ordered $\ZZ$-algebras, but now also when there is additive 2-torsion.

\begin{proposition}[Alekhno]\label{3_res:alekhno_improved}
	Let $\oa$ be an abelian group that is supplied with a translation invariant partial ordering such that $\posoa+\posoa\subseteq\posoa$, and with a bi-additive  map $(a_1,a_2)\mapsto a_1a_2$ from $\oa\times\oa$ into $\oa$ such that $\posoa\posoa\subseteq\posoa$. Take a positive idempotent $e$ in $\oa$, and let
	\[
	\mathrm{OI}(\oa;e)\coloneqq\{p\in\oa: 0\leq p\leq e,\,p^2=p,\,ep=pe=p\}
	\]
	denote the order idempotents relative to $e$. Suppose that $(p_1p_2)p_3=p_1(p_2p_3)$ whenever $p_1,p_2,p_3\in\mathrm{OI}(\oa;e)$.
	Set $\leftidalgnonassociative{e}{\oa}\coloneqq\{a\in\oa: ea=a\}$ and $\rightidalgnonassociative{e}{\oa}\coloneqq\{a\in\oa: ae=a\}$.
	Then ${\mathrm{OI}}(\oa;e)\subseteq {\leftidalgnonassociative{e}{\oa}}\cap{\rightidalgnonassociative{e}{\oa}}$, and:
	\begin{enumerate}
		\item\label{3_part:alekhno_improved_1}
		$p_1p_2=p_2p_1$ for all $p_1,p_2\in\mathrm{OI}(\oa;e)$;
		\item\label{3_part:alekhno_improved_2}
		for $p_1,p_2\in\mathrm{OI}(\oa;e)$, $p_1\vee p_2$ exists in $\leftidalgnonassociative{e}{\oa}$, $\rightidalgnonassociative{e}{\oa}$,  $\leftidalgnonassociative{e}{\oa}\cap\rightidalgnonassociative{e}{\oa}$, and $\mathrm{OI}(\oa;e)$. In all cases, it equals $p_1+p_2-p_1p_2\in \mathrm{OI}(\oa;e)$;
		\item\label{3_part:alekhno_improved_3}
		for $p_1,p_2\in\mathrm{OI}(\oa;e)$, $p_1\wedge p_2$ exists in  $\leftidalgnonassociative{e}{\oa}$, $\rightidalgnonassociative{e}{\oa}$,  $\leftidalgnonassociative{e}{\oa}\cap\rightidalgnonassociative{e}{\oa}$, and $\mathrm{OI}(\oa;e)$. In all cases, it equals $p_1p_2\in \mathrm{OI}(\oa;e)$.
	\end{enumerate}
	Supplied with the partial ordering inherited from $\oa$, $\mathrm{OI}(\oa;e)$ is a Boolean algebra with largest element $e$ and smallest element 0; the complement of $p\in{\mathrm {OI}}(\oa;e)$ is $e-p$.
	
	Suppose that every non-empty upward directed subset of ${\mathrm{OI}}(\oa;e)$ has a supremum in $\leftidalgnonassociative{e}{\oa}\cap\rightidalgnonassociative{e}{\oa}$. Take a non-empty \uppars{not necessarily directed} subset of ${\mathrm{OI}}(\oa;e)$. Then its supremum and infimum exist in $\leftidalgnonassociative{e}{\oa}\cap\rightidalgnonassociative{e}{\oa}$, and they are both elements of ${\mathrm{OI}(\oa;e)}$. Hence $\mathrm{OI}(\oa;e)$ is a complete Boolean algebra in this case.
\end{proposition}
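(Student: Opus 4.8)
My plan is to derive the entire statement from two order estimates valid for any pair of order idempotents, together with the assumed associativity on $\mathrm{OI}(\oa;e)$. I would use repeatedly that the multiplication is monotone in each variable: if $0\le a\le b$ and $c\ge 0$, then $cb-ca=c(b-a)\in\posoa$ and $bc-ac=(b-a)c\in\posoa$, so $ca\le cb$ and $ac\le bc$. The inclusion $\mathrm{OI}(\oa;e)\subseteq {\leftidalgnonassociative{e}{\oa}}\cap{\rightidalgnonassociative{e}{\oa}}$ is immediate from $ep=pe=p$, and $\mathrm{OI}(\oa;e)$ is stable under complementation: for $p\in\mathrm{OI}(\oa;e)$ one has $(e-p)^2=e-p$, $e(e-p)=(e-p)e=e-p$, and $0\le e-p\le e$. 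Finally, for $p,q\in\mathrm{OI}(\oa;e)$ I would record the bounds $qp\le ep=p$ (from $q\le e$) and $qp\ge p+q-e$ (expand $(e-q)(e-p)\ge 0$ into $e-p-q+qp$), together with the symmetric $pq\le p$ and $pq\ge p+q-e$.

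Commutativity, part~(1), is the key step, and the one where the ordering does the real work. Fix $p,q\in\mathrm{OI}(\oa;e)$ and note that $qpq=(qp)q=q(pq)$ is unambiguous by the associativity hypothesis. Multiplying $pq\le p$ and $pq\ge p+q-e$ on the left by $q\ge 0$ gives $q(pq)\le qp$ and $q(pq)\ge q(p+q-e)=qp+q-q=qp$, hence $qpq=qp$; multiplying $qp\le p$ and $qp\ge p+q-e$ on the right by $q\ge 0$ gives $(qp)q\le pq$ and $(qp)q\ge(p+q-e)q=pq+q-q=pq$, hence $qpq=pq$. Antisymmetry then yields $pq=qpq=qp$.

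With commutativity available I would next show $\mathrm{OI}(\oa;e)$ is closed under products. For $x=pq=qp$ one has $0\le x\le e$, while $x^2\le x\cdot q=(pq)q=pq=x$ and $x^2\ge x(p+q-e)=xp+xq-xe=x+x-x=x$, using $xp=(qp)p=qp=x$, $xq=x$, and $xe=(pq)e=pq=x$; since also $ex=x=xe$, we get $x\in\mathrm{OI}(\oa;e)$. As products of order idempotents are again order idempotents, the three-fold associativity hypothesis upgrades by the generalised associativity theorem to full associativity of the multiplication on $\mathrm{OI}(\oa;e)$, making $(\mathrm{OI}(\oa;e),\cdot)$ a commutative idempotent monoid with unit $e$. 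For part~(3) I would verify that $p_1p_2$ is the infimum: it is a lower bound of $\{p_1,p_2\}$ in ${\leftidalgnonassociative{e}{\oa}}$, and for any lower bound $a\in{\leftidalgnonassociative{e}{\oa}}$ the relation $a\le p_2$ gives $(e-p_2)a\le(e-p_2)p_2=0$, so $a\le p_2a$, whence $a\le p_2a\le p_2p_1=p_1p_2$ from $a\le p_1$; the case of ${\rightidalgnonassociative{e}{\oa}}$ is symmetric. For part~(2) I would obtain the supremum by De Morgan: $s:=e-(e-p_1)(e-p_2)=p_1+p_2-p_1p_2$ lies in $\mathrm{OI}(\oa;e)$, dominates $p_1,p_2$ since $s-p_1=(e-p_1)p_2\ge 0$, and is least because any upper bound $a\in{\leftidalgnonassociative{e}{\oa}}$ makes $e-a$ a lower bound of $\{e-p_1,e-p_2\}$, so $e-a\le(e-p_1)(e-p_2)$ and $a\ge s$. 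The Boolean-algebra assertions are then routine: $0$ and $e$ are the extreme elements, $e-p$ is the complement of $p$ since $p(e-p)=0$ and $p\vee(e-p)=e$, and distributivity follows from commutative associativity, as in $p(q+r-qr)=pq+pr-pqr=pq+pr-(pq)(pr)$.

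For the final assertion I would reduce arbitrary suprema to directed ones: given non-empty $S\subseteq\mathrm{OI}(\oa;e)$, the set $D$ of finite joins of elements of $S$ is upward directed and, by part~(2), contained in $\mathrm{OI}(\oa;e)$, so its supremum $\sigma$ exists in ${\leftidalgnonassociative{e}{\oa}}\cap{\rightidalgnonassociative{e}{\oa}}$ by hypothesis and equals $\sup S$ there; infima are handled by passing to the complements $\{e-p:p\in S\}$. The substantive point, which I expect to be the main obstacle, is that $\sigma$ again lies in $\mathrm{OI}(\oa;e)$, i.e.\ $\sigma^2=\sigma$, with neither associativity outside $\mathrm{OI}(\oa;e)$ nor monotone continuity of the multiplication at one's disposal. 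I would first squeeze out that $\sigma$ is a two-sided unit on $D$: for $d\le d'$ in $D$ one has $dd'=d$, so $d=dd'\le d\sigma\le de=d$ gives $d\sigma=d$, and symmetrically $\sigma d=d$; consequently $\sigma^2\le\sigma e=\sigma$ and $d=d\sigma\le\sigma^2$, so $d\le\sigma^2\le\sigma$ for all $d$. The obstruction is that concluding $\sigma\le\sigma^2$ needs $\sigma^2$ to lie in ${\leftidalgnonassociative{e}{\oa}}\cap{\rightidalgnonassociative{e}{\oa}}$, and the identities $e\sigma^2=\sigma^2=\sigma^2e$ cannot be read off by cancelling $e$ in the non-associative setting. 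I would tackle this by the order method: from $d\sigma=\sigma d=d$ and $d^2=d$ one gets $\sigma^2=d+(\sigma-d)^2$, and the purely monotone estimates $0\le(\sigma-d)^2\le\sigma-d$ and $(\sigma-d)^2d=0=d(\sigma-d)^2$ trap the fixed elements $e\sigma^2-\sigma^2$ and $\sigma^2e-\sigma^2$ between $-(\sigma-d)$ and $\sigma-d$ for every $d\in D$. Since $\inf_{d\in D}(\sigma-d)=0$ in ${\leftidalgnonassociative{e}{\oa}}\cap{\rightidalgnonassociative{e}{\oa}}$, the remaining delicate task is to show that these trapped elements do belong to that space, after which they are forced to vanish, giving $\sigma^2\in{\leftidalgnonassociative{e}{\oa}}\cap{\rightidalgnonassociative{e}{\oa}}$ and hence $\sigma\le\sigma^2$. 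With $\sigma\in\mathrm{OI}(\oa;e)$ established, the finite Boolean structure already obtained upgrades to a complete Boolean algebra.
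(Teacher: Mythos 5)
Your development through the Boolean-algebra statement is correct and complete, and it is worth noting that the paper itself contains no written proof of this proposition: \cref{3_rem:alekhno} merely points to Alekhno's Lemma~2.1 and Corollary~2.2, which are formulated for an ordered \emph{Banach} algebra with positive identity, and asserts that his arguments adapt. Your order estimates $pq\le p$ and $pq\ge p+q-e$ (from $(e-p)(e-q)\ge 0$), the multiplication of these by $q$ combined with the triple associativity on $\mathrm{OI}(\oa;e)$ to force $qp=q(pq)=(qp)q=pq$, the closure of $\mathrm{OI}(\oa;e)$ under products and complements, and the identification of $p_1p_2$ and $p_1+p_2-p_1p_2$ as infimum and supremum in $\leftidalgnonassociative{e}{\oa}$ and $\rightidalgnonassociative{e}{\oa}$ are all valid, nowhere divide by $2$ (so additive $2$-torsion is indeed permitted), and the upgrade from triple associativity plus closure to a genuine commutative idempotent semigroup is legitimate. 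This is very much the kind of argument the paper has in mind for the first four assertions.

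The genuine gap is in the completeness statement, and you flag it yourself. Your reductions $d\sigma=\sigma d=d$, $\sigma^2\le\sigma$, and $d\le\sigma^2$ are fine, but to conclude $\sigma\le\sigma^2$ you need $\sigma^2$ to be an upper bound of $D$ \emph{inside} $\leftidalgnonassociative{e}{\oa}\cap\rightidalgnonassociative{e}{\oa}$, i.e.\ $e\sigma^2=\sigma^2=\sigma^2e$. In Alekhno's associative setting this is the trivial computation $e\sigma^2=(e\sigma)\sigma=\sigma^2$; in the present non-associative setting it is precisely the content of the needed ``adaptation'', and your trapped-element repair does not supply it: it is circular. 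From $\sigma^2=d+(\sigma-d)^2$ and $0\le(\sigma-d)^2\le\sigma-d$ you correctly trap $e\sigma^2-\sigma^2$ and $\sigma^2e-\sigma^2$ in the interval between $-(\sigma-d)$ and $\sigma-d$ for every $d\in D$, but the relation $\inf_{d\in D}(\sigma-d)=0$ holds only relative to lower bounds that lie in $\leftidalgnonassociative{e}{\oa}\cap\rightidalgnonassociative{e}{\oa}$, so forcing the trapped elements to vanish requires first showing that they, or a dominating element such as $\sigma-\sigma^2$ (which does dominate $e\sigma^2-\sigma^2$, since $e\sigma^2\le e\sigma=\sigma$), belong to that set. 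But $e(\sigma-\sigma^2)=\sigma-\sigma^2$ is \emph{equivalent} to $e\sigma^2=\sigma^2$, the very identity being sought; every natural candidate in this circle of ideas ($e\sigma^2$, $\sigma^2e$, $\sigma-\sigma^2$, $e(\sigma-d)^2$) reduces to the same unknown, and the shortcut that would settle it instantly, namely monotone continuity of the multiplication (which would give $\sigma^2=\sup_{d}d^2=\sigma$ via \cref{3_res:multiplication_in_two_variables_is_monotone_order_continuous}), is not among the hypotheses of this proposition. Consequently the final assertion, and with it your reduction of infima via the complements $\{e-p:p\in S\}$ (which needs the directed supremum to land in $\mathrm{OI}(\oa;e)$), remains unproven; everything preceding it is solid.
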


\begin{remark}\label{3_rem:alekhno}\quad
	\begin{enumerate}
		\item\label{3_part:alekhno_1}
		In the algebra of all operators from a vector lattice to itself, the order idempotents relative to the identity operator are precisely the order projections; see \cite[Theorem~1.44]{aliprantis_burkinshaw_POSITIVE_OPERATORS_SPRINGER_REPRINT:2006}, for example. This motivates the terminology.
		\item\label{3_part:alekhno_2}
		Alekhno's actual statements in \cite[Lemma~2.1 and Corollary~2.2]{alekhno:2012} and their proofs are given in the context of an ordered Banach algebra $\oa$ with a positive identity element $e$. An inspection of his arguments shows that, with a few adaptations, they also suffice to establish \cref{3_res:alekhno_improved}.
	\end{enumerate}
\end{remark}

Returning to the main line, we have the following consequence of \cref{3_res:idempotents_form_boolean_algebra}.

\begin{proposition}\label{3_res:idempotents_for_subsets_of_finite_measure_give_spectral_measure}
	Let $\ms$ be a measurable space, let $\oa$ be a \smc\ partially ordered algebra, and let $\npm\colon \alg\to\posoaext$ be a measure. Then the following are equivalent:
	\begin{enumerate}
		\item\label{3_part:idempotents_for_subsets_of_finite_measure_give_spectral_measure_1}
		$\npm(\mss)^2=\npm(\mss)$ for all $\mss\in\alg$ with finite measure, and $\npm(\mss_1)\npm(\mss_2)=\npm(\mss_2)\npm(\mss_1)$ for all $\mss_1,\mss_2\in\alg$ with finite measure;
		\item\label{3_part:idempotents_for_subsets_of_finite_measure_give_spectral_measure_2}
		$\npm$ is a spectral measure.
		\end{enumerate}
\end{proposition}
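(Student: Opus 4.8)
The plan is to derive both implications directly from \cref{3_res:idempotents_form_boolean_algebra} after localising to a fixed set of finite measure. The implication \ref{3_part:idempotents_for_subsets_of_finite_measure_give_spectral_measure_2}$\Rightarrow$\ref{3_part:idempotents_for_subsets_of_finite_measure_give_spectral_measure_1} is immediate and requires no preparation: setting $\mss_1=\mss_2=\mss$ in the defining identity $\npm(\mss_1\cap\mss_2)=\npm(\mss_1)\npm(\mss_2)$ gives $\npm(\mss)=\npm(\mss)^2$ for every $\mss$ of finite measure, and comparing the identities for $\mss_1\cap\mss_2$ and $\mss_2\cap\mss_1$ yields commutativity. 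So the substance lies in the reverse implication.

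For that direction, the one genuine obstacle is that the collection of sets of finite measure is not an algebra of subsets of $\pset$, as it need not be closed under complementation; hence \cref{3_res:idempotents_form_boolean_algebra} cannot be applied to it as it stands. I would circumvent this by fixing a set $\mss_0\in\alg$ with $\npm(\mss_0)$ finite and passing to the trace $\alg_{\mss_0}\coloneqq\{\mss\in\alg:\mss\subseteq\mss_0\}$, which \emph{is} a genuine algebra of subsets of $\mss_0$. By monotonicity of $\npm$ (if $\npm(\mss)=\infty$ for some $\mss\subseteq\mss_0$, then $\npm(\mss_0)=\npm(\mss)+\npm(\mss_0\setminus\mss)=\infty$, a contradiction), every $\mss\in\alg_{\mss_0}$ has finite measure, so the restriction $\npm|_{\alg_{\mss_0}}$ takes its values in the finite cone $\posoa$.

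Next I would verify that $\npm|_{\alg_{\mss_0}}$ meets the hypotheses of \cref{3_res:idempotents_form_boolean_algebra}. Being a real vector space, $\oa$ is a torsion-free abelian group under addition, so it has no elements of order $2$; its ordering is translation invariant with $\posoa+\posoa\subseteq\posoa$; and its multiplication is biadditive with $\posoa\posoa\subseteq\posoa$. Moreover $\npm(\emptyset)=0$, and finite additivity on disjoint unions follows from the $\sigma$-additivity in \cref{3_eq:sigma_additivity} by padding a finite disjoint family with copies of the empty set. Thus \cref{3_res:idempotents_form_boolean_algebra} yields the equivalence of its conditions~\ref{3_part:idempotents_form_boolean_algebra_1} and~\ref{3_part:idempotents_form_boolean_algebra_2} for all subsets of $\mss_0$.

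Finally, to establish spectrality for an \emph{arbitrary} pair $\mss_1,\mss_2\in\alg$ of finite measure, I would take $\mss_0\coloneqq\mss_1\cup\mss_2$. Since $\npm(\mss_0)=\npm(\mss_1)+\npm(\mss_2\setminus\mss_1)$ is a sum of two finite elements, $\mss_0$ has finite measure, and $\mss_1,\mss_2\in\alg_{\mss_0}$. The hypotheses in part~\ref{3_part:idempotents_for_subsets_of_finite_measure_give_spectral_measure_1} hold for every subset of $\mss_0$, so condition~\ref{3_part:idempotents_form_boolean_algebra_1} of \cref{3_res:idempotents_form_boolean_algebra} is satisfied for $\npm|_{\alg_{\mss_0}}$, and its condition~\ref{3_part:idempotents_form_boolean_algebra_2} then gives $\npm(\mss_1\cap\mss_2)=\npm(\mss_1)\npm(\mss_2)$, as required. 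The localisation to $\alg_{\mss_0}$ is the only point needing care; once it is in place, the whole statement reduces to the already-proved \cref{3_res:idempotents_form_boolean_algebra}.
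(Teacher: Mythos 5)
Your proof is correct and follows essentially the same route as the paper: the paper also reduces to \cref{3_res:idempotents_form_boolean_algebra} by restricting $\npm$ to the algebra $\alg^\prime=\{\mss\cap(\mss_1\cup\mss_2):\mss\in\alg\}$ of subsets of $\mss_1\cup\mss_2$, which is exactly your trace algebra $\alg_{\mss_0}$ with $\mss_0=\mss_1\cup\mss_2$. Your additional verifications (monotonicity forcing finiteness on the trace, the group- and order-theoretic hypotheses, and the triviality of the converse implication) are details the paper leaves implicit, and they are all sound.
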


\begin{proof}
	We prove that part~\ref{3_part:idempotents_for_subsets_of_finite_measure_give_spectral_measure_1} implies part~\ref{3_part:idempotents_for_subsets_of_finite_measure_give_spectral_measure_2}; the converse is trivial. Take $\mss_1,\mss_2\in\alg$ with finite measure. Set $\alg^\prime\coloneqq\{\mss\cap(\mss_1\cup\mss_2):\mss\in\alg\}$, and define $\npm^\prime:\alg^\prime\to\posoa$ by setting $\npm^\prime(\mss^\prime)\coloneqq\npm(\mss^\prime)$ for $\mss^\prime\in\alg^\prime$. Then \cref{3_res:idempotents_form_boolean_algebra} applies to the finite measure $\npm^\prime$ on $\alg^\prime$ and shows that, in particular, $\npm(\mss_1\cap\mss_2)=\npm(\mss_1)\npm(\mss_2)$.
\end{proof}

We now come to one of our key results.

\begin{theorem}\label{3_res:representing_measure_is_spectral}
	Let $\ts$ be a locally compact Hausdorff space, let $\oa$ be a monotone complete partially ordered algebra with a monotone continuous multiplication, and let $\psm\colon \borel\to\posoaext$ be a regular Borel measure. Let $\posmap\colon \contcts\to\oa$ be a positive algebra homomorphism.
	\begin{enumerate}
		\item\label{3_part:representing_measure_is_spectral_1}
		Suppose that $\psm(V)=\sup\{\posmap(f): f\prec V\}$ for every open subset $V$ of $\pset$ with finite measure. Then $\psm$ is a spectral measure.
		\item\label{3_part:representing_measure_is_spectral_2}
		Suppose that $\psm(K)=\inf\{\posmap(f): K\prec f\}$ for every compact subset $K$ of $\pset$, and that $\npm$ is inner regular at all Borel subsets with finite measure. Then $\psm$ is a spectral measure.
	\end{enumerate} 	
\end{theorem}

\begin{proof}
	We prove part~\ref{3_part:representing_measure_is_spectral_1}. The first step is to show that every $\npm(\mss)$ for $\mss\in\borel$ with finite measure is an idempotent. We start with a special case. Take an open subset $V$ of $\ts$ with finite measure.  If $f_1,f_2\prec V$, then also $f_1f_2\prec V$. If $f\prec V$, then also $\sqrt{f}\prec V$. Using the first statement in \cref{3_res:multiplication_in_two_variables_is_monotone_order_continuous}, we thus see that
	\begin{align*}
		\psm(V)&=\sup\{\posmap(f): f\prec V\}\\
		&=\sup\{\posmap(f_1f_2): f_1,f_2\prec V\}\\
		&=\sup\{\posmap(f_1)\posmap(f_2): f_1,f_2\prec V\}\\
		&=\sup\{\posmap(f_1): f_1\prec V\}\cdot\sup\{\posmap(f_2): f_2\prec V\}\\
		&=\psm(V)^2.
		\intertext{For the general case, take a Borel subset $\mss$ of $\ts$ with finite measure. Using the outer regularity of $\npm$, what he have just established, and the second statement in \cref{3_res:multiplication_in_two_variables_is_monotone_order_continuous}, we have}
		\psm(\mss)&=\inf\{\psm(V): V\text{ is open, }\ \mss\subseteq V\text{, and }\npm(V)<\infty\}\\
		&=\inf\{\psm(V)^2:V\text{ is open, }\ \mss\subseteq V\text{, and }\npm(V)<\infty \}\\
		&=\inf\{\psm(V): V\text{ is open, }\ \mss\subseteq V\text{, and }\npm(V)<\infty\}^2\\
		&=\psm(\mss)^2.
		\intertext{In the second step, we proceed to show that the $\psm(\mss)$ for $\mss\in\borel$ with finite measure all commute. Again, we start with a special case. Take open subsets $V_1, V_2$ of $\ts$ with finite measure. Using the first statement in \cref{3_res:multiplication_in_two_variables_is_monotone_order_continuous}, we have}
		\psm(V_1)\psm(V_2)&=\sup\{\posmap(f_1): f_1\prec V_1\}\cdot\sup\{\posmap(f_2): f_2\prec V_2\}\\
		&=\sup\{\posmap(f_1)\posmap(f_2): f_1\prec V_1\,,f_2\prec V_2\}\\	
		&=\sup\{\posmap(f_2)\posmap(f_1): f_2\prec V_2,\,f_1\prec V_1\}\\	
		&=\sup\{\posmap(f_2): f_2\prec V_2\}\cdot\sup\{\posmap(f_1): f_1\prec V_1\}\\
		&=\psm(V_1)\psm(V_2).
	\end{align*}
	Using this, the outer regularity of $\psm$ at all Borel subsets of $\ts$ and the decreasing analogue of the first statement in \cref{3_res:multiplication_in_two_variables_is_monotone_order_continuous} show  that $\psm(\mss_1)\psm(\mss_2)=\psm(\mss_1)\psm(\mss_2)$ for $\mss_1,\mss_2\in\borel$ with finite measure. We can now invoke \cref{3_res:idempotents_for_subsets_of_finite_measure_give_spectral_measure} to conclude that $\npm$ is a spectral measure.
	
	The proof of part~\ref{3_part:representing_measure_is_spectral_2} is similar. 
\end{proof}


\section{Relations between measures, algebra homomorphisms, and vector lattice homomorphisms}\label{3_sec:relations_between_measures_algebra_homomorphisms_and_vector_lattice_homomorphisms}


\noindent Let $\msm$ \ be a measure space, and consider the associated integral operator $\opintm\colon \integrablefun\to\os$.  If $\os$ is a vector lattice, for which measures $\npm$ is $\opintm$ a vector lattice homomorphism? If $\os$ is a partially ordered algebra, for which measures $\npm$ is $\opintm$ an algebra homomorphism? In this section, we shall consider these and other relations between measures, algebra homomorphisms, and vector lattice homomorphisms. The notion of moduli preserving operators from \cref{3_subsec:moduli_preserving_operators} is a convenient one to formulate some of the results in this section with.

The two questions just raised are easily answered; see \cref{3_res:integral_is_vector_lattice_homomorphism} and \cref{3_res:integral_is_algebra_homomorphism}. Somewhat surprisingly, when $\os$ is a partially ordered algebra, $\npm$ is finite, and $\opintm\colon \integrablefun\to\os$ is an algebra homomorphism, there is always a vector lattice homomorphism associated with $\opintm$. Indeed, as \cref{3_res:integral_preserves_moduli} shows, the image of $\integrablefun$ under $\opintm$ is then a vector lattice, and $\opintm\colon \integrablefun\to\opintm(\integrablefun)$ is a vector lattice algebra homomorphism; see also \cref{3_res:eight_properties}. The results are particularly nice when $\os$ is a vector lattice algebra to begin with; see \cref{3_res:integral_into_riesz_algebra_is_riesz_algebra_homomorphism}. Various equivalences and (conditional) implications between the properties of finite measures with values in vector lattice algebras and those of the associated integral operators are collected in \cref{3_res:eight_properties}.

\begin{proposition}\label{3_res:integral_is_vector_lattice_homomorphism}
	Let $\ms$ be a measurable space, let $\rs$ be a \sDc\ vector lattice, and let $\npm\colon \alg\to\pososext$ be a measure. Then the following are equivalent:
	\begin{enumerate}
		\item\label{3_part:integral_act_as_a_riesz_hom_1}
		$\opintm\colon \integrablefun\to\rs$ is a vector lattice homomorphism;
		\item \label{3_part:integral_act_as_a_riesz_hom_2}
		$\npm(\mss_1\cap \mss_2)=\npm(\mss_1)\wedge\npm(\mss_2)$ in $\os$ for all $\mss_1,\mss_2\in\alg$ with finite measure;
		\item \label{3_part:integral_act_as_a_riesz_hom_2_extra}
		$\npm(\mss_1\cup\mss_2)=\npm(\mss_1)\vee\npm(\mss_2)$ in $\os$ for all $\mss_1,\mss_2\in\alg$ with finite measure.
	\end{enumerate}
When $\npm$ is finite, these are also equivalent to:
	\begin{enumerate}[resume]
		\item\label{3_part:integral_act_as_a_riesz_hom_3}
		$\opintm\colon \boundedmeasfun\to\rs$ is a vector lattice homomorphism.
	\end{enumerate}
If $\opintm\colon \integrablefun\to\rs$ is a vector lattice homomorphism, then the kernel of $\opintm$ is $\aezerofun$, so that the induced map $\opintP\colon \ellonespectral\to\rs$ is an injective vector lattice homomorphism. The space $\ellonespectral$ is then \sDc, and $\opintP\colon \ellonespectral\to\rs$ is $\sigma$-order continuous. If, in addition, $\rs$ is monotone complete and has the \csp, then $\ellonespectral$ is \Dc, it has the \csp, and $\opintP\colon \ellonespectral\to\os$ is order continuous.
\end{proposition}

\begin{proof}
	It is clear that part~\ref{3_part:integral_act_as_a_riesz_hom_1} implies part~\ref{3_part:integral_act_as_a_riesz_hom_2}. We show that part~\ref{3_part:integral_act_as_a_riesz_hom_2} implies part~\ref{3_part:integral_act_as_a_riesz_hom_1}. Take integrable elementary functions $\varphi_1$ and $\varphi_2$. There exist mutually disjoint elements $\mss_1,\dotsc,\mss_n$ of $\alg$ with finite measure, $\alpha_1,\dotsc,\alpha_n\geq 0$, and $\beta_1,\dotsc,\beta_n\geq 0$ such that $\varphi_1=\sum_{i=1}^n\alpha_i\chi_{\mss_i}$ and $\varphi_2=\sum_{i=1}^n\beta_i\chi_{\mss_i}$. As a consequence of the assumption, $\npm(\mss_k)\wedge\npm(\mss_l)=0$ for all $k,l=1,\dotsc,n$ such that $k\neq l$. Using \cite[Theorem~1.7(4)]{aliprantis_burkinshaw_LOCALLY_SOLID_RIESZ_SPACES_WITH_APPLICATIONS_TO_ECONOMICS_SECOND_EDITION:2003}, this implies that $\opintm(\varphi_1)\wedge\opintm(\varphi_2)=\sum_{i=1}^n\alpha_i\npm(\mss_i)\wedge\beta_i\npm(\mss_i)=\sum_{i=1}^n(\alpha_i\wedge\beta_i)\npm(\mss_i)=\opintm(\varphi_1\wedge\varphi_2)$. It then follows from the definition of the order integral and the (sequential) order continuity of the lattice operations in a vector lattice that $\opintm(f_1\wedge f_2)=\opintm(f_1)\wedge\opintm(f_2)$ for $f_1,f_2\in\posintegrablefun$. This implies that $\opintm$ is a vector lattice homomorphism from $\integrablefun$ into $\rs$.
	
	The equivalence of the parts \ref{3_part:integral_act_as_a_riesz_hom_2} and \ref{3_part:integral_act_as_a_riesz_hom_2_extra} follows from the fact that $\npm(\mss_1)+\npm(\mss_2)=\npm(\mss_1\cup\mss_1)+\npm(\mss_1\cap\mss_2)$ for all $\mss_1,\mss_2\in\alg$ with finite measure.
	
	Suppose that $\npm$ is finite. Then it is clear that part~\ref{3_part:integral_act_as_a_riesz_hom_3} implies part~\ref{3_part:integral_act_as_a_riesz_hom_2}. Evidently, part~\ref{3_part:integral_act_as_a_riesz_hom_1} then implies part~\ref{3_part:integral_act_as_a_riesz_hom_3}.
	
	The final statements follow from \cite[Theorem~6.17]{de_jeu_jiang:2022a}.
\end{proof}

\begin{proposition}\label{3_res:integral_is_algebra_homomorphism}
	Let $\ms$ be a measurable space, let $\oa$ be a \smc\ partially ordered algebra with a $\sigma$-monotone continuous multiplication, and let $\npm\colon \alg\to\posoaext$ be a measure. Then the following are equivalent:
	\begin{enumerate}
		\item\label{3_part:integral_preserves_multiplication_1}
		$\integrablefun$ is a commutative algebra, and $\opintm\colon  \integrablefun\to\oa$ is an algebra homomorphism;
		\item\label{3_part:integral_preserves_multiplication_2}
		$\npm$ is a spectral measure.
	\end{enumerate}
	When $\npm$ is finite, these are also equivalent to each of:
	\begin{enumerate}[resume]
		\item\label{3_part:integral_preserves_multiplication_3}
		$\opintm\colon  \boundedmeasfun\to\oa$ is an algebra homomorphism;
	\item\label{3_part:integral_preserves_multiplication_4}  $\psm(\mss)^2=\psm(\mss)$ for $\mss\in\alg$ and $\psm(\mss_1)\psm(\mss_2)=\psm(\mss_2)\psm(\mss_1)$ for $\mss_1,\mss_2\in\alg$.
	\end{enumerate}
\end{proposition}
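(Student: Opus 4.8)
The plan is to prove $(1)\Leftrightarrow(2)$ first, with no finiteness assumption, and only afterwards to fold in $(3)$ and $(4)$ for finite $\npm$ by a short cycle that invokes the already available \cref{3_res:idempotents_for_subsets_of_finite_measure_give_spectral_measure}.

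For $(2)\Rightarrow(1)$ I would begin on integrable elementary functions. Given positive integrable elementary $\varphi,\psi$, I refine their defining partitions to a common family of pairwise disjoint sets $C_1,\dots,C_m\in\alg$, so that $\varphi=\sum_k\alpha_k\chi_{C_k}$ and $\psi=\sum_k\beta_k\chi_{C_k}$ with $\alpha_k,\beta_k\geq 0$; integrability forces $\npm(C_k)$ to be finite whenever the relevant coefficient is nonzero. Since $\varphi\psi=\sum_k\alpha_k\beta_k\chi_{C_k}$ is again elementary, a direct expansion of $\ointm{\varphi}\ointm{\psi}=\sum_{k,l}\alpha_k\beta_l\npm(C_k)\npm(C_l)$ together with the spectrality relation $\npm(C_k)\npm(C_l)=\npm(C_k\cap C_l)$ (which vanishes for $k\neq l$ and equals $\npm(C_k)$ for $k=l$) yields $\ointm{\varphi\psi}=\ointm{\varphi}\ointm{\psi}$.

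The hard part is passing from elementary functions to all of $\integrablefun$, and in particular showing that $\integrablefun$ is closed under multiplication at all: in the scalar case $\mathrm L^1$ is not an algebra, so closure is genuinely something to establish and is the crux of the argument. I would take $f,g\in\posintegrablefun$ and choose increasing sequences of nonnegative integrable elementary functions $\varphi_n\uparrow f$ and $\psi_n\uparrow g$ pointwise. Then $\varphi_n\psi_n\uparrow fg$ pointwise, so by the definition of the order integral $\ointm{fg}=\psup_n\ointm{\varphi_n\psi_n}$, a priori possibly infinite. The elementary case gives $\ointm{\varphi_n\psi_n}=\ointm{\varphi_n}\ointm{\psi_n}$. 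Now $\ointm{\varphi_n}\uparrow\ointm{f}$ and $\ointm{\psi_n}\uparrow\ointm{g}$ are increasing sequences in $\posoa$ with finite suprema, so the sequential term-wise statement in \cref{3_res:multiplication_in_two_variables_is_monotone_order_continuous} applies and gives $\ointm{\varphi_n}\ointm{\psi_n}\uparrow\ointm{f}\,\ointm{g}$. Combining the three facts, $\ointm{fg}=\ointm{f}\,\ointm{g}$, which is finite; hence $fg\in\posintegrablefun$ and the integral is multiplicative on the positive cone. Splitting into positive and negative parts and using bilinearity then shows that $\integrablefun$ is closed under products and that $\opintm$ is multiplicative, while commutativity of $\integrablefun$ is automatic since pointwise multiplication of functions commutes.

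For $(1)\Rightarrow(2)$ I would simply note that for $\mss_1,\mss_2\in\alg$ of finite measure the indicators $\chi_{\mss_1},\chi_{\mss_2}$ are integrable with $\chi_{\mss_1}\chi_{\mss_2}=\chi_{\mss_1\cap\mss_2}$, so the homomorphism property gives $\npm(\mss_1\cap\mss_2)=\npm(\mss_1)\npm(\mss_2)$, i.e.\ $\npm$ is spectral in the sense of \cref{3_def:spectral_measures}. Finally, assuming $\npm$ finite, I would close the cycle: $(1)\Rightarrow(3)$ by restricting the homomorphism to the subalgebra $\boundedmeasfun\subseteq\integrablefun$; $(3)\Rightarrow(2)$ again by evaluating on indicators, now that every set has finite measure; and $(2)\Leftrightarrow(4)$ directly from \cref{3_res:idempotents_for_subsets_of_finite_measure_give_spectral_measure}, since for a finite measure ``finite measure set'' means ``every set'', so $(4)$ is exactly the idempotent-plus-commuting hypothesis there. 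This yields the full list of equivalences.
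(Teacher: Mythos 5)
Your proposal is correct, and it follows the paper's overall architecture (elementary functions first, monotone approximation for the positive cone, trivial implications plus the idempotent/commutation result once $\npm$ is finite), but the passage to the limit in the key step $(2)\Rightarrow(1)$ is handled differently. The paper fixes $m$ and lets $n\to\infty$ to show first that $\varphi_m g\in\posintegrablefun$ with $\opintm(\varphi_m g)=\opintm(\varphi_m)\opintm(g)$, and then lets $m\to\infty$, which requires the monotone convergence theorem for the order integral together with two separate applications of the $\sigma$-monotone continuity of the (one-sided) multiplications. You instead use the diagonal sequence $\varphi_n\psi_n\uparrow fg$ pointwise, so that $\ointm{fg}=\psup_n\ointm{\varphi_n\psi_n}$ holds by the very definition of the order integral (the product functions being again elementary), and you close the argument with the sequential term-wise product statement of \cref{3_res:multiplication_in_two_variables_is_monotone_order_continuous} applied to $\ointm{\varphi_n}\uparrow\ointm{f}$ and $\ointm{\psi_n}\uparrow\ointm{g}$. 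This is slightly shorter and avoids invoking the monotone convergence theorem altogether, at the cost of leaning on that lemma, which the paper proves but does not use in this particular proof; the paper's iterated-limit route, conversely, keeps each continuity application one-sided and produces the intermediate integrability of $\varphi_m g$ along the way. Two further harmless deviations: you spell out the common-refinement computation for elementary functions (with the correct observation that nonzero coefficients force finite measure of the corresponding sets, so spectrality is applicable), where the paper merely asserts the elementary case; and for $(2)\Leftrightarrow(4)$ you cite \cref{3_res:idempotents_for_subsets_of_finite_measure_give_spectral_measure} rather than \cref{3_res:idempotents_form_boolean_algebra}, which is equally valid since the former is the measure-theoretic specialisation of the latter and, for finite $\npm$, its hypothesis on finite-measure sets is exactly condition \ref{3_part:integral_preserves_multiplication_4}.
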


\begin{proof}
	It is obvious that part~\ref{3_part:integral_preserves_multiplication_1} implies part~\ref{3_part:integral_preserves_multiplication_2}. We show that part~\ref{3_part:integral_preserves_multiplication_2} implies part~\ref{3_part:integral_preserves_multiplication_1}. For this, it is sufficient to show that $fg\in\integrablefun$ whenever $f,g\in\posintegrablefun$, and that then $\opintm(fg)=\opintm(f)\opintm(g)$.
	These are both easily seen to hold when $f$ and $g$ are integrable elementary functions. For the general case, we choose integrable elementary functions $\varphi_1,\varphi_2,\dotsc$ and $\psi_1,\psi_2,\dotsc$ such that $\varphi_m\uparrow f$ and $\psi_n\uparrow g$ pointwise. Then $\opintm(\varphi_m\psi_n)=\opintm(\varphi_m)\opintm(\psi_n)$ for $m,n=1,2,\dotsc$. For fixed $m$, we let $n$ tend to infinity. The definition of the order integral and the $\sigma$-monotone continuity of the multiplication in $\oa$ then show that $\varphi_m g\in\posintegrablefun$, and that $\opintm(\varphi_m g)=\opintm(\varphi_m)\opintm(g)$ for $m=1,2,\dotsc$. Now we let $m$ tend to infinity, and use the monotone convergence theorem (see \cite[Theorem~6.9]{de_jeu_jiang:2022a}) and the $\sigma$-monotone continuity of the multiplication to conclude that $fg\in\integrablefun$, and that $\opintm(fg)=\opintm(f)\opintm(g)$.
	
	Suppose that $\npm$ is finite. Then it is clear that part~\ref{3_part:integral_preserves_multiplication_3} implies
	part~\ref{3_part:integral_preserves_multiplication_2}. Evidently,  part~\ref{3_part:integral_preserves_multiplication_1} then implies part~\ref{3_part:integral_preserves_multiplication_3}. \cref{3_res:idempotents_form_boolean_algebra} shows that the parts~\ref{3_part:integral_preserves_multiplication_2}\ and~\ref{3_part:integral_preserves_multiplication_4} are equivalent.
\end{proof}

\begin{remark}\label{3_rem:algebra_for_ricker_spectral_measures}
In the context of spectral measures which take their values in the bounded operators on a Banach space in the sense of \cite[Definition~III.2]{ricker_OPERATOR_ALGEBRAS_GENERATE_BY_COMMUTING_PROJECTIONS:1999}, it is also true that the integrable functions (see their `weak' definition in \cite[Definition~III.5]{ricker_OPERATOR_ALGEBRAS_GENERATE_BY_COMMUTING_PROJECTIONS:1999}) form an algebra, and that the pertinent integral operator is an algebra homomorphism; see \cite[Proposition~V.3]{ricker_OPERATOR_ALGEBRAS_GENERATE_BY_COMMUTING_PROJECTIONS:1999}.  Since it is established a little later that the integrable functions are precisely the essentially bounded ones (see \cite[Proposition~~V.4]{ricker_OPERATOR_ALGEBRAS_GENERATE_BY_COMMUTING_PROJECTIONS:1999}), the fact that they form an algebra becomes somewhat less surprising. \cref{3_ex:integrable_function_for_spectral_measure_need_not_be_bounded} shows, however,  that, for the spectral measures in our sense, integrable functions need not be essentially bounded. The fact that they still always form an algebra is, therefore, a longer lasting surprise than  the corresponding statement in \cite[Proposition~V.3]{ricker_OPERATOR_ALGEBRAS_GENERATE_BY_COMMUTING_PROJECTIONS:1999}.
\end{remark}

In the context of \cref{3_res:integral_is_algebra_homomorphism}, suppose that $\npm$ is finite, and that the four then equivalent statements in it hold. Then the parts~\ref{3_part:idempotents_form_boolean_algebra_1} and~\ref{3_part:idempotents_form_boolean_algebra_2}  of \cref{3_res:idempotents_form_boolean_algebra} apply.  Its parts~\ref{3_part:idempotents_form_boolean_algebra_b}  and~\ref{3_part:idempotents_form_boolean_algebra_c} then show that, when appropriately interpreted, $\opintm$ preserves the supremum and infimum of the characteristic functions of two measurable subsets of $\pset$. This is actually true for two arbitrary elements of $\integrablefun$, as is shown by part~\ref{3_part:integral_preserves_moduli_1_inserted} of the following result.

\begin{theorem}\label{3_res:integral_preserves_moduli}
	Let $\ms$ be a measurable space, let $\oa$ be a \smc\ partially ordered algebra with a $\sigma$-monotone continuous multiplication, and let $\psm\colon \alg\to\posoa$ be a finite spectral measure.
	Then:
	\begin{enumerate}
		\item\label{3_part:integral_preserves_moduli_1} $\opintP(f)\in\psm(\pset)\oa\psm(\pset)$ for $f\in\integrablefun$;
		\item\label{3_part:integral_preserves_moduli_1_inserted} the maps $\opintP\colon \integrablefun\to\leftidalg{\psm(\pset)}{\oa}$, $\opintP\colon \integrablefun\to\rightidalg{\psm(\pset)}{\oa}$, and $\opintP\colon \integrablefun\to\leftrightidalg{\psm(\pset)}{\oa}$ preserve moduli;
		\item \label{3_part:integral_preserves_moduli_2}
		$\integrablefunspectral$ is a commutative unital vector lattice algebra;
		\item \label{3_part:integral_preserves_moduli_3}
		when $\opintP(\integrablefunspectral)$ is supplied with the partial ordering inherited from $\oa$, it is a commutative unital vector lattice algebra with $\psm(\pset)$ as its positive multiplicative identity element. The map $\opintP\colon \integrablefunspectral\to\opintP(\integrablefunspectral)$ is then a surjective unital vector lattice algebra homomorphism.
		\item\label{3_part:integral_preserves_moduli_4}
		the kernel of $\opintP\colon \integrablefunspectral\to\oa$ is $\aezerofunspectral$, so that the induced map $\opintP\colon \ellonespectral\to\opintP(\ellonespectral)$ is an isomorphism of vector lattice algebras. The space $\ellonespectral$ is \sDc, and $\opintP\colon \ellonespectral\to\oa$ is $\sigma$-order continuous.  If, in addition, $\oa$ is monotone complete and has the \csp, then $\ellonespectral$ is \Dc, it has the \csp, and $\opintP\colon \ellonespectral\to\oa$ is order continuous.
	\end{enumerate}
\end{theorem}

\begin{proof}
Part~\ref{3_part:integral_preserves_moduli_1} follows from \cref{3_res:integral_is_algebra_homomorphism} and the fact that the constant 1 function is integrable.
	 For the moduli preserving properties of the two maps in part~\ref{3_part:integral_preserves_moduli_1_inserted}, we consider the case with codomain $\leftidalg{\psm(\pset)}{\oa}$. The other can be treated similarly, and clearly each of these implies the statement for their subset $\leftrightidalg{\psm(\pset)}{\oa}$.  Take $f,g\in\posintegrablefun$. According to \cref{3_res:lattice_preserving_on_positive_cone_is_sufficient}, the proof will be complete once we show that $\opintP(f)\vee\opintP(g)$ exists in $\leftidalg{\psm(\pset)}{\oa}$, and that it equals $\opintP(f\vee g)$. Certainly, $\opintP(f\vee g)$ is an upper bound of $\{\opintP(f),\opintP(g)\}$. Let $a\in\leftidalg{\psm(\pset)}{\oa}$ also be an upper bound of this set.
	
	As a preparation, suppose that $\varphi$ and $\psi$ are integrable elementary functions such that $a\geq\opintP(\varphi)$ and $a\geq\opintP(\psi)$. There exist mutually disjoint $\mss_1,\dotsc,\mss_n\in\alg$ such that $\pset=\bigcup_{i=1}^n\mss_i$, $\alpha_1,\dotsc,\alpha_n\geq 0$, and $\beta_1,\dotsc,\beta_n\geq 0$, such that $\varphi=\sum_{i=1}^n\alpha_i\chi_{\mss_i}$ and $\psi=\sum_{i=1}^n\beta_i\chi_{\mss_i}$. We have $a\geq\opintP(\varphi)=\sum_{i=1}^n \alpha_i\psm(\mss_i)\geq \alpha_j\psm(\mss_j)$ for $j=1,\dotsc,n$. Hence $\psm(\mss_j)a\geq\alpha_j\psm(\mss_j)^2=\alpha_j\psm(\mss_j)$ for $j=1,\dotsc,n$. A similar argument applies to $\psi$, and we conclude that $\psm(\mss_j)a\geq(\alpha_j\vee\beta_j)\psm(\mss_j)$ for $j=1,\dotsc,n$. Then
	\begin{align*}
		a&=\psm(\pset)a\\
		&=\sum_{i=1}^n\psm(\mss_i)a\\
		&\geq \sum_{i=1}^n(\alpha_i\vee\beta_i)\psm(\mss_i)\\
		&=\opintP(\varphi\vee\psi).
	\end{align*}
	
After this preparation, we choose integrable elementary functions $\varphi_1,\varphi_2,\dotsc$ and $\psi_1,\psi_2,\dotsc$ such that $\varphi_m\uparrow f$ and $\psi_n\uparrow g$ pointwise. Then $a\geq\opintP(f)\geq\opintP(\varphi_m)$ and $a\geq\opintP(g)\geq\opintP(\psi_n)$ for $m,n=1,2\dotsc$, so that $a\geq\opintP(\varphi_m\vee\psi_n)$ for $m,n=1,2,\dotsc$. On letting $n$ tend to infinity, we see from the definition of the order integral that $a\geq\opintP(f\vee \psi_n)$ for $n=1,2,\dotsc$. Now we let $n$ tend to infinity, and invoke the monotone convergence theorem (see \cite[Theorem~6.9]{de_jeu_jiang:2022a}) to conclude that $a\geq \opintP(f\vee g)$, as required.

The parts~\ref{3_part:integral_preserves_moduli_2} and~\ref{3_part:integral_preserves_moduli_3} follow from part~\ref{3_part:integral_preserves_moduli_1}, \cref{3_res:lattice_preserving_map_give_vector_lattice_homomorphism}, and \cref{3_res:integral_is_algebra_homomorphism}. Part~\ref{3_part:integral_preserves_moduli_4} follows from \cite[Theorem~6.17]{de_jeu_jiang:2022a}.
\end{proof}

When $\oa$ is a vector lattice algebra to begin with, \cref{3_res:integral_preserves_moduli} can sometimes be improved.

\begin{theorem}\label{3_res:integral_into_riesz_algebra_is_riesz_algebra_homomorphism}
		Let $\ms$ be a measurable space, and let $\oa$ be a \sDc\ vector lattice algebra with a positive identity element $e$ and $\sigma$-monotone continuous multiplication. Let $\psm\colon \alg\to\posoa$ be a finite spectral measure. Suppose that $\psm(\pset)\leq e$. Then:
		\begin{enumerate}
			\item\label{3_part:integral_into_riesz_algebra_is_riesz_algebra_homomorphism_1}
			$\psm(\pset)\oa\psm(\pset)$ is a vector lattice subalgebra of $\oa$ that is also a projection band in $\oa$;
			\item\label{3_part:integral_into_riesz_algebra_is_riesz_algebra_homomorphism_2}
			$\integrablefunspectral$ is  a unital vector lattice algebra,\! and  $\opintP\!\colon\! \integrablefunspectral\to\oa$ is a vector lattice algebra homomorphism;
			\item\label{3_part:integral_into_riesz_algebra_is_riesz_algebra_homomorphism_inserted}
			\begin{enumerate}
				\item the image of \!$\boundedmeasfun$ under $\opintm$ is contained in the order ideal of $\psm(\pset)\oa\psm(\pset)$ that is generated by $\npm(\pset)$;
				\item the image of $\integrablefun$ under $\opintm$ is contained in the $\sigma$-order ideal of $\psm(\pset)\oa\psm(\pset)$ that is generated by $\npm(\pset)$;
			\end{enumerate}
			\item\label{3_part:integral_into_riesz_algebra_is_riesz_algebra_homomorphism_3}
			the kernel of $\opintP\colon \integrablefunspectral\to\oa$ is $\aezerofunspectral$, so that the quotient $\ellonespectral$ is a commutative unital vector lattice algebra. The induced map $\opintP\colon \ellonespectral\to\oa$ is an injective vector lattice algebra isomorphism. The space $\ellonespectral$ is \sDc, and $\opintP\colon \ellonespectral\to\oa$ and $\opintP\colon \ellonespectral\to\psm(\pset)\oa\psm(\pset)$ are both $\sigma$-order continuous. If, in addition, $\oa$ is monotone complete and has the \csp, then $\ellonespectral$ is \Dc, it has the \csp, and $\opintP\colon \ellonespectral\to\oa$ and $\opintP\colon \ellonespectral\to\psm(\pset)\oa\psm(\pset)$ are both order continuous.
		\end{enumerate}
\end{theorem}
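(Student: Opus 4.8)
The plan is to build on \cref{3_res:integral_preserves_lattice_structure}, which already yields the algebra- and lattice-structure-preserving properties of $\opintP$ relative to the compressed spaces $\leftidalg{\psm(\pset)}{\oa}$ and $\rightidalg{\psm(\pset)}{\oa}$, and to promote these to statements about the ambient Riesz algebra $\oa$ itself; the extra hypotheses (that $\oa$ be a Riesz space with a positive identity $e$ and that $\psm(\pset)\le e$) are precisely what make this promotion possible. Write $p\coloneqq\psm(\pset)$. For part~\ref{3_part:integral_into_riesz_algebra_is_riesz_algebra_homomorphism_1} I would consider the compression $P\colon\oa\to\oa$, $P(a)\coloneqq pap$. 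Since $p^2=p$ this is a linear projection with range exactly $p\oa p$, and it is positive because $\posoa\posoa\subseteq\posoa$. The point where $\psm(\pset)\le e$ enters is that it forces $0\le P\le\mathrm{Id}$: for $a\in\posoa$ one has $pa\le ea=a$ and then $pap\le ap\le ae=a$. A positive \emph{projection} bounded by the identity is automatically a band projection: for $x\in\posoa$, the element $y\coloneqq P(x)\wedge(\mathrm{Id}-P)(x)$ satisfies $P(y)=0$ (apply $P$ to $y\le(\mathrm{Id}-P)(x)$ and use $P(\mathrm{Id}-P)=0$) and $(\mathrm{Id}-P)(y)=0$ (apply $\mathrm{Id}-P$ to $y\le P(x)$), whence $y=0$. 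Thus $p\oa p$, being the range of an order projection, is a projection band, hence an order ideal and a sub-vector-lattice of $\oa$; as it is visibly closed under multiplication and $p$ acts on it as a two-sided identity, it is a Riesz subalgebra with multiplicative identity $p$. (The scalar shadow of this is the observation that $p\wedge(e-p)=0$, which follows at once from $(e-p)p=p(e-p)=0$ and $u=eu=\big(p+(e-p)\big)u$ for $u\coloneqq p\wedge(e-p)$.)

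Part~\ref{3_part:integral_into_riesz_algebra_is_riesz_algebra_homomorphism_2} then follows by a squeeze. Since $\oa$ is a Riesz space, for $f,g\in\posintegrablefunspectral$ the supremum $\opintP(f)\vee\opintP(g)$ exists in $\oa$, and because $p\oa p$ is an order ideal containing both $\opintP(f)$ and $\opintP(g)$ it in fact lies in $p\oa p\subseteq\leftidalg{p}{\oa}$. On the other hand, \cref{3_res:integral_preserves_lattice_structure} tells us that the supremum of $\{\opintP(f),\opintP(g)\}$ computed in $\leftidalg{p}{\oa}$ equals $\opintP(f\vee g)$; as $\opintP(f)\vee\opintP(g)$ is an upper bound lying in $\leftidalg{p}{\oa}$, this gives $\opintP(f\vee g)\le\opintP(f)\vee\opintP(g)$, while the reverse inequality is immediate from positivity of $\opintP$. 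Hence $\opintP(f\vee g)=\opintP(f)\vee\opintP(g)$ in $\oa$, and by \cref{3_res:lattice_preserving_on_positive_cone_is_sufficient} the map $\opintP$ is a vector lattice homomorphism into $\oa$. Combining this with \cref{3_res:integral_is_algebra_homomorphism} (applicable because $\psm$ is spectral) shows that $\opintP\colon\integrablefunspectral\to\oa$ is a Riesz algebra homomorphism, $\integrablefunspectral$ being a unital Riesz algebra by \cref{3_res:integral_preserves_lattice_structure}.

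For part~\ref{3_part:integral_into_riesz_algebra_is_riesz_algebra_homomorphism_inserted} I would use the homomorphism property together with $\opintm(\onefunction)=\psm(\pset)=p$. If $f\in\boundedmeasfun$ with $\abs f\le M\onefunction$, then $\abs{\opintm(f)}=\opintm(\abs f)\le M p$, so $\opintm(f)$ lies in the order ideal of $p\oa p$ generated by $p$; for $f\in\integrablefun$ one approximates $\abs f$ from below by the bounded functions $\abs f\wedge n\onefunction$ and invokes the $\sigma$-order continuity of $\opintm$ to write $\abs{\opintm(f)}=\sup_n\opintm(\abs f\wedge n\onefunction)$ with each term dominated by $np$, exhibiting $\opintm(f)$ in the $\sigma$-order ideal generated by $p$. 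Finally, part~\ref{3_part:integral_into_riesz_algebra_is_riesz_algebra_homomorphism_3} is essentially a reading-off of the corresponding statements in \cref{3_res:integral_preserves_lattice_structure}, now reinterpreted through part~\ref{3_part:integral_into_riesz_algebra_is_riesz_algebra_homomorphism_2} so that the induced injective map on $\ellonespectral$ is a Riesz algebra homomorphism into $\oa$; the variants with codomain $p\oa p$ transfer without effort because $p\oa p$ is a band, so that monotone order limits computed in it coincide with those computed in $\oa$.

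The only genuinely new ingredient is part~\ref{3_part:integral_into_riesz_algebra_is_riesz_algebra_homomorphism_1}: recognising the compression $a\mapsto\psm(\pset)\,a\,\psm(\pset)$ as an order projection, where the hypothesis $\psm(\pset)\le e$ is indispensable, since it is exactly what bounds the projection by the identity. Once $p\oa p$ is known to be a projection band, everything else is bookkeeping on top of \cref{3_res:integral_preserves_lattice_structure}: the band being an order ideal is precisely what lets one replace the suprema and infima taken in $\leftidalg{\psm(\pset)}{\oa}$ by suprema and infima taken in $\oa$.
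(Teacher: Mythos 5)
Your proposal is correct and follows essentially the same route as the paper's proof: part~(1) via the compression $a\mapsto\psm(\pset)a\psm(\pset)$ being a projection between $0$ and the identity (the paper simply cites \cite[Theorem~1.44]{aliprantis_burkinshaw_POSITIVE_OPERATORS_SPRINGER_REPRINT:2006} where you reprove the disjointness step inline), part~(2) by upgrading the lattice-structure-preserving property from \cref{3_res:integral_preserves_lattice_structure} using that $\psm(\pset)\oa\psm(\pset)$ is an order ideal, and parts~(3) and~(4) by the triangle inequality, monotone approximation, and the regularity of the projection band, exactly as in the paper. Your squeeze argument in part~(2) is just a spelled-out version of the paper's observation that suprema in the sublattice $\psm(\pset)\oa\psm(\pset)$ agree with those in $\oa$, so the two proofs coincide in substance.
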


\begin{proof}
We prove part~\ref{3_part:integral_into_riesz_algebra_is_riesz_algebra_homomorphism_1}. It is clear that $\psm(\pset)\oa\psm(\pset)$ is a subalgebra of $\oa$. Define the idempotent map $P\colon \oa\to\oa$ by setting $Pa\coloneqq\psm(\pset)a\psm(\pset)$ for $a\in\oa$. Since $\psm(\pset)\leq e$, $P$ lies between the zero map and the identity map on $\oa$. By \cite[Theorem~1.44]{aliprantis_burkinshaw_POSITIVE_OPERATORS_SPRINGER_REPRINT:2006}, $P$ is an order projection on $\oa$. Hence its range $\psm(\pset)\oa\psm(\pset)$ is a projection band in $\oa$.

For part~\ref{3_part:integral_into_riesz_algebra_is_riesz_algebra_homomorphism_2}, we apply
\cref{3_res:integral_preserves_moduli}. It shows that $\integrablefun$ is a commutative unital vector lattice algebra, and also that $\opintP\colon \integrablefunspectral\to\psm(\pset)\oa\psm(\pset)$ preserves moduli. Since we know here that $\psm(\pset)\oa\psm(\pset)$ is a vector sublattice of $\oa$, $\opintP\colon \integrablefunspectral\to\oa$ is a vector lattice homomorphism.

The first statement of part~\ref{3_part:integral_into_riesz_algebra_is_riesz_algebra_homomorphism_inserted} follows from the triangle inequality for the order integral; see \cite[Lemma~6.7]{de_jeu_jiang:2022a}. The second is clear from the definition of that integral.

Part~\ref{3_part:integral_into_riesz_algebra_is_riesz_algebra_homomorphism_3} follows from \cite[Theorem~6.17]{de_jeu_jiang:2022a}, except the ($\sigma$)-order continuity of  $\opintP\colon \ellonespectral\to\psm(\pset)\oa\psm(\pset)$. Since $\psm(\pset)\oa\psm(\pset)$ is a projection band in $\oa$, this follows from the ($\sigma$)-order continuity of $\opintP\colon \ellonespectral\to\oa$.
\end{proof}

In the following overview result, we collect various equivalences and conditional implications that are valid for finite measures when $\oa$ is a vector lattice algebra. As \cref{3_res:integral_preserves_moduli} shows, vector lattice algebras can enter the picture as a codomain even when the initial codomain is only a partially ordered algebra. Regarding the condition in part~\ref{3_part:eight_properties_are_equivalent_b} of \cref{3_res:eight_properties}, we recall that a vector lattice algebra $\oa$ is called an \emph{$\!f\!$-algebra} when the left an right multiplications preserve disjointness. For this, it is necessary and sufficient that, for $x,y,z\in\posoa$, $(zx)\wedge y=(xz)\wedge y=0$ whenever $x\wedge y=0$. A vector lattice algebra with an identity element is an $\!f\!$-algebra if and only if its squares are positive. We refer to \cite[Corollary~1]{steinberg:1976} for this; additional equivalent characterisations of $\!f\!$-algebras among the vector lattice algebras with a positive identity element can be found in \cite[Theorem~2.3]{huijsmans:1990}.  An $\!f\!$-algebra is commutative; see \cite[Theorem~2.56]{aliprantis_burkinshaw_POSITIVE_OPERATORS_SPRINGER_REPRINT:2006}.
An $\!f\!$-algebra is called \emph{semiprime} when 0 is its only nilpotent element.
Every $\!f\!$-algebra with an identity element is semiprime; see \cite[Theorem~10.4]{de_pagter_THESIS:1981}. The orthomorphisms on a vector lattice form an $\!f\!$-algebra with an identity element (see \cite[Theorem~2.59]{aliprantis_burkinshaw_POSITIVE_OPERATORS_SPRINGER_REPRINT:2006}) which is then semiprime. When they are algebras, our spaces $\integrablefun$ and $\ellone$ are semiprime $\!f\!$-algebras. When $\oa$ is a commutative complex \Calgebra, its self-adjoint part is a semiprime $\!f\!$-algebra; this is clear from its realisation as a $\contots$-space. 

\begin{theorem}\label{3_res:eight_properties}
	Let $\ms$ be a measurable space, let $\oa$ be a \sDc\ vector lattice algebra with a $\sigma$-monotone continuous multiplication, and let $\psm\colon \alg\to\posoa$ be a finite measure.
	
	The following are equivalent:
	\begin{enumerate}
		\item\label{3_part:eight_properties_are_equivalent_5}
		$\opintP\colon \integrablefunspectral\to\oa $ is a vector lattice homomorphism;
		\item\label{3_part:eight_properties_are_equivalent_6}
		$\opintP\colon \boundedmeasfun\to\oa $ is a vector lattice homomorphism;
		\item\label{3_part:eight_properties_are_equivalent_7}
		$\psm(\mss_1\cap\mss_2)=\psm(\mss_1)\wedge\psm(\mss_2)$ in $\oa$ for $\mss_1,\mss_2\in\alg$;
		\item \label{3_part:eight_properties_are_equivalent_7_extra}
		$\npm(\mss_1\cup\mss_2)=\npm(\mss_1)\vee\npm(\mss_2)$ in $\os$ for $\mss_1,\mss_2\in\alg$.	
	\end{enumerate}

The following are equivalent:
\begin{enumerate}[resume]
	\item\label{3_part:eight_properties_are_equivalent_1}
	$\integrablefunspectral$ is a commutative algebra, and $\opintP\colon \integrablefunspectral\to\oa $ is an algebra homomorphism;
	\item\label{3_part:eight_properties_are_equivalent_2}
	$\opintP\colon \boundedmeasfun\to\oa $ is an algebra homomorphism;			
	\item\label{3_part:eight_properties_are_equivalent_3}
	$\npm$ is a spectral measure;
	\item\label{3_part:eight_properties_are_equivalent_4} $\psm(\mss)^2=\psm(\mss)$ for $\mss\in\alg$ and $\psm(\mss_1)\psm(\mss_2)=\psm(\mss_2)\psm(\mss_1)$ for $\mss_1,\mss_2\in\alg$.
\end{enumerate}
	
	Suppose that at least one of the following is satisfied:
\begin{enumerate_alpha}
	\item\label{3_part:eight_properties_are_equivalent_a}
	$\oa$ has a positive identity element $e$ and $\psm(\pset)\leq e$;
	\item\label{3_part:eight_properties_are_equivalent_b}
	$\oa$ is a semiprime $\!f\!$-algebra.
\end{enumerate_alpha}
Then each of the equivalent parts~\ref{3_part:eight_properties_are_equivalent_1}--\ref{3_part:eight_properties_are_equivalent_4} implies each of the equivalent parts~\ref{3_part:eight_properties_are_equivalent_5}--\ref{3_part:eight_properties_are_equivalent_7_extra}.
\end{theorem}

\begin{proof}
		The equivalence of the parts~\ref{3_part:eight_properties_are_equivalent_5}--\ref{3_part:eight_properties_are_equivalent_7_extra} follows from \cref{3_res:integral_is_vector_lattice_homomorphism}; that of the parts~\ref{3_part:eight_properties_are_equivalent_1}--\ref{3_part:eight_properties_are_equivalent_4} follows from \cref{3_res:integral_is_algebra_homomorphism}.
	
	Suppose that $\oa$ has a positive identity element $e$, that $\psm(\pset)\leq e$, and that part~\ref{3_part:eight_properties_are_equivalent_3} holds. Then \cref{3_res:integral_into_riesz_algebra_is_riesz_algebra_homomorphism} shows that  part~\ref{3_part:eight_properties_are_equivalent_5} holds.
	
	Suppose that $\oa$ is a semiprime $\!f\!$-algebra and that part~\ref{3_part:eight_properties_are_equivalent_2} holds. Since $\boundedmeasfun$ is a semiprime $\!f\!$-algebra, and since a positive algebra homomorphism be\-tween two semi\-prime $\!f\!$-algebras is automatically a vector lattice homomorphism (see \cite[p.~96]{de_pagter_THESIS:1981}), we see that part~\ref{3_part:eight_properties_are_equivalent_6} holds.
\end{proof}


\section{Ups and downs}\label{3_sec:ups_and_downs}


\noindent Suppose that $\os$ is a partially ordered vector space, that $\ts$ is a locally compact Hausdorff space, and that $\posmap\colon \contcts\to\os$ is a positive operator. The earlier paper \cite{de_jeu_jiang:2022b} contains a number of results to the extent that, under appropriate conditions, there is unique regular $\pososext$-valued Borel measure $\npm$ on $\ts$ such that
\begin{equation*}\label{3_eq:operator_and_measure}
\posmap(f)=\ointm{f}
\end{equation*}
for all $f\in\contcts$. Moreover, if $V$ is a non-empty open subset of $\ts$, then
\begin{equation*}\label{3_eq:measure_of_open_subset}
	\npm(V)=\bigvee\{\posmap(f) : f\prec V\}
\end{equation*}
in $\pososext$; and if $K$ is a compact subset of $\ts$, then
\begin{equation*}\label{3_eq:measure_of_compact_subset}
	\npm(K)=\bigwedge\{\posmap(f) : K\prec f\}
\end{equation*}
in $\pososext$.
The original operator $\posmap$ can be extended to $\opintm\colon\integrablefun\to\os$, and to $\opintm\colon \boundedmeasfun\to\os$ when $\npm$ is finite.  Can we then describe the images $\opintm(\integrablefun)$ and $\opintm(\boundedmeasfun)$ of these functional calculi more directly in terms of $\posmap(\contcts)$? It turns out that this can often be done. The underlying reason is that (as \cref{3_sec:relations_between_measures_algebra_homomorphisms_and_vector_lattice_homomorphisms} shows) $\opintm$ often preserves moduli; if $\os$ has the countable sup property, this fact is then sufficient to make such a description possible. The present section is devoted to this.

We shall actually work in a more general context, where a Borel measure and a positive operator are \emph{supposed} to be related in a certain way. This allows us to obtain our results without unnecessary restrictions on, in particular, $\os$. The Riesz representation theorems in \cite{de_jeu_jiang:2022b} for positive operators $\posmap\colon \contcts\to\os$ then guarantee that, under appropriate conditions on, in particular, $\os$, these hypotheses are indeed satisfied, enabling us to apply the results in the present section to \cref{3_sec:special_positive_representations} where\textemdash this is not needed in the present section\textemdash$\npm$ will even be a spectral measure.

The description of the images of $\opintm$ will be in terms of ups and downs. We start with some preparations.

Let $\os$ be a partially ordered vector space, and let $S$ be a non-empty subset of $\os$. We define
\begin{align*}
	S^\up&\coloneqq\left\{ x\in\os: \text{ there exists a net }\net{x} \text{ in }S\text{ such that } x_\lambda\uparrow x \text{ in }\os\right\}\\
	\intertext{and its sequential version}
	S^\ups&\coloneqq\left\{ x\in\os: \text{ there exists a sequence }\seq{x} \text{ in }S\text{ such that } x_n\uparrow x \text{ in }\os\right\},
\end{align*}
and define $S^\down$ and $S^\downs$ similarly.\footnote{In \cite{aliprantis_burkinshaw_POSITIVE_OPERATORS_SPRINGER_REPRINT:2006}, our $S^\up$, $S^\down$, $S^\ups$, and $S^\downs$ are denoted by $S^\uparrow$, $S^\downarrow$, $S^\upharpoonleft$, and $S^\downharpoonleft$, respectively; in \cite{de_pagter:1983}, they are $S^\uparrow$, $S^\downarrow$, $S^{\uparrow_\omega}$, and $S^{\downarrow_\omega}$, respectively. Our notation may be a little clearer in smaller font.} We shall use self-evident notations such as $S^{\ups\down}\coloneqq\left(S^\ups\right)^{\raisebox{-2.5pt}{$\scriptstyle\down$}}$, etc.

\begin{remark}\label{3_rem:ups_and_downs_and_supersets}
When $\ostwo$ is a linear subspace of $\os$ and $S$ is a non-empty subset of  $\ostwo$, the ups and downs of $S$ in $\ostwo$ and in $\os$ need not be the same. With \cref{3_res:integral_preserves_moduli} in mind, we note that, for a monotone complete partially ordered algebra $\oa$ with monotone continuous multiplication and an idempotent $p\in\posoa$, the ups and downs of a non-empty subset of $p\oa$ in $p \oa$ coincide with their counterparts in $\oa$. This follows from \cref{3_res:properties_inherited_by_algebras_associated_to_idempotent}. Similar statements hold for $\oa p$ and $p\oa p$.
\end{remark}

We collect a few basic facts in the next three results.

\begin{lemma}\label{3_res:properties_of_ups_and_downs}
	Let $S$ be a non-empty subset of a partially ordered vector space $\os$, and let $\lambda\geq 0$.
	\begin{enumerate}
		\item\label{3_part:properties_of_ups_and_downs_1a}
		If $S+S\subseteq S$, or if $\lambda S\subseteq S$, then $S^\up$, $S^\down$, $S^\ups$, and $S^\downs$ all have the same respective property.
		\item\label{3_part:properties_of_ups_and_downs_2}		
		If $S^\vee$ exists in $\os$ and $S^\vee=S$, then $S^{\up\up}=S^\up$ and $S^{\ups\ups}=S^\ups$.
		\item\label{3_part:properties_of_ups_and_downs_3}
		If $S^\wedge$ exists in $\os$ and $S^\wedge=S$, then  $S^{\down\down}=S^\down$ and $S^{\downs\downs}=S^\downs$.
		\item\label{3_part:properties_of_ups_and_downs_1b}
		Suppose that $\os$ is a vector lattice. If $S^\vee=S$, or if $S^\wedge=S$, then each of $S^\up$, $S^\down$, $S^\ups$, and $S^\downs$ has the same respective property.
	\end{enumerate}
\end{lemma}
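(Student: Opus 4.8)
The plan is to reduce all four parts to a single workhorse about monotone double limits, and then to layer the lattice structure on top for the last part. The workhorse I would establish first is this: if $\net{x}$ and $\{y_\mu\}_{\mu\in M}$ are increasing nets in $\os$ with $x_\lambda\uparrow x$ and $y_\mu\uparrow y$, then the net $(x_\lambda+y_\mu)_{(\lambda,\mu)\in\Lambda\times M}$, ordered by the product direction, satisfies $x_\lambda+y_\mu\uparrow x+y$. The upper-bound half is immediate, and if $z\geq x_\lambda+y_\mu$ for all $\lambda,\mu$, then fixing $\mu$ and taking the supremum over $\lambda$ gives $x\leq z-y_\mu$, whence $y\leq z-x$, i.e.\ $x+y\leq z$. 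For sequences one observes in addition that the diagonal $(x_n+y_n)$ is cofinal in $\NN\times\NN$, so $x_n+y_n\uparrow x+y$ as well. The scalar analogue $\lambda x_\lambda\uparrow\lambda x$ for $\lambda\geq 0$ is handled by the order isomorphism $u\mapsto\lambda u$ when $\lambda>0$ and trivially when $\lambda=0$. Each statement has an evident decreasing dual, obtained by passing to $-S$.

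With this in hand, part~(1) is immediate: if $S+S\subseteq S$ and $x,y\in S^\up$, the witnessing nets combine as above into a net in $S$ increasing to $x+y$, so $x+y\in S^\up$; the sequential and decreasing variants, and the scaling statement via the scalar workhorse, are identical. For part~(2), fix $x\in S^{\up\up}$ and consider $D\coloneqq\{s\in S: s\leq x\}$. Because $S^\vee=S$, the set $D$ is closed under finite suprema and hence upward directed, and it is non-empty since $x$ dominates some element of $S^\up$, which in turn dominates some member of $S$. I would then show $x=\sup D$: any upper bound $z$ of $D$ dominates every $s\in S$ with $s\leq x$, and unwinding the two layers of suprema defining $x\in S^{\up\up}$ (each inner witness $s_{\alpha\beta}\leq y_\alpha\leq x$ lies in $D$, hence $z\geq y_\alpha$, hence $z\geq x$) shows $x\leq z$. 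Thus $D$, viewed as a net indexed by itself, increases to $x$, giving $x\in S^\up$. For $S^{\ups\ups}=S^\ups$, where directed sets are unavailable, I would instead take double-indexed witnesses $s_{n,k}\uparrow_k y_n$ and $y_n\uparrow x$ and set $t_m\coloneqq\sup\{s_{n,k}: n,k\leq m\}$; these finite suprema lie in $S$ by $S^\vee=S$, form an increasing sequence bounded by $x$, and satisfy $t_m\uparrow x$ because any $z\geq t_m$ for all $m$ dominates every $s_{n,k}$, hence every $y_n$, hence $x$. Part~(3) is the order-dual of part~(2), obtained by applying it to $-S$.

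Part~(4) is where the Riesz structure is essential, and the main obstacle lies in the two \emph{mixed} cases. Assume $S^\vee=S$. The cases $S^\up$ and $S^\ups$ closed under $\vee$ are easy: for $x,y\in S^\up$ the product net $x_\lambda\vee y_\mu$ lies in $S$ and increases to $x\vee y$, since any upper bound of it dominates both $x$ and $y$; the diagonal handles $S^\ups$. The delicate cases are $S^\down$ and $S^\downs$ closed under $\vee$, because knowing $w\leq x_\alpha\vee y_\beta$ for all indices does not obviously force $w\leq x\vee y$. Here I would invoke the Riesz-space estimate $\abs{u\vee c-v\vee c}\leq\abs{u-v}$ twice to obtain
\[
0\leq (x_\alpha\vee y_\beta)-(x\vee y)\leq (x_\alpha-x)+(y_\beta-y)
\]
for $x_\alpha\downarrow x$ and $y_\beta\downarrow y$ in $S$; since the right-hand side decreases to $0$ along the product net by the decreasing dual of the workhorse, the squeeze gives $x_\alpha\vee y_\beta\downarrow x\vee y$, and $x_\alpha\vee y_\beta\in S$ yields $x\vee y\in S^\down$ (the diagonal gives $S^\downs$). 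The assertion for $S^\wedge=S$ is the order-dual, obtained once more by passing to $-S$. I expect the only genuinely non-routine point to be exactly these mixed cases: justifying the infimum of the product net through the lattice estimate rather than by a naive bound-chase is where the distinction between a general partially ordered vector space (parts~(1)--(3)) and a Riesz space (part~(4)) actually bites.
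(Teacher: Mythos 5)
Your proof is correct and takes essentially the same route as the paper: for parts~(2) and~(3) the paper also uses $S^\vee=S$ to convert the two layers of witnesses into a single increasing family with supremum $x$ (it collects all inner witnesses into a set $S_0$ and passes to the directed set $S_0^\vee\subseteq S$, resp.\ takes running finite suprema of an enumeration in the sequential case, which is exactly your $D=\{s\in S: s\leq x\}$ and your $t_m$ up to cosmetic differences). The paper declares parts~(1) and~(4) routine, and your product-net workhorse and the Birkhoff-inequality squeeze $0\leq (x_\alpha\vee y_\beta)-(x\vee y)\leq (x_\alpha-x)+(y_\beta-y)$ for the mixed cases are correct ways of supplying those details.
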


\begin{proof}The parts~\ref{3_part:properties_of_ups_and_downs_1a} and~\ref{3_part:properties_of_ups_and_downs_1b} are routine to establish.

We prove part~\ref{3_part:properties_of_ups_and_downs_2}; the proof of part~\ref{3_part:properties_of_ups_and_downs_3} is similar.
	
	Take an $x\in S^{\up\up}$. There exists a net $\{x_\lambda\}_{\lambda\in\Lambda}$ in $S^\up$ such that $x_\lambda\uparrow x$. For each $\lambda\in\Lambda$, there exists a net $\{x_{i_\lambda}\}_{i_\lambda\in I_\lambda}$ in $S$ such that $x_{i_\lambda}\uparrow x_\lambda$. Then $x$ is the supremum of the subset $S_0\coloneqq\{x_{i_\lambda} : \lambda\in \Lambda,\,i_\lambda\in I_\lambda\}$ of $S$. This subset is not obviously (the image of) an increasing net in $S$. However, the subset $S_0^\vee$ of $S$ still  provides a net in $S$ that increases to $x$.
	
	Take an $x\in S^{\ups\ups}$. There exists a sequence $\{x_n\}_{n=1}^\infty$ in $S^\ups$ such that $x_n \uparrow x$. For each $n\geq 1$, there exists a sequence $\{x_n^m\}_{m=1}^\infty$ in $S$ such that $x_n^m\uparrow_m x_n$. Then $x$ is the supremum of the countable subset $S_0\coloneqq\{x_n^m : n,m\geq 1\}$ of $S$. Choose an enumeration $z_1,\,z_2,\,z_3,\,\dotsc$ of $S_0$. Then the sequence $z_1$, $\sup\{z_1, z_2\}$, $\sup\{z_1,z_2,z_3\}$, $\dotsc$ is a sequence in $S$ that increases to $x$.
\end{proof}

\begin{remark}\label{3_rem:mistake_in_positive_operators}
For a vector lattices $\os$, it is stated on \cite[p.~83]{aliprantis_burkinshaw_POSITIVE_OPERATORS_SPRINGER_REPRINT:2006} that it is clear that $S^{\up\up}=S^\up$ and that $S^{\down\down}=S^\down$, without including any condition on $S$. This appears to be a mistake.
\end{remark}

\begin{lemma}\label{3_res:intersection_of_ups_and_downs_is_linear_subspace}
	Let $L$ be a linear subspace of a partially ordered vector space $\os$. Then $L^{\ups}\cap L^\downs$ and $L^{\up\downs}\cap L^{\down\ups}$ are linear subspaces of $\os$. In general, intersections such as $L^{\downs\up\up\ups}\cap L^{\ups\down\down\downs}$ of a finite number of consecutive  ups or downs of $L$, an arbitrary number of which may  be sequential, and the `mirrored'  consecutive  downs or ups  of $L$, with the directions of the arrows reversed, is a linear subspace of $\os$.
\end{lemma}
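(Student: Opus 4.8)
The plan is to reduce everything to the single general assertion, of which the two displayed cases are instances, and to prove it by isolating three elementary facts about the operations $S\mapsto S^\up,\,S^\down,\,S^\ups,\,S^\downs$ and the way they interact with negation and with the linear structure of $L$.

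First I would fix notation for iterated ups and downs. For a finite word $w=a_1\cdots a_k$ in the alphabet $\{\up,\down,\ups,\downs\}$, write $S^w\coloneqq(\cdots(S^{a_1})^{a_2}\cdots)^{a_k}$, and let $w^\ast$ denote the mirrored word obtained by reversing each arrow \emph{in place}, i.e.\ via the substitutions $\up\leftrightarrow\down$ and $\ups\leftrightarrow\downs$ (the order of the letters being unchanged). With this notation the two displayed cases are $L^\ups\cap L^{\ups^\ast}$ (where $\ups^\ast=\downs$) and $L^{\up\downs}\cap L^{(\up\downs)^\ast}$ (where $(\up\downs)^\ast=\down\ups$), and the general statement is precisely that $L^w\cap L^{w^\ast}$ is a linear subspace of $\os$ for every word $w$.

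Next I would record three observations. (1) Since negation is an order-reversing bijection of $\os$, one has $x_\lambda\uparrow x$ if and only if $-x_\lambda\downarrow -x$ (and similarly for sequences); hence $-(S^\up)=(-S)^\down$ and $-(S^\ups)=(-S)^\downs$, together with the two reversed identities. A straightforward induction on the length of $w$ then yields $-(S^w)=(-S)^{w^\ast}$ for every word $w$ and every non-empty $S\subseteq\os$. (2) By part~\ref{3_part:properties_of_ups_and_downs_1a} of \cref{3_res:properties_of_ups_and_downs}, each of the four operations preserves both of the properties $S+S\subseteq S$ and $\lambda S\subseteq S$ (for $\lambda\geq 0$); applying this at each letter of $w$ shows that $L^w$ is closed under addition and under multiplication by non-negative scalars. (3) Using constant nets and sequences one has $T\subseteq T^a$ for each single letter $a$, so by induction $L\subseteq L^w$; in particular $0\in L^w$.

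Finally I would assemble the conclusion for $V\coloneqq L^w\cap L^{w^\ast}$. By~(3), $0\in V$, so $V$ is non-empty. If $x,y\in V$ then, by~(2) applied to both $L^w$ and $L^{w^\ast}$, the elements $x+y$ and $\lambda x$ (for $\lambda\geq 0$) lie in each of the two sets, so $V$ is closed under addition and non-negative scalings. For closure under negation I would apply~(1) with $S=L$ together with $-L=L$: this gives $-(L^w)=(-L)^{w^\ast}=L^{w^\ast}$, and, since $(w^\ast)^\ast=w$, likewise $-(L^{w^\ast})=L^w$; hence $-V=(-L^w)\cap(-L^{w^\ast})=L^{w^\ast}\cap L^w=V$. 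Therefore $V$ is a linear subspace, which proves the general statement and, as special cases, the two displayed ones. The only point requiring care\textemdash and it is no serious obstacle\textemdash is keeping track of the order-reversing effect of negation, which interchanges ups with downs and so sends $L^w$ to $L^{w^\ast}$ rather than back to $L^w$; this is exactly why one must intersect a word with its \emph{mirror}, and not a word with itself, to obtain a subspace.
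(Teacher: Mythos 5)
Your proposal is correct and takes essentially the same route as the paper's own proof: closure under addition and non-negative scalings is obtained from part~\ref{3_part:properties_of_ups_and_downs_1a} of \cref{3_res:properties_of_ups_and_downs}, and closure under negation from the identities $-(S^\up)=(-S)^\down$, $-(S^\ups)=(-S)^\downs$ (and their reverses), which show that negation carries $L^w$ onto the mirrored set $L^{w^\ast}$, so that the intersection is invariant. The only difference is presentational: you make the word/mirror bookkeeping and the induction on the length of $w$ explicit for the general statement, whereas the paper proves the representative case $L^{\up\downs}\cap L^{\down\ups}$ and leaves the general case implicit.
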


\begin{proof}
As an example, we prove that $L^{\up\downs}\cap L^{\down\ups}$  is a linear subspace of $\os$. In view of a double application of part~\ref{3_part:properties_of_ups_and_downs_1a} of \cref{3_res:properties_of_ups_and_downs}, it is sufficient to show that $-x\in L^{\up\downs}\cap L^{\down\ups}$ whenever $x\in L^{\up\downs}\cap L^{\down\ups}$. The fact that $x\in L^{\up\downs}$ implies that $-x\in -(L^{\up\ups})=(-L^\up)^\downs=(-L)^{\down\ups}=L^{\down\ups}$. Likewise, it follows from the fact that $x\in L^{\down\ups}$ that $-x\in L^{\up\downs}$.
\end{proof}

Suppose  that $S$ is a non-empty subset of the partially ordered vector space $\os$. Then we let
\[
\Wed{S}\coloneqq\left\{\sum_{i=1}^n{\alpha_is_i: n=1,2,\dotsc,\ \alpha_i\geq 0 \text{ for }i=1,\dotsc,n}\right\}
\]
denote the wedge in $\os$ that is generated by $S$. The following is clear from \cref{3_res:properties_of_ups_and_downs}.

\begin{lemma}\label{3_res:inclusion_between_wedges}
	Let $\os$ be a partially ordered vector space, and let $W$ be a wedge in $\os$. Then each of $W^\up$, $W^\down$, $W^\ups$, and $W^\downs$ is a wedge in $\os$. Suppose that $S$ is a non-empty subset of  $\os$. Then $\Wed{S^\up}\subseteq\left[\Wed{S}\right]^\up$. Similar statements hold for $S^\down$, $S^\ups$, and $S^\downs$.
	
	Suppose that $\os$ is a vector lattice. If $W^\vee=W$ or $W^\wedge=W$, then each of $W^\up$, $W^\down$, $W^\ups$, and $W^\downs$ has the same respective property.
	\end{lemma}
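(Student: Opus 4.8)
The plan is to derive all three assertions directly from \cref{3_res:properties_of_ups_and_downs}, exactly as the sentence preceding the lemma anticipates; no genuinely new idea is required beyond organising the bookkeeping and invoking the two closure principles already established there.

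For the first assertion, I would unwind the definition of a wedge: saying that $W$ is a wedge means precisely that $W+W\subseteq W$ and that $\la W\subseteq W$ for every $\la\geq 0$. Applying part~\ref{3_part:properties_of_ups_and_downs_1a} of \cref{3_res:properties_of_ups_and_downs} with the additivity hypothesis $W+W\subseteq W$ shows that each of $W^\up$, $W^\down$, $W^\ups$, $W^\downs$ is again closed under addition; applying the same part once for each $\la\geq 0$ with the hypothesis $\la W\subseteq W$ shows that each of these four sets is closed under multiplication by every $\la\geq 0$. Combining the two conclusions, each of $W^\up$, $W^\down$, $W^\ups$, $W^\downs$ is a wedge.

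For the second assertion, the key observation is that the $\up$-operation is monotone with respect to inclusion: if $A\subseteq B$ then $A^\up\subseteq B^\up$, since a net in $A$ that increases to a point is a fortiori a net in $B$ that does the same. Because $S\subseteq\Wed{S}$, this yields $S^\up\subseteq\left[\Wed{S}\right]^\up$. By the first assertion, applied to the wedge $\Wed{S}$, the set $\left[\Wed{S}\right]^\up$ is itself a wedge; being a wedge containing $S^\up$, it must then contain the wedge $\Wed{S^\up}$ generated by $S^\up$ (which is the smallest wedge containing $S^\up$), giving the desired inclusion $\Wed{S^\up}\subseteq\left[\Wed{S}\right]^\up$. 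The statements for $S^\down$, $S^\ups$, and $S^\downs$ follow in exactly the same way, using the corresponding monotonicity and the corresponding cases of the first assertion.

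For the final assertion, in the Riesz-space setting, I would simply specialise part~\ref{3_part:properties_of_ups_and_downs_1b} of \cref{3_res:properties_of_ups_and_downs} to $S=W$: if $W^\vee=W$, that part yields $(W^\up)^\vee=W^\up$ together with the analogous identities for $W^\down$, $W^\ups$, and $W^\downs$, and symmetrically if $W^\wedge=W$. I do not anticipate a real obstacle anywhere, since each piece is an immediate application of an already proved principle; the only point that deserves a word of care is the second assertion, where one must remember to invoke the first assertion to know that $\left[\Wed{S}\right]^\up$ is a wedge \emph{before} concluding that it absorbs the generated wedge $\Wed{S^\up}$.
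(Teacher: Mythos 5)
Your proposal is correct and follows exactly the route the paper intends: the paper gives no written proof, stating only that the lemma ``is clear from \cref{3_res:properties_of_ups_and_downs}'', and your argument is precisely the fleshing-out of that remark\textemdash part~\ref{3_part:properties_of_ups_and_downs_1a} for the wedge property, monotonicity of the up/down operations plus minimality of the generated wedge for the inclusion $\Wed{S^\up}\subseteq\left[\Wed{S}\right]^\up$, and part~\ref{3_part:properties_of_ups_and_downs_1b} for the Riesz-space statement. No gaps; your care in first establishing that $\left[\Wed{S}\right]^\up$ is a wedge before absorbing $\Wed{S^\up}$ is exactly the right ordering of the steps.
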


We can now show that the images of canonical positive cones are contained in cones that are built from the image of $\pos{\contcts}$ by using ups and downs.

\begin{proposition}\label{3_res:image_of_posmap_is_contained_in_ups_and_downs}
	Let $\ts$ be a locally compact Hausdorff space, let $\os$ be a \mc\  partially ordered vector space, and let $\posmap: \contcts\to\os$ be a positive operator. Suppose that $\npm\colon \borel\to\pososext$ is a regular Borel measure such that
	
	\begin{equation*}
		\posmap(f)=\ointm{f}
	\end{equation*}
for $f\in\contcts$;
	\begin{equation}\label{3_eq:image_of_posmap_is_contained_in_ups_and_downs_open_subsets}
		\npm(V)=\psup\{\posmap(f) : f\prec V\}
	\end{equation}
	in $\posos$ for every open subset $V$ of $\ts$ with finite measure; and
	\begin{equation}\label{3_eq:image_of_posmap_is_contained_in_ups_and_downs_compact_subsets}		
		\npm(K)=\pinf\{\posmap(f) : K\prec f\}
	\end{equation}
	in $\posos$ for every compact subset $K$ of $\ts$. Then
	\begin{align}
		\label{3_eq:image_of_posmap_is_contained_in_ups_and_downs_1}
		&\opintm\left(\posintegrablefunts\right)\subseteq\left[ \posmap(\pos{\contcts})\right]^{\up\down\ups};\\
		\intertext{if $\npm$ is inner regular at all Borel subsets of $\ts$ with finite measure, then also}
		\label{3_eq:image_of_posmap_is_contained_in_ups_and_downs_2}
		&\opintm\left(\posintegrablefunts\right)\subseteq \left[\posmap(\pos{\contcts})\right]^{\down\up\ups}.
		\intertext{Suppose that $\npm$ is finite. Then}
		\label{3_eq:image_of_posmap_is_contained_in_ups_and_downs_finite_1}
		&\opintm\left(\posboundedmeasfunts\right)\subseteq\left[ \posmap(\pos{\contcts})\right]^{\up\down\downs};\\
		\intertext{if $\npm$ is inner regular at all Borel subsets of $\ts$ , then also}
		\label{3_eq:image_of_posmap_is_contained_in_ups_and_downs_finite_2}
		&\opintm\left(\posboundedmeasfunts\right)\subseteq \left[\posmap(\pos{\contcts})\right]^{\down\up\downs}.
	\end{align}
\end{proposition}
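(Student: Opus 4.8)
The plan is to assemble all four inclusions from the bottom up: first for the values $\npm(\mss)$ on individual Borel sets, then for order integrals of positive elementary functions, and finally for arbitrary positive integrable or bounded measurable functions by monotone approximation. Throughout I would abbreviate $W\coloneqq\posmap(\pos{\contcts})$ and use that $W$ is a wedge, being the image of a cone under a positive operator. By repeated application of part~\ref{3_part:properties_of_ups_and_downs_1a} of \cref{3_res:properties_of_ups_and_downs}, each of the iterated sets $W^\up$, $W^\down$, $W^{\up\down}$, $W^{\down\up}$, and their further ups and downs is then again a wedge, hence closed under sums and under multiplication by nonnegative scalars; this is the fact that lets positive linear combinations stay inside the relevant up-down.

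First I would record the two building blocks. For an open set $V$ of finite measure, the hypothesis \eqref{3_eq:image_of_posmap_is_contained_in_ups_and_downs_open_subsets} displays $\npm(V)$ as the supremum of $\{\posmap(f):f\prec V\}\subseteq W$, a family that is upward directed because $f_1,f_2\prec V$ forces $f_1\vee f_2\prec V$ and $\posmap$ is positive; thus $\npm(V)\in W^\up$. Dually, $\{\posmap(f):K\prec f\}$ is downward directed through $f_1\wedge f_2$, so \eqref{3_eq:image_of_posmap_is_contained_in_ups_and_downs_compact_subsets} gives $\npm(K)\in W^\down$ for compact $K$. I would then pass to a Borel set $\mss$ of finite measure. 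Outer regularity, which is built into the assumption that $\npm$ is a regular Borel measure, presents $\npm(\mss)$ as the infimum of the downward-directed family $\{\npm(V):\mss\subseteq V\text{ open},\ \npm(V)<\infty\}\subseteq W^\up$, whence $\npm(\mss)\in W^{\up\down}$. If in addition $\npm$ is inner regular at the finite-measure Borel sets, then the upward-directed family $\{\npm(K):K\subseteq\mss\text{ compact}\}\subseteq W^\down$ gives instead $\npm(\mss)\in W^{\down\up}$.

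With these in hand, I would integrate. A positive integrable elementary function, written with pairwise disjoint sets as $\varphi=\sum_i r_i\chi_{\mss_i}$, has $r_i>0$ only where $\npm(\mss_i)<\infty$, so $\ointm{\varphi}=\sum_i r_i\npm(\mss_i)$ is a nonnegative combination of elements of $W^{\up\down}$ (respectively $W^{\down\up}$) and therefore lies in that wedge. For $f\in\posintegrablefunts$, the definition of the order integral supplies elementary functions $\varphi_n\uparrow f$ pointwise with $\ointm{\varphi_n}\uparrow\ointm{f}=\posmap(f)$; each $\varphi_n$ is integrable, so $\posmap(f)$ is the supremum of a sequence in $W^{\up\down}$ (respectively $W^{\down\up}$), giving \eqref{3_eq:image_of_posmap_is_contained_in_ups_and_downs_1} and \eqref{3_eq:image_of_posmap_is_contained_in_ups_and_downs_2}. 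For the bounded case I would instead approximate from above: when $\npm$ is finite, every $f\in\posboundedmeasfunts$ is the pointwise decreasing limit of the bounded positive elementary functions $\psi_n\coloneqq 2^{-n}\lceil 2^n f\rceil$, and, $\ointm{\psi_1}$ being finite, the monotone convergence theorem (\cite[Theorem~6.8]{de_jeu_jiang:2021a}) yields $\ointm{\psi_n}\downarrow\posmap(f)$; as each $\ointm{\psi_n}\in W^{\up\down}$ (respectively $W^{\down\up}$), this delivers \eqref{3_eq:image_of_posmap_is_contained_in_ups_and_downs_finite_1} and \eqref{3_eq:image_of_posmap_is_contained_in_ups_and_downs_finite_2}.

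Two points in this otherwise routine scheme need care. The first is the directedness of the families of set-measures, which is exactly what allows each $\npm(\mss)$ to be realised as a monotone limit drawn from a single copy of $W^\up$ or $W^\down$, rather than forcing a preliminary passage to finite suprema as in the proof of \cref{3_res:properties_of_ups_and_downs}; it rests on the lattice operations of $\contcts$ preserving the relations $f\prec V$ and $K\prec f$ together with the positivity of $\posmap$. The second, and the main obstacle, is the decreasing approximation behind \eqref{3_eq:image_of_posmap_is_contained_in_ups_and_downs_finite_1} and \eqref{3_eq:image_of_posmap_is_contained_in_ups_and_downs_finite_2}: a decreasing sequence of order integrals need not decrease to the integral of the limit unless the leading term is finite, which is precisely why these two inclusions are asserted only for finite $\npm$. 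I would establish $\ointm{\psi_n}\downarrow\posmap(f)$ by applying the monotone convergence theorem to the increasing sequence $\psi_1-\psi_n\uparrow\psi_1-f$ and subtracting the finite quantity $\ointm{\psi_1}$, and this is the step I would carry out most carefully.
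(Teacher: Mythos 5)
Your proof is correct and follows essentially the same route as the paper: the hypotheses and outer (resp.\ inner) regularity place each $\npm(\mss)$ of finite measure in $\left[T(\pos{\contcts})\right]^{\up\down}$ (resp.\ $\left[T(\pos{\contcts})\right]^{\down\up}$), wedge-stability of ups and downs handles integrals of elementary functions, the definition of the order integral supplies the final sequential up, and, for finite $\npm$, a decreasing elementary approximation from above supplies the sequential down. Your two deviations are cosmetic: you exploit the directedness of $\{T(f):f\prec V\}$ and $\{T(f):K\prec f\}$ where the paper invokes \cref{3_res:inclusion_between_wedges}, and you use dyadic upper functions $2^{-n}\lceil 2^n f\rceil$ where the paper writes $M\onefunction-s_n$ with $s_n\uparrow M\onefunction-f$ — the same subtraction-from-a-finite-integral mechanism in both cases.
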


\begin{proof} We establish \cref{3_eq:image_of_posmap_is_contained_in_ups_and_downs_1}.	It is clear that
	\[
	\opintm(\integrableelemfunts)=\Wed{\{\npm(\mss) : \mss\in\borel,\, \npm(\mss)<\infty\}}.
	\]

	The outer regularity of $\npm$ and \cref{3_eq:image_of_posmap_is_contained_in_ups_and_downs_open_subsets} imply that
	\[
	\{\npm(\mss) : \mss\in\borel,\, \npm(\mss)<\infty\}\subseteq\{\posmap(f): f\prec\ts\}^{\up\down}.
	\]
	Using a twofold application of \cref{3_res:inclusion_between_wedges} in the second step, we therefore see that
	\begin{equation}\label{3_eq:image_of_posmap_is_contained_in_ups_and_downs_finite_proof}
		\begin{split}
			\opintm\left(\integrableelemfunts\right)&\subseteq\Wed{\{\posmap(f): f \prec\ts\}^{\up\down}}\\
			&\subseteq\big[\Wed{\{\posmap(f): f \prec\ts\}}\big]^{\up\down}\\
			&=\big[\posmap(\pos{\contcts})\big]^{\up\down}.
		\end{split}
	\end{equation}
	The definition of the order integral now shows that \cref{3_eq:image_of_posmap_is_contained_in_ups_and_downs_1} holds.
	
	The proof of \cref{3_eq:image_of_posmap_is_contained_in_ups_and_downs_2} is similar, but now combines \cref{3_eq:image_of_posmap_is_contained_in_ups_and_downs_compact_subsets} with the inner regularity of $\npm$ at all Borel sets.
	
	Suppose that $\npm$ is finite. Take an $f\in\posboundedmeasfunts$. As for general elements of $\integrablefunts$, we have that $\opintm(f)\in\big[\opintm(\integrableelemfunts)\big]^\ups$, but we claim that now also $\opintm(f)\in\big[\opintm(\integrableelemfunts)\big]^\downs$. To see this, take an $M\geq 0$ such that $f(x)\leq M$ for all $x\in\pset$. Then $M\onefunction-f\geq 0$, so there exists a sequence $\{s_n\}_{n=1}^\infty$ of elementary function such that $s_n\uparrow M\onefunction - f$. Hence $\opintm(s_n)\uparrow \opintm(M\onefunction-f)$, which implies that $\opintm(M\onefunction-s_n)\downarrow\opintm(f)$. Since  $M\onefunction-s_n\in\integrableelemfunts$ for all $n$, this establishes our claim. An appeal to \cref{3_eq:image_of_posmap_is_contained_in_ups_and_downs_finite_proof} then concludes the proof of \cref{3_eq:image_of_posmap_is_contained_in_ups_and_downs_finite_1}. Similarly, the proof of \cref{3_eq:image_of_posmap_is_contained_in_ups_and_downs_finite_2} uses \cref{3_eq:image_of_posmap_is_contained_in_ups_and_downs_compact_subsets}, the inner regularity of $\npm$ at all Borel sets, and our claim.
\end{proof}

To improve\textemdash under extra conditions\textemdash inclusions such as in \cref{3_res:image_of_posmap_is_contained_in_ups_and_downs} to equalities, we need the following preparatory result. It is based on the monotone convergence theorem.

\begin{proposition}\label{3_res:pulling_back_ups_and_downs}
	Let $\msm$ be a measure space. Suppose that $\os$ has the \csp, and that $\opintm\colon \integrablefun\to\os$ preserves moduli. Let $S$ be a non-empty subset of $\posellone$.
	\begin{enumerate}
		\item\label{3_part:pulling_back_ups_and_downs_1}
		Suppose that $S^\vee=S$. Then $\opintm(S^\ups)=[\opintm(S)]^\ups=[\opintm(S)]^\up$.
		\item\label{3_part:pulling_back_ups_and_downs_2}
		Suppose that $S^\wedge=S$. Then $\opintm(S^\downs)=[\opintm(S)]^\downs=[\opintm(S)]^\down$.
		\end{enumerate}
		In fact:
		\begin{enumerate}[resume]
			\item\label{3_part:pulling_back_ups_and_downs_3}
			if $S\sp\vee=S$, then the subset of $\os$ that is obtained by taking at least one consecutive  ups of $\opintm(S)$, an arbitrary number of which may be sequential, is always equal to $\opintm(S^\ups)$;
			\item\label{3_part:pulling_back_ups_and_downs_4}
			if $S\sp\wedge=S$, then the subset of $\os$ that is obtained by taking at least one consecutive  downs of $\opintm(S)$, an arbitrary number of which may be sequential, is always equal to $\opintm(S^\downs)$.
		\end{enumerate}
\end{proposition}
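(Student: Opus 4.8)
The plan is to establish part~\ref{3_part:pulling_back_ups_and_downs_1} through the chain of inclusions
\[
\opintm(S^\ups)\subseteq[\opintm(S)]^\ups\subseteq[\opintm(S)]^\up\subseteq\opintm(S^\ups),
\]
to obtain part~\ref{3_part:pulling_back_ups_and_downs_2} by the dual argument, and to deduce parts~\ref{3_part:pulling_back_ups_and_downs_3} and~\ref{3_part:pulling_back_ups_and_downs_4} from \cref{3_res:properties_of_ups_and_downs}. In the chain the middle inclusion is immediate, and the first is easy: if $x_n\uparrow x$ with $\seq{x}$ in $S$, then the $\sigma$-order continuity of $\opintm:\ellone\to\os$ recorded in \cref{3_subsec:measures_and_integrals} gives $\opintm(x_n)\uparrow\opintm(x)$, whence $\opintm(x)\in[\opintm(S)]^\ups$.

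The substance is in the third inclusion, and I expect the handling of possibly infinite pointwise suprema to be the main obstacle. Given $y\in[\opintm(S)]^\up$, I would choose a net with $y_\lambda=\opintm(s_\lambda)\uparrow y$ and $s_\lambda\in S$; as $S\subseteq\posellone$, all terms and $y$ lie in $\posos$, so the \csp\ of $\os$ provides increasing indices $\lambda_1\le\lambda_2\le\cdots$ with $y_{\lambda_n}\uparrow y$. The functions $s_{\lambda_n}$ need not increase, and this is precisely where $S^\vee=S$ and lattice-structure preservation enter: putting $t_n:=s_{\lambda_1}\vee\dots\vee s_{\lambda_n}\in S$ yields an increasing sequence in $S$ with $\opintm(t_n)=y_{\lambda_1}\vee\dots\vee y_{\lambda_n}=y_{\lambda_n}$ by \cref{3_rem:binary_lattice_preserving_operators}, so $\opintm(t_n)\uparrow y$.

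It remains to realise $y$ as $\opintm$ of an element of $S^\ups$. Taking pointwise increasing representatives, let $t$ be the pointwise supremum of the $t_n$, a priori valued in $\posRext$. The monotone convergence theorem (\EXTREF{\cref{1_res:monotone_convergence_theorem}}{Theorem~6.8}{de_jeu_jiang:2021a}) gives $\ointm{t}=\psup_n\opintm(t_n)=y$ in $\osext$, so $\ointm{t}$ is finite. The delicate point is that $t$ might equal $\largest$ on a set of positive measure; but $n\,\npm(\set{t=\largest})=\ointm{n\indicator{\set{t=\largest}}}\le\ointm{t}=y$ for all $n$, so $\npm(\set{t=\largest})$ is finite and then vanishes because $\os$ is Archimedean. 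Hence $t<\largest$ almost everywhere; truncating on a null set produces $\tilde t\in\posellone$ with $\opintm(\tilde t)=y$, and since the pointwise supremum of the $t_n$ agrees almost everywhere with $\tilde t$, one checks that $t_n\uparrow\tilde t$ in $\ellone$. Thus $\tilde t\in S^\ups$ and $y\in\opintm(S^\ups)$, closing the chain.

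Part~\ref{3_part:pulling_back_ups_and_downs_2} follows by replacing $\vee,\uparrow$, suprema with $\wedge,\downarrow$, infima throughout; it is in fact simpler, since the relevant decreasing sequences are dominated by an integrable function, so no infinite values occur and bare $\sigma$-order continuity of $\opintm$ replaces the monotone convergence theorem, while the \csp\ is applied to the increasing net $y_{\lambda_0}-y_\lambda\uparrow y_{\lambda_0}-y$ to extract a decreasing subsequence. For parts~\ref{3_part:pulling_back_ups_and_downs_3} and~\ref{3_part:pulling_back_ups_and_downs_4}, I would first note that $[\opintm(S)]^\vee=\opintm(S)$: for $s_1,s_2\in S$ lattice-structure preservation gives $\opintm(s_1)\vee\opintm(s_2)=\opintm(s_1\vee s_2)\in\opintm(S)$ since $s_1\vee s_2\in S$. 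Writing $U:=[\opintm(S)]^\up$, part~\ref{3_part:properties_of_ups_and_downs_2} of \cref{3_res:properties_of_ups_and_downs} gives $U^\up=[\opintm(S)]^{\up\up}=U$; combined with $U\subseteq U^\ups\subseteq U^\up=U$ this yields $U^\ups=U$, and an induction shows that any finite word in $\up,\ups$ applied to $\opintm(S)$ returns $U=\opintm(S^\ups)$. Part~\ref{3_part:pulling_back_ups_and_downs_4} is dual, using part~\ref{3_part:properties_of_ups_and_downs_3} of \cref{3_res:properties_of_ups_and_downs}.
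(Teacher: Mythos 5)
Your proposal is correct and takes essentially the same route as the paper: the countable sup property extracts a countable subfamily, $S^\vee=S$ together with lattice-structure preservation turns it into an increasing sequence in $S$, the monotone convergence theorem is applied to the $\posRext$-valued pointwise supremum, an almost-everywhere-finiteness argument (your inline Archimedean computation re-proves the companion paper's Lemma~6.4, which the paper simply cites) allows truncation and passage to $\ellone$, and parts \ref{3_part:pulling_back_ups_and_downs_3} and \ref{3_part:pulling_back_ups_and_downs_4} collapse via \cref{3_res:properties_of_ups_and_downs} just as in the paper. Your remaining deviations\textemdash using plain $\sigma$-order continuity instead of the decreasing monotone convergence theorem in part \ref{3_part:pulling_back_ups_and_downs_2}, and an idempotency induction in place of the paper's sandwich between the mixed and all-net iterated ups\textemdash are cosmetic.
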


\begin{proof}
	We establish part~\ref{3_part:pulling_back_ups_and_downs_1}. Already without any further conditions on $\os$ or $\opintm$, it is a consequence of the monotone convergence theorem that $\opintm\colon\ellone\to\os$ is $\sigma$-order continuous; see  \cite[Theorem~6.17]{de_jeu_jiang:2022a}. Hence certainly $\opintm(S^\ups)\subseteq[\opintm(S)]^\ups\subseteq[\opintm(S)]^\up$. We shall now use the extra hypotheses to show that $[\opintm(S)]^\up\subseteq \opintm(S^\ups)$. Suppose that $e\in\os$ and that $\opintm(s_\lambda)\uparrow e$ for some net $\{s_\lambda\}_{\lambda\in\Lambda}$ in $S$. Since $\os$ has the \csp, there exists a sequence $\{f_n\}_{n=1}^\infty$ in $\posintegrablefun$ such that $\{\eclass{f_n]}: n=1,2,\dotsc\}\subseteq\{s_\lambda:\lambda\in\Lambda\}\subseteq S$ and $e=\sup_{n\geq 1}\opintm(f_n)$. For $n=1,2\dotsc$, set $g_n\coloneqq f_1\vee\dotsb\vee f_n$. Since $S^\vee=S$, we have  $\eclass{g_n}=\eclass{f_1}\vee\dotsb\vee\eclass{f_n}\in S$. Furthermore, since $\opintm\colon \integrablefun\to\os$ preserves moduli, we have (see \cref{3_rem:binary_lattice_preserving_operators}) that $\opintm(g_n)=\opintm(f_1)\vee\dotsb\vee\opintm(f_n)\uparrow e$. We define the measurable function $g\colon \pset\to\posRext$ by setting $g(x)\coloneqq\sup_{n\geq 1}g_n(x)\in\posRext$ for  $x\in\pset$. Then $g_n(x)\uparrow g(x)$ in $\posRext$ for every $x\in\pset$. According to the monotone convergence theorem (see \cite[Theorem~6.9]{de_jeu_jiang:2022a}), we have that $\opintm(g_n)\uparrow\opintm(g)$ in $\pososext$. Hence $\opintm(g)=e$. Since this is finite, it follows from \cite[Lemma~6.4]{de_jeu_jiang:2022a} that $g$ is almost everywhere finite-valued. When necessary, we can, therefore, redefine $g$ and the $g_n$ for $n\geq 1$ to be zero on a subset of measure zero, and arrange that $g$ is finite-valued and that $g_n(x)\uparrow g(x)$ in $\RR$ for all $x\in\pset$. Then $g_n\uparrow g$ in $\integrablefun$. Because the quotient map from $\integrablefun$ to $\ellone$ is $\sigma$-order continuous (see \cite[Theorem~6.17]{de_jeu_jiang:2022a}, this implies that $\eclass{g_n}\uparrow \eclass{g}$ in $\ellone$. Since $\opintm(\eclass{g})=e$, we can now conclude that $e\in\opintm(S^\ups)$, as desired.
	
	The proof of part~\ref{3_part:pulling_back_ups_and_downs_2} is similar, using the monotone  convergence theorem for decreasing sequences; see \cite[Corollary~6.10]{de_jeu_jiang:2022a}. It is even slightly easier because the then occurring pointwise limit function is already automatically finite-valued.
	
	We prove part~\ref{3_part:pulling_back_ups_and_downs_3}. Suppose that $S\sp\vee=S$. Take $n\geq 1$ and take $n$ consecutive ups of $\opintm(S)$, an arbitrary number of which may be sequential, and let $\Sigma$ denote the resulting subset of $\os$. For $n=1$, part~\ref{3_part:pulling_back_ups_and_downs_3} coincides with part~\ref{3_part:pulling_back_ups_and_downs_1}. For $n\geq 2$, we let $\widetilde\Sigma$ denote the subset of $\os$ that is obtained by $n$ consecutive net ups $\up$  of $\opintm(S)$. We note that $\opintm(S\sp\ups)=[\opintm(S)]\sp\ups\subseteq\Sigma\subseteq\widetilde\Sigma$. Since $S\sp\vee=S$ and $\opintm$ preserves moduli, we see that $[\opintm(S)]\sp\vee$ exists in $\os$ and that $[\opintm(S)]\sp\vee=\opintm(S)$; see \cref{3_rem:binary_lattice_preserving_operators}.  An $(n-1)$-fold application of part~\ref{3_part:properties_of_ups_and_downs_2} of \cref{3_res:properties_of_ups_and_downs} shows that $\widetilde\Sigma=[\opintm(S)]\sp\up$.  By part~\ref{3_part:pulling_back_ups_and_downs_1}, $[\opintm(S)]\sp\up$ equals $\opintm(S\sp\ups)$. The proof of  part~\ref{3_part:pulling_back_ups_and_downs_3} is now complete.
	
	Similarly, part~\ref{3_part:pulling_back_ups_and_downs_4} follows from part~\ref{3_part:pulling_back_ups_and_downs_2} and part~\ref{3_part:properties_of_ups_and_downs_3} of \cref{3_res:properties_of_ups_and_downs}.
\end{proof}

\begin{corollary}\label{3_res:pulling_back_ups_and_downs_for_vector_sublattices}
	Let $\msm$ be a measure space. Suppose that $\os$ has the \csp, and that $\opintm\colon \integrablefun\to\os$ preserves moduli. Let $S$ be a vector sublattice of $\ellone$. Then:
	\begin{enumerate}
		\item\label{3_part:pulling_back_ups_and_downs_for_vector_sublattices_1}
		$\opintm(S^\ups)=[\opintm(S)]^\ups=[\opintm(S)]^\up$;
		\item\label{3_part:pulling_back_ups_and_downs_for_vector_sublattices_2}
		$\opintm(S^\downs)=[\opintm(S)]^\downs=[\opintm(S)]^\down$.
		\end{enumerate}
		In fact:
		\begin{enumerate}[resume]
		\item\label{3_part:pulling_back_ups_and_downs_for_vector_sublattices_3}
		the subset of $\os$ that is obtained by taking at least one consecutive  ups of $\opintm(S)$, an arbitrary number of which  may be sequential, is always equal to $\opintm(S^\ups)$;		\item\label{3_part:pulling_back_ups_and_downs_for_vector_sublattices_4}
		the subset of $\os$ that is obtained by taking at least one consecutive  downs of $\opintm(S)$, an arbitrary number of which  may be sequential, is always equal to $\opintm(S^\downs)$.
	\end{enumerate}	
\end{corollary}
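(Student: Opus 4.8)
The plan is to reduce the statement to \cref{3_res:pulling_back_ups_and_downs}. The one point that genuinely needs attention is that \cref{3_res:pulling_back_ups_and_downs} is formulated for subsets of the positive cone $\posellone$, whereas a vector sublattice $S$ of $\ellone$ also contains negative elements; I shall bridge this gap by splitting off a positive part. Set $S^+\coloneqq S\cap\posellone$. Since $S$ is closed under the lattice operations of $\ellone$, this is a non-empty subset of $\posellone$ with $(S^+)^\vee=S^+$ and $(S^+)^\wedge=S^+$, so \cref{3_res:pulling_back_ups_and_downs} applies to $S^+$ and delivers $\opintm((S^+)^\ups)=[\opintm(S^+)]^\ups=[\opintm(S^+)]^\up$, the analogous down-statement, and the versions for arbitrary iterated ups and downs.

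The second step is a translation argument. For an increasing sequence $s_n\uparrow x$ in $S$, subtracting the first term yields $s_n-s_1\uparrow x-s_1$ with $s_n-s_1\in S^+$, so that $S^\ups=S+(S^+)^\ups$; likewise $S^\downs=S-(S^+)^\ups$. The same manipulation carried out in $\os$ shows that, for the image $T\coloneqq\opintm(S)$, one has $T^\up=T+(T\cap\posos)^\up$ and $T^\ups=T+(T\cap\posos)^\ups$, where $T$ is a vector sublattice of $\os$ (and in particular $T^\vee=T$) because $\opintm$ preserves the lattice structure. The key compatibility is $\opintm(S^+)=T\cap\posos$: the inclusion $\subseteq$ is clear, and if $\opintm(s)\geq 0$ for some $s\in S$, then $\opintm(s)=\opintm(s)\vee 0=\opintm(s\vee 0)\in\opintm(S^+)$. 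Combining these identities with the linearity of $\opintm$ and the first step gives $\opintm(S^\ups)=\opintm(S)+[\opintm(S^+)]^\up=T+(T\cap\posos)^\up=[\opintm(S)]^\up$, and, since $(T\cap\posos)^\ups=(T\cap\posos)^\up$, also $[\opintm(S)]^\ups=[\opintm(S)]^\up$. The down-statements follow symmetrically, establishing parts~\ref{3_part:pulling_back_ups_and_downs_for_vector_sublattices_1} and~\ref{3_part:pulling_back_ups_and_downs_for_vector_sublattices_2}.

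For parts~\ref{3_part:pulling_back_ups_and_downs_for_vector_sublattices_3} and~\ref{3_part:pulling_back_ups_and_downs_for_vector_sublattices_4}, I would mirror the squeezing argument from the proof of \cref{3_res:pulling_back_ups_and_downs}, working directly with the sublattice $T=\opintm(S)$. Any subset $\Sigma$ obtained by $n$ consecutive ups of $T$, some or all of which are sequential, satisfies $\opintm(S^\ups)=T^\ups\subseteq\Sigma\subseteq\widetilde\Sigma$, where $\widetilde\Sigma$ is the result of $n$ consecutive net ups. Since $T^\vee=T$, an $(n-1)$-fold application of part~\ref{3_part:properties_of_ups_and_downs_2} of \cref{3_res:properties_of_ups_and_downs} gives $\widetilde\Sigma=T^\up$, while part~\ref{3_part:pulling_back_ups_and_downs_for_vector_sublattices_1} gives $T^\up=\opintm(S^\ups)$; the squeeze then forces $\Sigma=\opintm(S^\ups)$, and the down-case is symmetric via part~\ref{3_part:properties_of_ups_and_downs_3}. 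The main obstacle is entirely the bookkeeping around positivity: one must pass to $S^+$ and verify $\opintm(S^+)=T\cap\posos$ so that the positive-cone hypothesis of \cref{3_res:pulling_back_ups_and_downs} is legitimately in force before the proposition is invoked; everything else is routine manipulation of suprema.
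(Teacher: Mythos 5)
Your proof is correct, and for the parts~\ref{3_part:pulling_back_ups_and_downs_for_vector_sublattices_3} and~\ref{3_part:pulling_back_ups_and_downs_for_vector_sublattices_4} your squeeze between $[\opintm(S)]^\ups$ and the $n$-fold net ups, collapsed via part~\ref{3_part:properties_of_ups_and_downs_2} of \cref{3_res:properties_of_ups_and_downs}, is exactly the paper's argument. For the parts~\ref{3_part:pulling_back_ups_and_downs_for_vector_sublattices_1} and~\ref{3_part:pulling_back_ups_and_downs_for_vector_sublattices_2}, however, your packaging is genuinely different from the paper's, even though the underlying trick\textemdash subtract the first term of a monotone family to land in the positive part\textemdash is the same. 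The paper argues at the level of a given net: for $e\in[\opintm(S)]^\up$ it invokes the \csp\ of $\os$ \emph{again} to extract a sequence $\{f_n\}$ from the net, forms $g_n\coloneqq f_1\vee\dotsb\vee f_n-f_1$, and applies part~\ref{3_part:pulling_back_ups_and_downs_1} of \cref{3_res:pulling_back_ups_and_downs} to $\pos{S}$ to conclude $e-\opintm([f_1])\in\opintm((\pos{S})^\ups)$. You instead prove the set-level translation identities $S^\ups=S+(S^+)^\ups$, $S^\downs=S-(S^+)^\ups$, $T^\up=T+(T\cap\posos)^\up$, $T^\ups=T+(T\cap\posos)^\ups$ for $T=\opintm(S)$, together with the compatibility $\opintm(S^+)=T\cap\posos$ (where lattice-structure preservation enters, via $\opintm(s)=\opintm(s\vee 0)$), and then deduce the equalities by pure linearity. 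This buys two things: the \csp\ is used only once, encapsulated inside the cited \cref{3_res:pulling_back_ups_and_downs} rather than duplicated in the corollary's proof, and it makes transparent that the corollary is a formal consequence of the positive-cone case. The paper's direct net argument is shorter on the page but repeats the extraction machinery; yours is slightly longer but more structural, and all the auxiliary identities (tail-net translation for $T^\up$, $(S^+)^\vee=S^+$, nonemptiness via $0\in S$) check out.
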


\begin{proof}
	We prove part~\ref{3_part:pulling_back_ups_and_downs_for_vector_sublattices_1}; the proof of part~\ref{3_part:pulling_back_ups_and_downs_for_vector_sublattices_2} is similar.
	It follows from the monotone convergence theorem that 	$\opintm(S^\ups)\subseteq[\opintm(S)]^\ups$, and trivially 	$[\opintm(S)]^\ups\subseteq[\opintm(S)]^\up$. We show that $[\opintm(S)]^\up\subseteq\opintm(S^\ups)$.  Suppose that $e\in\os$ and that $\opintm(s_\lambda)\uparrow e$ for some net $\{s_\lambda\}_{\lambda\in\Lambda}$ in $S$. Since $\os$ has the \csp, there exists a sequence $\{f_n\}_{n=1}^\infty$ in $\posintegrablefun$ such that $\{\eclass{f_n]}: n=1,2,\dotsc\}\subseteq\{s_\lambda:\lambda\in\Lambda\}\subseteq S$ and $e=\sup_{n\geq 1}\opintm(f_n)$. For $n=1,2\dotsc$, set $g_n\coloneqq f_1\vee\dotsb\vee f_n-f_1$. Then $\eclass{g_n]}\in\pos{S}$, and $\opintm(\eclass{g_n})\uparrow e-\opintm(\eclass{f_1})$. Part~\ref{3_part:pulling_back_ups_and_downs_1} of \cref{3_res:pulling_back_ups_and_downs} shows that $e-\opintm(\eclass{f_1})\in\opintm((\pos{S})^\ups)\subseteq\opintm(S^\ups)$. This implies that $e\in\opintm(S^\ups)$.
	
	The parts~\ref{3_part:pulling_back_ups_and_downs_for_vector_sublattices_3} and~\ref{3_part:pulling_back_ups_and_downs_for_vector_sublattices_4} follow from the  parts~\ref{3_part:pulling_back_ups_and_downs_for_vector_sublattices_1} resp.~\ref{3_part:pulling_back_ups_and_downs_for_vector_sublattices_2} as in the proof of  \cref{3_res:pulling_back_ups_and_downs}.
\end{proof}

We can now establish the following result, where inclusions as in \cref{3_res:image_of_posmap_is_contained_in_ups_and_downs} are replaced with equalities. As \cref{3_subsec:the_countable_sup_property} indicates, the condition in it that $\os$ have the \csp\ is often met. Regarding its final part we recall that, if $\os$ is a \smc\ normed partially ordered algebra with a monotone norm, and $\npm$ is a finite spectral measure, then \cref{3_res:integrable_function_for_spectral_measure_is_bounded} shows that it is automatic that $\ellonets=\boundedmeasfunaets$; here we have used our convention as in \cref{3_rem:embedding_of_ellone}.


\begin{theorem}[Ups and downs for positive cones]\label{3_res:ups_and_downs}
	
		Let $\ts$ be a locally compact Hausdorff space, let $\os$ be a \mc\  partially ordered vector space, and let $\posmap: \contcts\to\os$ be a positive operator. Suppose that $\npm\colon \borel\to\pososext$ is a regular Borel measure such that
	
	\begin{equation*}
		\posmap(f)=\ointm{f}
	\end{equation*}
	for $f\in\contcts$;
	\begin{equation*}
		\npm(V)=\psup\{\posmap(f) : f\prec V\}
	\end{equation*}
	in $\posos$ for every open subset $V$ of $\ts$ with finite measure; and
	\begin{equation*}
		\npm(K)=\pinf\{\posmap(f) : K\prec f\}
	\end{equation*}
	in $\posos$ for every compact subset $K$ of $\ts$.
	Suppose, furthermore, that $\os$ has the \csp, and that $\opintm\colon \integrablefun\to\os$ preserves moduli. Then:
	\begin{enumerate}
		\item\label{3_part:ups_and_downs_1}
		\begin{align}
			\label{3_eq:ups_and_downs_3}
			\opintm(\posintegrablefunts)&=\opintm(\posintegrablefunts)^\up=\opintm(\posintegrablefunts)^\down
			\intertext{and}
			\label{3_eq:ups_and_downs_1}
			\opintm(\posintegrablefunts)&=\opintm(\eclass{\pos{\contcts}}^{\ups\downs\ups})=\big[\posmap(\pos{\contcts})\big]^{\ups\downs\ups};
		\end{align}
		\item\label{3_part:ups_and_downs_2}
		If $\npm$ is inner regular at all Borel subsets of $\ts$ with finite measure, then
		\begin{equation}
		\label{3_eq:ups_and_downs_2}
		\opintm(\posintegrablefunts)=\opintm(\eclass{\pos{\contcts}}^{\downs\ups})=\big[\posmap(\pos{\contcts})\big]^{\downs\ups};
		\end{equation}
		\item\label{3_part:ups_and_downs_3}
		If $\npm$ is finite and $\ellonets=\boundedmeasfunaets$, then
		\end{enumerate}
		\begin{equation}\label{3_eq:ups_and_downs_finite}
		\opintm(\posintegrablefunts)=\opintm(\eclass{\pos{\contcts}}^{\ups\downs})=\big[\posmap(\pos{\contcts})\big]^{\ups\downs}.
		\end{equation}
\end{theorem}

\begin{proof}
	The equalities in \cref{3_eq:ups_and_downs_3} are immediate from \cref{3_res:pulling_back_ups_and_downs}.
	
	We establish \cref{3_eq:ups_and_downs_1}. We know from \cref{3_eq:image_of_posmap_is_contained_in_ups_and_downs_1} that
	\[
	\opintm(\posintegrablefunts)\subseteq\big[\posmap(\pos{\contcts})\big]^{\up\down\ups}=\big[\opintm(\eclass{\pos{\contcts}})\big]^{\up\down\ups}.
	\]
	Since $\eclass{\pos{\contcts}}$ is closed under the taking of finite suprema (and of finite infima), \cref{3_res:pulling_back_ups_and_downs} shows that
	\[
	\big[\opintm(\eclass{\pos{\contcts}})\big]^{\up\down\ups}=\big[\opintm(\eclass{\pos{\contcts}}\sp\ups)\big]^{\down\ups}.
	\]
	According to \cref{3_res:properties_of_ups_and_downs}, $\eclass{\pos{\contcts}}^\ups$ is still closed under the taking of finite infima (and of finite suprema). Another appeal to \cref{3_res:pulling_back_ups_and_downs}, followed by one further repetition of the argument, therefore yields that
	\[
	\big[\opintm(\eclass{\pos{\contcts}}^\ups)\big]^{\down\ups}=\opintm(\eclass{\pos{\contcts}}^{\ups\downs\ups}).
	\]
	Since, trivially,
	\[
	\opintm(\eclass{\pos{\contcts}}^{\ups\downs\ups})\subseteq\opintm(\posellonets)=\opintm(\posintegrablefunts),
	\]
	we can now conclude that
	\[
	\opintm(\posintegrablefunts)=\opintm(\eclass{\pos{\contcts}}^{\ups\downs\ups}).
	\]
	A threefold application of \cref{3_res:properties_of_ups_and_downs} and \cref{3_res:pulling_back_ups_and_downs} shows that
	\[
	\opintm(\eclass{\pos{\contcts}}^{\ups\downs\ups})=\big[\opintm(\eclass{\pos{\contcts}})\big]^{\ups\downs\ups}=\big[\posmap(\pos{\contcts})\big]^{\ups\downs\ups}.
	\]
	This completes the proof of \cref{3_eq:ups_and_downs_1}.
	
	When $\npm$ is inner regular at all Borel subsets of finite measure, we use \cref{3_eq:image_of_posmap_is_contained_in_ups_and_downs_2} as a starting point, and show similarly that
	\[
	\opintm(\posintegrablefunts)=\opintm(\eclass{\pos{\contcts}}^{\downs\ups\ups}).
	\]
	Since $\eclass{\contcts}^\downs$ is closed under the taking of finite suprema by \cref{3_res:properties_of_ups_and_downs}, the same \cref{3_res:properties_of_ups_and_downs} shows that $\eclass{\pos{\contcts}}^{\downs\ups\ups}=\eclass{\pos{\contcts}}^{\downs\ups}$. A twofold application of \cref{3_res:properties_of_ups_and_downs} and \cref{3_res:pulling_back_ups_and_downs}  then completes the proof of \cref{3_eq:ups_and_downs_2}.
	
	When $\npm$ is finite and $\ellonets=\boundedmeasfunaets$, \cref{3_eq:image_of_posmap_is_contained_in_ups_and_downs_finite_1} shows that
	\[
	\opintm(\posintegrablefunts)\!=\!\tnegskip\opintm(\posboundedmeasfunts)\tnegskip\subseteq\tnegskip\big[\!\posmap(\pos{\contcts})\!\big]^{\up\down\downs}\!=\big[\tnegskip\opintm(\eclass{\pos{\contcts}})\big]^{\up\down\downs}.
	\]
	Arguing as before, \cref{3_eq:ups_and_downs_finite} follows from this.	
\end{proof}


\begin{remark}\label{3_rem:components_of_a_positive_operators}
	It is known from \cite[Theorem~3.10]{de_pagter:1983} that the set of components of a positive operator between two \Dc\ vector lattices $\os$ and $\ostwo$, where $\ocdual{\ostwo}$ separates the points of $\ostwo$, can be obtained as the $\ups\down\up$ and as the $\downs\up\down$ of the set of its simple components.\footnote{According to \cite[Theorem~2.6]{aliprantis_burkinshaw_POSITIVE_OPERATORS_SPRINGER_REPRINT:2006}, $\os$ need merely have the principal projection property.} When the operator is order continuous, it suffices to take the $\ups\downs$ or the $\downs\ups$ of the set of its simple components; see \cite[Theorem~3.11]{de_pagter:1983}. Given the similarities, it is an intriguing question whether (special cases of) these results in \cite{de_pagter:1983} may be related to (special cases of) those in \cref{3_res:ups_and_downs}. 	
\end{remark}	

\cref{3_res:image_of_posmap_is_contained_in_ups_and_downs} and \cref{3_res:ups_and_downs} are concerned with the images of the positive cones of the integrable and bounded measurable functions. It is also possible to establish versions for the image of the full vector lattice of bounded measurable functions. These are taken together in the following result. Its final statement will be particularly relevant in the sequel.

\begin{theorem}[Ups and downs for vector lattices]\label{3_res:full_ups_and_downs}

	Let $\ts$ be a locally compact Hausdorff space, let $\os$ be a \mc\  partially ordered vector space, and let $\posmap: \contcts\to\os$ be a positive operator. Suppose that $\npm\colon \borel\to\posos$ is a finite regular Borel measure that is inner regular at all Borel subsets of $\ts$, and such that

\begin{equation*}
	\posmap(f)=\ointm{f}
\end{equation*}
for $f\in\contcts$;
\begin{equation*}
	\npm(V)=\psup\{\posmap(f) : f\prec V\}
\end{equation*}
in $\posos$ for every open subset $V$ of $\ts$;
\begin{equation*}
	\npm(K)=\pinf\{\posmap(f) : K\prec f\}
\end{equation*}
in $\posos$ for every compact subset $K$ of $\ts$. Then
	\begin{align}
		\label{3_eq:full_image_of_posmap_is_contained_in_ups_and_downs_1}
		&\opintm\left(\boundedmeasfunts\right)\subseteq\left[ \posmap(\contcts)\right]^{\up\down\ups}\cap\left[\posmap(\contcts)\right]^{\down\up\downs}\\
		\intertext{and}
		\label{3_eq:full_image_of_posmap_is_contained_in_ups_and_downs_3}
		&\opintm\left(\boundedmeasfunts\right)\subseteq \left[\posmap(\contcts)\right]^{\up\down\downs}\cap \left[ \posmap(\contcts)\right]^{\down\up\ups}.
		\intertext{If $\os$ has the \csp\ and $\opintm\colon\integrablefunts\to\os$ preserves moduli, then}
		\label{3_eq:full_image_of_posmap_is_contained_in_ups_and_downs_6}
		&\opintm\left(\boundedmeasfunts\right)\subseteq  \left[ \posmap(\contcts)\right]^{\ups\downs}\cap\left[ \posmap(\contcts)\right]^{\downs\ups}.
	\end{align}

	If $\os$ has the \csp, $\opintm \colon\integrablefunts\to\os$ preserves moduli, and $\ellonets=\boundedmeasfunaets$, then		
	
	\begin{equation}\label{3_eq:full_image_of_posmap_is_contained_in_ups_and_downs_7}
		\opintm\left(\boundedmeasfunts\right)=\left[ \posmap(\contcts)\right]^{\ups\downs}=\left[ \posmap(\contcts)\right]^{\downs\ups}.
	\end{equation}
\end{theorem}

\begin{proof}
	It follows from \cref{3_eq:image_of_posmap_is_contained_in_ups_and_downs_1,3_eq:image_of_posmap_is_contained_in_ups_and_downs_finite_2} that
\begin{align*}
\opintm(\posboundedmeasfunts)&\subseteq\left[ \posmap(\pos{\contcts})\right]^{\up\down\ups}\cap \left[\posmap(\pos{\contcts})\right]^{\down\up\ups}\\
&\subseteq \left[ \posmap(\contcts)\right]^{\up\down\ups}\cap \left[\posmap(\contcts)\right]^{\down\up\ups}.
\end{align*}
Since $\left[ \posmap(\contcts)\right]^{\up\down\ups}\cap \left[\posmap(\contcts)\right]^{\down\up\ups}$ is a linear subspace of $\os$ by \cref{3_res:intersection_of_ups_and_downs_is_linear_subspace}, the validity of
\cref{3_eq:full_image_of_posmap_is_contained_in_ups_and_downs_1} is now clear. Similarly, \cref{3_eq:image_of_posmap_is_contained_in_ups_and_downs_2,3_eq:image_of_posmap_is_contained_in_ups_and_downs_finite_1} can be used to establish \cref{3_eq:full_image_of_posmap_is_contained_in_ups_and_downs_3}.

Suppose that $\os$ has the \csp, and that $\opintm\tnegskip\colon  \integrablefunts\to\os$ preserves moduli.
Using \cref{3_res:pulling_back_ups_and_downs_for_vector_sublattices},
\cref{3_eq:full_image_of_posmap_is_contained_in_ups_and_downs_3} implies that
\[\opintm\left(\boundedmeasfunts\right)\subseteq\left[ \posmap(\contcts)\right]^{\up\down\downs}=\left[ \opintm(\eclass{\contcts})\right]^{\up\down\downs}=\left[\opintm(\eclass{\contcts^\ups})\right]^{\down\downs}.
 \]
As $\os$ has the \csp, we have
\[
\left[\opintm(\eclass{\contcts^\ups})\right]^{\down\downs}=\left[\opintm(\eclass{\contcts^\ups})\right]^{\downs\downs}.
\]

Since $\eclass{\contcts}^\ups$ is still closed under the taking of finite infima by part~\ref{3_part:properties_of_ups_and_downs_1b} of \cref{3_res:properties_of_ups_and_downs}, the fact that $\opintm$ preserves moduli implies that $[\opintm(\eclass{\contcts}^\ups)]\sp{\wedge}$ exists in $\os$ and that $[\opintm(\eclass{\contcts}^\ups)]\sp{\wedge}=\opintm(\eclass{\contcts}^\ups)$; see \cref{3_rem:binary_lattice_preserving_operators}. Part~\ref{3_part:properties_of_ups_and_downs_3} of \cref{3_res:properties_of_ups_and_downs} therefore yields that

\[
\left[\opintm(\eclass{\contcts^\ups})\right]^{\downs\downs}=\left[\opintm(\eclass{\contcts^\ups})\right]^{\downs},
\]
Finally, \cref{3_res:pulling_back_ups_and_downs_for_vector_sublattices} shows that
\[
\left[\opintm(\eclass{\contcts^\ups})\right]^{\downs}=\left[\opintm(\eclass{\contcts})\right]^{\ups\downs}=\left[\posmap(\contcts)\right]^{\ups\downs}.
\]
One similarly shows that
\[\opintm\left(\boundedmeasfunts\right)\subseteq\left[ \posmap(\contcts)\right]^{\down\up\ups}=\left[\posmap(\contcts)\right]^{\downs\ups},
\]
which concludes the proof of \cref{3_eq:full_image_of_posmap_is_contained_in_ups_and_downs_6}.

Suppose that $\os$ has the \csp, that $I_{\npm}\colon\integrablefunts\to\os$ preserves moduli, and that $\ellonets=\boundedmeasfunaets$.  We establish the first equality in \cref{3_eq:full_image_of_posmap_is_contained_in_ups_and_downs_7}. Starting from \cref{3_eq:full_image_of_posmap_is_contained_in_ups_and_downs_6}, and using \cref{3_res:pulling_back_ups_and_downs_for_vector_sublattices} in the final step, we have that
\[
\opintm(\boundedmeasfunts)\subseteq\left[\posmap(\contcts)\right]^{\ups\downs}=\left[\opintm(\eclass{\contcts})\right]^{\ups\downs}=\left[\opintm(\eclass{\contcts}\sp\ups)\right]^{\downs}.
\]
Using \cref{3_res:pulling_back_ups_and_downs_for_vector_sublattices} in the second step, we see that
\begin{align*}
	\left[\opintm(\eclass{\contcts}\sp{\ups}\right]^{\downs}&\subseteq\left[\opintm(\ellonets)\right]^{\downs}=\opintm(\ellonets\sp\downs)\\
		&=\opintm(\ellonets)=\opintm(\boundedmeasfunts).
	\end{align*}
We conclude that  $\opintm(\boundedmeasfunts)=\left[\posmap(\contcts)\right]^{\ups\downs}$.
 It is similarly proved that $\opintm(\boundedmeasfunts)=\left[\posmap(\contcts)\right]^{\downs\ups}$.
\end{proof}

The following should not go unnoticed.

\begin{proposition}\label{3_res:should_not_remain_unnoticed}
 Let $\ts$ be a locally compact Hausdorff space, let $\os$ be a \smc\ partially ordered vector space, and let $\npm\colon \borel\to\posos$ be a finite Borel measure. Then $\opintm(\pos{\contots})\subseteq[\opintm(\pos{\contcts})]\sp\ups$.
	\end{proposition}

\begin{proof}
Take an $f\in\contots$. For $n=1,2,\ldots$, there exists a $\varphi_n\in\contcts$ such that $\zerofunction\leq \varphi_n\leq\onefunction$ and $\varphi_n (x)=1$ when $\abs{f(x)}\leq 1/n$. Then $\varphi_1f$, $(\varphi_1\vee\varphi_2)f$,  $(\varphi_1\vee\varphi_2\vee\varphi_3)f$ is a sequence in $\pos{\contcts}$ that increases pointwise to $f$, so that the image sequence increases to $\opintm(f)$ by the monotone convergence theorem.
\end{proof}


\section{Spectral theorems for positive algebra homomorphisms}
\label{3_sec:riesz_representation_theorems_for_positive_algebra_homomorphisms}


\noindent In this section, a number of results from \cite{de_jeu_jiang:2022a} and  \cite{de_jeu_jiang:2022b} are combined with those from the present paper to yield two spectral theorems for positive algebra homomorphisms. The statements of the theorems are long, and some parts of them are identical, but we thought it worthwhile to collect all major results from these three papers that are applicable in a particular context in one place. With an eye towards possible further extensions and applications, we mention that the monotone convergence theorem, Fatou's lemma, and the dominated convergence theorem hold for the order integral that occurs in the results below; see \cite[Section~6.2]{de_jeu_jiang:2022a}.

It will be a recurring theme to know that a spectral measure for a positive algebra homomorphism from $\contcts$ into a partially ordered algebra is finite, as a consequence of the fact that it is the restriction of a positive algebra homomorphism that is defined on $\contots$. This will be possible when the algebra is a quasi-perfect partially ordered vector space. As \cref{3_res:examples_of_quasi_perfect_spaces} shows, this class of spaces contains a good number of spaces of practical interest.

The results on ups and downs below are established under the hypothesis that the codomain have the \csp. As indicated in \cref{3_subsec:the_countable_sup_property}, this condition is also satisfied for a variety of spaces of practical interest.

Let $\msm$ be a measure space. We recall for the convenience of the reader that  $\opintm\colon\integrablefun\to\os$ denotes the map $f\mapsto\ointm{f}$, and that we use the same notation for its restriction to subspaces of $\integrablefun$ and to quotients of such subspaces. In the results below, it will typically denote an extension of a positive operator $\posmap\colon\contcts\to\os$, or an operator that is compatible with a canonical map from $\contots$ into the domain of $\opintm$.

Our first result is for Banach lattice algebras with order continuous norms and monotone continuous multiplications. Banach lattice algebras of operators on infinite dimensional spaces will not often fall into this category\textemdash the order continuity of the norm is problematic\textemdash but on finite dimensional spaces these requirements are met. Function algebras such as $\ell^p$ for $1\leq p<\infty$ provide another class of examples to which \cref{3_res:positive_homomorphisms_into_banach_lattice_algebras} can be applied.

\begin{theorem}[Positive algebra homomorphisms from $\contcts$ into Banach lattice algebras with order continuous norms]\label{3_res:positive_homomorphisms_into_banach_lattice_algebras}
Let $\ts$ be a locally compact Hausdorff space, let $\oa$ be a Banach lattice algebra with an order continuous norm and a monotone continuous multiplication, and let $\posmap\colon \contcts\to\oa$ be a positive algebra homomorphism.

\begin{enumerate}
\item\label{3_part:positive_homomorphisms_into_banach_lattice_algebras_1}
There exists a unique regular Borel measure $\npm\colon \borel\to\posoaext$ on the Borel $\sigma$-algebra $\borel$ of $\ts$ such that
\begin{equation}\label{3_eq:positive_homomorphisms_into_banach_lattice_algebras_1}
\posmap(f)=\ointm{f}
\end{equation}
for all $f\in\contcts$. If $V$ is a non-empty open subset of $\ts$, then
\begin{equation}\label{3_eq:positive_homomorphisms_into_banach_lattice_algebras_inserted_number}
	\npm(V)=\psup\{\posmap(f) : f\prec V\}
\end{equation}
in $\posoaext$. If $K$ is a compact subset of $\ts$, then
\begin{equation*}
	\npm(K)=\pinf\{\posmap(f) : K\prec f\}
\end{equation*}
in $\posoa$.
\item\label{3_part:positive_homomorphisms_into_banach_lattice_algebras_1_extra}
The measure $\npm$ is a spectral measure which is inner regular at all Borel sets of finite measure. It is finite  if and only if
$\{\posmap(f) : f\in\pos{\contcts},\,\norm{f}\leq 1\}$ is bounded above in $\oa$. This is automatically the case when $\ts$ is compact, and also when $\oa$ is quasi-perfect and $\posmap$ is the restriction of a positive algebra homomorphism $\posmap\colon \contots\to\oa$. In the latter case, \cref{3_eq:positive_homomorphisms_into_banach_lattice_algebras_1} also holds for $f\in\contots$.

\item\label{3_part:positive_homomorphisms_into_banach_lattice_algebras_inserted}
$\ellonets\subseteq\boundedmeasfunaets$. If $\npm$ is finite, then $\ellonets=\boundedmeasfunaets$.

\item\label{3_part:positive_homomorphisms_into_banach_lattice_algebras_2}
The spaces $\integrablefunts$ and $\ellonets$ are \sDc\ vector lattices, and the naturally defined operators $\opintm$ from these spaces into $\oa$ are both $\sigma$-order continuous. The operator $\opintm\colon \ellonets\to\os$ is strictly positive. When $\oa$ has the \csp, $\ellonets$ is a \Dc\ vector lattice with the \csp, and $\opintm\colon \ellonets\to\oa$ is order continuous.

\item\label{3_part:positive_homomorphisms_into_banach_lattice_algebras_5}
Suppose that $\npm$ is finite. Then:
\begin{enumerate}
	\item $\opintm(\boundedmeasfunaets)\subseteq\npm(\ts)\oa\npm(\ts)$;
	\item  $\opintm\colon\boundedmeasfunaets\to\npm(\ts)\oa$,  $\opintm\colon\boundedmeasfunaets\to\oa\npm(\ts)$, and\newline $\opintm\colon\boundedmeasfunaets\to\npm(\ts)\oa\npm(\ts)$ preserve moduli;
	\item $\opintm\colon\boundedmeasfunaets\to\oa$ is a topological embedding of the Banach algebra $\boundedmeasfunaets$ as a Banach subalgebra of $\oa$;
	\item when $\opintm(\boundedmeasfunaets)$ is supplied with the partial ordering inherited from $\oa$, it is a unital Banach lattice algebra with identity element $\npm(\ts)$, and $\opintm\colon\boundedmeasfunaets\to\opintm(\boundedmeasfunaets)$ is an isomorphism of Banach lattice algebras.
	\item if $\oa$ has an identity $e$ and $\npm(\ts)\leq e$, then  $\opintm(\boundedmeasfunaets)$ is a Banach lattice subalgebra of $\oa$.
\end{enumerate}

\item\label{3_part:positive_homomorphisms_into_banach_lattice_algebras_inserted_once_more}
Suppose that $\npm$ is finite. For $x^\prime\in\pos{(\ocdualoa)}=\pos{(\ndualoa)}$ and $\mss\in\borel$, set $\npm_{x^\prime}(\mss)\coloneqq(\npm(\mss),x^\prime)$. Then $\npm_{x^\prime}\colon \borel\to\posR$ is a regular Borel measure, and we have $f\in{\lebfont L}^1(\pset,\alg,\npm_{x^\prime};\RR)$ for $f\in\integrablefunts$. For $f\in\integrablefunts$, $\opintm(f)=\ointm{f}$ is the unique element of $\oa$ such that
\begin{equation*}
	\left(\opintm(f), x^\prime\right)=\int_\pset\!f\di{\npm_{x^\prime}}
\end{equation*}
for all $x^\prime\in\pos{(\ocdualoa)}$.

\item\label{3_part:positive_homomorphisms_into_banach_lattice_algebras_3}
For $a\in\oa$, the following are equivalent:
\begin{enumerate_alpha}
	\item\label{3_part:positive_homomorphisms_into_banach_lattice_algebras_commuting_1}
	$a\posmap(f)=\posmap(f)a$ for all $f\in\contcts$;
	\item\label{3_part:positive_homomorphisms_into_banach_lattice_algebras_commuting_2}
	$a\npm(\mss)=\npm(\mss)a$ for all $\mss\in\borel$ with finite measure;	
	\item\label{3_part:positive_homomorphisms_into_banach_lattice_algebras_commuting_3}
	$a\opintm(f)=\opintm(f)a$ for all $f\in\integrablefun$.
\end{enumerate_alpha}

When $\contots\subseteq\integrablefun$, these are also equivalent to:
\begin{enumerate_alpha}[resume]
	\item\label{3_part:positive_homomorphisms_into_banach_lattice_algebras_commuting_4}
	$a\opintm(f)=\opintm(f)a$ for all $f\in\contots$.
	\end{enumerate_alpha}

\item\label{3_part:positive_homomorphisms_into_banach_lattice_algebras_6}
Suppose that $\npm$ is finite and that $\oa$ has the \csp. Then:
\begin{align}
		\label{3_eq:positive_homomorphisms_into_banach_lattice_algebras_2}
		&\opintm(\posboundedmeasfunaets)=\opintm(\posboundedmeasfunaets)^\up=\opintm(\posboundedmeasfunaets)^\down;\\	
		\label{3_eq:positive_homomorphisms_into_banach_lattice_algebras_3}
		&\opintm(\posboundedmeasfunaets)=\opintm(\eclass{\pos{\contcts}}^{\downs\ups})=\big[\posmap(\pos{\contcts})\big]^{\downs\ups};\\
		\label{3_eq:positive_homomorphisms_into_banach_lattice_algebras_4}
		&\opintm(\posboundedmeasfunaets)=\opintm(\eclass{\pos{\contcts}}^{\ups\downs})=\big[\posmap(\pos{\contcts})\big]^{\ups\downs};\\
		\label{3_eq:positive_homomorphisms_into_banach_lattice_algebras_5}
		&\opintm(\boundedmeasfunaets)=\opintm(\boundedmeasfunaets)^\up=\opintm(\boundedmeasfunaets)^\down;\\
		\label{3_eq:positive_homomorphisms_into_banach_lattice_algebras_6}
		&\opintm(\boundedmeasfunaets)=\big[\posmap(\contcts)\big]^{\ups\downs}=\big[\posmap(\contcts)\big]^{\downs\ups}.
\end{align}
Here the ups and downs of the images of $\opintm$ and $\posmap$ can be taken in $\oa$, $\npm(\ts)\oa$, $\oa\npm(\ts)$, $\npm(\ts)\oa\npm(\ts)$, or $\opintm(\boundedmeasfunaets)$ with equal outcomes.
\end{enumerate}
\end{theorem}

\begin{proof}

\begin{enumerate}
The parts~\ref{3_part:positive_homomorphisms_into_banach_lattice_algebras_1} and~\ref{3_part:positive_homomorphisms_into_banach_lattice_algebras_1_extra} are a consequence of \cite[Theorems~4.2 and~6.8]{de_jeu_jiang:2022b} for positive operators, and of \cite[Proposition~3.6]{de_jeu_jiang:2022b}, except the fact that $\npm$ is a spectral measure. Since $\posmap$ is now an algebra homomorphism, this follows from \cref{3_res:representing_measure_is_spectral}.

Part~\ref{3_part:positive_homomorphisms_into_banach_lattice_algebras_inserted} follows from \cref{3_res:integrable_function_for_spectral_measure_is_bounded}.

Part~\ref{3_part:positive_homomorphisms_into_banach_lattice_algebras_2} follows from  \cite[Proposition~6.14 and Theorem~6.17]{de_jeu_jiang:2022a}.

Part~\ref{3_part:positive_homomorphisms_into_banach_lattice_algebras_5} follows from  part~\ref{3_part:positive_homomorphisms_into_banach_lattice_algebras_inserted}, \cref{3_res:topological_embedding_with_closed_image} (which applies as the positive operator $\opintm$ between two Banach lattices is automatically continuous), \cref{3_res:integral_preserves_moduli}, and \cref{3_res:integral_into_riesz_algebra_is_riesz_algebra_homomorphism}.

Part~\ref{3_part:positive_homomorphisms_into_banach_lattice_algebras_inserted_once_more} follows from \cite[Proposition~6.8]{de_jeu_jiang:2022a}.

We turn to part~\ref{3_part:positive_homomorphisms_into_banach_lattice_algebras_3}, and
prove that part~\ref{3_part:positive_homomorphisms_into_banach_lattice_algebras_commuting_1} implies part~\ref{3_part:positive_homomorphisms_into_banach_lattice_algebras_commuting_2}. Suppose that $a=\pos{a}-\negt{a}$ in $\oa$ commutes with $\posmap(f)$ for all $f\in\contcts$. Then, in particular,
	\begin{equation}\label{3_eq:positive_homomorphisms_into_banach_lattice_algebras_commuting_1}
		\pos{a}\posmap(f)+\posmap(f)\negt{a}=\posmap(f)\pos{a}+\negt{a}\posmap(f)
	\end{equation}
	for all $f\in\pos{\contcts}$. Take an non-empty open subset $V$ of $\ts$ with finite measure. It follows from \cref{3_eq:positive_homomorphisms_into_banach_lattice_algebras_commuting_1}, \cref{3_eq:positive_homomorphisms_into_banach_lattice_algebras_inserted_number}, and the monotone continuity of the multiplication in $\oa$ that
	\begin{equation*}
		\pos{a}\npm(V)+\psm(V)\negt{a}=\npm(V)\pos{a}+\negt{a}\npm(V).
	\end{equation*}
	The inner regularity of $\npm$ can now be used to establish the same equality where $\npm(V)$ is replaced with $\npm(\mss)$ for a Borel subset $\mss$ of $\ts$ with finite measure. This shows that  part~\ref{3_part:positive_homomorphisms_into_banach_lattice_algebras_commuting_2} holds. When part~\ref{3_part:positive_homomorphisms_into_banach_lattice_algebras_commuting_2} holds, the definition of the order integral and the \mbox{($\sigma$-)}monotone continuity of the multiplication in $\oa$ imply that part~\ref{3_part:positive_homomorphisms_into_banach_lattice_algebras_commuting_3} holds. It is clear that part~\ref{3_part:positive_homomorphisms_into_banach_lattice_algebras_commuting_3} implies part~\ref{3_part:positive_homomorphisms_into_banach_lattice_algebras_commuting_1}. The statement regarding part~\ref{3_part:positive_homomorphisms_into_banach_lattice_algebras_commuting_4} is clear.

	We turn to part~\ref{3_part:positive_homomorphisms_into_banach_lattice_algebras_6}. Part~\ref{3_part:positive_homomorphisms_into_banach_lattice_algebras_5} shows that $\opintm$ maps $\integrablefun$ into $\npm(\ts)\oa$, and that $\opintm:\integrablefun\to\npm(\ts)\oa$ preserves moduli. We can view $\npm$ as a $\npm(\ts)\oa$-valued measure, and $\posmap$ as a positive operator from $\contcts$ into $\npm(\ts)\oa$. It then follows from \cref{3_res:properties_inherited_by_algebras_associated_to_idempotent} that \cref{3_res:ups_and_downs} can be applied to this context. Thus the equations \eqref{3_eq:positive_homomorphisms_into_banach_lattice_algebras_2}, \eqref{3_eq:positive_homomorphisms_into_banach_lattice_algebras_3}, and \eqref{3_eq:positive_homomorphisms_into_banach_lattice_algebras_4} (with the ups and downs of the images taken in $\npm(\ts)\oa$) follow from the equations \eqref{3_eq:ups_and_downs_3}, \eqref{3_eq:ups_and_downs_2}, and \eqref{3_eq:ups_and_downs_finite}, respectively. Similarly, the equations \eqref{3_eq:positive_homomorphisms_into_banach_lattice_algebras_5} and \eqref{3_eq:positive_homomorphisms_into_banach_lattice_algebras_6} (again with the ups and downs of the images taken in $\npm(\ts)\oa$) follow from \cref{3_res:pulling_back_ups_and_downs_for_vector_sublattices} and
	\cref{3_eq:full_image_of_posmap_is_contained_in_ups_and_downs_7}, respectively. Likewise, one establishes the validity of the equations \eqref{3_eq:positive_homomorphisms_into_banach_lattice_algebras_2}--\eqref{3_eq:positive_homomorphisms_into_banach_lattice_algebras_6} with the ups and downs of the images taken in $\npm(\ts)\oa$, and in $\npm(\ts)\oa\npm(\ts)$.
	It follows from \cref{3_rem:ups_and_downs_and_supersets} that the outcomes in all three cases agree with those in $\oa$.
	
	The monotone completeness of $\oa$ and the validity of
	\cref{3_eq:positive_homomorphisms_into_banach_lattice_algebras_5} with the up an down taken in $\oa$ imply that $\oa$ and $\opintm(\boundedmeasfunaets)$ as supersets for ups and downs also give the same result.
\end{enumerate}
\end{proof}

\begin{remark}\label{3_rem:banach_lattice_algebras_of_operators}
Suppose that, in \cref{3_res:positive_homomorphisms_into_banach_lattice_algebras}, $\oa$ consists of order continuous operators on a directed normal partially ordered vector space, and that $\npm$ is finite. For $x\in\posos$ and $x^\prime\in\pos{(\ocdualos)}$, the functional $a\mapsto (ax,x^\prime)$ is a positive order continuous functional on $\oa$, and setting $\npm_{x,x^\prime}(\mss)\coloneqq(\npm(\mss)x,x^\prime)$ for $\mss\in\borel$ yields a finite regular Borel measure $\npm_{x,x^\prime}\colon \borel\to\posR$. Using \cite[Proposition~6.8]{de_jeu_jiang:2022a}, it is easy to see that
$f\in{\lebfont L}^1(\pset,\alg,\npm_{x,x^\prime};\RR)$ when $f\in\integrablefunts$ and that,  for $f\in\integrablefunts$, $\opintm(f)=\ointm{f}$ is the unique element of $\oa$ such that
\begin{equation}\label{3_eq:weak_characterisation_of_order_integrals}
	\left(\opintm(f)x, x^\prime\right)=\int_\pset\!f\di{\npm_{x,x^\prime}}
\end{equation}
for all $x\in\posos$ and $x^\prime\in\pos{(\ocdualos)}$.
\end{remark}

\begin{remark}\label{3_rem:infinite_spectral_measure}
	It is not true that the spectral measure $\npm$ in \cref{3_res:positive_homomorphisms_into_banach_lattice_algebras} is always finite, even when $\posmap$ can be extended to a positive algebra homomorphism from $\contots$ into $\oa$. By way of example, let $S$ be an infinite set, supplied with the discrete topology. Then $\conto{S}$ is a Banach lattice algebra with an order continuous norm and monotone continuous multiplication. Consider the identity map $\posmap\colon \conto{S}\to\conto{S}$. Its representing spectral measure $\npm$ is given by $\npm(\mss)=\indicator{\mss}$ when $\mss$ is a finite subset of $S$, and by  $\npm(\mss)=\infty$ when $\mss$ is an infinite subset of $S$. Hence $\npm$ is not finite.
	
	Apparently, $\conto{S}$ is not quasi-perfect. It is normal, as is any Banach lattice with an order continuous norm, but it does not satisfy condition~\ref{3_part:quasi_perfect_spaces_2} in \cref{3_def:quasi_perfect_spaces}. This is easy to see. Take an infinite countable subset $\{s_1,s_2,\dotsc\}$ of $S$, and consider the sequence $\{\indicator{\{s_1,\dotsc,s_n\}}\}_{n=1}^\infty$ of indicator functions in $\contcts$. Since the dual of $\conto{S}$ can be identified with $\ell^1(S)$, it is clear that $\sup_{n\geq 1}\f{\indicator{\{s_1,\dotsc,s_n\}},x^\prime}<\infty$ for each $x^\prime\in\pos{(\odual{\conto{S}})}$. Yet the sequence has no supremum in $\conto{S}$.	
\end{remark}	

Whereas \cref{3_res:positive_homomorphisms_into_banach_lattice_algebras} does typically not apply to algebras of operators on infinite dimensional spaces, our next result, \cref{3_res:positive_homomorphisms_into_partially_ordered_algebras}, often \emph{does}; see \cref{3_res:order_continuous_operators_are_suitable_algebra}, \cref{3_res:order_continuous_operators_on_Banach_lattices_are_suitable_algebra}, and \cref{3_res:riesz_algebra_for_hilbert_spaces}. The order continuous operators on quasi-perfect spaces are an example of quasi-perfect partially ordered algebras with a monotone continuous multiplication to which it applies; another important example is in the context of operators on Hilbert spaces. In \cref{3_sec:special_positive_representations}, we shall see its consequences for positive representations of $\contots$ on Banach lattices and Hilbert spaces. As \cref{3_rem:infinite_spectral_measure} makes clear, \cref{3_res:positive_homomorphisms_into_banach_lattice_algebras} and \cref{3_res:positive_homomorphisms_into_partially_ordered_algebras} have an independent value of their own.

\begin{theorem}[Positive algebra homomorphisms from $\contcts$ into normal partially ordered algebras with a monotone continuous multiplication]\label{3_res:positive_homomorphisms_into_partially_ordered_algebras}
Let $\ts$ be a locally compact Hausdorff space, let $\oa$ be a monotone complete and normal partially ordered algebra  with a monotone continuous multiplication,  and let $\posmap\colon \contcts\to\oa$ be a positive algebra homomorphism such that $\{\posmap(f)\colon f\in\pos{\contcts},\ \norm{f}\leq 1\}$ is bounded above in $\oa$; the latter condition is automatically satisfied when $\ts$ is compact, and also when $\oa$ is quasi-perfect and $\posmap$ is the restriction of a positive algebra homomorphism $\posmap\colon \contots\to\oa$.
\begin{enumerate}
\item\label{3_part:positive_homomorphisms_into_partially_ordered_algebras_1}
There exists a unique regular Borel measure $\npm\colon \borel\to\posoaext$ on the Borel $\sigma$-algebra $\borel$ of $\ts$ such that
\begin{equation}\label{3_eq:positive_homomorphisms_into_partially_ordered_algebras_1}
\posmap(f)=\ointm{f}
\end{equation}
for all $f\in\contcts$. If $V$ is a non-empty open subset of $\ts$, then
\begin{equation*}\label{3_eq:positive_homomorpisms_into_bpartially_ordered_algebras_finites_inserted_number}
	\npm(V)=\psup\{\posmap(f) : f\prec V\}
\end{equation*}
in $\posoa$. If $K$ is a compact subset of $\ts$, then
\begin{equation*}
	\npm(K)=\pinf\{\posmap(f) : K\prec f\}
\end{equation*}
in $\posoa$.  When  $\posmap$ is the restriction of a positive algebra homomorphism $\posmap\!\colon \contots\to\oa$,  \cref{3_eq:positive_homomorphisms_into_banach_lattice_algebras_1} also holds for $f\in\contots$.

\item\label{3_part:positive_homomorphisms_into_partially_ordered_algebras_4}
The measure $\npm$ is a finite spectral measure which is inner regular at all Borel sets.

\item\label{3_part:positive_homomorphisms_into_partially_ordered_algebras_2}
The spaces $\integrablefunts$ and $\ellonets$ are both \sDc\ vector lattices, and the naturally defined operators $\opintm$ from these spaces into $\oa$ are both $\sigma$-order continuous.  When $\oa$ has the \csp, $\ellonets$ is a \Dc\ vector lattice with the \csp, and $\opintm\colon \ellonets\to\oa$ is order continuous.

\item\label{3_part:positive_homomorphisms_into_partially_ordered_algebras_5}

\begin{enumerate}
	\item\label{3_part:positive_homomorphisms_into_partially_ordered_algebras_5a} $\opintm(\ellonets)\subseteq\leftrightidalg{\psm(\pset)}{\oa}$;
	\item\label{3_part:positive_homomorphisms_into_partially_ordered_algebras_5b} The operators $\opintm\colon \ellonets\to\leftidalg{\psm(\pset)}{\oa}$, $\opintm\colon \ellonets\to\rightidalg{\psm(\pset)}{\oa}$, and  $\opintm\colon \ellonets\to\leftrightidalg{\psm(\pset)}{\oa}$ preserve moduli.
	\item \label{3_part:positive_homomorphisms_into_partially_ordered_algebras_5c}
	$\ellonets$ is a unital vector lattice algebra.
	\item \label{3_part:positive_homomorphisms_into_partially_ordered_algebras_5d}
	When supplied with the partial ordering inherited from $\oa$, the image $\opintm(\ellonets)$ is a unital vector lattice algebra with $\psm(\pset)$ as identity element, and $\opintm\colon \ellonets\to\opintm(\ellonets)$ is an isomorphism of vector lattice algebras.
	\item If $\oa$ is a vector lattice algebra with an identity element $e$ and $\npm(\ts)\leq e$, then $\opintm(\ellonets)$ is a vector lattice subalgebra of $\oa$.
\end{enumerate}

\item\label{3_part:positive_homomorphisms_into_partially_ordered_algebras_6}
Suppose that $\oa$ is also a normed algebra such that $\norm{x}\leq\norm{y}$ for all $x,y\in\oa$ with $0\leq x\leq y$. Then $\ellonets=\boundedmeasfunaets$, and
the map $\opintm\colon\boundedmeasfunaets\to\oa$ is a topological embedding of the Banach algebra $\boundedmeasfunaets$ as a closed subalgebra of $\oa$;

\item\label{3_part:positive_homomorphisms_into_partially_ordered_algebras_inserted_once_more}
For $x^\prime\in\pos{(\ocdualoa)}$ and $\mss\in\borel$, set $\npm_{x^\prime}(\mss)\coloneqq(\npm(\mss),x^\prime)$. Then $\npm_{x^\prime}\colon \borel\to\posR$ is a regular Borel measure, and $f\in{\lebfont L}^1(\pset,\alg,\npm_{x^\prime};\RR)$ when $f\in\integrablefunts$. For $f\in\integrablefunts$, $\opintm(f)=\ointm{f}$ is the unique element of $\oa$ such that
\begin{equation*}
	\left(\opintm(f), x^\prime\right)=\int_\pset\!f\di{\npm_{x^\prime}}
\end{equation*}
for all $x^\prime\in\pos{(\ocdualoa)}$.

\item\label{3_part:positive_homomorphisms_into_partially_ordered_algebras_3}
For $a\in\oa$, the following are equivalent:
\begin{enumerate_alpha}
	\item\label{3_part:positive_homomorphisms_into_partially_ordered_algebras_commuting_1}
	$a\posmap(f)=\posmap(f)a$ for all $f\in\contcts$;
	\item\label{3_part:positive_homomorphisms_into_partially_ordered_algebras_commuting_2}
	$a\npm(\mss)=\npm(\mss)a$ for all $\mss\in\borel$ with finite measure;	
	\item\label{3_part:positive_homomorphisms_into_partially_ordered_algebras_commuting_3}
	$a\opintm(f)=\opintm(f)a$ for all $f\in\integrablefun$.
\end{enumerate_alpha}

When $\contots\subseteq\integrablefun$, these are also equivalent to:
\begin{enumerate_alpha}[resume]
	\item\label{3_part:positive_homomorphisms_into_partially_ordered_algebras_commuting_4}
	$a\opintm(f)=\opintm(f)a$ for all $f\in\contots$.
	\end{enumerate_alpha}

\item\label{3_part:positive_homomorphisms_into_partially_ordered_algebras_7}
Suppose that $\ellonets=\boundedmeasfunaets$, and that $\oa$ has the \csp. Then:
	\begin{align*}
		&\opintm(\posboundedmeasfunaets)=\opintm(\posboundedmeasfunaets)^\up=\opintm(\posboundedmeasfunaets)^\down;\\	
		&\opintm(\posboundedmeasfunaets)=\opintm(\eclass{\pos{\contcts}}^{\downs\ups})=\big[\posmap(\pos{\contcts})\big]^{\downs\ups};\\
		&\opintm(\posboundedmeasfunaets)=\opintm(\eclass{\pos{\contcts}}^{\ups\downs})=\big[\posmap(\pos{\contcts})\big]^{\ups\downs};\\
		&\opintm(\boundedmeasfunaets)=\opintm(\boundedmeasfunaets)^\up=\opintm(\boundedmeasfunaets)^\down;\\
		&\opintm(\boundedmeasfunaets)=\big[\posmap(\contcts)\big]^{\ups\downs}=\big[\posmap(\contcts)\big]^{\downs\ups}.
	\end{align*}
Here the ups and downs of the images of $\posmap$ and $\opintm$ can be taken in $\oa$, $\npm(\ts)\oa$, $\oa\npm(\ts)$, $\npm(\ts)\oa\npm(\ts)$, or $\opintm(\boundedmeasfunaets)$ with equal outcomes.
\end{enumerate}
\end{theorem}

\begin{proof}
	It is clear that $\{\posmap(f)\colon f\in\pos{\contcts},\ \norm{f}\leq 1\}$ is bounded above in $\oa$ when $\ts$ is compact. It follows from \cite[Theorem~6.8]{de_jeu_jiang:2022b} that this is also true when $\oa$ is quasi-perfect and $\posmap$ is the restriction of a positive algebra homomorphism $\posmap\colon\contots\to\oa$.
	
	 The statements in the parts~\ref{3_part:positive_homomorphisms_into_partially_ordered_algebras_1} and~\ref{3_part:positive_homomorphisms_into_partially_ordered_algebras_4} follow from \cite[Proposition~3.6, Lemma~6.3, and Theorem~5.4]{de_jeu_jiang:2022b} and \cref{3_res:representing_measure_is_spectral}.
	
	 Part~\ref{3_part:positive_homomorphisms_into_partially_ordered_algebras_2} follows from \cite[Proposition~6.14 and Theorem~6.17]{de_jeu_jiang:2022a}.
	
	 Part~\ref{3_part:positive_homomorphisms_into_partially_ordered_algebras_5} follows from \cref{3_res:integral_preserves_moduli,3_res:integral_into_riesz_algebra_is_riesz_algebra_homomorphism}.

	 The first statement in part~\ref{3_part:positive_homomorphisms_into_partially_ordered_algebras_6} follows from \cref{3_res:integrable_function_for_spectral_measure_is_bounded} and the finiteness of $\npm$. It is easy to see that $\opintm\colon \boundedmeasfunaets\to\oa$ is continuous, so that the statement on the embedding follows from \cref{3_res:topological_embedding_with_closed_image}.

	 The proofs of the parts~\ref{3_part:positive_homomorphisms_into_partially_ordered_algebras_inserted_once_more},~\ref{3_part:positive_homomorphisms_into_partially_ordered_algebras_3}, and~\ref{3_part:positive_homomorphisms_into_partially_ordered_algebras_7} are as in the proof of \cref{3_res:positive_homomorphisms_into_banach_lattice_algebras}.
\end{proof}

\begin{remark}\label{3_rem:partially_ordered_algebras_of_operators}
Except for the fact that the finiteness of $\npm$ need not be supposed, \cref{3_rem:banach_lattice_algebras_of_operators} applies verbatim to the algebra $\oa$ in \cref{3_res:positive_homomorphisms_into_partially_ordered_algebras}. It yields the same weak characterisation of order integrals of integrable functions as in \cref{3_eq:weak_characterisation_of_order_integrals} in the case where $\oa$ consists of order continuous operators on a directed normal partially ordered vector space.
\end{remark}

\section{Positive representations of $\contots$ on Banach lattices and Hilbert spaces}\label{3_sec:special_positive_representations}

\noindent We shall now apply \cref{3_res:positive_homomorphisms_into_partially_ordered_algebras} to positive representations of $\contots$ on Banach lattices, and to representations of $\contoCts$ on Hilbert spaces.

As a preparation, we recall the following. Suppose that $\oatwo$ is a normed algebra, and that $\posmap$ is a bounded representation of $\oatwo$ on a normed space $\os$. Then the representation $\posmap$ is  \emph{non-degenerate} when $\os$ is the closed linear span of the elements $\posmap(b)x$ for $b\in\oatwo$ and $x\in\os$. When $\oatwo$ has a bounded left approximate identity, then it is not difficult to see that there is as largest invariant linear subspace $\os_{\mathrm{nd}}$ of $\os$ such that the restricted representation of $\oatwo$ on it is non-degenerate. This subspace is closed; in fact, it is  the closed linear span of the elements $\posmap(b)x$ for $b\in\oatwo$ and $x\in\os$. In our case, we shall apply this with $\oatwo=\contots$, so that $\os_{\mathrm{nd}}=\overline{{\mathrm {Span}} \{\posmap(f)x : f\in\contots, \,x\in\os\}}$. When working with a positive representation $\posmap$ of $\contots$ on a \Dc\ Banach lattice, the automatic continuity of $\posmap$ with respect to the regular norm (and then also the operator norm) implies that then also  $\os_{\mathrm{nd}}=\overline{{\mathrm {Span}} \{\posmap(f)x : f\in\contcts, x\in\os\}}$. Similar remarks apply to representations of $\contoCts$ on complex Hilbert spaces.

\subsection{Banach lattices}\label{3_subsec:Banach_lattices}
According to \cref{3_res:order_continuous_operators_on_Banach_lattices_are_suitable_algebra}, the order continuous operators on a \Dc\ normal Banach lattice form a \Dc\ normal Banach lattice algebra with a monotone continuous multiplication. Hence \cref{3_res:positive_homomorphisms_into_partially_ordered_algebras} can be applied to positive algebra homomorphisms $\posmap\colon \contots\to\ocontop{\os}$.  If, in addition, the norm on $\os$ is a Levi norm, then,  according to \cref{3_res:order_continuous_operators_on_Banach_lattices_are_suitable_algebra}, $\ocontop{\os}$ is even a  quasi-perfect Banach lattice algebra with a monotone continuous multiplication. In this case, the measure $\npm$ in \cref{3_res:positive_homomorphisms_into_partially_ordered_algebras} is automatically finite.

On taking into account \cref{3_res:topological_embedding_with_closed_image}; the weak characterisation of $\opintm$ in \cref{3_rem:partially_ordered_algebras_of_operators};  the fact that every vector sublattice of $\regularop{\os}$ has the \csp\ whenever $\os$ is a \Dc\ separable Banach lattice (see \cref{3_subsec:the_countable_sup_property}); and the fact that $\ocontop{\os}$ is a band in $\regularop{\os}$, we obtain the following from \cref{3_res:positive_homomorphisms_into_partially_ordered_algebras}. In its part~\ref{3_part:positive_representations_on_Banach_lattices_5}, the regular operators on the Banach lattice $\os$ are supplied with the regular norm.

\begin{theorem}[Positive representations of $\contots$ on \Dc\ normal Banach lattices]\label{3_res:positive_representations_on_Banach_lattices}
	Let $\ts$ be a locally compact Hausdorff space, let $\os$ be a \Dc\ Banach lattice such that  $\ocdualos$ separates the points of $\os$, and let $\posmap\colon \contots\to\ocontop{E}$ be a positive algebra homomorphism. Suppose that at least one of the following is satisfied:
	\begin{enumerate_roman}
		\item\label{3_part:positive_representations_on_Banach_lattices_i}
		 $\ts$ is compact;
		\item\label{3_part:positive_representations_on_Banach_lattices_ii}
		 the norm on $\os$ is a Levi norm.
		\end{enumerate_roman}
	Then the following hold.
	\begin{enumerate}
		\item\label{3_part:positive_representations_on_Banach_lattices_1}
		There exists a unique regular Borel measure $\npm\colon \borel\to\overline{\pos{\ocontop{E}}}$ on the Borel $\sigma$-algebra $\borel$ of $\ts$ such that
		\begin{equation}\label{3_eq:positive_representations_on_Banach_lattices_1}
			\posmap(f)=\ointm{f}
		\end{equation}
		for all $f\in\contcts$. If $V$ is a non-empty open subset of $\ts$, then
		\begin{equation*}\label{3_eq:positive_representations_on_Banach_lattices_inserted_number}
			\npm(V)=\psup\{\posmap(f) : f\prec V\}
		\end{equation*}
			in $\ocontop{\os}$ and in $\regularop{\os}$. If $K$ is a compact subset of $\ts$, then
		\begin{equation*}
			\npm(K)=\pinf\{\posmap(f) : K\prec f\}
		\end{equation*}
		in $\ocontop{\os}$ and in $\regularop{\os}$.
		
		\item The measure $\npm$ is a finite spectral measure which is inner regular at all Borel sets.   \Cref{3_eq:positive_homomorphisms_into_banach_lattice_algebras_1} also holds for $f\in\contots$.
		
		\item\label{3_part:positive_representations_on_Banach_lattices_2}
		We have $\ellonets=\boundedmeasfunaets$.

		\item\label{3_part:positive_representations_on_Banach_lattices_4}
		The spaces $\boundedmeasfunts$ and $\boundedmeasfunaets$ are both \sDc\ vector lattices, and the naturally defined operators $\opintm$ from these spaces into $\ocontop{E}$ are both $\sigma$-order continuous. When $\ocontop{E}$ has the \csp\ \uppars{which is the case when $\os$ is separable},   $\boundedmeasfunaets$ is a \Dc\ vector lattice with the \csp, and $\opintm\colon \boundedmeasfunaets\to\ocontop{\os}$ is order continuous.

		\item\label{3_part:positive_representations_on_Banach_lattices_5}
		
		\begin{enumerate}
			\item\label{3_part:positive_representations_on_Banach_lattices_5a} $\opintm(\boundedmeasfunaets)\subseteq\psm(\pset)\ocontop{\os}\psm(\pset)$.
			\item\label{3_part:positive_representations_on_Banach_lattices_5b}  The operators $\opintm\!\colon \boundedmeasfunaets\to\leftidalg{\psm(\pset)}{\ocontop{E}}$, $\opintm\!\colon \boundedmeasfunaets\to\rightidalg{\psm(\pset)}{\ocontop{E}}$, and $\opintm\colon\boundedmeasfunaets\to\leftrightidalg{\psm(\pset)}{\ocontop{E}}$ preserve moduli.
			\item\label{3_part:positive_representations_on_Banach_lattices_5e} The map $\opintm\colon \boundedmeasfunaets\to\opintm(\boundedmeasfunaets)$ is a topological embedding of the Banach algebra $\boundedmeasfunaets$ as a Banach subalgebra of $\ocontop{\os}$.
			\item \label{3_part:positive_representations_on_Banach_lattices_5d}
			When supplied with the partial ordering inherited from $\regularop{\os}$, the image $\opintm(\boundedmeasfunaets)$ is a unital vector lattice algebra with $\psm(\pset)$ as identity element. The map $\opintm\colon \boundedmeasfunaets\to\opintm(\boundedmeasfunaets)$ is then an isomorphism of vector lattice algebras.
			
			\item If $\npm(\ts)\leq\idop$, then $\opintm(\boundedmeasfunaets)$ is a Banach lattice subalgebra of the center of $\os$, and $\opintm\colon \boundedmeasfunaets\to\opintm(\boundedmeasfunaets)$ is an isomorphism of Banach lattice algebras.
			\item The image $\posmap(\contots)$ is a closed subalgebra of $\regularop{\os}$.
		\end{enumerate}

		\item\label{3_part:positive_representations_on_Banach_lattices_3}
		For $x\in\posos$ and $x^\prime\in\pos{(\ocdualos)}$, set $\npm_{x,x^\prime}(\mss)\coloneqq(\npm(\mss)x,x^\prime)$ for $\mss\in\borel$. Then $\npm_{x,x^\prime}\colon \borel\to\posR$ is a finite regular Borel measure on $\ts$. For $f\in\boundedmeasfunts$, $\opintm(f)=\ointm{f}$ is the unique element of $\regularop{\os}$ such that
		\begin{equation}\label{3_eq:weak_characterisation_of_order_integrals_banach_lattices}
			\left(\opintm(f)x, x^\prime\right)=\int_\pset\!f\di{\npm_{x,x^\prime}}
		\end{equation}
		for all $x\in\posos$ and $x^\prime\in\pos{(\ocdualos)}$.

		\item\label{3_part:positive_representations_on_Banach_lattices_6}
		For $S\in\ocontop{E}$, the following are equivalent:
		\begin{enumerate_alpha}
			\item\label{3_part:positive_representations_on_Banach_lattices_commuting_1}
			$S\posmap(f)=\posmap(f)S$ for all $f\in\contcts$;
			\item\label{3_part:positive_representations_on_Banach_lattices_commuting_2}
			$S\npm(\mss)=\npm(\mss)S$ for all $\mss\in\borel$ with finite measure;	
			\item\label{3_part:positive_representations_on_Banach_lattices_commuting_3}
			$S\opintm(f)=\opintm(f)S$ for all $f\in\boundedmeasfun$;
			\item\label{3_part:positive_representations_on_Banach_lattices_commuting_4}
			$S\posmap(f)=\posmap(f)S$ for all $f\in\contots$.
		\end{enumerate_alpha}
		\item\label{3_part:positive_representations_on_Banach_lattices_7}
		If $\ocontop{E}$ has the \csp\ \uppars{which is the case when $\os$ is separable}, then:
		\begin{align*}
			&\opintm(\posboundedmeasfunaets)=\opintm(\posboundedmeasfunaets)^\up=\opintm(\posboundedmeasfunaets)^\down;\\	
			&\opintm(\posboundedmeasfunaets)=\opintm(\eclass{\pos{\contcts}}^{\downs\ups})=\big[\posmap(\pos{\contcts})\big]^{\downs\ups};\\
			&\opintm(\posboundedmeasfunaets)=\opintm(\eclass{\pos{\contcts}}^{\ups\downs})=\big[\posmap(\pos{\contcts})\big]^{\ups\downs};\\
			&\opintm(\boundedmeasfunaets)=\opintm(\boundedmeasfunaets)^\up=\opintm(\boundedmeasfunaets)^\down;\\
			&\opintm(\boundedmeasfunaets)=\big[\posmap(\contcts)\big]^{\ups\downs}=\big[\posmap(\contcts)\big]^{\downs\ups}.
		\end{align*}
		Here the ups and downs of the images of $\opintm$ and $\posmap$ can be taken in $\ocontop{\os}$, $\npm(\ts)\ocontop{\os}$, $\ocontop{\os}\npm(\ts)$, $\npm(\ts)\ocontop{\os}\npm(\ts)$, $\opintm(\boundedmeasfunaets)$, or $\regularop{E}$ with equal outcomes.
	\end{enumerate}
\end{theorem}

It is easy to see that, for a monotone net of regular operators on a Banach lattice with an order continuous norm, the existence of its order limit and the existence of its strong operator limit are equivalent, and that,  when they exist, they are equal. It follows from this that  the regular operators are closed in the bounded operators under the taking of monotone strong operator limits. These two observations enable us to extend \cref{3_res:positive_representations_on_Banach_lattices} for Banach lattices with order continuous norms.  We recall that the Banach lattices with order continuous Levi norms are precisely the KB-spaces.

\begin{theorem}[Positive representations of $\contots$ on Banach lattices with order continuous norms]\label{3_res:positive_representations_on_Banach_lattices_with_order_continuous_norms}
Let $\ts$ be a locally compact Hausdorff space, let $\os$ be a Banach lattice with an order continuous norm, and let $\posmap\colon \contots\to\regularop{\os}$ be a positive algebra homomorphism. Suppose that at least one of the following is satisfied:
\begin{enumerate_roman}
	\item\label{3_part:positive_representations_on_Banach_lattices_with_order_continuous_norms_i}
	$\ts$ is compact;
	\item\label{3_part:positive_representations_on_Banach_lattices_with_order_continuous_norms_ii}
	$\os$ is a KB-space.
\end{enumerate_roman}
Then all statements in \cref{3_res:positive_representations_on_Banach_lattices} hold, with the additional observations that $\ocontop{\os}=\regularop{\os}$ and that $\ocdualos=\odualos=\ndualos$. Furthermore:
\begin{enumerate}
\item\label{3_part:positive_representations_on_Banach_lattices_with_order_continuous_norms_2}
If $V$ is a non-empty open subset of $\ts$, then
\begin{equation}\label{3_eq:positive_representations_on_Banach_lattices_with_order_continuous_norms_1}
	\npm(V)=\SOTlim_{f\prec V}\posmap(f).
\end{equation}
If $K$ is a compact subset of $\ts$, then
\begin{equation}\label{3_eq:positive_representations_on_Banach_lattices_with_order_continuous_norms_2}
	\npm(K)=\SOTlim_{K\prec f}\posmap(f).
\end{equation}
In particular,
\[
\npm(\ts)=\SOTlim_{f\prec \ts}\posmap(f).
\]
\item\label{3_part:positive_representations_on_Banach_lattices_with_order_continuous_norms_1}
If $\seq{\mss}$ is a pairwise disjoint sequence in $\borel$ then, for all $x\in\os$,
\begin{equation*}
	\npm\left(\bigcup_{n=1}^\infty\mss_n\right)x=\sum_{n=1}^\infty\npm(\mss_n)x
\end{equation*}
in the norm topology of $\os$;

\item\label{3_part:positive_representations_on_Banach_lattices_with_order_continuous_norms_5}
The projection $\npm(\ts)$ projects onto the non-degenerate part of $\os$, i.e.,
\begin{align*}
	\npm(\ts)\os=\os_{\mathrm{nd}}&=\overline{{\mathrm {Span}} \{\posmap(f)x : f\in\contots, x\in\os\}}\\
	&=\overline{{\mathrm {Span}} \{\posmap(f)x : f\in\contcts, x\in\os\}}.
\end{align*}

\item\label{3_part:positive_representations_on_Banach_lattices_with_order_continuous_norms_3}
The \SOT-closed linear subspaces of the bounded \uppars{not necessarily regular} linear operators on $\os$ that are generated by the following sets are equal:
\begin{enumerate}
	\item
	$\{\posmap(f): f\in\contcts\}$;
	\item
	$\{\posmap(f): f\in\contots\}$;
	\item
	$\{\opintm(f): f\in\boundedmeasfunts\}$;
	\item
	$\{\npm(V): V\text{ is an open subset of }\ts\}$;
	\item
	$\{\npm(K): K\text{ is a compact subset of }\ts\}$;
	\item
	$\{\npm(\mss): \mss\text{ is a Borel subset of }\ts\}$.
\end{enumerate}
\item\label{3_part:positive_representations_on_Banach_lattices_with_order_continuous_norms_4} If $\regularop{E}$ has the \csp\ \uppars{which is the case when $\os$ is separable}, then the equalities in part~\ref{3_part:positive_representations_on_Banach_lattices_7} of \cref{3_res:positive_representations_on_Banach_lattices} hold when the ups and downs of the images of $\opintm$ and $\posmap$ in $\regularop{\os}$ are defined in the strong operator topology;
\end{enumerate}
\end{theorem}

\begin{proof}
The first observation preceding the theorem implies that the parts~\ref{3_part:positive_representations_on_Banach_lattices_with_order_continuous_norms_2} and~\ref{3_part:positive_representations_on_Banach_lattices_with_order_continuous_norms_1} hold. It also the basis for \cite[Theorem~6.10]{de_jeu_jiang:2022b}, from which part~\ref{3_part:positive_representations_on_Banach_lattices_with_order_continuous_norms_3} is taken. For part~\ref{3_part:positive_representations_on_Banach_lattices_with_order_continuous_norms_4}, one uses both observations preceding the theorem.

We turn to part~\ref{3_part:positive_representations_on_Banach_lattices_with_order_continuous_norms_5}. The final two equalities were already observed in the beginning of this section. Since $\npm(\ts)\posmap(f)=\posmap(f)$ for $f\in\contots$, it is clear that $\os_{\mathrm{nd}}\subseteq\npm(\ts)\os$. The reverse inclusion follows from part~\ref{3_part:positive_representations_on_Banach_lattices_with_order_continuous_norms_2}.
\end{proof}

\begin{remark}\label{3_rem:positive_representations_on_Banach_lattices_with_order_continuous_norms}
	\quad
	\begin{enumerate}
		\item\label{3_rem:positive_representations_on_Banach_lattices_with_order_continuous_norms_1}
	It follows from part~\ref{3_part:positive_representations_on_Banach_lattices_with_order_continuous_norms_3} of \cref{3_res:positive_representations_on_Banach_lattices_with_order_continuous_norms} that the commutants and then also the bicommutants (both in the bounded, not necessarily regular, operators on $\os$) of the seven sets in part~\ref{3_part:positive_representations_on_Banach_lattices_with_order_continuous_norms_3} are equal. Thus part~\ref{3_part:positive_representations_on_Banach_lattices_6} of \cref{3_res:positive_representations_on_Banach_lattices} has be improved. Consequently, in the context of \cref{3_res:positive_representations_on_Banach_lattices_with_order_continuous_norms},  $\npm$ takes its values in the coinciding bicommutants of these sets. This statement, which is familiar from the representation theory of $\contCts$ on Hilbert spaces is, however, less precise than  part~\ref{3_part:positive_representations_on_Banach_lattices_with_order_continuous_norms_3}.
	\item\label{3_rem:positive_representations_on_Banach_lattices_with_order_continuous_norms_2}
	The positive algebra homomorphisms $\posmap\colon \contots\to\regularop{\os}$ for a KB-space $\os$ were studied in \cite{de_jeu_ruoff:2016}. \cref{3_res:positive_representations_on_Banach_lattices_with_order_continuous_norms} provides a substantial improvement of the main results in \cite{de_jeu_ruoff:2016}.
	\end{enumerate}
\end{remark}

We conclude this subsection with a discussion of possibly degenerate positive representations of $\contots$ on KB-spaces.

Let $\os$ be a Banach lattice, and let $P\colon\os\to\os$ be a positive projection. Following ideas of Schaefer's (see \cite[p.~214]{schaefer_BANACH_LATTICES_AND_POSITIVE_OPERATORS:1974}) it can be shown that the closed subspace $P\os$ is a vector lattice when supplied with the partial ordering inherited from $\os$. Its modulus is given by $\lvert x \rvert _{P\os}=P\abs{x}$ for $x\in P\os$. The definition $\lVert x\rVert_{P\os}\coloneq \norm{\abs{x}_{P\os}}=\norm{P\abs{x}}$ yields a lattice norm on $P\os$ that is equivalent to the restriction of the original norm to $P\os$. Thus $P\os$ is a Banach lattice in the inherited partial ordering and with the norm $\lVert \,\cdot\,\rVert_{P\os}$. We refer to  \cite[Theorem~5.59]{abramovich_aliprantis_INVITATION_TO_OPERATOR_THEORY:2002} for details and additional information.

If $\os$ is a KB-space, then the equivalence of $\lVert \,\cdot\,\rVert_{P\os}$ and the original norm on $P\os$ implies that $P\os$ is also a KB-space.

Suppose that $\posmap\colon\contots\to\regularop{\os}$ is a positive representation on the KB-space $\os$, so that $\npm(\ts)$ projects onto the non-degenerate part $\os_{\mathrm{nd}}$ of $\os$. Since $\posmap(f)\npm(\ts)=\posmap(f)$ for $f\in\contots$, $\posmap(\contots)$ vanishes on the kernel of $\npm(\ts)$. We may, therefore, just as well restrict our attention to the representation of $\contots$ on the complementing KB-space $\os_{\mathrm{nd}}$. We claim that this representation on $\os_{\mathrm{nd}}$ is non-degenerate. Its non-degenerate part is $\overline{{\mathrm {Span}} \{\posmap(f)x : f\in\contcts, x\in\npm(\ts)\os\}}$, where the closure is in the lattice norm on $\os_{\mathrm{nd}}$. Since this lattice norm is equivalent to the restricted original norm, and $\os_{\mathrm{nd}}$ is closed, this closure is also the closure in $\os$. It is not difficult to see that it is  $\os_{\mathrm{nd}}$, establishing our claim.

\cref{3_res:positive_representations_on_Banach_lattices,3_res:positive_representations_on_Banach_lattices_with_order_continuous_norms} now apply to the positive representation of $\contots$ on $\os_{\mathrm{nd}}$. In this case, however, we know from the above that the projection $\npm(\ts)$ in these theorems is the identity operator (on the Banach lattice-subspace $\os_{\mathrm{nd}}$ of the original $\os$). In particular, part~\ref{3_part:positive_representations_on_Banach_lattices_5} of \cref{3_res:positive_representations_on_Banach_lattices} shows that, for a positive representation of $\contots$ on a KB-space $\os$, $\boundedmeasfunaets$ embeds as a Banach lattice subalgebra of the ideal centre of $\os_{\mathrm{nd}}$.\footnote{Some care is in order here: in this statement, $\npm$ is the spectral measure corresponding to the restricted representation, which is\textemdash in the obvious sense\textemdash the restriction of the original one. However, as these two spectral measures have the same zero sets, the corresponding spaces $\boundedmeasfunaets$ are, in the end, still equal.}


\subsection{Hilbert spaces}\label{3_subsec:Hilbert_spaces}
We turn to representations on Hilbert spaces. Suppose that $\posmap\colon \contoCts\to\boundedh$ is a representation of the complex \Calgebra\ $\contoCts$ on a complex Hilbert space $\hilbert$. It follows from Kaplansky's density theorem (see  \cite[Theorem~5.3.5]{kadison_ringrose_FUNDAMENTALS_OF_THE_THEORY_OF_OPERATOR_ALGEBRAS_VOLUME_I:1983}), that $\oa\coloneqq\overline{\posmap(\contoCts)}\sp\SOT$ is a commutative \SOT-closed \Csubalgebra\ of  $\boundedh$, and that the set $\oa_\sa$ of its self-adjoint elements is $\overline{\posmap(\contots)}\sp\SOT$.  Since $\oa_\sa$ is a quasi-perfect vector lattice algebra by \cref{3_res:riesz_algebra_for_hilbert_spaces}, \cref{3_res:positive_homomorphisms_into_partially_ordered_algebras} applies to the positive algebra homomorphism $\posmap\colon \contots\to\oa_\sa$ and yields a representing finite regular spectral Borel measure $\npm\colon \borel\to\oa_\sa$. It takes its values in the idempotents in $\oa_\sa$, which are orthogonal projections on $\hilbert$.  Before giving the full statement, let us identify in operator algebraic terms the condition that  $\oa_\sa$ have the \csp, which is instrumental to the results on ups and downs. As a preparation for this, we note that $\npm(\ts)\posmap(f)=\posmap(f)\npm(\ts)=\posmap(f)$ for $f\in\contots$ because $\opintm\colon \boundedmeasfunts\to\oa_\sa$ is an algebra homomorphism. Consequently, $\oa$ is a unital algebra, so that the following result applies to it.

\begin{lemma}\label{3_res:sigma_finite_von_Neumann_algebra}
Let $\poalgfont{M}$ be a 	commutative unital \SOT-closed \Csubalgebra\ of $\boundedh$, not necessarily containing the identity operator. Then the vector lattice $\poalgfont{M}_\sa$ has the \csp\ if and only if every subset of $\poalgfont{M}$ that consists of non-zero mutually orthogonal self-adjoint  projections is countable.
\end{lemma}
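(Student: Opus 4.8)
The plan is to prove the two implications separately, using throughout that $\algfont M_\sa$ is a \mc\ Riesz space: it is a quasi-perfect Riesz algebra by \cref{3_res:riesz_algebra_for_hilbert_spaces}, hence monotone complete, so bounded increasing nets and sequences in it have suprema, and its projections are exactly the self-adjoint projections in $\algfont M$. Writing $e$ for the identity element of $\algfont M$ (a projection, a priori distinct from $\idop$), every projection $p\in\algfont M$ satisfies $p=pe\le e$, so all the mutually orthogonal families in question lie below $e$; restricting to the range of $e$ I may therefore assume without loss of generality that $e=\idop$ acts as the identity on $\hilbert$, which makes the commutant arguments below clean.

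For the implication that the \csp\ forces every family of non-zero mutually orthogonal self-adjoint projections to be countable, I would start from such a family $\{p_i\}_{i\in I}\subseteq\algfont M$ and form the net of finite partial sums $s_F\coloneqq\sum_{i\in F}p_i$, indexed by the finite subsets $F\subseteq I$ directed by inclusion. This net increases and is bounded above by $e$, so $s_F\uparrow p$ for the projection $p\coloneqq\psup_i p_i$ in $\algfont M_\sa$. The \csp\ yields finite sets $F_1,F_2,\dotsc$ with $\psup_n s_{F_n}=p$; setting $J\coloneqq\bigcup_n F_n$, which is countable, one checks that the partial sums over finite subsets of $J$ increase to some $q\ge s_{F_n}$ for all $n$, whence $\sum_{i\in J}p_i=q=p$. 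For any $i_0\notin J$ the orthogonality relations give $p_{i_0}p=0$, while $p_{i_0}\le p$ forces $p_{i_0}=p_{i_0}p=0$, contradicting $p_{i_0}\neq 0$. Hence $I=J$ is countable.

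For the converse, the strategy is to produce a strictly positive functional on $\algfont M_\sa$ and then invoke the fact, recalled in \cref{3_subsec:the_countable_sup_property}, that a Riesz space admitting a strictly positive functional has the \csp. To build it I would choose, by Zorn's lemma, a maximal family $\{\xi_i\}_{i\in I}$ of non-zero vectors in $\hilbert$ whose cyclic projections $p_i$ (the orthogonal projections onto $\overline{\algfont M'\xi_i}$) are mutually orthogonal; since $\algfont M'$ is a von Neumann algebra, each $p_i$ commutes with $\algfont M'$, so $p_i\in\algfont M''=\algfont M$, and $p_i\neq 0$. A standard maximality argument shows $\psup_i p_i=e$: if $r\coloneqq e-\psup_i p_i$ were non-zero, any $\eta$ with $r\eta\neq 0$ would produce a non-zero cyclic projection onto $\overline{\algfont M' r\eta}\subseteq r\hilbert$, orthogonal to every $p_i$, contradicting maximality. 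By hypothesis the mutually orthogonal family $\{p_i\}$ is countable, so after rescaling the $\xi_i$ to be square-summable the formula $\varphi(a)\coloneqq\sum_i\lrinp{a\xi_i,\xi_i}$ defines a bounded positive functional on $\algfont M$. Its restriction to $\algfont M_\sa$ is \emph{strictly} positive: if $a\ge 0$ and $\varphi(a)=0$, then $a^{1/2}\xi_i=0$, hence $a\xi_i=0$, and since $a$ commutes with $\algfont M'$ it annihilates $\overline{\algfont M'\xi_i}$, so $ap_i=0$ for all $i$; summing gives $a=ae=a\psup_i p_i=0$. The cited criterion then delivers the \csp.

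I expect the main obstacle to be this converse direction, and within it the verification that $\varphi$ is strictly positive, i.e.\ the translation of the purely order-theoretic countability hypothesis into the operator-algebraic statement that $\algfont M$ is countably decomposable and therefore carries a faithful (automatically normal, though only strict positivity is needed here) state. The necessity direction, the reduction to the unital case, and the closing appeal to the strictly-positive-functional criterion for the \csp\ are all routine.
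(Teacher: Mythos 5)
Your proof is correct, but it takes a genuinely different route from the paper's. The paper argues entirely in the Gelfand picture: since $\algfont M_\sa$ is Dedekind complete (because $\algfont M$ is \SOT-closed), $\algfont M\simeq\cont{K;\CC}$ with $K$ extremally disconnected, and both implications are read off from the Luxemburg--Zaanen characterisation of the \csp\ (a Riesz space has it if and only if every order-bounded disjoint system of strictly positive elements is at most countable): one direction is then immediate, and for the converse a bounded disjoint system $\{f_i\}$ in $\cont{K}$ is matched with disjoint characteristic functions of non-empty clopen sets inside the interiors of the supports of the $f_i$\textemdash this is where extremal disconnectedness plus Urysohn is used\textemdash so the system is countable. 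You, by contrast, never leave the Hilbert space: your forward direction is a direct net argument from the paper's own net-based definition of the \csp\ applied to the partial sums $s_F$ (legitimate because \cref{3_res:riesz_algebra_for_hilbert_spaces} supplies both the monotone completeness and the monotone continuity of multiplication that your step $p_{i_0}p=0$ requires), and your converse reproves, in effect, the standard fact that a countably decomposable commutative von Neumann algebra carries a faithful state, via maximal families of mutually orthogonal cyclic projections and the double commutant theorem after the harmless reduction to $e=\idop$; the \csp\ then follows from the strictly-positive-functional criterion that the paper recalls in \cref{3_subsec:the_countable_sup_property}. What each buys: the paper's route is shorter given the Stonean representation and needs nothing of the ambient $\boundedh$ beyond Dedekind completeness of $\algfont M_\sa$, whereas yours avoids the Riesz-space function-theory citations entirely, stays within standard operator-algebra technique, and in the converse direction actually delivers a stronger conclusion than the \csp\ alone, namely a strictly positive (indeed normal) functional on $\algfont M_\sa$. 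The steps you leave as routine (the reduction to $e=\idop$, the identification $q=p$, the orthogonality $p'p_i=p'rp_i=0$ in the maximality argument, and the strict positivity of $\varphi$) all check out as sketched.
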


\begin{proof}
	The unital \Calgebra\ $\poalgfont{M}$ is isomorphic to $\cont{K;\CC}$ for a Hausdorff space $K$. Since the vector lattice $\cont{K}\simeq\poalgfont{M}_\sa$ is Dedekind complete as a consequence of the fact that $\poalgfont{M}$ is strongly closed, $K$ is extremally disconnected; see \cite[Theorem~43.11]{luxemburg_zaanen_RIESZ_SPACES_VOLUME_I:1971}, for example. We now recall that a vector lattice has the \csp\ if and only if every disjoint system of strictly positive elements that is bounded from above is countable; see \cite[Theorem~29.3]{luxemburg_zaanen_RIESZ_SPACES_VOLUME_I:1971}. This makes it clear that every subset of $\cont{K}$  that consists of non-zero disjoint idempotents is countable when $\cont{K}$ has the \csp. Conversely, suppose that every subset of $\cont{K}$ that consists of non-zero mutually disjoint idempotents is countable, and let $\{f_i:i\in I\}$ be a system of mutually disjoint non-zero positive elements of $\cont{K}$ that is bounded above. Using \cite[Theorem~2.7]{rudin_REAL_AND_COMPLEX_ANALYSIS_SECOND_EDITION:1974}, the fact that $K$ is extremally disconnected implies that the interior of the support of each $f_i$ contains a non-empty clopen subset of $K$. As a consequence of the assumption, the characteristic functions of these sets must form an a most countable subset of $\cont{K}$. Hence the index set $I$ is countable.
\end{proof}

Hence the \csp\ of $\oa_\sa$ is equivalent to $\oa$ being a $\sigma$-finite von Neumann algebra in the sense of  \cite[p.~62]{pedersen_C-STAR-ALGEBRAS_AND_THEIR_AUTOMORPHISM_GROUPS:1979}.\footnote{Recall that, in \cite{pedersen_C-STAR-ALGEBRAS_AND_THEIR_AUTOMORPHISM_GROUPS:1979}, a von Neumann algebra need not contain the identity operator.} On separable Hilbert spaces, this is obviously always satisfied.


We recall that the existence of the extremum of a monotone net of self-adjoint operators in $\boundedh$ and the existence  of its strong operator limit are equivalent and that, when they exists, they are equal. Thus part~\ref{3_part:positive_representations_on_Hilbert_spaces_2} of \cref{3_res:positive_representations_on_Hilbert_spaces} follows from the corresponding formulas in \cref{3_res:positive_homomorphisms_into_partially_ordered_algebras}. Its part~\ref{3_part:positive_representations_on_Hilbert_spaces_8} then follows just as in the proof of \cref{3_res:positive_representations_on_Banach_lattices_with_order_continuous_norms}. Since $\oa$ is isomorphic to its restriction to $\npm(\ts)\hilbert=\hilbert_{\mathrm {nd}}$, we see that $\oa_\sa$ certainly has the \csp\ when $\hilbert_{\mathrm{nd}}$ is separable.

This all being said, \cref{3_res:positive_homomorphisms_into_partially_ordered_algebras} implies the largest part of the following result. The equality of the \SOT-closed linear subspaces that are generated by the sets in part~\ref{3_part:positive_representations_on_Hilbert_spaces_4} follows
from \cite[Theorem~6.12]{de_jeu_jiang:2022b}.
Furthermore, for $x\in\hilbert$,  the functional $S\mapsto\lrinp{Sx,x}$ is a positive order continuous functional on $\oa_\sa$, and part~\ref{3_part:positive_representations_on_Hilbert_spaces_inserted_again} then follows from
\cite[Proposition~6.8]{de_jeu_jiang:2022a}.

\begin{theorem}[Representations of $\contoCts$ on Hilbert spaces]\label{3_res:positive_representations_on_Hilbert_spaces}
Let $\ts$ be a locally compact Hausdorff space, let $\hilbert$ be a complex Hilbert space, and let $\posmap\colon \contoCts\to\boundedh$ be a $^\ast$-homomorphism. Set $\oa\coloneqq\overline{\posmap(\contoCts)}\sp\SOT$. Then $\oa$ is a unital commutative \Calgebra, and its algebra $\oa_\sa$ of self-adjoint elements is $\overline{\posmap{(\contots})}\sp\SOT$. We supply $\oa_\sa$ with the partial ordering that is inherited from the usual partial ordering on $\boundedh_\sa$, so that $\oa_\sa$ becomes a quasi-perfect vector lattice algebra.
\begin{enumerate}
\item\label{3_part:positive_representations_on_Hilbert_spaces_1}
There exists a unique regular Borel measure on $\ts$ with values in the extended positive cone of $\oa_\sa$ such that
\begin{equation}\label{3_eq:positive_representations_on_hilbert_space_1}
\posmap(f)=\ointm{f}
\end{equation}
for all $f\in\contcts$. The measure $\npm$ is a finite spectral measure which is inner regular at all Borel sets, and \cref{3_eq:positive_representations_on_hilbert_space_1} also holds for $f\in\contots$. If $V$ is a non-empty open subset of $\ts$, then
\begin{equation}\label{3_eq:positive_representations_on_hilbert_spaces_inserted_1}
	\npm(V)=\SOTlim_{f\prec V}\posmap(f).
\end{equation}
If $K$ is a compact subset of $\ts$, then
\begin{equation}\label{3_eq:positive_representations_on_hilbert_spaces_inserted_2}
	\npm(K)=\SOTlim_{K\prec f}\posmap(f).
\end{equation}
In particular,
\[
\npm(\ts)=\SOTlim_{f\prec \ts}\posmap(f).
\]

\item\label{3_part:positive_representations_on_Hilbert_spaces_inserted}
If $\seq{\mss}$ is a pairwise disjoint sequence in $\borel$ then, for all $x\in\os$,
\begin{equation*}
	\npm\left(\bigcup_{n=1}^\infty\mss_n\right)x=\sum_{n=1}^\infty\npm(\mss_n)x
\end{equation*}
in the norm topology of $\hilbert$.

\item\label{3_part:positive_representations_on_Hilbert_spaces_2}
We have $\ellonets=\boundedmeasfunaets$.

\item\label{3_part:positive_representations_on_Hilbert_spaces_inserted_again}
For $x\in\hilbert$, set $\npm_{x,x}(\mss)\coloneqq\lrinp{\npm(\mss)x,x}$ for $\mss\in\borel$. Then $\npm_{x,x}\colon \borel\to\posR$ is a finite regular Borel measure on $\ts$. For $f\in\boundedmeasfunts$, $\opintm(f)=\ointm{f}$ is the unique element of $\boundedh$ such that
\begin{equation}\label{3_eq:weak_characterisation_of_order_integrals_hilbert spaces}
	\lrinp{\opintm(f)x, x}=\int_\pset\!f\di{\npm_{x,x}}
\end{equation}
for all $x\in\hilbert$.

\item\label{3_part:positive_representation_on_Hilbert_spaces_3}
The spaces $\boundedmeasfunts$ and $\boundedmeasfunaets$ are both \sDc\ vector lattices, and the naturally defined operators $\opintm$ from these spaces into $\oa_\sa$ are both $\sigma$-order continuous. If $\oa_\sa$ has the \csp\ \uppars{equivalently: when $\oa$ is $\sigma$-finite; this is certainly the case when $\hilbert_{\mathrm{nd}}$ is separable},  then $\boundedmeasfunaets$ is a \Dc\ vector lattice with the \csp, and $\opintm\colon \boundedmeasfunaets\to\oa_\sa$ is order continuous.

\item\label{3_part:positive_representations_on_Hilbert_spaces_5} The map $\opintm\colon \boundedmeasfunaets\to\oa_\sa$ is a topological embedding of the Banach lattice algebra $\boundedmeasfunae$ as a Banach lattice subalgebra of $\oa_\sa$. The map $\posmap\colon \contots\to\oa_\sa$ is a Banach lattice algebra homomorphism with closed range.

\item\label{3_part:positive_representations_on_Hilbert_spaces_4}
The  \SOT-closed complex linear subspaces of $\boundedh$ that are generated by the following sets are all equal to the algebra $\oa$:
	\begin{enumerate}
		\item\label{3_part:positive_representations_on_Hilbert_spaces_4-1}
		$\{\posmap(f): f\in\contcts\}$;
		\item\label{3_part:positive_representations_on_Hilbert_spaces_4-2}
		$\{\posmap(f): f\in\contots\}$;
		\item\label{3_part:positive_representations_on_Hilbert_spaces_3-3}
		$\{\opintm(f): f\in\boundedmeasfunts\}$;
		\item\label{3_part:positive_representations_on_Hilbert_spaces_3-5}
		$\{\npm(V): V\text{ is an open subset of }\ts\}$;
		\item\label{3_part:positive_representations_on_Hilbert_spaces_3-6}
		$\{\npm(K): K\text{ is a compact subset of }\ts\}$;
		\item\label{3_part:positive_representations_on_Hilbert_spaces_3-7}
		$\{\npm(\mss): \mss\text{ is a Borel subset of }\ts\}$.
	\end{enumerate}
	
\item\label{3_part:positive_representations_on_Hilbert_spaces_8}
The projection $\npm(\ts)$ projects onto the non-degenerate part of $\hilbert$, i.e.,
\begin{align*}
	\npm(\ts)\hilbert=\hilbert_{\mathrm{nd}}&=\overline{{\mathrm {Span}} \{\posmap(f)x : f\in\contoCts, x\in\hilbert\}}\\
	&=\overline{{\mathrm {Span}} \{\posmap(f)x : f\in\contcCts, x\in\hilbert\}}.
\end{align*}

\end{enumerate}
\end{theorem}

\begin{remark}\label{3_rem:automatic_continuity}
		After extending the order integral in the natural way to complex-valued functions, one obtains the (possibly non-unital) representations of \Calgebras\ $\opintm\colon \boundedmeasfunCts\to\boundedh$ and $\opintm\colon \boundedmeasfunCaets\to\boundedh$. Although their definition via the order integral did not involve any topology, the automatic continuity of these representations implies, for example,  that, for $f\in\boundedmeasfunCts$, $\opintm(f)$ is the uniform limit of natural linear combinations of the $\npm(\mss)$ for $\mss\in\borel$.
\end{remark}

\begin{remark}\label{3_rem:principled_order_theoretical_approach_1}
In the context of \cref{3_res:positive_representations_on_Hilbert_spaces}  take $x,y\in\hilbert$, and set  $\npm_{x,y}(\mss)\coloneqq\lrinp{\npm(\mss)x,y}$ for $\mss\in\borel$. Then $\npm_{x,y}\colon \borel\to\CC$ is a finite regular Borel measure on $\ts$, and  \cref{3_eq:weak_characterisation_of_order_integrals_hilbert spaces} implies that, for $f\in\contoCts$, $\posmap(f)$ is the unique element of  $\boundedh$ such that
\begin{equation}\label{3_eq:weak_characterisation_of_order_integrals_hilbert spaces_again}
	\lrinp{\posmap(f)x, y}=\int_\pset\!f\di{\npm_{x,y}}
\end{equation}
for all $x,y\in\hilbert$.

It follows from this that,  for compact $\ts$ and a unital representation of $\contCts$), our measure $\npm$ coincides with the spectral measure from the literature. As in \cite[Chapter IX.1]{conway_A_COURSE_IN_FUNCTIONAL_ANALYSIS_SECOND_EDITION:1990}, for example,   this is usually constructed by first using \cref{3_eq:weak_characterisation_of_order_integrals_hilbert spaces_again} for $f\in\contCts$ to \emph{define} $\npm_{x,y}$, then using \cref{3_eq:weak_characterisation_of_order_integrals_hilbert spaces_again} again to define $\posmap(f)$ for $f\in\boundedmeasfunCts$, and finally setting $\npm(\mss)\coloneqq \posmap(\mss)$ for $\mss\in\borel$. Our approach is from the opposite direction. The spectral measure is found first, and then the relation with the measures $\npm_{x,y}$ is an immediate consequence.

 These spectral measures from the literature are such that $\npm(\ts)=I$. For our measures, which exist in the most general setting of locally compact spaces and possibly non-degenerate representations, this need not be the case.
 \end{remark}

\begin{remark}\label{3_rem:principled_order_theoretical_approach_2}
Our principled order-theoretical approach to the spectral theorem for representations of commutative \Calgebras\ on Hilbert spaces  appears to be new. For unital representations of $\contCts$ with $\ts$ compact, traits of it appear in the statement of  \cite[Theorem~5.2.6]{kadison_ringrose_FUNDAMENTALS_OF_THE_THEORY_OF_OPERATOR_ALGEBRAS_VOLUME_I:1983} and its proof, where \cref{3_eq:positive_representations_on_hilbert_spaces_inserted_1} and the outer regularity of $\npm$ can be found. The full underlying picture with $\npm$ as a regular Borel measure that also satisfies \cref{3_eq:positive_representations_on_hilbert_spaces_inserted_2}, and with the functional calculi as special cases of generalised Lebesgue integrals is, however, not visible there.
 \end{remark}

 Part~\ref{3_part:positive_homomorphisms_into_partially_ordered_algebras_7} of \cref{3_res:positive_homomorphisms_into_partially_ordered_algebras}
does not make an appearance in \cref{3_res:positive_representations_on_Hilbert_spaces} because, in this particular context, it is an ingredient to the proof of the following strengthening of it.

\begin{theorem}\label{3_res:Borel_functional_calculus}
Let $\ts$ be a locally compact Hausdorff space, let $\hilbert$ be a complex Hilbert space, and let $\posmap\colon \contoCts\to\boundedh$ be a $^\ast$-homomorphism. Set $\oa\coloneqq\overline{\posmap(\contoCts)}\sp\SOT$.  Suppose that $\oa_\sa=\overline{\posmap(\contots)}\sp\SOT$ has the \csp\ or, equivalently, that $\oa$ is $\sigma$-finite; this is certainly the case when $\hilbert_{\mathrm{nd}}$ is separable.

Then:
	\begin{align*}
	\oa&=\opintm(\boundedmeasfunCaets);\\
	\oa_\sa&=\opintm(\boundedmeasfunaets)=\big[\posmap(\contcts)\big]^{\ups\downs}=\big[\posmap(\contcts)\big]^{\downs\ups};\\
	 \pos{\oa_\sa}&=\opintm(\posboundedmeasfunaets)=\posmap(\eclass{\pos{\contcts}}^{\downs\ups})=\big[\posmap(\pos{\contcts})\big]^{\downs\ups}\\
	&\phantom{=\opintm(\posboundedmeasfunaets)\,\,}=\opintm(\eclass{\pos{\contcts}}^{\ups\downs})=\big[\posmap(\pos{\contcts})\big]^{\ups\downs}.
	\end{align*}
Consequently, $\opintm\colon\oa\to\boundedmeasfunCaets$ is an isomorphism.
\end{theorem}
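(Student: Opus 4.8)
The plan is to split the statement into two very different parts: the identities that involve consecutive ups and downs, which are already available from the order-theoretic machinery, and the three \emph{surjectivity} assertions $\pos{\oa_\sa}=\posmap(\posboundedmeasfunts)$, $\oa_\sa=\posmap(\boundedmeasfunts)$, and $\oa=\posmap(\boundedmeasfunCts)$, which carry all the new content. First I would note that every displayed equality between an image of $\posmap$ and an expression in ups and downs of $\posmap(\contcts)$ or $\posmap(\pos{\contcts})$ is a transcription of part~\ref{3_part:positive_homomorphisms_into_partially_ordered_algebras_7} of \cref{3_res:positive_homomorphisms_into_partially_ordered_algebras}: that theorem applies to $\posmap\colon\contots\to\oa_\sa$ because $\oa_\sa$ is a monotone complete, normal quasi-perfect Riesz algebra with monotone continuous multiplication (\cref{3_res:riesz_algebra_for_hilbert_spaces}), and \cref{3_res:positive_representations_on_Hilbert_spaces} already records $\ellonets=\boundedmeasfunaets$ and the standing countable sup property. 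Since $\ellonets=\boundedmeasfunaets$, the maps $\posmap$ have the same images on $\posintegrablefunts$ as on $\posboundedmeasfunts$ and on $\integrablefunts$ as on $\boundedmeasfunts$, so the second through fifth members of each chain are exactly the conclusions of that part. This leaves only the three surjectivity statements.

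I would then reduce the three to one. The inclusion $\posmap(\posboundedmeasfunts)\subseteq\pos{\oa_\sa}$ is immediate from positivity of $\posmap$. Granting the reverse inclusion, $\oa_\sa=\posmap(\boundedmeasfunts)$ follows at once, because $\oa_\sa$ is a Riesz space, so $\oa_\sa=\pos{\oa_\sa}-\pos{\oa_\sa}$, while $\boundedmeasfunts=\posboundedmeasfunts-\posboundedmeasfunts$ and $\posmap$ is linear; and $\oa=\posmap(\boundedmeasfunCts)$ follows by complexifying, using $\oa=\oa_\sa+i\oa_\sa$, $\boundedmeasfunCts=\boundedmeasfunts+i\boundedmeasfunts$, and the complex-linear extension of $\posmap$. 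Thus everything rests on proving $\pos{\oa_\sa}\subseteq\posmap(\posboundedmeasfunts)$.

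For this last inclusion I would pass to projections. By \cref{3_res:idempotents_form_boolean_algebra} together with the order $\sigma$-additivity of $\npm$, the range $\npm(\borel)$ is a $\sigma$-complete Boolean algebra of projections; it is contained in $\posmap(\posboundedmeasfunts)$ since $\npm(A)=\posmap(\indicator{A})$, and by part~\ref{3_part:positive_representations_on_Hilbert_spaces_4} of \cref{3_res:positive_representations_on_Hilbert_spaces} its \SOT-closed linear span is all of $\oa$. The decisive step is to upgrade this to $\npm(\borel)=\mathcal P(\oa)$, the full projection lattice of the von Neumann algebra $\oa$. Here one uses that $\oa$ is $\sigma$-finite (\cref{3_res:sigma_finite_von_Neumann_algebra}): fixing a faithful normal state and working in the associated measure algebra, $\sigma$-completeness makes $\npm(\borel)$ closed for the measure-algebra metric while the \SOT-density of its span makes it dense, and a dense closed Boolean subalgebra is everything. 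Once every spectral projection of an element of $\oa$ lies in $\npm(\borel)$, the spectral theorem expresses any $S\in\pos{\oa_\sa}$ as the norm limit of an increasing sequence of positive $\npm(\borel)$-simple elements; each lies in the cone $\posmap(\posboundedmeasfunts)$, the convergence is monotone and hence gives the order supremum in $\boundedh_\sa$, and $\posmap(\posboundedmeasfunts)=\posmap(\posboundedmeasfunts)^\up$ (again part~\ref{3_part:positive_homomorphisms_into_partially_ordered_algebras_7}) then puts $S$ in $\posmap(\posboundedmeasfunts)$.

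I expect the identification $\npm(\borel)=\mathcal P(\oa)$ to be the main obstacle, since it is the single place where the purely order-theoretic picture must be married to the von Neumann-algebra structure, and it is exactly where $\sigma$-finiteness is indispensable: a naive attempt to run the argument from \SOT-density and order-closedness of $\posmap(\boundedmeasfunts)$ alone fails, because an order-closed, \SOT-dense Riesz subalgebra (for instance $\posmap(\contots)$, which is even norm-closed) need not exhaust $\oa_\sa$. It is only the countable sup property, converted into $\sigma$-finiteness through \cref{3_res:sigma_finite_von_Neumann_algebra}, that closes this gap.
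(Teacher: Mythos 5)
Your proposal is correct in substance, but it proves the hard part by a genuinely different route than the paper. The paper attacks the \emph{complex} identity $\oa=\posmap(\boundedmeasfunCts)$ first and derives everything else from it together with part~\ref{3_part:positive_homomorphisms_into_partially_ordered_algebras_7} of \cref{3_res:positive_homomorphisms_into_partially_ordered_algebras}: since $\posmap(\boundedmeasfunCts)$ is a \Csubalgebra\ containing $\posmap(\contoCts)$, it suffices by Pedersen's characterisation of \SOT-closed \Csubalgebras\ (\cite[Theorem~2.4.4]{pedersen_C-STAR-ALGEBRAS_AND_THEIR_AUTOMORPHISM_GROUPS:1979}) to show $\posmap(\boundedmeasfunts)$ is closed under \SOT-limits of increasing nets; such a limit lies in $\oa_\sa$ and is then an order supremum there, so the countable sup property and the net up-closedness $\posmap(\integrablefunts)=\posmap(\integrablefunts)^\up$ from part~\ref{3_part:positive_homomorphisms_into_partially_ordered_algebras_7} finish in three lines, with no mention of projections, states, or the spectral theorem. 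You instead go in the opposite direction (positive cone first, then real and complex parts) and carry the load on the identification $\npm(\borel)=$ the full projection lattice of $\oa$, obtained through a faithful normal state and a measure-algebra argument, followed by the spectral theorem and the same up-closedness. Your route works: $\npm(\borel)$ is indeed closed under countable lattice operations computed in $\oa$ (order $\sigma$-additivity plus \cref{3_res:idempotents_form_boolean_algebra}), and once it is identified with all projections, simple-function approximation plus $\posmap(\posintegrablefunts)=\posmap(\posintegrablefunts)^\up$ does the rest. What the paper's approach buys is brevity and the avoidance of any von Neumann-algebra machinery beyond Pedersen's monotone-closure theorem; what yours buys is an explicit structural statement (the range of the spectral measure exhausts the projection lattice) that the paper never states.

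Two soft spots. First, the density half of your metric argument is asserted, not proved: that \SOT-density of the \emph{linear span} of $\npm(\borel)$ forces metric density of $\npm(\borel)$ in the projection lattice needs an argument (e.g., the $\sigma$-field $\Sigma_0$ of sets with $\npm$-indicator in the range generates an \SOT-closed von Neumann subalgebra containing the span, hence all of $\oa$, whence every projection of $\oa$ lies in $\npm(\borel)$ by $\sigma$-completeness; or a Chebyshev estimate against the GNS vector). It is true, so this is a fillable gap rather than an error. Second, your closing diagnosis is off: $\posmap(\contots)$ is \emph{not} order-closed in $\oa_\sa$ (were it closed under increasing net limits, Pedersen's theorem would make it \SOT-closed, hence all of $\oa_\sa$), so it is not a counterexample to the ``naive'' argument\textemdash indeed that naive argument, made precise via Pedersen's characterisation, is exactly the paper's proof. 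You are right, however, that $\sigma$-finiteness is indispensable: in the paper it powers the net-to-sequence reduction behind the up-closedness, and in your argument it supplies the faithful normal state; your $\ell^\infty(\Gamma)$-type intuition about what fails without it is sound even though the stated example is not.
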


\begin{proof}
	
	We establish the first equation. The remaining ones follows from this and  part~\ref{3_part:positive_homomorphisms_into_partially_ordered_algebras_7} of \cref{3_res:positive_homomorphisms_into_partially_ordered_algebras}; the final statement is then also clear as $\opintm$ is injective on $\boundedmeasfunCaets$. We know that $\opintm(\boundedmeasfunCaets)\subseteq\oa$. We prove the reverse inclusion. Since $\posmap(\contoCts)\subseteq\opintm(\boundedmeasfunCaets)$, it suffices to show that  $\opintm(\boundedmeasfunCaets)$ is strongly closed. Since it is a \Calgebra, its being strongly closed is  equivalent to its self-adjoint part being closed under the taking of \SOT-limits of increasing nets; see  \cite[Theorem~2.4.4]{pedersen_C-STAR-ALGEBRAS_AND_THEIR_AUTOMORPHISM_GROUPS:1979}. We now show this. Suppose that $\net{S}\subseteq\opintm(\boundedmeasfunaets)$ is a net of self-adjoint operators and that $S_\alpha\uparrow S$ strongly for some $S\in\boundedh_\sa$. Since $\oa_\sa$ is strongly closed, we have $S\in\oa_\sa$.  Hence $S_\alpha\uparrow S$ in $\oa$ in order, and then the countability assumption entails that  $S\in\opintm{(\boundedmeasfunaets)}$ by part~\ref{3_part:positive_homomorphisms_into_partially_ordered_algebras_7} of \cref{3_res:positive_homomorphisms_into_partially_ordered_algebras}.
\end{proof}

\begin{remark}\label{3_rem:up-down}\quad
\begin{enumerate}
\item\label{3_part:up-down_1}
According to \cref{3_res:Borel_functional_calculus}, if $\oa$ is $\sigma$-finite (in particular: if $\hilbert_{\mathrm{nd}}$ is separable), then the \SOT-closed subalgebra of $\boundedh$ that is generated by $\posmap(\contoCts)$ is equal to the image of the accompanying Borel functional calculus. For a normal operator on a separable Hilbert space, this is \cite[Lemma~IX.8.7]{conway_A_COURSE_IN_FUNCTIONAL_ANALYSIS_SECOND_EDITION:1990}, which is proved by rather different methods.\footnote{Given the necessary preparations for this result, the result could, perhaps, have been called a theorem.}

When $\hilbert$ is separable, $\ts$ is compact, and $\contCts$ is realised as a \Csubalgebra\ of $\boundedh$ that contains the identity operator, a proof\textemdash again using methods different from ours\textemdash is sketched in \cite[Theorem~1.57]{folland_A_COURSE_IN_ABSTRACT_HARMONIC_ANALYSIS_SECOND_EDITION:2016}.

We are not aware of a reference for the general result in \cref{3_res:Borel_functional_calculus}, which, in our approach, is essentially a consequence of the characterisation of \SOT-closed \Csubalgebras\ of $\boundedh$ as the ones that are monotone \SOT-closed, combined with the monotone convergence theorem for the order integral.

\item\label{3_part:up-down_2}
For a separable Hilbert space, \cite[Theorem~2.4.3]{pedersen_C-STAR-ALGEBRAS_AND_THEIR_AUTOMORPHISM_GROUPS:1979} shows that $\pos{\oa_\sa}=
[\posmap(\pos{\contots)}]\sp{\ups\downs}$; \cref{3_res:Borel_functional_calculus} shows that even $\pos{\oa_\sa}=
[\posmap(\pos{\contcts)}]\sp{\ups\downs}$.
Suppose that $\hilbert_{\mathrm{nd}}=\hilbert$ and that $\oa$ is $\sigma$-finite. It then follows from \cite[Theorem~4.2.2]{takesaki_THEORY_OF_OPERATOR_ALGEBRAS_VOLUME_I_REPRINT_OF_1979_EDITION:2002} that $\oa_\sa=[\posmap(\contots)]^{\ups\downs}$;  \cref{3_res:Borel_functional_calculus} yields that $\oa_\sa=[\posmap(\contcts)]^{\ups\downs}$.  Our result, however,  does not without further effort yield any information about norms as in the cited results.
\end{enumerate}
\end{remark}

\section{JBW-algebras}\label{3_sec:JBW-algebras}

\noindent In this section, we show how the general principles in the present paper lead to stronger and new results in spectral theory for JBW-algebras. Among others, it will become clear how the spectral resolution for an element of a JBW-algebra arises from an underlying spectral measure. The use of the latter in this context appears to be new.

Let ${\poalgfont M}$ be a JBW-algebra with identity element 1. Take $a\in{\poalgfont M}$, and let ${\poalgfont C}(a,1)$ denote the norm closed Jordan subalgebra generated by $a$ and 1. It is an associative JB-algebra, and by \cite[Proposition~1.12]{alfsen_shultz_GEOMETRY_OF_STATE_SPACES_OF_OPERATOR_ALGEBRAS:2003} there exist a compact Hausdorff space $\cont{\ts}$ and an isometric  unital algebra isomorphism $b\mapsto \widehat b$ from ${\poalgfont C}(a,1)$ onto $\cont{\ts}$. Hence ${\poalgfont C}(a,1)$ is a unital Banach lattice algebra in the ordering inherited from ${\poalgfont M}$.

The spectrum $\sigma(a)$ of $a$ in ${\poalgfont M}$ is the set of all $\lambda\in\RR$ such that $\lambda 1- a$ is not Jordan invertible in ${\poalgfont M}$. By  \cite[Proposition~1.17]{alfsen_shultz_GEOMETRY_OF_STATE_SPACES_OF_OPERATOR_ALGEBRAS:2003}, $\sigma(a)$ equals the Banach algebra spectrum of $a$ in the Banach algebra ${\poalgfont C}(a,1)$. Take $b\in {\poalgfont M}$ and an associative JB-subalgebra ${\poalgfont M}^\prime$ of ${\poalgfont M}$ containing 1 and $b$. Since $C(b,1)$ is isomorphic as a Banach algebra to $C(K)$ for a compact Hausdorff space $K$, and since the Banach algebra spectra of elements of such Banach algebras are easily seen to be stable under passing to Banach superalgebras,\footnote{This ingredient appears to be missing in the proof of the spectral mapping theorem in \cite[Proposition 1.21]{alfsen_shultz_GEOMETRY_OF_STATE_SPACES_OF_OPERATOR_ALGEBRAS:2003}.} $\sigma(b)$ is also equal to the Banach algebra spectrum of $b$ in he Banach algebra ${\poalgfont M}^\prime$. If $b\in {\poalgfont C}(a,1)$, then $C(b,1)\subseteq {\poalgfont C}(a,1)$, and we can now conclude that $\sigma(b)=\widehat b(X)$. In particular, $\sigma(a)=\widehat a(\ts)$.

Let ${\poalgfont W}(a,1)$ denote the $\sigma$-weakly closed Jordan subalgebra generated by $a$ and 1. It is an associative JBW-algebra. By  \cite[Proposition~2.11]{alfsen_shultz_GEOMETRY_OF_STATE_SPACES_OF_OPERATOR_ALGEBRAS:2003}, it is isomorphic to a $\cont{K}$-space. Hence ${\poalgfont W}(a,1)$ is also a unital Banach lattice algebra in the ordering inherited from ${\poalgfont M}$.

We view the inverse isomorphism $\widehat b\mapsto b$ from $\cont{\ts}$ onto ${\poalgfont C}(a,1)$ as a positive algebra homomorphism from $\cont{\ts}$ into ${\poalgfont W}(a,1)$. Since \cref{res:JBW_algebra_monotone_continous_multiplication} shows that ${\poalgfont W}(a,1)$ is a quasi-perfect partially ordered algebra with a monotone continuous multiplication, we can apply \cref{3_res:positive_homomorphisms_into_partially_ordered_algebras}.
Hence there exists a unique regular $\pos{{\poalgfont W}(a,1)}$-valued measure $\npm$ on the Borel $\sigma$-algebra of $\ts$ such that
\begin{equation}\label{3_eq:JBW_representation}
	b=\ointm{\widehat b}
\end{equation}
for $b\in {\poalgfont C}(a,1)$.
It is a spectral measure with $\npm(\ts)=1$. If $V$ is a non-empty open subset of $\ts$, then there exists a non-zero positive $\widehat b\in\cont{X}$ with support contained in $V$. The formula for $\npm(V)$ in \cref{3_res:positive_homomorphisms_into_partially_ordered_algebras} then implies that $\npm(V)>0$.

We now consider the image $\mu_a$ of $\mu$ under the measurable map $\widehat a\colon \ts\to \sigma(a)$. On setting $\npm_a(\mss)\coloneqq\npm\left(\widehat a^{-1}(\mss)\right)$  for a Borel subset $\mss$ of $\sigma(a)$, we obtain a spectral measure $\npm_a$ on the Borel $\sigma$-algebra $\borel(\sigma(a))$ of $\sigma(a)$ such that $\npm_a(\sigma(a))=1$. If $V$ is a non-empty (relatively) open subset of $\sigma(a)$, then $\npm_a(V)=\npm\left(\widehat a^{-1}(V)\right)>0$.

We claim that $\npm_a$ is a regular Borel measure. The shortest way to prove this is by noting that $\widehat a\colon \ts\to\sigma(a)$ is a homeomorphism (see the first part of the proof of \cite[Corollary~1.19]{alfsen_shultz_GEOMETRY_OF_STATE_SPACES_OF_OPERATOR_ALGEBRAS:2003}), so that the regularity is inherited from $\mu$. The regularity is, however, automatic.  To see this, take a normal state $\rho$  on ${\poalgfont W}(a,1)$. Since every (relatively) open subset of $\sigma(a)$ is $\sigma$-compact,  \cite[Theorem~2.18]{rudin_PRINCIPLES_OF_MATHEMATICAL_ANALYSIS_THIRD_EDITION:1976} implies that $\rho\circ\npm_a$ is a regular Borel measure on $\sigma(a)$. It then follows from \cite[Proposition~3.11]{de_jeu_jiang:2022a} that $\npm_a$ itself is a regular Borel measure. It is also automatic that $\mu$ is inner regular at all elements of $\borel(\sigma(a))$. Indeed, since $\rho\circ\npm_a$ is inner regular at every $\Delta\in\borel(\sigma(a))$ by \cite[Proposition~7.5]{folland_A_COURSE_IN_ABSTRACT_HARMONIC_ANALYSIS_SECOND_EDITION:2016}, we can again use \cite[Proposition~3.11]{de_jeu_jiang:2022a} to conclude that $\npm_a$ has the same property.

Using \cref{3_res:image_measure}, we see that
\begin{equation}\label{3_eq:integrals_related}
	\orderintegral{\ts}{f\circ \widehat a}{\npm}= \orderintegral{\sigma(a)}{f}{\npm_a}
\end{equation}
for $f\in \cont{\sigma(a)}$. If $p$ is a polynomial, then
\begin{equation}\label{3_eq:JBW_spectral_integral_all}
\orderintegral{\sigma(a)}{p}{\npm_a}=\orderintegral{\ts}{p\circ \widehat a}{\npm}=\orderintegral{\ts}{p(a)^\wedge}{\npm}=p(a).
\end{equation}
\Cref{3_eq:JBW_representation,3_eq:integrals_related} show that
\[
\left(\orderintegral{\sigma(a)}{f}{\npm_a}\right)^\wedge=f\circ\widehat a
\]
for $f\in\cont{\sigma(a)}$. Hence the spectrum of $\orderintegral{\sigma(a)}{f}{\npm_a}$ equals
$(f\circ \widehat a)(X)=f(\sigma(a))$.

As a particular case of \cref{3_eq:JBW_spectral_integral_all} we have
\begin{equation}\label{3_eq:JBW_spectral_integral}
	a=\orderintegral{\sigma(a)}{\idmap}{\npm_a}.
\end{equation}

We claim that $\npm_a$ is the only unital ${\poalgfont W}(a,1)$-valued spectral measure on $\borel(\sigma(a))$ that satisfies \cref{3_eq:JBW_spectral_integral}. To see this, let $\widetilde\npm$ be another such. Since the associated operators $I_{\npm_a}, I_{\widetilde \mu}\colon \cont{\sigma(a)}\to {\poalgfont W}(a,1)$ are algebra homomorphisms, they agree on all polynomials on $\sigma(a)$. As they are automatically norm continuous because of their positivity, they are equal. We can now use \cite[Proposition~6.8]{de_jeu_jiang:2022a} to conclude that
\begin{equation}
	\int_{\sigma(a)}\!\,f \di {(\rho\circ\npm_a)}=\int_{\sigma(a)}\! f\,\di {(\rho\circ\widetilde\npm)}
\end{equation}
for every normal state $\rho$ on ${\poalgfont W}(a,1)$ and for all $f\in \cont{\sigma(a)}$. Since, again by \cite[Theorem~2.18]{rudin_PRINCIPLES_OF_MATHEMATICAL_ANALYSIS_THIRD_EDITION:1976},  $\rho\circ\widetilde\npm$ is also automatically regular, we see that $\rho\circ\npm_a=\rho\circ\widetilde\npm$ for all normal states $\rho$. Hence $\npm_a=\widetilde{\npm}$.

\cref{3_res:integrable_function_for_spectral_measure_is_bounded} shows that $\Ell^1(\sigma(a),\borel(\sigma(a),\npm_a;\RR)=\upB(\sigma(a),\borel(\sigma(a),\npm_a;\RR)$.

Now that we have the spectral measure $\npm_a$, we can define the associated operators $I_{\npm,_a}$ from various spaces of (equivalence classes of) measurable functions on $\sigma(a)$ into ${\poalgfont W}(a,1)$. Since $\npm_a(\sigma(a))=1$, \cref{3_res:eight_properties} shows that $I_{\npm_a}\colon\upB(\sigma(a),\borel(\sigma(a),\npm_a;\RR)\to {\poalgfont W}(a,1)$ is a unital vector lattice algebra homomorphism. The fact that it is a vector lattice homomorphism implies that it is injective. After making the appropriate identifications, it is a injective algebra homomorphism between two $\cont{K}$-spaces. Hence it is isometric.

The fact that $\npm_a(V)>0$ for every non-empty (relatively) open subset $V$ of $\sigma(a)$ implies that the natural map from $\cont{\sigma(a)}$ into $\upB(\sigma(a),\borel(\sigma(a),\npm_a;\RR)$ is an injective isometry. Hence $I_{\npm_a}\colon \cont{\sigma(a)}\to {\poalgfont W}(a,1)$ is an isometric unital vector lattice algebra homomorphism. The image is then clearly ${\poalgfont C}(a,1)$. By the uniqueness statement in \cite[Corollary~1.19]{alfsen_shultz_GEOMETRY_OF_STATE_SPACES_OF_OPERATOR_ALGEBRAS:2003}, the map $I_{\npm_a}\colon \cont{\sigma(a)}\to {\poalgfont C}(a,1)$ is the continuous functional calculus for $a$. Consequently, and as we have also seen above, the spectral mapping theorem \cite[Proposition~1.21]{alfsen_shultz_GEOMETRY_OF_STATE_SPACES_OF_OPERATOR_ALGEBRAS:2003} holds for it.

The operator $I_{\npm_a}\colon\cont{\sigma(a)}\to {\poalgfont W}(a,1)$ is a positive unital algebra homomorphism to which \cref{3_res:positive_homomorphisms_into_partially_ordered_algebras} applies. It yields a representing regular Borel measure, which, by uniqueness, is $\mu_a$. Combined with the above, we thus have the following. We recall from  \cref{3_subsec:the_countable_sup_property} that the Banach lattice ${\poalgfont W}(a,1)$ has the countable sup property when it is separable or, more generally, when it has strictly positive (not necessarily normal) state.

\begin{theorem}\label{3_res:spectral_theorem_JBW_algebras}
Let ${\poalgfont M}$ be a JBW-algebra. Take $a\in {\poalgfont M}$, and let ${\poalgfont C}(a,1)$ \uppars{resp.\ ${\poalgfont W}(a,1)$} be the JB-subalgebra \uppars{resp.\ JBW-subalgebra} that is generated by $a$. These are both unital Banach lattice algebras in the ordering inherited from ${\poalgfont M}$.

There exists a unique spectral measure $\npm_a\colon\borel\to\pos{{\poalgfont W}(a,1)}$ on the Borel $\sigma$-algebra $\borel(\sigma(a))$ of $\sigma(a)$ with $\npm_a(\sigma(a))=1$ such that
\begin{equation*}
	a=\orderintegral{\sigma(a)}{\idmap}{\npm_a}
\end{equation*}
in ${\poalgfont W}(a,1)$.
The measure $\npm_a$ is a regular Borel measure that is inner regular at all Borel subsets of $\sigma(a)$. If $V$ is a non-empty \uppars{relatively} open subset of $\sigma(a)$, then
\begin{align*}
	0<\npm_a(V)&=\psup\{I_{\npm_a}(f) : f\in\cont{\sigma(a)},\, f\prec V\}
\intertext{in ${\poalgfont W}(a,1)$. If $K$ is a compact subset of $\sigma(a)$, then}
	\npm_a(K)&=\pinf\{I_{\mu_a}(f) : f\in\cont{\sigma(a)},\, K\prec f\}
\end{align*}
in ${\poalgfont W}(a,1)$.

Furthermore:
\begin{enumerate}

\item\label{3_part:spectral_theorem_JBW_algebras_1}
 $\Ell^1(\sigma(a),\borel(\sigma(a),\npm_a;\RR)=\upB(\sigma(a),\borel(\sigma(a),\npm_a;\RR)$.
 \item\label{3_part:spectral_theorem_JBW_algebras_2}
\begin{enumerate_alpha}
	\item $\upB(\sigma(a),\borel(\sigma(a),\npm_a;\RR)$ is a $\sigma$-Dedekind complete unital Banach lattice algebra, and $I_{\npm_a}\colon\upB(\sigma(a),\borel(\sigma(a),\npm_a;\RR)\to {\poalgfont W}(a,1)$ is an isometric $\sigma$-order continuous unital vector lattice algebra homomorphism.
	\item If ${\poalgfont W}(a,1)$ has the countable sup property, $\upB(\sigma(a),\borel(\sigma(a),\npm_a;\RR)$ is Dedekind complete, and $I_{\npm_a}\colon \upB(\sigma(a),\borel(\sigma(a),\npm_a;\RR)\to {\poalgfont W}(a,1)$ is order continuous.
	
	  \end{enumerate_alpha}
\item\label{3_part:spectral_theorem_JBW_algebras_3}
\begin{enumerate_alpha}
	\item The natural map from $\cont{\sigma(a)}$ into $\upB(\sigma(a),\borel(\sigma(a),\npm_a;\RR)$ is an isometric unital vector lattice algebra homomorphism.	
	\item
	$I_{\npm_a}\colon\cont{\sigma(a)}\to {\poalgfont C}(a,1)$ is a surjective isometric unital vector lattice algebra homomorphism which coincides with the continuous functional calculus for $a$.
	\item $\sigma(I_{\mu_a}(f))=f(\sigma(a))$ for $f\in\cont{\sigma(a)}$.
\end{enumerate_alpha}

\item\label{3_part:spectral_theorem_JBW_algebras_4}
For a normal state $\rho$ on ${\poalgfont W}(a,1)$ and $\mss\in\borel(\sigma(a))$, set $\npm_{a,\rho}(\mss)\coloneqq(\npm_a(\mss),\rho)$. Then $\npm_{a,\rho}\colon \borel(\sigma(a))\to\posR$ is a regular Borel measure that is inner regular at all Borel subsets of $\sigma(a)$. For $f\in\upB(\sigma(a),\borel(\sigma(a));\RR)$, $I_{\npm_a}(f)=\int_{\sigma(a)}^{\upo}\!f \di{\npm_a}$ is the unique element of ${\poalgfont W}(a,1)$ such that
\begin{equation*}
	\left(I_{\npm_a}(f), \rho\right)=\int_{\sigma(a)}\!f \di{\npm_{a,\rho}}
\end{equation*}
for all normal states $\rho$ on ${\poalgfont W}(a,1)$.
\item\label{3_part:spectral_theorem_JBW_algebras_5}
Suppose that ${\poalgfont W}(a,1)$ has the \csp.  Then:
\begin{align*}
	I_{\npm_a}(\upB(\sigma(a),\borel(\sigma(a),\npm_a;\pos{\RR}))&=I_{\npm_a}(\upB(\sigma(a),\borel(\sigma(a),\npm_a;\pos{\RR}))^\up\\ &=I_{\npm_a}(\upB(\sigma(a),\borel(\sigma(a),\npm_a;\pos{\RR}))^\down;\\
	I_{\npm_a}(\upB(\sigma(a),\borel(\sigma(a),\npm_a;\pos{\RR}))&=I_{\npm_a}(\eclass{\pos{\cont{\sigma(a)}}}^{\downs\ups})\\
	&=\big[ \pos{{\poalgfont C}(a,1)}\big]^{\downs\ups};\\
	I_{\npm_a}(\upB(\sigma(a),\borel(\sigma(a),\npm_a;\pos{\RR}))&=I_{\npm_a}(\eclass{\pos{\cont{\sigma(a)}}}^{\ups\downs})\\
	&=\big[\pos{{\poalgfont C}(a,1)}\big]^{\ups\downs};\\
	I_{\npm_a}(\upB(\sigma(a),\borel(\sigma(a),\npm_a;\RR))&=I_{\npm_a}(\upB(\sigma(a),\borel(\sigma(a),\npm_a;\RR))^\up\\&=I_{\npm_a}(\upB(\sigma(a),\borel(\sigma(a),\npm_a;\RR))^\down;\\
	I_{\npm_a}(\upB(\sigma(a),\borel(\sigma(a),\npm_a;\RR))&=\big[{\poalgfont C}(a,1)\big]^{\ups\downs}\\
	&=\big[{\poalgfont C}(a,1)\big]^{\downs\ups}	
		\end{align*}
in ${\poalgfont W}(a,1)$.
\end{enumerate}
\end{theorem}

\begin{remark}\quad
	\begin{enumerate}
	\item The fact that $\mu_a$ is a measure, the order integral, and the suprema and infima in \cref{3_res:spectral_theorem_JBW_algebras} (with those in $\upB(\sigma(a),\borel(\sigma(a),\npm_a;\RR)$ in part~\ref{3_part:spectral_theorem_JBW_algebras_5} excepted) are all defined with respect to the ordering in ${\poalgfont W}(a,1)$. However, since these involve only monotone nets, and since bounded monotone nets in ${\poalgfont W}(a,1)$ convergence $\sigma$-weakly to their extrema by \cite[Proposition~2.5(ii)]{alfsen_shultz_GEOMETRY_OF_STATE_SPACES_OF_OPERATOR_ALGEBRAS:2003}, the fact that ${\poalgfont W}(a,1)$ is $\sigma$-weakly closed in ${\poalgfont M}$ implies that one can equivalently use the ordering in ${\poalgfont M}$.
	\item Using ad hoc methods involving von Neumann algebras, it is shown in \cite[Theorem~2.1]{roelands_wortel:2020} that there exists a unital algebra homomorphism from ${\lebfont B}(\sigma(a),\borel(\sigma(a);\RR)$ into ${\poalgfont W}(a,1)$. This also follows from the more precise part~\ref{3_part:spectral_theorem_JBW_algebras_1} of \cref{3_res:spectral_theorem_JBW_algebras}.
	
	In \cite[Theorem~2.1]{roelands_wortel:2020}, it is also stated that the pertinent algebra homomorphism is surjective. This can, however, not be concluded from its proof. \footnote{Personal communication by Marten Wortel.} Surjectivity is \emph{not} stated in part~\ref{3_part:spectral_theorem_JBW_algebras_1} of \cref{3_res:spectral_theorem_JBW_algebras}. Given the proof of \cref{3_res:Borel_functional_calculus}, it seems likely that, for the proof of such a result (presumably under additional hypotheses), an analogue of \cite[Theorem~2.4.4]{pedersen_C-STAR-ALGEBRAS_AND_THEIR_AUTOMORPHISM_GROUPS:1979} for JB- and JBW-algebras is necessary.
	\item If $\seq{f}$ is a uniformly bounded sequence in ${\lebfont B}(\sigma(a),\borel(\sigma(a));\RR)$ converging pointwise to $f$, then $\{I_{\npm_a}(f_n)\}_{n=1}^\infty$ converges $\sigma$-weakly to $I_{\npm_a}(f)$. This fact, which was already observed in \cite[Theorem~2.1]{roelands_wortel:2020}, is immediate from part~\ref{3_part:spectral_theorem_JBW_algebras_3} and the (classical) dominated convergence theorem.
	\end{enumerate}
\end{remark}

We shall now show how the spectral resolution for $a$ as in \cite[Theorem~2.20]{alfsen_shultz_GEOMETRY_OF_STATE_SPACES_OF_OPERATOR_ALGEBRAS:2003} can be found from the underlying spectral measure $\npm_a$. For $\lambda\in\RR$, set $e_\lambda\coloneqq\npm_a((-\infty,\lambda]\cap\sigma(a))$. We claim that $\{e_\lambda:\lambda\in\RR\}$ is the spectral resolution for $a$. First of all, $e_\lambda$ is a projection in the associative algebra ${\poalgfont W}(a,1)$, so that it operator commutes with $a$ by \cite[Proposition~1.47]{alfsen_shultz_GEOMETRY_OF_STATE_SPACES_OF_OPERATOR_ALGEBRAS:2003}. Furthermore, $U_{e_\lambda} a=e_\lambda\circ a$. Since $\chi_{(-\infty,\lambda]\cap(\sigma(a))}\cdot \idmap\leq \lambda \chi_{(-\infty,\lambda]\cap(\sigma(a))}$ as functions on $\sigma(a)$, we have
\begin{align*}
U_{e_\lambda} a&=e_\lambda\circ a\\
&=I_{\npm_a}(\chi_{(-\infty,\lambda]\cap(\sigma(a))})\circ I_{\npm_a}(\idmap)\\
&=I_{\npm_a}(\chi_{(-\infty,\lambda]\cap(\sigma(a))}\cdot \idmap)\\
&\leq I_{\npm_a}(\lambda\chi_{(-\infty,\lambda]\cap(\sigma(a))})\\
&=\lambda e_\lambda.
\end{align*}
It follows similarly that $U_{1-e_\lambda}\geq\lambda(1-e_\lambda)$. Since $\sigma(a)\subseteq[-\norm{a},\norm{a}]$, it is clear that $e_\lambda=0$ for $\lambda<-\norm{a}$ and that $e_\lambda=1$ for $\lambda>\norm{a}$. Certainly, $e_{\lambda_1}\leq e_{\lambda_2}$ when $\lambda_1<\lambda_2$. It follows from \cite[Proposition~4.6]{de_jeu_jiang:2022a} that $e_\lambda=\bigwedge_{n\geq 1} e_{\lambda+1/n}$, so that $e_\lambda=\bigwedge_{\lambda^\prime>\lambda}e_{\lambda^\prime}$ by the monotonicity of $\lambda\mapsto e_\lambda$. We have now verified all defining properties for the spectral resolution for $a$ in \cite[Theorem~2.20]{alfsen_shultz_GEOMETRY_OF_STATE_SPACES_OF_OPERATOR_ALGEBRAS:2003}.

It is also easy to see why $a$ can be approximated by Riemann{\textendash}Stieltjes type sums as in \cite[Theorem~2.20]{alfsen_shultz_GEOMETRY_OF_STATE_SPACES_OF_OPERATOR_ALGEBRAS:2003}. This is, in fact, true for $I_{\npm_a}(f)$ for every continuous $f\colon\sigma(a)\to\RR$, i.e., for all $b\in {\poalgfont C}(a,1)$, and then in particular for $a=I_{\npm_a}(\idmap)$. To see this, fix a uniformly continuous extension $f_{\upe}$ of $f$ to $\RR$. One can always find such $f_{\upe}$ with compact support by \cite[Theorem~20.4]{rudin_PRINCIPLES_OF_MATHEMATICAL_ANALYSIS_THIRD_EDITION:1976}, but for the identity function on $\sigma(a)$ the identity function on $\RR$ is also a possible choice. Take a finite increasing sequence $\gamma=\{\lambda_0,\dotsc,\lambda_n\}$ such that $\sigma(a)\subset(\lambda_0,\lambda_n]$. Write $\norm{\gamma}=\max_{i=1,\dotsc,n}(\lambda_{i}-\lambda_{i-1})$. For $i=1,\ldots,n$, take any $\lambda_i^\prime\in[\lambda_{i-1},\lambda_i]\cap\sigma(a)$. Then $\sum_{i=1}^n f_{\upe}(\lambda_i^\prime)\chi_{(\lambda_{i-1},\lambda_i]\cap\sigma(a)}\to f$ uniformly on $\sigma(a)$ as $\norm{\gamma}\to 0$. Applying the continuous operator $I_{\npm_a}$ yields that $\sum_{i=1}^nf_{\upe}(\lambda_i^\prime)(e_{\lambda_{i}}-e_{\lambda_{i-1}})\to I_{\npm_a}(f)$ as $\norm{\gamma}\to 0$. In particular, this is true for $f=\idmap$ with as extension $f_{\upe}$ the identity map on $\RR$, finite increasing sequences $\gamma=\{\lambda_0,\dotsc,\lambda_n\}$ such that $[-\norm{a},\norm{a}]\subset(\lambda_0,\lambda_n]$, and $\lambda_i^\prime=\lambda_i$. This implies the approximation result for $a$ by Riemann{\textendash}Stieltjes type sums for $a$ in \cite[Theorem~2.20]{alfsen_shultz_GEOMETRY_OF_STATE_SPACES_OF_OPERATOR_ALGEBRAS:2003}.\footnote{As the present argument in the paper shows, it is sufficient to have $[-\norm{a},\norm{a}]\subset(\lambda_0,\lambda_n]$ rather than $[-\norm{a},\norm{a}]\subset(\lambda_0,\lambda_n)$ for $a$ as in \cite[Theorem~2.20]{alfsen_shultz_GEOMETRY_OF_STATE_SPACES_OF_OPERATOR_ALGEBRAS:2003}.} The ease with which this can be extended to arbitrary elements of ${\poalgfont C}(a,1)$, as a consequence of the existence of the Borel functional calculus that comes with the spectral measure, may serve as evidence that it is better to work with the spectral measure than with the spectral resolution that is derived from it.



\subsection*{Acknowledgements} The authors thank Onno van Gaans for his assistance with \cref{3_rem:kalauch_stennder_van_gaans}, and Marten Wortel for helpful discussions on JBW-algebras. They are grateful to Ben de Pagter for bringing Schaefer's work on lattice-subspaces and the relevant results on $\!f\!$-algebras in \cite{de_pagter_THESIS:1981} under their attention.



\bibliographystyle{plain}
\urlstyle{same}

\bibliography{general_bibliography}


\end{document}